\theoremstyle{plain}
\newtheorem{rem}[theorem]{Remark}
\newtheorem{exam}{\hspace{1mm}Example}[section]
\newtheorem{assumption}[theorem]{Assumption}
\newcommand{\cm}[1]{{\color{black}{#1}}}
\tikzset{% customization of pattern 
        hatch distance/.store in=\hatchdistance,
        hatch distance=5pt,
        hatch thickness/.store in=\hatchthickness,
        hatch thickness=5pt
        }
\pgfqpoint{\hatchdistance}{\hatchdistance}}% above right
\newcommand{\vertiii}[1]{{\left\vert\kern-0.25ex\left\vert\kern-0.25ex\left\vert #1 
    \right\vert\kern-0.25ex\right\vert\kern-0.25ex\right\vert}}
\begin{document}
 
% --------------------------------------------------------------
%                         Start here
% --------------------------------------------------------------

\headers{Unified framework for MS-GFEM}{} 
 
\title{A unified framework for multiscale spectral generalized FEMs and low-rank approximations to multiscale PDEs}

%{Abstract multiscale spectral generalized FEMs and a unified theoretical framework for low-rank approximations to multiscale PDEs}
\author{Chupeng Ma\thanks 
{School of Sciences, Great Bay University, Dongguan 523000, China (\email{chupeng.ma@gbu.edu.cn}).}
}
% \and R. Scheichl\footnotemark[1]
%\and T. J. Dodwell\thanks{College of Engineering, Mathematics and Physical Sciences, University of Exeter, Exeter EX4 4PY, United Kingdom, and The Alan Turing Institute, London, NW1 2DB, United Kingdom (\email{T.Dodwell@exeter.ac.uk}).}}
%
\maketitle

\begin{abstract}
% Solving multiscale partial differential equations (PDEs), characterized by strongly heterogeneous coefficients that oscillate on multiple possibly non-separated scales, is prohibitively expensive with standard numerical methods. Over the past two decades, many new numerical methods have been developed for efficient solution of such problems. Representative methods are numerical multiscale methods based on problem-adapted coarse approximation spaces, and structured inverse methods that aim at approximate matrix factorizations by exploiting a low-rank property of the Green's functions.
Multiscale partial differential equations (PDEs), featuring heterogeneous coefficients oscillating across possibly non-separated scales, pose computational challenges for standard numerical techniques. Over the past two decades, a range of specialized methods has emerged that enables the efficient solution of such problems. Two prominent approaches are numerical multiscale methods with problem-adapted coarse approximation spaces, and structured inverse methods that exploit a low-rank property of the associated Green's functions to obtain approximate matrix factorizations.

This work presents an abstract framework for the design, implementation, and analysis of the multiscale spectral generalized finite element method (MS-GFEM), a particular numerical multiscale method originally proposed in [I. Babuska and R. Lipton, Multiscale Model.\;\,Simul., 9 (2011), pp.~373--406]. MS-GFEM is a partition of unity method employing optimal local approximation spaces constructed from local spectral problems. We establish a general local approximation theory demonstrating exponential convergence with respect to local degrees of freedom under certain assumptions, with explicit dependence on key problem parameters. Our framework applies to a broad class of multiscale PDEs with $L^{\infty}$-coefficients in both continuous and discrete, finite element settings, including highly indefinite problems (convection-dominated diffusion, as well as the high-frequency Helmholtz, Maxwell and elastic wave equations with impedance boundary conditions), and higher-order problems. Notably, we prove a local convergence rate of $O(e^{-cn^{1/d}})$ for MS-GFEM for all these problems, improving upon the $O(e^{-cn^{1/(d+1)}})$ rate shown by Babuska and Lipton.

% In particular, for all these problems, we prove that MS-GFEM has the local convergence rate $O(e^{-cn^{1/d}})$, thereby improving the result of Babuska and Lipton where the rate $O(e^{-cn^{1/(d+1)}})$ was proved.

Moreover, based on the abstract local approximation theory for MS-GFEM, we establish a unified framework for showing low-rank approximations to multiscale PDEs. This framework applies to the aforementioned problems, proving that the associated Green's functions admit an $O(|\log\epsilon|^{d})$-term separable approximation on well-separated domains with error $\epsilon>0$. Our analysis improves and generalizes the result in [M. Bebendorf and W. Hackbusch, Numerische Mathematik, 95 (2003), pp.~1-28] where an $O(|\log\epsilon|^{d+1})$-term separable approximation was proved for Poisson-type problems. It provides a rigorous theoretical foundation for diverse structured inverse methods, and also clarifies the intimate connection between approximation mechanisms in such methods and MS-GFEM.

% Finally, we present applications of the spectral local spaces of MS-GFEM to Trefftz methods and numerical homogenization. In particular, we develop a numerical homogenization framework based on the local spectral bases within MS-GFEM, and prove that with $O((\log\frac{1}{H})^{d})$ basis functions per nodal point, this approach achieves an $O(H)$ convergence rate for practical applications, where $H$ denotes the finite element mesh size.
\end{abstract}

\begin{keywords}
generalized finite element method, multiscale methods, low-rank approximations, Green's functions, multiscale PDEs
\end{keywords}

\begin{AMS}
65J05, 65N15, 65N30
\end{AMS}
\tableofcontents
\section{Introduction}\label{sec:1}
Multiscale problems appear in a wide range of scientific and engineering applications in different fields. Representative examples include flow in highly heterogeneous porous media and the mechanical analysis of composite materials. These problems are often modeled by partial differential equations (PDEs) with coefficients and solutions encoding the underlying multiscale character. Direct numerical solution of these PDE models requires very fine mesh discretizations to resolve all relevant fine-scale features and is thus prohibitively expensive. To solve multiscale problems efficiently and accurately, suitable approximation at some level is needed.

Classical homogenization theory \cite{bensoussan2011asymptotic,jikov2012homogenization} provides a powerful framework for solving multiscale problems with a periodic microstructure. The basic idea is to approximate the original PDE model by a homogenized equation via suitable cell problems. This conceptual foundation has profoundly influenced and motivated the development of diverse multiscale methodologies in broader contexts. For instance, in settings characterized by scale separation, the heterogeneous multiscale methods (HMM) \cite{abdulle2012heterogeneous,ming2005analysis,weinan2003heterognous}. For general multiscale problems without scale separation, numerical multiscale methods are commonly devised within the variational framework. These methods hinge on approximating relevant solution spaces by low or finite dimensional spaces with localized bases. There is an extensive literature on this subject, encompassing notable methodologies such as multiscale finite element methods (MsFEM) \cite{hou1997multiscale,hou1999convergence,le2014msfem}, generalized multiscale finite element methods (GMsFEM) \cite{efendiev2013generalized,chung2018constraint}, localized orthogonal decomposition (LOD) \cite{maalqvist2014localization,hauck2023super,freese2024super}, variational multiscale methods (VMS) \cite{hughes1995multiscale,hughes2007variational}, flux norm homogenization \cite{owhadi2011localized}, game-theoretical approach \cite{owhadi2015bayesian,owhadi2017gamblets,owhadi2019operator}, and more \cite{araya2013multiscale,hetmaniuk2010special}. The multiscale spectral generalized finite element method (MS-GFEM) \cite{babuska2011optimal,ma2021novel} addressed in this work also falls into this category. These methods are primarily concerned with identifying problem-specific local basis functions exhibiting favorable approximation qualities, often through carefully designed local problems. For comprehensive surveys of numerical multiscale methods, we refer to \cite{altmann2021numerical,chungmultiscale,engquist2008asymptotic,efendiev2009multiscale,maalqvist2020numerical,ming2006numerical}.

Homogenization-based methods and coarse space based methods gain efficiency through approximations at the model level and discretization level, respectively. There exists yet another class of multiscale methods \cite{bebendorf2003existence,grasedyck2008parallel,xia2010superfast,xia2013efficient,amestoy2015improving,ho2016hierarchical,chaillat2017theory,engquist2011sweeping} based on approximate matrix factorizations, with approximation at the solver level. These methods aim at an efficient solution of large linear systems resulting from standard discretizations of multiscale PDEs. The key idea is to exploit a low-rank property of the associated matrices and their inverses. Indeed, research \cite{bebendorf2003existence,bebendorf2005efficient,chandrasekaran2010numerical} has revealed that off-diagonal blocks in certain dense intermediate matrices in matrix factorizations for elliptic type PDEs, even with rough coefficients, admit low-rank approximations. These dense matrices can then be well approximated by rank-structured matrices, such as hierarchical matrices ($\mathcal{H}$-matrices) \cite{hackbusch2015hierarchical}, $\mathcal{H}^{2}$-matrices \cite{borm2010efficient}, and other compression formats \cite{amestoy2015improving,xia2010fast}. This adaption considerably reduces memory requirements and the complexity of matrix operations. The resulting approximate factorization can work as an effective preconditioner for iterative solvers or as an accurate direct solver, depending on the accuracy of matrix approximations. The foundation of these structured inverse methods rests upon a low-rank approximation property of Green's functions or equivalently, the solution spaces of the underlying PDEs. This theoretical rank structure governs achievable matrix compression rates and associated computational costs in the methods mentioned above.

In what follows, we review related works on the MS-GFEM and low-rank approximations to elliptic PDEs underlying the structured inverse methods, which will be unified in a theoretical framework in this paper.

\subsection{MS-GFEM}
The MS-GFEM, as its name indicates, is rooted in the generalized finite element method (GFEM) \cite{babuvska1997partition,melenk1995generalized,strouboulis2001generalized}. The basic idea of the GFEM is to partition the computational domain into a series of overlapping subdomains $\{\omega_i\}_{i=1}^{M}$, and construct finite-dimensional approximation spaces tailored to each subdomain. These local approximation spaces are glued together using a partition of unity to build a global ansatz space which is then used in the Galerkin scheme. The MS-GFEM is a special GFEM with optimal local approximation spaces based on local spectral problems. In a seminal work by Babuska and Lipton \cite{babuska2011optimal}, the optimal local approximation spaces were constructed for Poisson-type equations with $L^{\infty}$-coefficients by means of the singular value decomposition of specific compact operators. Within this approach, the local space on a subdomain $\omega_i$ is spanned by selected eigenfunctions of a local eigenproblem posed on a generalized harmonic space, which, in the $H^{1}$ setting, takes the form 
\begin{equation*}
H_{a}(\omega_{i}^{\ast}) = \{u\in H^{1}(\omega_{i}^{\ast}): a_{\omega_i^{\ast}}(u,v) = 0 \quad \text{for all}\;\,v\in H_{0}^{1}(\omega_{i}^{\ast})\},  
\end{equation*}
where $\omega^{\ast}_i\supset \omega_i$ is a so-called oversampling domain, and $a_{\omega_{i}^{\ast}}(\cdot,\cdot)$ is the associated local bilinear form. It was rigorously proved that such a local space, augmented by the solution of a local Poisson problem, is capable of approximating the solution on $\omega_i$ in the energy norm with error bound $\exp(-n^{1/(d+1)-\epsilon})$, where $d$ and $n$ represent the dimensions of the domain and the local space, respectively. A notable feature of the MS-GFEM is its inherent parallelism, facilitated by the independent execution of local computations. Significantly, it needs relatively few degrees of freedom in practical computations, attributed to the exponential convergence rate. The method was later extended to other applications including elasticity problems \cite{babuvska2014machine} and parabolic problems \cite{schleuss2022optimal}. Moreover, the concept of MS-GFEM has been harnessed in the development of novel model order reduction methods \cite{buhr2018randomized,smetana2016optimal} and other multiscale methods \cite{chen2021exponential,chen2023exponentially,chen2023exponentially-2} demonstrating exponential convergence rates. Details of a practical implementation of the method can be found in \cite{chen2020random,babuvska2020multiscale}.

Recently, we introduced new optimal local approximation spaces for MS-GFEM in \cite{ma2021novel}, utilizing local spectral problems involving the partition of unity, and achieving a similar exponential convergence rate. The novelty of our approach is that the information of the partition of unity is suitably incorporated into the local spaces such that the global error is fully determined by local approximation errors. This advantageous attribute simplifies the implementation of adaptive algorithms, particularly in the discrete setting (see Remark~\ref{adaptive-remarks}), and allows the use of extremely small overlaps in practical computations. In \cite{ma2022error}, we extended the method to the FE discrete setting and proved similar theoretical results. The discrete MS-GFEM essentially serves as an efficient model reduction technique for discretized multiscale PDEs. For generalizations of our approach to other applications and practical issues of an implementation, we refer to \cite{benezech2022scalable,chupeng2023wavenumber,ma2022exponential}.

While exponential convergence rates have been proved for MS-GFEM, numerical experiments \cite{benezech2022scalable,ma2021novel} showed that it converges much faster than theoretical predictions. Therefore, the established local convergence rate $O(e^{-cn^{1/(d+1)}})$ is not optimal. Indeed, the local error estimate in the MS-GFEM is closely linked to classical approximation theory. Specifically, this error estimate relates to the optimal approximation of the generalized harmonic space $H_{a}(\omega_{i}^{\ast})$ in a compact subset $\omega_{i}\subset\omega_{i}^{\ast}$ by $n$-dimensional spaces. The quest for optimal approximations of 'well-behaved' functions within compact subsets by finite-dimensional spaces has a long-standing history dating back to Bernstein's work. In \cite{bernstein1912ordre}, it was demonstrated that any analytic function in a ellipse $E_{\rho}\supset [-1,1]$ ($\rho>1$), with sup-norm bounded by 1, can be uniformly approximated in $[-1,1]$ by polynomials of degree at most $n$ with a convergence rate $O(\rho^{-n})$. Later Kolmogrov and collaborators studied this best approximation problem for analytic functions and harmonic functions by means of the concept of the Kolmogrov $n$-width \cite{pinkus1985n}. In the context of harmonic functions, it was established \cite{nguyen1973bases} that the unit ball (in the sup-norm topology) of the space of bounded harmonic functions in $D\subset\mathbb{R}^{2}$ can be uniformly approximated in a compact subset by $n$-dimensional spaces, exhibiting a convergence rate $O(e^{-cn})$. For harmonic functions in $\mathbb{R}^{d}$ ($d\geq 3$), similar results are known, yet limited to the case of concentric balls with a convergence rate $O(e^{-cn^{1/(d-1)}})$ \cite{skiba2000asymptotics}. Nevertheless, since these results primarily pertain to error estimates in the sup-norm, and their proofs critically hinge on favorable properties of analytic or harmonic functions, it is very difficult to adapt them to our context for establishing local error bounds in the $H^{1}$-norm.

\subsection{Low-rank approximations to elliptic PDEs}
The intrinsic low-rank nature of elliptic PDEs is characterized by approximate separability of the associated Green's functions. For a differential operator with sufficiently smooth coefficients, the Green's function $G({\bm x}, {\bm y})$ is smooth (away from the singularity). This characteristic implies that $G({\bm x}, {\bm y})$ restricted to disjoint compact sets can be approximated in a separable manner: $\sum_{i=1}^{k}u_{i}({\bm x})v_{i}({\bm y})$, with exponential convergence in $k$. This property has been exploited extensively to develop efficient numerical algorithms \cite{hackbusch2015hierarchical,greengard1987fast,engquist2007fast,candes2009fast}. Its proof commonly relies on standard approximation techniques, e.g.\;Taylor expansions or polynomial interpolation \cite{borm2016approximation,candes2009fast}. Such techniques fail in the absence of the required smoothness, which happens for differential operators with non-smooth coefficients. In this case, a similar separability is highly nontrivial and it hinges on the intrinsic property of the underlying differential operators. A pioneering work on this problem was done by Hackbusch and Bebendorf \cite{bebendorf2003existence}, serving as a key theoretical instrument for demonstrating the approximability of FEM inverse matrices for second-order elliptic PDEs by $\mathcal{H}$-matrices. Their work showed that for Poisson-type equations with $L^{\infty}$-coefficients, the Green's functions have an $O(|\log \epsilon|^{d+1})$-term separable approximation (in the $L^{2}$-norm) on well-separated domains with error $\epsilon>0$. The key to the proof is the Caccioppoli inequality for generalized harmonic functions. This result was later extended to general second-order elliptic PDEs within the same framework \cite{bebendorf2005efficient}. The approximate separability established therein inevitably depends on the lower-order terms of the problems. Indeed, this approximate separability is closely linked to a low-dimensional property of the corresponding solution space, often offering an equivalent yet more straightforward approach to characterize the low-rank property. This insight was underscored in \cite{borm2010approximation}. Through arguments analogous to those in \cite{bebendorf2003existence,bebendorf2005efficient}, it was demonstrated that the solution space of a second-order elliptic PDE can be approximated by finite-dimensional spaces in the energy norm on a subset isolated from the support of the right-hand side with exponential convergence. While fundamentally it is sufficient to establish a low-rank property for a differential operator in the continuous setting, a corresponding property within a discrete setting is often more illustrative and useful. In the context of $\mathcal{H}$-matrix techniques, the discrete counterpart of the result of \cite{borm2010approximation} in the FE setting was established in \cite{faustmann2015h}, which was recently extended to Maxwell's equations with constant coefficients and low wavenumbers \cite{faustmann2022h}. Moreover, related low-rank approximation results at the matrix level for elliptic PDEs with finite difference discretizations were discussed in \cite{chandrasekaran2010numerical}. For low-rank approximations of parametric elliptic PDEs, we refer to \cite{bachmayr2017kolmogorov,bachmayr2018parametric}.

Most results on the low-rank property of elliptic PDEs focus on demonstrating the existence of low-rank approximations, with scarce investigation into their dependence on important parameters in practical applications. A notable exception is found in the work of \cite{engquist2018approximate}. Therein, the authors aimed at the wavenumber dependence of the approximate separability for the Helmholtz equation in the high-frequency limit, and derived sharp lower and upper bound estimates by means of an explicit expression of the Green's function. Finally, we note that on top of serving as a fundamental component in the development of fast, traditional algorithms, the low-rank property for elliptic PDEs has also been leveraged as a mathematical cornerstone for underpinning recent machine learning approaches based on learning Green's functions \cite{boulle2022learning,boulle2023elliptic,boulle2023learning}.

\subsection{Contributions}
In this paper, motivated by the recent works \cite{ma2021novel,ma2022exponential,chupeng2023wavenumber}, we establish a unified framework for the design, implementation, and analysis of MS-GFEM. This framework is developed in an abstract Hilbert space setting and utilizes appropriate local spaces and associated extension and restriction operators. Correspondingly, the framework has three main components: (i) construction of optimal local approximation spaces via local spectral problems involving an abstract partition of unity, (ii) efficient numerical algorithms for solving local eigenproblems based on a mixed formulation, and (iii) an abstract convergence analysis, including local approximation error estimates and a quasi-optimal global convergence analysis. The core of the framework lies in local approximation error estimates. We identify two fundamental conditions -- a Caccioppoli-type inequality and a weak approximation property -- that guarantee the exponential decay of the local error with respect to the dimension of the local subspace. Notably, the derived error bounds are explicit in important problem parameters like the wavenumber for wave problems or Péclet number for convection-diffusion. Under a G$\mathring{\rm a}$rding-type inequality and certain resolution conditions on the subdomain sizes and the number of local degrees of freedom (for indefinite problems), we prove quasi-optimal global convergence. We demonstrate the generality of the framework by applying it to various multiscale PDEs with $L^\infty$-coefficients in continuous and finite element settings. These include positive definite (Poisson, linear elasticity, $H({\rm curl})$ elliptic), highly indefinite (convection-diffusion, Helmholtz, Maxwell and elastic wave equations with large wavenumbers and impedance boundary conditions), as well as higher-order problems. Significantly, we prove the local errors for MS-GFEM decay as $O(e^{-cn^{1/d}})$, improving upon previous results \cite{babuska2011optimal,babuvska2014machine,ma2021novel,ma2022exponential,chupeng2023wavenumber} that showed a decay of $O(e^{-cn^{1/(d+1)}})$. Moreover, our result also improves corresponding convergence rates of other multiscale methods \cite{chen2021exponential,chen2023exponentially,chen2023exponentially-2} that use similar approximation techniques. The key to fitting practical applications into our framework lies in verifying the two fundamental conditions. While it is relatively standard for elliptic type problems, the verification of these two conditions, especially the weak approximation property, is highly nontrivial for Maxwell type problems. Indeed, for Maxwell's equations with rough coefficients and particularly large wavenumbers, the design and analysis of numerical methods remain tremendously challenging and existing studies are very limited and mainly focused on the positive definite case \cite{chung2019adaptive,gallistl2018numerical,henning2020computational}. Thus, for this particular application our work can be considered as pioneering.

Building upon the local convergence theory of the abstract MS-GFEM, we also establish a unified framework for deriving low-rank approximations to multiscale PDEs. Specifically, under the aforementioned fundamental conditions, we prove that the Green's function of the underlying problem admits an $O(|\log\epsilon|^{\beta})$-term separable approximation on well-separated domains with error $\epsilon>0$. An equivalent low-rank property of the associated solution space is also demonstrated without presuming the existence of the Green's function. Given the attained local convergence rates for MS-GFEM, we show $\beta=d$ for the practical applications mentioned earlier. Hence, our findings improve and generalize previous results \cite{bebendorf2003existence,bebendorf2005efficient,borm2010approximation} on the low-rank properties of elliptic PDEs. For highly indefinite problems, the approximate separability of Green's functions typically depends on parameters like wavenumber and Péclet number. However, our estimate reveals special configurations where a high separability holds regardless of these parameters. Such configurations were observed and leveraged to devise fast algorithms for high-frequency Helmholtz equations \cite{engquist2018approximate,candes2009fast,luo2014fast}. Our analysis unveils that these special configurations commonly exist for indefinite multiscale problems. The low-rank trait plays an integral role in constructing efficient numerical algorithms. By virtue of its broad applicability, our unified framework can provide a rigorous theoretical basis for existing algorithms and catalyze new developments. Moreover, the enhanced theoretical results are vital for precisely assessing the computational complexity of these algorithms.

\subsection{Outline}
The rest of this paper is organized as follows. In \cref{sec:2}, we define the abstract variational problem considered in this paper, and formulate the MS-GFEM in the abstract setting, with particular focus on the construction of optimal local approximation spaces and on efficient algorithms for solving the local eigenproblems. In \cref{sec:3}, we carry out a complete convergence analysis of the abstract MS-GFEM under certain hypotheses, including the local approximation error estimates and the quasi-optimal global convergence analysis. In \cref{sec:low-rank approximation}, based on the established local convergence theory of MS-GFEM, we prove estimates for the approximate separability of the Green's function of the underlying problem, and for an equivalent low-rank property of the corresponding solution space. In \cref{sec:verification_of_two_conditions}, we provide detailed guidelines on the verification of the two fundamental conditions, i.e., the Caccioppoli-type inequality and the weak approximation property. \Cref{sec:5} is devoted to applying the abstract framework to a broad selection of the most prominent multiscale PDEs appearing in practice. Central to this section is the verification of the two fundamental conditions in various settings. Finally, a brief conclusion along with important directions for future research is presented in \cref{sec:conclusion}.

% \subsection{Outline}
% The rest of this paper is organized as follows. In \cref{sec:2}, we define the abstract variational problem considered in this paper, and formulate the MS-GFEM in the abstract setting, with particular focus on the construction of optimal local approximation spaces and on efficient algorithms for solving the local eigenproblems. In \cref{sec:3}, we carry out a complete convergence analysis of the abstract MS-GFEM under certain hypotheses, including the local approximation error estimates and the quasi-optimal global convergence analysis. In \cref{sec:low-rank approximation}, based on the established local convergence theory of MS-GFEM, we prove estimates for the approximate separability of the Green's function of the underlying problem, and for an equivalent low-rank property of the corresponding solution space. In \cref{sec:verification_of_two_conditions}, we provide detailed guidelines on the verification of the two fundamental conditions, i.e., the Caccioppoli-type inequality and the weak approximation property. \Cref{sec:5} is devoted to applying the abstract framework to various multiscale PDEs. Central to this section is the verification of the two fundamental conditions in various settings. Finally, a brief conclusion along with important directions for future research is presented in \cref{sec:conclusion}.

\section{Abstract MS-GFEM}\label{sec:2}
\subsection{Problem formulation and abstract GFEM}\label{sec:2-1}
Let $\Omega\subset \mathbb{R}^{d}$ $(d=2,3)$ be a bounded Lipschitz domain, and let $\mathcal{H}(\Omega)\subset\mathcal{L}(\Omega)\subset \mathcal{H}(\Omega)^{\prime}$ be Hilbert spaces of functions defined on $\Omega$ with continuous embeddings. Here $\mathcal{H}(\Omega)^{\prime}$ denotes the dual space of $\mathcal{H}(\Omega)$ with respect to the pivot space $\mathcal{L}(\Omega)$, i.e., 
\begin{equation}
\Vert u\Vert_{\mathcal{H}^{\prime}(\Omega)} = \sup_{v\in \mathcal{H}(\Omega)} \frac{(u,v)_{\mathcal{L}(\Omega)}}{\Vert v\Vert_{\mathcal{H}(\Omega)}}.    
\end{equation}
Let $B(\cdot,\cdot):\mathcal{H}(\Omega)\times \mathcal{H}(\Omega)\rightarrow \mathbb{C}$ be a sesquilinear form that satisfies
\begin{assumption}\label{ass:2-1-0}
\hspace{2ex}
\begin{itemize}
\item[(i)] (Boundedness) There exists a constant $C_{b}>0$ such that
\begin{equation}\label{eq:2-1}
|B(u,v)|\leq C_{b}\Vert u\Vert_{\mathcal{H}(\Omega)} \Vert v\Vert_{\mathcal{H}(\Omega)}\quad \forall u,v\in \mathcal{H}(\Omega).
\end{equation}
\item[(ii)] (G$\mathring{a}$rding-type inequality) There exist constants $C_{1}>0$ and $C_{0}\geq 0$ such that
\begin{equation}\label{Garding-inequality}
C_{1}\Vert u\Vert_{\mathcal{H}(\Omega)}^{2} - C_{0} \Vert u\Vert_{\mathcal{L}(\Omega)}^{2} \leq |B(u,u)|\quad \forall u\in \mathcal{H}(\Omega).
\end{equation}
\end{itemize}
\end{assumption}
The G$\mathring{\rm a}$rding-type inequality will only be used for proving the quasi-optimal convergence of MS-GFEM. We consider the following abstract variational problem: Given $F\in \mathcal{H}(\Omega)^{\prime}$, find $u^{e}\in \mathcal{H}(\Omega)$ such that
\begin{equation}\label{eq:2-3}
B(u^{e},v) = F(v)\quad \forall v\in \mathcal{H}(\Omega).
\end{equation}
For later use, we introduce the adjoint of problem \cref{eq:2-3}: Find $w^{e}\in \mathcal{H}(\Omega)$ for given $F\in \mathcal{H}(\Omega)^{\prime}$ such that
\begin{equation}\label{eq:2-3-0}
B(v,w^{e}) = F(v)\quad \forall v\in \mathcal{H}(\Omega).
\end{equation}
We make the following assumption concerning the well-posedness of problem \cref{eq:2-3} and its adjoint \cref{eq:2-3-0}:
\begin{assumption}\label{ass:2-0}
For any $F\in \mathcal{H}(\Omega)^{\prime}$, the problem \cref{eq:2-3} and the adjoint problem \cref{eq:2-3-0} are uniquely solvable in $\mathcal{H}(\Omega)$. In addition, there exist constants $C_{\rm stab}>0$ and $C_{\rm stab}^{a}>0$ such that for any $F\in \mathcal{L}(\Omega)$,
\begin{equation}\label{stablity-estimate}
\begin{array}{lll}
{\displaystyle \Vert u^{e} \Vert_{\mathcal{H}(\Omega)} \leq C_{\rm stab}\Vert F\Vert_{\mathcal{L}(\Omega)},\quad \Vert w^{e} \Vert_{\mathcal{H}(\Omega)} \leq C^{a}_{\rm stab}\Vert F\Vert_{\mathcal{L}(\Omega)}.}
\end{array}
\end{equation}
\end{assumption}

If the form $B(\cdot,\cdot)$ is coercive on $\mathcal{H}(\Omega)$, i.e., $C_{0}= 0$ in \cref{Garding-inequality}, \cref{ass:2-0} is satisfied by the Lax--Milgram theorem. If, in addition, $B(\cdot,\cdot)$ is symmetric (Hermitian), it is often convenient to use the following energy norm for the space $\mathcal{H}(\Omega)$:
\begin{equation}\label{eq:elliptic-norm}
\Vert u\Vert_{\mathcal{H}(\Omega)}:=\sqrt{B(u,u)}\qquad \forall u\in \mathcal{H}(\Omega). 
\end{equation}
In this case, much of the analysis in the paper can be simplified. We refer to this important case as the \textit{elliptic} case in the sequel.

In the following, we will set up an abstract framework for the GFEM for solving problem \cref{eq:2-3}. To do this, we need suitable definitions of local spaces, i.e., spaces of functions defined on a subdomain of $\Omega$, and of associated operators and sesquilinear forms, which are formulated in terms of the following assumption.
\begin{assumption}\label{ass:2-1-1}
There exist Hilbert spaces $\big\{\mathcal{H}(D),\;\mathcal{H}_{0}(D): D\subset \Omega\big\}$ such that
\vspace{0.5ex}
\begin{itemize}
\item[(i)] $(\mathrm{Continuous \;\,embeddings}).$ For any subdomain $D\subset \Omega$, $\mathcal{H}_{0}(D)\subset \mathcal{H}(D)$, and $\mathcal{H}_{0}(D) = \mathcal{H}(D)$ if $D=\Omega$.

\vspace{0.5ex}
\item[(ii)] $(\mathrm{Zero\;\, extension}).$ For any subdomains $D\subset D^{\ast}$, there exists a linear operator $E_{D,D^{\ast}}:\mathcal{H}_{0}(D)\rightarrow \mathcal{H}_{0}(D^{\ast})$ such that $\big\Vert E_{D,D^{\ast}}(u) \big\Vert_{\mathcal{H}_{0}(D^{\ast})} =  \Vert u\Vert_{\mathcal{H}_{0}(D)}$ for all $u\in  \mathcal{H}_{0}(D)$. In particular, $\Vert u\Vert_{\mathcal{H}_{0}(D)} = \big\Vert E_{D,\Omega}(u) \big\Vert_{\mathcal{H}(\Omega)}$.
%Without ambiguity, we simply write $E_{\omega^{\ast}}u$.

\vspace{0.5ex}
\item[(iii)] $(\mathrm{Restriction}).$ For any subdomains $D\subset D^{\ast}$, there exists a linear operator $R_{D^{\ast},D}:\mathcal{H}(D^{\ast})\rightarrow \mathcal{H}(D)$ that satisfies $\big\Vert R_{D^{\ast},D}(u) \big\Vert_{\mathcal{H}(D)}\leq \Vert u\Vert_{\mathcal{H}(D^{\ast})}$ for all $u\in \mathcal{H}(D^{\ast})$, $R_{D^{\ast},D}\circ E_{D,D^{\ast}}=id$, and $R_{D^{\ast},D}\circ R_{D^{\ast\ast},D^{\ast}} = R_{D^{\ast\ast},D}$ for any $D\subset D^{\ast}\subset D^{\ast\ast}$. 

If no ambiguity arises, we simply write $R_{D}$ and denote $R_{D}(u)$ by $u|_{D}$.

\vspace{0.5ex}
\item[(iv)] $(\mathrm{Local\;\, sesquilinear \;\,forms}).$ For any subdomain $D$, there is a bounded sesqui-\\linear form $B_{D}(\cdot,\cdot)$ on $\mathcal{H}(D)\times  \mathcal{H}(D)$ with $B_{\Omega}(\cdot,\cdot) = B(\cdot,\cdot)$. Moreover, for any $D\subset D^{\ast}$, $u\in \mathcal{H}(D^{\ast})$, and $v\in \mathcal{H}_{0}(D)$, it holds $B_{D}(u|_{D},\,v) = B_{D^{\ast}}\big(u, \,E_{D,D^{\ast}}(v)\big)$.
\end{itemize}
\end{assumption}
\begin{rem}
\Cref{ass:2-1-1} is often trivially satisfied. In most applications, $\mathcal{H}(D)=\{v|_{D}:v\in \mathcal{H}(\Omega)\}$, $\mathcal{H}_{0}(D)$ consists of functions in $\mathcal{H}(D)$ that vanish on $\partial D\cap \Omega$ in the sense of trace, and $B_{D}(\cdot,\cdot)$ is defined by restricting the integral in $B(\cdot,\cdot)$ to $D$.
\end{rem}

For ease of notation, we identify functions in $\mathcal{H}_{0}(D)$ with their zero extensions in $\mathcal{H}(\Omega)$ and drop the notation $E_{D,\Omega}$. Note that in the elliptic case, the form $B_{D}(\cdot,\cdot)$ defined above is symmetric and positive definite on $\mathcal{H}_{0}(D)$.

%$B(\cdot,\cdot)$ is expressed as an integral over the domain $\Omega$; 

% In the elliptic case, \cref{ass:2-1-1} implies that the form $B_{\omega}(\cdot,\cdot)$ is coercive on $\mathcal{H}(\Omega)$ and that $\Vert u\Vert_{\mathcal{H}(\Omega)} = \sqrt{B_{\omega}(u,u)}$. However, in general, $B_{\omega}(\cdot,\cdot)$ is only positive semi-definite on $\mathcal{H}(\Omega)$.

\textbf{Abstract GFEM}. Let $\{ \omega_{i} \}_{i=1}^{M}$ be a collection of overlapping subdomains of $\Omega$ such that $\cup_{i=1}^{M} \omega_{i} = \Omega$. A key ingredient of the classical GFEM is a \textit{partition of unity} subordinate to the open cover. To formulate the method in an abstract setting, we need an abstract partition of unity consisting of suitably defined operators. 
\begin{definition}\label{def:2-1-1}
Let $P_{i}: \mathcal{H}(\omega_{i})\rightarrow \mathcal{H}_{0}(\omega_{i})$, $i=1,\ldots,M$, be a set of bounded linear operators such that 
\begin{equation}
u=\sum_{i=1}^{M} P_{i}(u|_{\omega_{i}}) \qquad \text{for all}\;\;u\in \mathcal{H}(\Omega).    
\end{equation}
Then $\{P_{i}\}_{i=1}^{M}$ is called an abstract partition of unity. 
\end{definition}
\begin{rem}\label{rem:POU}
The construction of the abstract partition of unity is often standard. In a continuous setting, the operators $P_{i}$ are usually defined by $P_{i}u=\chi_{i}u$, where $\{\chi_{i}\}_{i=1}^{M}$ is a classical partition of unity subordinate to the open cover $\{\omega_i\}_{i=1}^{M}$. In a finite element setting, $P_{i}$ can be defined by $P_{i}u=I_{h}(\chi_{i}u)$, where $I_{h}$ denotes the associated finite element interpolant.
\end{rem}

The basic idea of the GFEM is to build the ansatz space from carefully designed local spaces with good approximation properties. Assume that on each subdomain $\omega_{i}$, we are given a \textit{local particular function} $u_{i}^{p}\in \mathcal{H}(\omega_{i})$ and a \textit{local approximation space} $S_{n_i}(\omega_{i}) \subset \mathcal{H}(\omega_{i})$ of dimension $n_{i}$. As in the classical GFEM, we construct the global particular function and the global approximation space by gluing the local components together with the (abstract) partition of unity: 
\begin{equation}\label{eq:2-9}
u^{p}=\sum_{i=1}^{M}P_{i}u_{i}^{p},\quad S_{n}(\Omega) =\Big\{\sum_{i=1}^{M}P_{i}\phi_{i}\,:\, \phi_{i}\in S_{n_{i}}(\omega_{i})\Big\}.
\end{equation}
By \cref{def:2-1-1}, we see that $u^{p}\in \mathcal{H}(\Omega)$ and $S_{n}(\Omega)\subset \mathcal{H}(\Omega)$. 
The abstract GFEM approximation of problem \cref{eq:2-3} is then defined by:
\begin{equation}\label{eq:2-10}
{\rm Find}\;\;u^{G}\in u^{p}+  S_{n}(\Omega)\quad \;{\rm such \;\;that}\;\quad B(u^{G},v) = F(v)\quad \forall v\in S_{n}(\Omega).
\end{equation}

Before giving the approximation theorem for the abstract GFEM, let us define a coloring constant associated with the open cover $\{\omega_{i}\}_{i=1}^{M}$, which in general is determined by the intersections between the subdomains. 
\begin{definition}\label{def:2-0-1}
Let $\zeta$ be an integer such that the set of the subdomains $\{\omega_i\}_{i=1}^{M}$ can be partitioned into $\zeta$ classes $\{\mathcal{C}_{j}, \;1\leq j\leq \zeta\}$ satisfying that 
\begin{equation*}
\{\omega_{j_{1}},\,\omega_{j_{2}}\}\subset \mathcal{C}_{j}\;\; \text{for some}\;j\; \Longleftrightarrow\; (u_{1},u_{2})_{\mathcal{H}(\Omega)} = 0\;\;\; \forall u_{1}\in \mathcal{H}_{0}(\omega_{j_{1}}),\;u_{2}\in \mathcal{H}_{0}(\omega_{j_{2}}).
\end{equation*}
Then $\zeta$ is called the coloring constant associated with the open cover $\{\omega_{i}\}_{i=1}^{M}$.
\end{definition}

The following theorem shows that the global approximation error of the abstract GFEM is fully determined by local approximation errors, thereby providing the mathematical foundation for the method. It is important to note that in contrast to the classical GFEM, the abstract method is based on locally approximating $P_{i}u^{e}$ instead of $u^{e}|_{\omega_i}$, thus incorporating the information of the partition of unity into the local approximation spaces. This seemingly minor difference is significant to the method in terms of theoretical analysis and practical algorithms. Most notably, the resulting global error estimate does not involve the partition of unity, which, in practice, greatly facilitates the design of adaptive algorithms; see \cref{adaptive-remarks}.
\begin{theorem}\label{thm:2-1}
Let $u\in \mathcal{H}(\Omega)$. For each $i=1,\ldots,M$, assume that 
\begin{equation}\label{eq:2-11}
\inf_{v_{i}\in u_{i}^{p}+ S_{n_{i}}(\omega_{i})} \big\Vert P_{i}(u|_{\omega_i} - v_{i})\big\Vert_{\mathcal{H}_{0}(\omega_i)} \leq \varepsilon_{i}.
\end{equation}
Then, with the coloring constant $\zeta$ defined in \cref{def:2-0-1}, 
\begin{equation}\label{eq:2-12}
\inf_{v\in u^{p}+ S_{n}(\Omega)}\Vert u - v\Vert_{\mathcal{H}(\Omega)} \leq \Big(\zeta \sum_{i=1}^{M}\varepsilon_{i}^{2}\Big)^{1/2}.
\end{equation}
\end{theorem}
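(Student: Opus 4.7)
The plan is to glue together nearly-optimal local approximants using the partition of unity and then control the resulting global error via the orthogonality structure encoded in the coloring constant $\zeta$. Fix $\delta>0$ and, for each $i$, choose $v_i = u_i^p + \phi_i \in u_i^p + S_{n_i}(\omega_i)$ with $\phi_i\in S_{n_i}(\omega_i)$ and $\|P_i(u|_{\omega_i}-v_i)\|_{\mathcal{H}_0(\omega_i)} \leq \varepsilon_i + \delta$. Setting $v := \sum_{i=1}^M P_i v_i = u^p + \sum_{i=1}^M P_i\phi_i$ produces a candidate in $u^p + S_n(\Omega)$ by the definition \cref{eq:2-9}.

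Next I represent the error through the partition of unity. Since $u = \sum_i P_i(u|_{\omega_i})$ by \cref{def:2-1-1}, one has $u - v = \sum_{i=1}^M w_i$ with $w_i := P_i(u|_{\omega_i}-v_i) \in \mathcal{H}_0(\omega_i)$. The norm-preserving zero extension in \cref{ass:2-1-1}(ii) lets me identify each $w_i$ with an element of $\mathcal{H}(\Omega)$ of the same norm, so $\|w_i\|_{\mathcal{H}(\Omega)} \leq \varepsilon_i + \delta$.

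The single substantive step is the norm estimate. Partition the index set into $\zeta$ color classes $\mathcal{C}_1,\ldots,\mathcal{C}_\zeta$ as in \cref{def:2-0-1}. Inside each class the (extended) $w_i$'s are pairwise orthogonal in $\mathcal{H}(\Omega)$, hence $\big\|\sum_{i\in\mathcal{C}_k} w_i\big\|_{\mathcal{H}(\Omega)}^2 = \sum_{i\in\mathcal{C}_k} \|w_i\|_{\mathcal{H}(\Omega)}^2$. A triangle inequality across the $\zeta$ classes followed by Cauchy--Schwarz in $\mathbb{R}^\zeta$ then yields
\begin{equation*}
\|u-v\|_{\mathcal{H}(\Omega)}^2
\;\leq\; \Big(\sum_{k=1}^\zeta \Big\|\sum_{i\in\mathcal{C}_k} w_i\Big\|_{\mathcal{H}(\Omega)}\Big)^{\!2}
\;\leq\; \zeta \sum_{k=1}^\zeta \sum_{i\in\mathcal{C}_k}\|w_i\|_{\mathcal{H}(\Omega)}^2
\;\leq\; \zeta \sum_{i=1}^M (\varepsilon_i+\delta)^2,
\end{equation*}
and sending $\delta\to 0$ gives \cref{eq:2-12}.

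I do not expect any real obstacle: the argument is essentially linear-algebra bookkeeping. The two points that require care are the passage from the local norm $\mathcal{H}_0(\omega_i)$ to the global norm $\mathcal{H}(\Omega)$, which is handled cleanly by the isometric zero extension of \cref{ass:2-1-1}(ii), and the observation that the local hypothesis \cref{eq:2-11} controls $P_i(u|_{\omega_i}-v_i)$ rather than $u|_{\omega_i}-v_i$ itself. This latter feature is precisely what lets the final bound avoid any constant measuring the "size" of the partition of unity --- a key structural advantage over the classical GFEM stressed in the paragraph preceding the theorem.
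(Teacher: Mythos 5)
Your proof is correct and follows essentially the same route as the paper: glue the local approximants via the partition of unity, write $u-v=\sum_i P_i(u|_{\omega_i}-v_i)$, and bound the squared norm by $\zeta$ times the sum of the local squared norms using the color classes of \cref{def:2-0-1} together with the isometry in \cref{ass:2-1-1}(ii). The paper compresses the coloring argument into the inequality \cref{eq:2-14}, whereas you unpack it (orthogonality within a class, triangle plus Cauchy--Schwarz across the $\zeta$ classes) and add an $\delta$-approximation that you then send to zero; both refinements are harmless and, if anything, slightly more explicit (though the $\delta$ is unnecessary here since $P_i(S_{n_i}(\omega_i))$ is finite-dimensional and the infimum is attained).
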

\begin{proof}
By \cref{eq:2-11}, there exist $\varphi_{i}\in S_{n_{i}}(\omega_{i})$, $i=1,\ldots,M$, such that
\begin{equation}\label{eq:2-13}
\big\Vert P_{i}(u|_{\omega_i} - u_{i}^{p}-\varphi_{i})\big\Vert_{\mathcal{H}_{0}(\omega_i)} \leq \varepsilon_{i}.
\end{equation}
Let $\varphi = \sum_{i=1}^{M}P_{i}(\varphi_{i})\in S_{n}(\Omega)$. It follows from the Cauchy--Schwarz inequality, \cref{def:2-0-1}, and \cref{ass:2-1-1} ($ii$) that
\begin{equation}\label{eq:2-14}
\begin{array}{lll}
{\displaystyle \big\Vert u - u^{p}-\varphi\big\Vert_{\mathcal{H}(\Omega)}^{2} = \Big \Vert\sum_{i=1}^{M}P_{i}\big(u|_{\omega_i} - u_{i}^{p}-\varphi_{i}\big)\Big\Vert_{\mathcal{H}(\Omega)}^{2} }\\[3mm]
{\displaystyle \leq \zeta\sum_{i=1}^{M}\big \Vert P_{i}\big(u|_{\omega_i} - u_{i}^{p}-\varphi_{i}\big)\big\Vert_{\mathcal{H}(\Omega)}^{2} = \zeta \sum_{i=1}^{M}\big \Vert P_{i}\big(u|_{\omega_i} - u_{i}^{p}-\varphi_{i}\big)\big\Vert_{\mathcal{H}_{0}(\omega_{i})}^{2}.}
\end{array}
\end{equation}
Inserting \cref{eq:2-13} into \cref{eq:2-14} completes the proof of this theorem.
\end{proof}

%\Cref{thm:2-1} clearly shows that the key point of the abstract GFEM lies in carefully designing the local particular functions and the local approximation spaces such that the local errors decay as fast as possible with respect to the local degrees of freedom.

\Cref{thm:2-1} clearly shows that the accuracy of the abstract GFEM depends essentially on choosing the local particular functions and local approximation spaces which are able to approximate the solution well. In the following, we will detail the construction of local approximations with exponential convergence rates. 

\subsection{Optimal local approximation spaces}\label{sec:2-2}
In this subsection, we will construct the local particular functions, and particularly, the local approximation spaces which are optimal in an appropriate sense. Exponential convergence rates for such local approximations will be established in \cref{sec:3} under certain conditions.

%They are specifically constructed such that the exact solution restricted to $\omega_i$ can be well approximated in $u_{i}^{p}+S_{n_i}(\omega_{i})$.
%can be approximated locally with a superalgebraic convergence rate. 
%Before going into the details of the construction, we first describe the basic idea behind our method. 

\textbf{Basic idea}. To construct the local approximation on a subdomain $\omega_i$, we split the solution $u^{e}$ on a slightly larger domain $\omega_{i}^{\ast}$ into two parts. The first part satisfies a local variational problem with the same right-hand side as problem \cref{eq:2-3}, and its restriction to the subdomain $\omega_i$ is defined as the local particular function. The second part belongs to a so-called \textit{generalized harmonic space} (defined below) that has a low-dimensional approximation in $\omega_i$. The local approximation space, designed to approximate the second part, is spanned by the singular vectors of a compact operator defined on the generalized harmonic space involving the partition of unity.

\begin{figure}\label{fig:2-1}
\centering
\includegraphics[scale=0.5]{./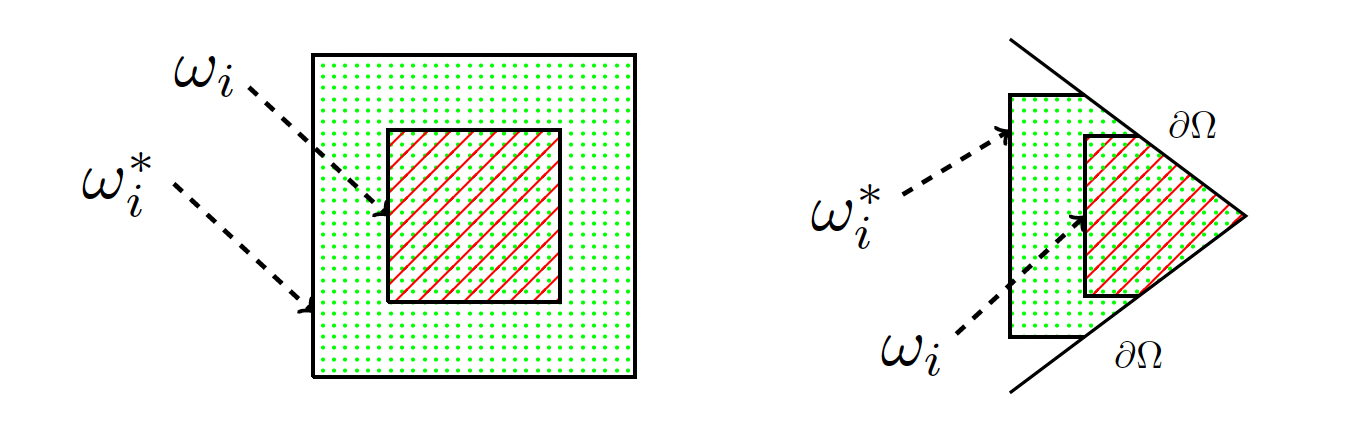}
\caption{Illustration of a subdomain $\omega_i$ that lies within the interior of $\Omega$ (left) and one that intersects the outer boundary $\partial \Omega$ (right) with associated oversampling domains $\omega_i^{\ast}$.}
\end{figure}

To start with, for each subdomain $\omega_i$, we introduce an \textit{oversampling domain} $ \omega_{i}^{\ast}$ (typically with a Lipschitz boundary) that satisfies $\omega_{i}\subset \omega_{i}^{\ast}\subset \Omega$ as illustrated in \cref{fig:2-1}. The relative distance between $\omega_i$ and $\omega_i^{\ast}$ is crucial to the convergence rate of the associated local approximation, as we will see in \cref{sec:3-1}. In order to define the local particular functions, we need to assume the existence of solutions to the following local variational problems posed on the oversampling domains:
\begin{assumption}\label{ass:2-2-1}
For each $i=1,\ldots,M$, there exists a $\psi_{i}\in \mathcal{H}(\omega^{\ast}_i)$ such that 
\begin{equation}\label{eq:2-15}
B_{\omega_{i}^{\ast}}(\psi_{i}, v) = F(v)\qquad \forall v\in \mathcal{H}_{0}(\omega_{i}^{\ast}).
\end{equation}
\end{assumption}
Now we can define the local particular functions as follows.
\begin{definition}[\textbf{Local particular functions}]\label{def:2-1}
The local particular function on $\omega_{i}$ is defined by $u_{i}^{p} = \psi_{i}|_{\omega_{i}}$, where $\psi_{i}\in \mathcal{H}(\omega^{\ast}_i)$ is a solution of \cref{eq:2-15}.
\end{definition}
\begin{rem}
Since we do not specify boundary conditions for $\psi_{i}$, in general, there exist many functions in $\mathcal{H}(\omega^{\ast}_i)$ that satisfy \cref{eq:2-15}. Hence, the local particular function on $\omega_{i}$ is not uniquely defined.  A proper selection of the local particular function depends on the specific problem under consideration. In the elliptic case, we can simply choose $\psi_{i}\in \mathcal{H}_{0}(\omega^{\ast}_i)$ and \cref{ass:2-2-1} is fulfilled. For time harmonic wave problems, homogeneous impedance boundary conditions are typically used for $\psi_{i}$.
\end{rem}
%Having defined the local particular functions, we proceed to find the local approximation space for approximating $P_{i}\big(u^{e}|_{\omega_i}-\psi_{i}|_{\omega_i}\big)$, where $P_{i}$ denotes the partition of unity operator associated with $\omega_i$. With the desired local approximation error estimate \cref{eq:2-11} in mind, 

Next we introduce the generalized harmonic space for any subdomain $D\subset \Omega$:
\begin{equation}\label{generalized_harmonic_space}
\mathcal{H}_{B}(D) = \big\{u\in \mathcal{H}(D)\,:\, B_{D}(u,v) = 0\quad \forall v\in \mathcal{H}_{0}(D) \big\}.    
\end{equation}
The following results are a direct consequence of \cref{ass:2-1-1}.
\begin{proposition}\label{prop:2-1}
$\mathcal{H}_{B}(D)$ is a closed subspace of $\mathcal{H}(D)$. In addition, if $D\subset D^{\ast}$, then for any $u\in \mathcal{H}_{B}(D^{\ast})$, $u|_{D}\in \mathcal{H}_{B}(D)$.
\end{proposition}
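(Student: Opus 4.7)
The plan is to verify both claims directly from \cref{ass:2-1-1}, since all the required ingredients (boundedness of the local forms, zero extensions, restrictions, and the compatibility between extensions and local forms) are already in place.

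For the first claim, I would begin by noting that $\mathcal{H}_B(D)$ is a linear subspace of $\mathcal{H}(D)$, which is immediate from the sesquilinearity of $B_D(\cdot,\cdot)$. To show closedness, I take an arbitrary sequence $\{u_n\}\subset \mathcal{H}_B(D)$ converging in $\mathcal{H}(D)$ to some $u\in \mathcal{H}(D)$, and fix any $v\in \mathcal{H}_0(D)$. Using the boundedness of $B_D(\cdot,\cdot)$ guaranteed by \cref{ass:2-1-1}(iv), together with $B_D(u_n,v)=0$ for all $n$, I get
\begin{equation*}
|B_D(u,v)| = |B_D(u-u_n,v)| \leq C \,\Vert u-u_n\Vert_{\mathcal{H}(D)} \Vert v\Vert_{\mathcal{H}(D)}\longrightarrow 0,
\end{equation*}
so $B_D(u,v)=0$. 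Since $v\in \mathcal{H}_0(D)$ was arbitrary, $u\in \mathcal{H}_B(D)$, proving closedness.

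For the second claim, let $D\subset D^{\ast}$ and $u\in \mathcal{H}_B(D^{\ast})$. By \cref{ass:2-1-1}(iii), $u|_D:=R_{D^{\ast},D}(u)$ is a well-defined element of $\mathcal{H}(D)$. For any $v\in \mathcal{H}_0(D)$, the compatibility relation in \cref{ass:2-1-1}(iv) gives
\begin{equation*}
B_D(u|_D,v)=B_{D^{\ast}}\bigl(u,\,E_{D,D^{\ast}}(v)\bigr).
\end{equation*}
By \cref{ass:2-1-1}(ii), $E_{D,D^{\ast}}(v)\in \mathcal{H}_0(D^{\ast})$, and since $u\in \mathcal{H}_B(D^{\ast})$, the right-hand side vanishes. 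Hence $B_D(u|_D,v)=0$ for every $v\in \mathcal{H}_0(D)$, i.e.\ $u|_D\in \mathcal{H}_B(D)$.

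Both arguments are essentially bookkeeping from the axioms, so there is no real obstacle; the only point to be slightly careful about is invoking the correct parts of \cref{ass:2-1-1} at each step (boundedness of $B_D$ for closedness, and the extension/restriction compatibility for the second claim). No additional ideas beyond those already encoded in the assumptions are needed.
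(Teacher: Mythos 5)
Your proof is correct and fills in exactly the routine bookkeeping the paper alludes to when it calls this a "direct consequence of Assumption~\ref{ass:2-1-1}" without spelling out details. The two steps — boundedness of $B_D$ for closedness and the extension/restriction compatibility for hereditariness of the harmonic property under restriction — are precisely the intended ones.
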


In general, the local solution $\psi_{i}$ is not a good local approximation of the solution $u^{e}$, and hence it is necessary to consider the residual $u^{e}|_{\omega_i}-\psi_{i}|_{\omega_i}$, or equivalently, $(u^{e}|_{\omega^{\ast}_i}-\psi_{i})|_{\omega_i}$. 
Combining \cref{eq:2-3}, \cref{eq:2-15}, and \cref{ass:2-1-1} $(iv)$ leads to the key observation:
\begin{equation}\label{eq:2-16}
B_{\omega_{i}^{\ast}}(u^{e}|_{\omega_{i}^{\ast}}-\psi_{i}, v)=B(u^{e}, v) -B_{\omega_{i}^{\ast}}(\psi_{i}, v) = 0\quad \forall v\in \mathcal{H}_{0}(\omega_{i}^{\ast}),
\end{equation}
namely, $u^{e}|_{\omega_{i}^{\ast}}-\psi_{i}$ lies in the generalized harmonic space $\mathcal{H}_{B}(\omega_{i}^{\ast})$. 
% \begin{equation}\label{eq:2-17}
% \mathcal{H}_{B}(\omega_{i}^{\ast}) = \big\{u\in \mathcal{H}(\omega_{i}^{\ast})\,:\, B_{\omega_i^{\ast}}(u,v) = 0\quad \forall v\in \mathcal{H}_{0}(\omega_{i}^{\ast}) \big\}.
% \end{equation}
%Since $u^{e}|_{\omega_{i}^{\ast}}-\psi_{i}$ could be any vector of $\mathcal{H}_{B}(\omega_{i}^{\ast})$, our problem is essentially equivalent to finding finite-dimensional spaces for approximating the whole space of $\mathcal{H}_{B}(\omega_{i}^{\ast})$ on $\omega_{i}$. To do this, Before proceeding, let us give some useful properties of the generalized harmonic spaces.  Here we let $\mathcal{H}_{B}(D)$ be defined similarly as in \cref{eq:2-17} for any subdomain $D\subset \Omega$.
%We now proceed to find the low-dimensional approximation space for $\mathcal{H}_{B}(\omega_{i}^{\ast})$. To do this,
It turns out that under certain conditions that are valid for many practical problems, $\mathcal{H}_{B}(\omega_{i}^{\ast})$ has a low-dimensional property, i.e., it can be approximated by finite-dimensional spaces in $\omega_i$ with exponential convergence. To identify such low-dimensional approximation spaces, it is important to specify the norm (equivalently, the inner product) for $\mathcal{H}_{B}(\omega_{i}^{\ast})$. By \cref{prop:2-1}, a natural but typically suboptimal choice is to use the inner product on $\mathcal{H}(\omega_i^{\ast})$. In practice, it is often desirable to use some "energy" inner products directly associated with the form $B_{\omega_{i}^{\ast}}(\cdot,\cdot)$. Unfortunately, these inner products, denoted by $B^{+}_{\omega_i^{\ast}}(\cdot,\cdot)$, often have a finite-dimensional kernel in $\mathcal{H}_{B}(\omega_i^{\ast})$. To fix this problem and for later use in \cref{sec:3}, we make the following assumption. 
\begin{assumption}\label{ass:2-2-3}
For any domain $\omega_i\subset D\subset \omega_i^{\ast}$, there exist a finite-dimensional space $\mathcal{K}_{D}\subset \mathcal{H}_{B}(D)$ and a semi-inner product $B^{+}_{D}(\cdot,\cdot)$ on $\mathcal{H}(D)$ such that

\vspace{1mm}
\begin{itemize}
\item[(i)] $B^{+}_{D}(v,w) = 0$ holds for any $w\in \mathcal{H}(D)$ if and only if $v\in \mathcal{K}_{D}$;

\vspace{1mm}
\item[(ii)] $\big(\mathcal{H}_{B}(D)/\mathcal{K}_{D},\,B^{+}_{D}(\cdot,\cdot)\big)$ is a Hilbert space when $D$ satisfies the cone condition.
\end{itemize}

\vspace{2mm}
\cm{When $D = \omega_i^{\ast}$, we denote the kernel by $\mathcal{K}_{i}$ and let $l_{i}:={\rm dim}(\mathcal{K}_{i})$. Moreover, we assume that $P_{i}|_{\mathcal{K}_{i}}$ is injective.}

\end{assumption}

\begin{rem}
For uniformly elliptic problems, we typically set $B^{+}_{\omega_i^{\ast}}=B_{\omega_i^{\ast}}$. For indefinite problems, these two forms, albeit closely related, are different. 
\end{rem}

\begin{rem}\label{kernel-remark}
Let $\omega_i^{\ast}$ lie in the interior of $\Omega$. For convection diffusion problems with $B^{+}_{\omega_i^{\ast}}(u,v):=\int_{\omega_i^{\ast}}A\nabla u\cdot \nabla v \,d{\bm x}$, $\mathcal{K}_{i}$ is the space of constant functions and $l_i=1$. For elasticity problems in $\mathbb{R}^{d}$ with $B^{+}_{\omega_i^{\ast}}({\bm u},{\bm v}) : = \int_{\omega_i^{\ast}}\mathcal{C} \varepsilon({\bm u}) : \varepsilon({\bm v}) \,d{\bm x}$, where $\varepsilon({\bm u}) = (\nabla {\bm u} + \nabla {\bm u}^{\mathsf{\top}})/2$, $\mathcal{K}_{i}$ is the space of rigid motions and $l_i=d(d+1)/2$. For fourth-order problems in $\mathbb{R}^{d}$ with $B^{+}_{\omega_i^{\ast}}(u,v):=\sum_{k,l}\int_{\omega_i^{\ast}}a_{kl}u_{x_{k}x_{l}}v_{x_{k}x_{l}}\,d{\bm x}$, $\mathcal{K}_i$ is the space of linear functions and $l_i=d+1$.    
\end{rem}

According to \cref{ass:2-2-3}, $B^{+}_{\omega_i^{\ast}}(\cdot,\cdot)$ induces a seminorm on $\mathcal{H}_{B}(\omega_i^{\ast})$, which is a norm when the kernel $\mathcal{K}_i=\emptyset$ (we do not exclude this possibility) or when $\mathcal{K}_{i}$ is removed. With a slight abuse of notation, we define
\begin{equation}\label{eq:2-24-3}
\Vert u\Vert_{B^{+},\omega_i^{\ast}}:=\big(B^{+}_{\omega_i^{\ast}}(u,u)\big)^{1/2}.
\end{equation}
Having identified the (semi-)norm for $\mathcal{H}_{B}(\omega_{i}^{\ast})$, we are now ready to find the low-dimensional approximation spaces. To this end, we first define the operator $\widehat{P}_{i}: \mathcal{H}_{B}(\omega_{i}^{\ast})\rightarrow \mathcal{H}_{0}(\omega_{i})$ by 
\begin{equation}\label{eq:2-18}
\widehat{P}_{i} u = P_{i}(u|_{\omega_i})\qquad \forall u\in \mathcal{H}_{B}(\omega_{i}^{\ast}),
\end{equation}
and introduce the following subspace of $\mathcal{H}_{B}(\omega_i^{\ast})$:
\begin{equation}\label{eq:2-24-1}
\mathcal{H}_{B}^{0}(\omega_i^{\ast}) = \big\{v\in \mathcal{H}_{B}(\omega_{i}^{\ast}): (\widehat{P}_{i}v, \widehat{P}_{i}w)_{\mathcal{H}_{0}(\omega_{i})} = 0\quad \forall w\in \mathcal{K}_i \big\}.
\end{equation}
By \cref{ass:2-2-3}, it is clear that $\Vert \cdot\Vert_{B^{+},\omega_i^{\ast}}$ is a norm on $\mathcal{H}^{0}_{B}(\omega_i^{\ast})$ whether $\mathcal{K}_{i}=\emptyset$ or not. Indeed, we have
\begin{lemma}\label{lem:2-2-3}
$\big(\mathcal{H}_{B}^{0}(\omega_i^{\ast}), B^{+}_{\omega_{i}^{\ast}}(\cdot,\cdot)\big)$ is a Hilbert space.
\end{lemma}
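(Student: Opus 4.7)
The plan is to verify two things: first, that $B^{+}_{\omega_{i}^{\ast}}(\cdot,\cdot)$ restricted to $\mathcal{H}_{B}^{0}(\omega_i^{\ast})$ is a genuine inner product (i.e., positive definite), and second, that this inner product space is complete. The overall strategy is to identify $\mathcal{H}_{B}^{0}(\omega_i^{\ast})$ with the quotient $\mathcal{H}_{B}(\omega_i^{\ast})/\mathcal{K}_i$, which by Assumption \ref{ass:2-2-3} (ii) is already a Hilbert space (since $\omega_i^{\ast}$ is taken to be Lipschitz and hence satisfies the cone condition).

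For the positive-definiteness step, suppose $v\in\mathcal{H}_{B}^{0}(\omega_i^{\ast})$ satisfies $\|v\|_{B^+,\omega_i^{\ast}}=0$. Part (i) of Assumption \ref{ass:2-2-3} forces $v\in\mathcal{K}_i$. By the defining orthogonality in \eqref{eq:2-24-1}, taking the test function $w=v\in\mathcal{K}_i$ yields $\|\widehat{P}_i v\|_{\mathcal{H}_0(\omega_i)}^{2}=0$, i.e., $\widehat{P}_i v=0$. The injectivity of $P_i|_{\mathcal{K}_i}$ then gives $v=0$.

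For completeness, I will show that the restriction to $\mathcal{H}_{B}^{0}(\omega_i^{\ast})$ of the canonical quotient map $\pi:\mathcal{H}_{B}(\omega_i^{\ast})\to\mathcal{H}_{B}(\omega_i^{\ast})/\mathcal{K}_i$ is an isometric isomorphism. Injectivity of $\pi|_{\mathcal{H}_{B}^{0}(\omega_i^{\ast})}$ is precisely the positive-definiteness argument above, applied to elements of $\mathcal{H}_{B}^{0}(\omega_i^{\ast})\cap\mathcal{K}_i$. For surjectivity, given any $u\in\mathcal{H}_{B}(\omega_i^{\ast})$, I construct $k\in\mathcal{K}_i$ so that $u+k\in\mathcal{H}_{B}^{0}(\omega_i^{\ast})$; this reduces to solving the finite-dimensional linear system
\begin{equation*}
(\widehat{P}_i k,\widehat{P}_i w)_{\mathcal{H}_0(\omega_i)} = -(\widehat{P}_i u,\widehat{P}_i w)_{\mathcal{H}_0(\omega_i)}\quad\forall w\in\mathcal{K}_i,
\end{equation*}
which is uniquely solvable because $(k_1,k_2)\mapsto(\widehat{P}_i k_1,\widehat{P}_i k_2)_{\mathcal{H}_0(\omega_i)}$ is an inner product on the finite-dimensional space $\mathcal{K}_i$ (again by injectivity of $P_i|_{\mathcal{K}_i}$). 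The isometry property is immediate: since $\mathcal{K}_i$ lies in the kernel of $\|\cdot\|_{B^+,\omega_i^{\ast}}$, we have $\|v+k\|_{B^+,\omega_i^{\ast}}=\|v\|_{B^+,\omega_i^{\ast}}$ for every $k\in\mathcal{K}_i$, so the quotient norm coincides with $\|v\|_{B^+,\omega_i^{\ast}}$ on any representative.

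The only conceptual subtlety is recognizing that the orthogonality constraint defining $\mathcal{H}_{B}^{0}(\omega_i^{\ast})$ is exactly the right normalization to select a unique representative from each class of $\mathcal{H}_{B}(\omega_i^{\ast})/\mathcal{K}_i$; everything else is a finite-dimensional linear-algebra argument combined with the standard fact that completeness transfers along isometric bijections. I therefore do not anticipate a significant technical obstacle.
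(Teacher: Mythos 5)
Your proof is correct; the paper states \cref{lem:2-2-3} without an explicit proof, and your argument---isometrically identifying $\mathcal{H}_{B}^{0}(\omega_i^{\ast})$ with the quotient $\mathcal{H}_{B}(\omega_i^{\ast})/\mathcal{K}_i$, which is Hilbert by \cref{ass:2-2-3}(ii), and transporting completeness through that isometry---is precisely the orthogonal decomposition $\mathcal{H}_{B}(\omega_{i}^{\ast}) = \mathcal{H}^{0}_{B}(\omega_{i}^{\ast}) \oplus \mathcal{K}_i$ that the paper itself invokes when proving \cref{lem:2-2-2}, so you have supplied the intended details. The only step you leave slightly implicit is the passage from $\Vert v\Vert_{B^{+},\omega_i^{\ast}}=0$ to $v\in\mathcal{K}_i$: since \cref{ass:2-2-3}(i) characterizes $\mathcal{K}_i$ by $B^{+}_{\omega_i^{\ast}}(v,w)=0$ for \emph{all} $w$, one needs the Cauchy--Schwarz inequality for the semi-inner product to upgrade $B^{+}_{\omega_i^{\ast}}(v,v)=0$ to that stronger statement.
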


%$\mathcal{H}^{0}_{B}(\omega_i^{\ast})$ is complete with respect to $\Vert \cdot\Vert_{B^{+},\omega_i^{\ast}}$ as shown below. The proof is given in the appendix. 

Next we consider the problem of optimal finite-dimensional approximations for the set $\widehat{P}_{i}(\mathcal{H}^{0}_{B}(\omega_{i}^{\ast}))\subset \mathcal{H}_{0}(\omega_{i})$. Given any $n$-dimensional subspace $Q_{i}(n)$ of $\mathcal{H}_{0}(\omega_{i})$, the accuracy of approximating $\widehat{P}_{i}(\mathcal{H}^{0}_{B}(\omega_{i}^{\ast}))$ by $Q_{i}(n)$ can be measured by 
\begin{equation}\label{eq:2-19}
d(Q_{i}(n),\omega_{i}) :=\sup_{u\in \mathcal{H}^{0}_{B}(\omega_{i}^{\ast})} \inf_{v\in Q_{i}(n)}\frac{\Vert \widehat{P}_{i}u-v\Vert_{\mathcal{H}_{0}(\omega_{i})}}{\Vert u \Vert_{B^{+},\omega_i^{\ast}}}.
\end{equation}
Here we note that by \cref{ass:2-1-1} ($ii$), $\Vert u\Vert_{\mathcal{H}_{0}(\omega_i)} := \Vert u\Vert_{\mathcal{H}(\Omega)}$ for all $u\in \mathcal{H}_{0}(\omega_i)$. We are interested in finding, for each $n\in \mathbb{N}$, an $n$-dimensional space $Q^{\rm opt}_{i}(n)\subset \mathcal{H}_{0}(\omega_{i})$ with the best approximation accuracy, i.e., $d(Q^{\rm opt}_{i}(n),\omega_{i}) \leq d(Q_{i}(n),\omega_{i})$ for all other $n$-dimensional subspaces $Q_{i}(n)$ of $\mathcal{H}_{0}(\omega_{i})$. The problem of finding the optimal approximation spaces can be formulated by means of the Kolmogorov $n$-width \cite{pinkus1985n} as
\begin{equation}\label{eq:2-20}
\begin{array}{lll}
{\displaystyle d_{n}(\omega_{i},\omega_{i}^{\ast}) := \inf_{Q_{i}(n)\subset \mathcal{H}_{0}(\omega_{i})}d(Q_{i}(n),\omega_{i}) }\\[4mm]
{\displaystyle \qquad =\inf_{Q_{i}(n)\subset \mathcal{H}_{0}(\omega_{i})}\sup_{u\in \mathcal{H}^{0}_{B}(\omega_{i}^{\ast})} \inf_{v\in Q_{i}(n)}\frac {\Vert \widehat{P}_{i}u-v\Vert_{\mathcal{H}_{0}(\omega_{i})}}{\Vert u \Vert_{B^{+},\omega_{i}^{\ast}}}.}
\end{array}
\end{equation}
The quantity $d_{n}(\omega_i,\omega_i^{\ast})$ characterizes the best accuracy of approximation of the set $\widehat{P}_{i}(\mathcal{H}^{0}_{B}(\omega_{i}^{\ast}))$ by $n$-dimensional subspaces of $\mathcal{H}_{0}(\omega_{i})$. See \cref{Kolmogrov-nwidth} for a general definition of the Kolmogorov $n$-width of an operator and some useful properties. To ensure the existence of a low-dimensional approximation space, it is essential that $\lim_{n\rightarrow \infty}d_{n}(\omega_{i},\omega_{i}^{\ast}) = 0$. Clearly, this convergence implies the compactness of operator $\widehat{P}_{i}|_{\mathcal{H}^{0}_{B}(\omega_{i}^{\ast})}$, or equivalently, the compactness of operator $\widehat{P}_{i}$ as $\mathcal{K}_i$ is finite-dimensional. In view of this fact, the following assumption is crucial:
%. 
%Then the optimal approximation space $\widetilde{Q}_{i}(n)$ satisfies
%\begin{equation}
%d_{n}(\omega_{i},\omega_{i}^{\ast}) =\sup_{u\in H_{B}(\omega_{i}^{\ast})} \inf_{v\in \widetilde{Q}_{i}(n)}\frac {\Vert \widehat{P}_{i}u-v\Vert_{\mathcal{H}(\omega_{i})}}{\Vert u \Vert_{\mathcal{H}_{B}(\omega_{i}^{\ast})}}.
%\end{equation}
%The following condition 
%Here we do not specify the norm on $\mathcal{H}_{0}(\omega_{i})$ as this is often an easy task in practical applications. 
%: \big(\mathcal{H}_{B}^{0}(\omega_i^{\ast}), B^{+}_{\omega_{i}^{\ast}}(\cdot,\cdot)\big)\rightarrow \big(\mathcal{H}_{0}(\omega_i), (\cdot,\cdot)_{\mathcal{H}_{0}(\omega_i)}\big)
\begin{assumption}\label{ass:2-2-2}
For each $i=1,\ldots,M$, the operator $\widehat{P}_{i}$ is compact.
\end{assumption}
\begin{rem}
In a discrete setting, this assumption trivially holds true. In the continuous setting, it is a consequence of the Caccioppoli-type inequality and the weak approximation property in \cref{sec:3-1}; see \cref{prop:3-1-1}. In practice, nevertheless, it is often proved by using the Caccioppoli-type inequality and certain compactness results that are much easier to verify than the weak approximation property.
\end{rem}

The Kolmogrov $n$-width of a compact operator in Hilbert spaces has been well studied. By \cref{lem:1-1}, $d_{n}(\omega_{i},\omega_{i}^{\ast})$ can be characterized in terms of the singular values and (left) singular vectors of the operator $\widehat{P}_{i}|_{\mathcal{H}^{0}_{B}(\omega_{i}^{\ast})}$ as follows.
\begin{lemma}\label{lem:2-2-1}
For each $j\in\mathbb{N}$, let $(\lambda_{i,j},\,\phi_{i,j})\in\mathbb{R}\times \mathcal{H}^{0}_{B}(\omega_{i}^{\ast})$ be the $j$-th eigenpair of the problem:
\begin{equation}\label{eq:2-21}
\big(\widehat{P}_{i}\phi_{i},\,  \widehat{P}_{i} v\big)_{\mathcal{H}_{0}(\omega_{i})}= \lambda_{i}\,B^{+}_{\omega_i^{\ast}}(\phi_{i},  v)\quad \forall v\in \mathcal{H}^{0}_{B}(\omega_{i}^{\ast}).
\end{equation}
Then, $d_{n}(\omega_{i},\omega_{i}^{\ast}) =\lambda^{1/2}_{i,n+1}$, and the optimal approximation space is given by 
\begin{equation}\label{eq:2-22}
Q^{\rm opt}_{i}(n) = {\rm span}\big\{ \widehat{P}_{i}\phi_{i,1}, \ldots, \widehat{P}_{i}\phi_{i,n}\big\}.
\end{equation}
\end{lemma}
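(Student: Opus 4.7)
The plan is to recognize that Lemma 2.2.1 is essentially a direct application of the classical relationship between Kolmogorov $n$-widths of a compact operator between Hilbert spaces and the singular value decomposition of that operator, which is the content of the already-cited Lemma 1.1. The main work is therefore not to re-derive this classical result, but to set up the Hilbert-space framework correctly and to identify the weak eigenvalue problem \cref{eq:2-21} as exactly the characterization of the squared singular values of $\widehat{P}_i$ restricted to $\mathcal{H}^0_B(\omega_i^\ast)$.

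First, I would invoke \cref{lem:2-2-3} to regard $\bigl(\mathcal{H}^0_B(\omega_i^\ast),\,B^+_{\omega_i^\ast}(\cdot,\cdot)\bigr)$ as a genuine Hilbert space, so that $\Vert\cdot\Vert_{B^+,\omega_i^\ast}$ is truly a norm on the domain of interest. The codomain $\mathcal{H}_0(\omega_i)$ is already a Hilbert space by \cref{ass:2-1-1}. By \cref{ass:2-2-2}, the operator $\widehat{P}_i$ is compact, and since $\mathcal{H}^0_B(\omega_i^\ast)$ is a closed subspace of $\mathcal{H}_B(\omega_i^\ast)$, the restricted operator $\widehat{P}_i|_{\mathcal{H}^0_B(\omega_i^\ast)} \colon \mathcal{H}^0_B(\omega_i^\ast) \to \mathcal{H}_0(\omega_i)$ is also compact. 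Comparing with \cref{eq:2-20}, the quantity $d_n(\omega_i,\omega_i^\ast)$ is precisely the Kolmogorov $n$-width of the image of the unit ball of $\mathcal{H}^0_B(\omega_i^\ast)$ under this operator.

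Next I would apply \cref{lem:1-1}: for any compact operator $T$ between Hilbert spaces, the $n$-width of $T$ equals its $(n{+}1)$-th singular value, with the optimal $n$-dimensional approximation space spanned by the first $n$ left singular vectors. It therefore suffices to show that the eigenpairs $(\lambda_{i,j},\phi_{i,j})$ of \cref{eq:2-21} are exactly the squared singular values and right singular vectors of $\widehat{P}_i|_{\mathcal{H}^0_B(\omega_i^\ast)}$. To that end, let $\widehat{P}_i^\ast \colon \mathcal{H}_0(\omega_i) \to \mathcal{H}^0_B(\omega_i^\ast)$ be the Hilbert-space adjoint with respect to the inner products $B^+_{\omega_i^\ast}(\cdot,\cdot)$ and $(\cdot,\cdot)_{\mathcal{H}_0(\omega_i)}$, which exists and is well-defined because $\widehat{P}_i$ is bounded and the two spaces are Hilbert. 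Then \cref{eq:2-21} reads $B^+_{\omega_i^\ast}\bigl(\widehat{P}_i^\ast \widehat{P}_i \phi_i,\,v\bigr) = \lambda_i B^+_{\omega_i^\ast}(\phi_i, v)$ for all $v \in \mathcal{H}^0_B(\omega_i^\ast)$, i.e., $\widehat{P}_i^\ast \widehat{P}_i \phi_i = \lambda_i \phi_i$. Since $\widehat{P}_i^\ast \widehat{P}_i$ is compact, self-adjoint and nonnegative on the Hilbert space $\mathcal{H}^0_B(\omega_i^\ast)$, the spectral theorem produces a decreasing sequence of eigenvalues $\lambda_{i,1}\geq \lambda_{i,2}\geq \cdots \geq 0$ with $B^+$-orthonormal eigenfunctions $\{\phi_{i,j}\}$, and the $\lambda_{i,j}^{1/2}$ are the singular values of $\widehat{P}_i|_{\mathcal{H}^0_B(\omega_i^\ast)}$.

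Finally, invoking \cref{lem:1-1} with this singular value system yields $d_n(\omega_i,\omega_i^\ast) = \lambda_{i,n+1}^{1/2}$ and identifies the optimal $n$-dimensional approximation subspace as the span of the first $n$ left singular vectors, namely $\widehat{P}_i\phi_{i,j}$ for $j=1,\ldots,n$, which is \cref{eq:2-22}. The only real obstacle I expect is bookkeeping: confirming that the restriction to the subspace $\mathcal{H}^0_B(\omega_i^\ast)$ (rather than to all of $\mathcal{H}_B(\omega_i^\ast)$) does not change the $n$-width, since functions in $\mathcal{K}_i$ either lie in the kernel of the semi-norm $\Vert\cdot\Vert_{B^+,\omega_i^\ast}$ or are already captured by the $\mathcal{K}_i$-orthogonality constraint defining $\mathcal{H}^0_B(\omega_i^\ast)$. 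A short decomposition argument using \cref{ass:2-2-3}, together with the injectivity of $P_i|_{\mathcal{K}_i}$, shows that the supremum over $\mathcal{H}^0_B(\omega_i^\ast)$ in \cref{eq:2-19}--\cref{eq:2-20} is the natural quotient of the supremum over $\mathcal{H}_B(\omega_i^\ast)$ and is not affected by quotienting out the kernel, so the identification with the singular value problem is legitimate.
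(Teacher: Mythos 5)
Your proof is correct and takes exactly the route the paper intends: the paper presents \cref{lem:2-2-1} as a direct consequence of \cref{lem:1-1} (the abstract singular-value characterization of Kolmogorov $n$-widths for compact operators between Hilbert spaces), and you simply make the identification of \cref{eq:2-21} with the eigenvalue problem for $\widehat{P}_i^\ast\widehat{P}_i$ explicit, which is the intended filling-in. One small remark: the worry you raise in your final paragraph is moot, because the $n$-width $d_n(\omega_i,\omega_i^\ast)$ in \cref{eq:2-20} is already defined via the supremum over $\mathcal{H}^0_B(\omega_i^\ast)$, so no quotient or kernel-bookkeeping argument is required to justify passing to that subspace.
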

% \begin{proof}
% Noting that equation \cref{eq:2-21} is the variational formulation of the problem 
% \begin{equation}
% \widehat{P}_{i}^{\ast} \widehat{P}_{i} \phi_{i} = \lambda_{i}\phi_{i}, 
% \end{equation}
% the result follows immediately from \cref{lem:1-1}.
% \end{proof}

Before proceeding, we show a close relation between the local eigenproblem \cref{eq:2-21} and the one posed on $\mathcal{H}_{B}(\omega_{i}^{\ast})$: Find $\lambda_{i}\in \mathbb{R}\cup \{+\infty\}$ and $\phi_{i}\in \mathcal{H}_{B}(\omega_{i}^{\ast})$ such that
\begin{equation}\label{eq:2-24-5}
\big(\widehat{P}_{i}\phi_{i},\,  \widehat{P}_{i} v\big)_{\mathcal{H}_{0}(\omega_i)} = \lambda_{i}\,B^{+}_{\omega_{i}^{\ast}}(\phi_{i},  v)\quad \forall v\in \mathcal{H}_{B}(\omega_{i}^{\ast}).
\end{equation}

\begin{lemma}\label{lem:2-2-2}
Let $\mathcal{K}_i$ and $l_i$ be as in \cref{ass:2-2-3}, and let $(\lambda_{i,j},\,\phi_{i,j})$ denote the $j$-th eigenpair (arranged in non-ascending order) of problem \cref{eq:2-24-5}. Then, $\lambda_{i,1}=\cdots=\lambda_{i,l_i}=+\infty$, $\mathcal{K}_{i} = {\rm span}\big\{{\phi_{i,1}},\ldots,{\phi_{i,l_i}}\big\}$, and for each $j>l_i$, $(\lambda_{i,j},\,\phi_{i,j})$ is the $(j-l_i)$-th eigenpair of problem \cref{eq:2-21}.
\end{lemma}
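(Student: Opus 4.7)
The plan is to reduce problem \cref{eq:2-24-5} to problem \cref{eq:2-21} by exploiting a direct-sum decomposition $\mathcal{H}_{B}(\omega_i^{\ast}) = \mathcal{K}_i \oplus \mathcal{H}_{B}^0(\omega_i^{\ast})$ that is compatible with both bilinear forms appearing in the generalized eigenproblem. Setting $a(\phi,v) := (\widehat{P}_i \phi, \widehat{P}_i v)_{\mathcal{H}_0(\omega_i)}$, \cref{ass:2-2-3} guarantees that $P_i|_{\mathcal{K}_i}$, and hence $\widehat{P}_i|_{\mathcal{K}_i}$, is injective, so $a$ is a genuine inner product on the finite-dimensional space $\mathcal{K}_i$. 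Letting $\Pi$ denote the $a$-orthogonal projection onto $\mathcal{K}_i$, the decomposition $v = \Pi v + (v - \Pi v)$ realizes the direct sum, since $v - \Pi v$ is $a$-orthogonal to $\mathcal{K}_i$ and therefore lies in $\mathcal{H}_{B}^0(\omega_i^{\ast})$ by its very definition in \cref{eq:2-24-1}; uniqueness follows from $\mathcal{K}_i \cap \mathcal{H}_{B}^0(\omega_i^{\ast}) = \{0\}$, again by positive-definiteness of $a|_{\mathcal{K}_i}$.

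Next I would split the eigenpairs. For any $\phi \in \mathcal{K}_i \setminus \{0\}$, \cref{ass:2-2-3}(i) yields $B^{+}_{\omega_i^{\ast}}(\phi, v) = 0$ for every $v \in \mathcal{H}(\omega_i^{\ast})$, while $a(\phi,\phi) > 0$. Rewriting \cref{eq:2-24-5} in the reciprocal form $\mu\, a(\phi,v) = B^{+}_{\omega_i^{\ast}}(\phi,v)$ with $\mu := 1/\lambda$, such $\phi$ are precisely the $\mu = 0$ eigenvectors, which justifies the formal convention $\lambda = +\infty$ and produces $l_i$ infinite eigenvalues whose eigenspace is exactly $\mathcal{K}_i$. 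Conversely, let $\phi$ be an eigenvector of \cref{eq:2-24-5} with finite $\lambda$ and decompose $\phi = \phi_0 + w$ with $\phi_0 \in \mathcal{H}_{B}^0(\omega_i^{\ast})$ and $w \in \mathcal{K}_i$. Testing the equation against an arbitrary $u \in \mathcal{K}_i$ collapses the right-hand side to zero (since $u \in \mathcal{K}_i$), while the left-hand side reduces to $a(w,u)$ (since $a(\phi_0, u) = 0$ by definition of $\mathcal{H}_{B}^0$), so $a(w,u) = 0$ for all $u \in \mathcal{K}_i$. Positive-definiteness of $a|_{\mathcal{K}_i}$ forces $w = 0$, hence $\phi = \phi_0 \in \mathcal{H}_{B}^0(\omega_i^{\ast})$.

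Finally I would verify that for such a $\phi_0$, the eigenequation posed on $\mathcal{H}_{B}(\omega_i^{\ast})$ is equivalent to \cref{eq:2-21}. Restricting the test space gives one direction trivially. For the converse, any $v \in \mathcal{H}_{B}(\omega_i^{\ast})$ decomposes as $v = v_0 + u$ with $v_0 \in \mathcal{H}_{B}^0(\omega_i^{\ast})$ and $u \in \mathcal{K}_i$; then $a(\phi_0, u) = 0$ and $B^{+}_{\omega_i^{\ast}}(\phi_0, u) = 0$, so both sides of \cref{eq:2-24-5} reduce to the corresponding expressions in \cref{eq:2-21} evaluated at $v_0$. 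Combining these steps and arranging all eigenvalues in non-ascending order produces the claimed structure. The only delicate point is assigning rigorous meaning to $\lambda_{i,1} = \cdots = \lambda_{i,l_i} = +\infty$, which is handled by the reciprocal-eigenvalue reformulation above; once this is done, standard spectral theory for the compact operator associated with \cref{eq:2-21} on the Hilbert space $\big(\mathcal{H}_{B}^0(\omega_i^{\ast}), B^{+}_{\omega_i^{\ast}}(\cdot,\cdot)\big)$ (cf.\;\cref{lem:2-2-3}) closes the argument.
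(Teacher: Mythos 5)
Your proposal is correct and follows the same strategy as the paper's proof: establish the orthogonal decomposition $\mathcal{H}_{B}(\omega_i^{\ast}) = \mathcal{K}_i \oplus \mathcal{H}_{B}^0(\omega_i^{\ast})$ compatible with both scalar products, and observe that the eigenproblem \cref{eq:2-24-5} splits accordingly into the infinite eigenvalues on $\mathcal{K}_i$ and problem \cref{eq:2-21} on $\mathcal{H}_B^0(\omega_i^{\ast})$. You fill in details the paper leaves implicit (the $a$-orthogonal projection, the reciprocal reformulation to make sense of $\lambda = +\infty$, and the test-function bookkeeping), but the structure of the argument is identical.
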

\begin{proof}
By \cref{ass:2-2-3} and the definition of $\mathcal{H}^{0}_{B}(\omega_{i}^{\ast})$, we have the orthogonal decomposition $\mathcal{H}_{B}(\omega_{i}^{\ast}) = \mathcal{H}^{0}_{B}(\omega_{i}^{\ast}) \oplus \mathcal{K}_i$ with respect to the scalar products $B^{+}_{\omega_{i}^{\ast}}(\cdot,\cdot)$ and $(\widehat{P}_{i}\cdot,\,  \widehat{P}_{i} \cdot)_{\mathcal{H}_{0}(\omega_{i})}$. Thus, the local eigenproblem \cref{eq:2-24-5} can be split into two subproblems: one on $\mathcal{K}_i$ with eigenvalues $+\infty$ and eigenvectors spanning $\mathcal{K}_i$, and another on $\mathcal{H}^{0}_{B}(\omega_{i}^{\ast})$ with finite eigenvalues, i.e., problem \cref{eq:2-21}.
\end{proof}

Now we are ready to define the desired optimal local approximation spaces. Thanks to \cref{lem:2-2-2}, these spaces can be defined in a unified way based on local eigenproblems \cref{eq:2-24-5} for two cases: $\mathcal{K}_{i}=\emptyset$ or $\mathcal{K}_{i}\neq\emptyset$.
\begin{definition}[\textbf{Local approximation spaces}]\label{def:2-2}
Let $\mathcal{K}_i$ and $l_i$ be as in \cref{ass:2-2-3}, and let $n_{i}>l_i$. The local approximation space on $\omega_{i}$ is defined as
\begin{equation}\label{eq:2-23}
S_{n_{i}}(\omega_i) =  {\rm span}\big\{{\phi_{i,1}}|_{\omega_i},\ldots,{\phi_{i,n_{i}}}|_{\omega_i}\big\},
\end{equation}
where $\phi_{i,j}$ denotes the $j$-th eigenvector of problem \cref{eq:2-24-5}.
\end{definition}
\begin{rem}
By \cref{lem:2-2-2}, it holds that $S_{n_{i}}(\omega_i)= \mathcal{K}_i\oplus S^{0}_{n_{i}-l_i}(\omega_i)$, where $S^{0}_{n_{i}-l_i}(\omega_i)$ is spanned by the restrictions (to $\omega_i$) of the first $n_i-l_i$ eigenvectors of problem \cref{eq:2-21}. 
\end{rem}

With the local approximation constructed above, we have the following local error estimate for the abtract GFEM.
\begin{theorem}[\textbf{Local approximation error estimates}]\label{thm:local_approximation_error}
Let the local particular functions and the local approximation spaces be as in \cref{def:2-1,def:2-2}, and let $u^{e}$ be the solution of problem \cref{eq:2-3}. Then, for each $i=1,\ldots,M$,
\begin{equation}\label{eq:2-24}
\inf_{\varphi\in u_{i}^{p}  + S_{n_i}(\omega_i)}\big\Vert {P}_{i}(u^{e}|_{\omega_i}-\varphi)\big\Vert_{\mathcal{H}_{0}(\omega_{i})}\leq d_{n_i-l_i}(\omega_{i},\omega_{i}^{\ast}) \,\big\Vert u^{e}|_{\omega_{i}^{\ast}} - \psi_{i}\big\Vert_{B^{+},\omega^{\ast}_i},
\end{equation}
where $l_i\in\mathbb{N}$ is the dimension of the space $\mathcal{K}_i$ given in \cref{ass:2-2-3}.
\end{theorem}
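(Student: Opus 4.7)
The plan is to exploit the orthogonal decomposition of $\mathcal{H}_B(\omega_i^{\ast})$ from the proof of \cref{lem:2-2-2} together with the characterization of the optimal approximation space from \cref{lem:2-2-1}. The starting observation is \cref{eq:2-16}, which says $u^e|_{\omega_i^{\ast}} - \psi_i \in \mathcal{H}_B(\omega_i^{\ast})$. Using the decomposition $\mathcal{H}_B(\omega_i^{\ast}) = \mathcal{H}_B^0(\omega_i^{\ast}) \oplus \mathcal{K}_i$, orthogonal with respect to both $B^+_{\omega_i^{\ast}}(\cdot,\cdot)$ and $(\widehat{P}_i \cdot, \widehat{P}_i \cdot)_{\mathcal{H}_0(\omega_i)}$, I would write
\[
u^e|_{\omega_i^{\ast}} - \psi_i = w + k, \qquad w \in \mathcal{H}_B^0(\omega_i^{\ast}),\;\; k \in \mathcal{K}_i.
\]
Since $B^+_{\omega_i^{\ast}}(k,\cdot)=0$ by \cref{ass:2-2-3}(i), this decomposition is $B^+$-orthogonal and yields the key identity $\|u^e|_{\omega_i^{\ast}} - \psi_i\|_{B^+,\omega_i^{\ast}} = \|w\|_{B^+,\omega_i^{\ast}}$.

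Next, I would construct a convenient competitor. Writing $u_i^p = \psi_i|_{\omega_i}$ and restricting the above decomposition to $\omega_i$, an arbitrary $\varphi = u_i^p + s$ with $s \in S_{n_i}(\omega_i)$ gives
\[
u^e|_{\omega_i} - \varphi = w|_{\omega_i} + k|_{\omega_i} - s.
\]
By \cref{lem:2-2-2} the first $l_i$ eigenvectors in \cref{def:2-2} span $\mathcal{K}_i$, so $k|_{\omega_i}$ lies in $S_{n_i}(\omega_i)$, and I would choose $s = k|_{\omega_i} + \widetilde{s}$ with $\widetilde{s}$ ranging over $\mathrm{span}\{\phi_{i,l_i+1}|_{\omega_i},\ldots,\phi_{i,n_i}|_{\omega_i}\} \subset S_{n_i}(\omega_i)$. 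Applying $P_i$ and using $P_i(k|_{\omega_i}) = \widehat{P}_i k$, the $\widehat{P}_i k$ term cancels, leaving
\[
P_i(u^e|_{\omega_i} - \varphi) = \widehat{P}_i w - P_i(\widetilde{s}).
\]

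The final step is to identify $P_i(\widetilde{s})$ with an element of the optimal approximation space. By \cref{lem:2-2-2}, the vectors $\phi_{i,l_i+1},\ldots,\phi_{i,n_i}$ are precisely the first $n_i-l_i$ eigenvectors of the eigenproblem \cref{eq:2-21} on $\mathcal{H}_B^0(\omega_i^{\ast})$, hence by \cref{lem:2-2-1},
\[
\mathrm{span}\bigl\{\widehat{P}_i\phi_{i,l_i+1},\ldots,\widehat{P}_i\phi_{i,n_i}\bigr\} = Q_i^{\mathrm{opt}}(n_i-l_i).
\]
Since $w \in \mathcal{H}_B^0(\omega_i^{\ast})$, the definition \cref{eq:2-19}--\cref{eq:2-20} of $d_{n_i-l_i}(\omega_i,\omega_i^{\ast})$ together with the fact that $Q_i^{\mathrm{opt}}(n_i-l_i)$ is an optimal space yields
\[
\inf_{v \in Q_i^{\mathrm{opt}}(n_i-l_i)} \|\widehat{P}_i w - v\|_{\mathcal{H}_0(\omega_i)} \leq d_{n_i-l_i}(\omega_i,\omega_i^{\ast})\,\|w\|_{B^+,\omega_i^{\ast}}.
\]
Combining this with the identity $\|w\|_{B^+,\omega_i^{\ast}} = \|u^e|_{\omega_i^{\ast}}-\psi_i\|_{B^+,\omega_i^{\ast}}$ and taking the infimum over $\varphi$ (which dominates the restricted infimum over the chosen parametrization) gives \cref{eq:2-24}.

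The proof is essentially a bookkeeping argument once the orthogonal decomposition is in place; the only subtle point is ensuring that the choice $s = k|_{\omega_i} + \widetilde{s}$ is actually an element of $S_{n_i}(\omega_i)$, which relies on the linearity of the restriction operator $R_{\omega_i^{\ast},\omega_i}$ from \cref{ass:2-1-1}(iii) together with the structural statement in \cref{lem:2-2-2} that identifies $\mathcal{K}_i$ as the span of the infinite-eigenvalue eigenvectors of \cref{eq:2-24-5}. No genuine obstacle is expected.
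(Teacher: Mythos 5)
Your proof is correct and follows essentially the same route as the paper: the decomposition $u^e|_{\omega_i^{\ast}} - \psi_i = w + k$ with $w\in\mathcal{H}_B^0(\omega_i^{\ast})$, $k\in\mathcal{K}_i$ is precisely the paper's choice of $c_i\in\mathcal{K}_i$ with $u^e|_{\omega_i^{\ast}}-\psi_i-c_i\in\mathcal{H}_B^0(\omega_i^{\ast})$, the identity $\|w\|_{B^+,\omega_i^{\ast}}=\|u^e|_{\omega_i^{\ast}}-\psi_i\|_{B^+,\omega_i^{\ast}}$ is the paper's observation $B^+_{\omega_i^{\ast}}(v,c_i)=0$, and the final appeal to the $n$-width of $\widehat{P}_i|_{\mathcal{H}_B^0}$ together with the decomposition $S_{n_i}(\omega_i)=\mathcal{K}_i\oplus S^0_{n_i-l_i}(\omega_i)$ matches the paper's concluding step. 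You merely write out more explicitly the cancellation of the $\mathcal{K}_i$-component and the identification of $P_i$ applied to the restricted eigenvectors with $Q_i^{\rm opt}(n_i-l_i)$, which the paper leaves implicit.
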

\begin{proof}
Let $S^{0}_{n_{i}-l_i}(\omega_i)$ denote the space spanned by the restrictions (to $\omega_i$) of the first $n_i-l_i$ eigenvectors of problem \cref{eq:2-21}.
Recalling that $u^{e}|_{\omega_{i}^{\ast}} - \psi_{i}\in \mathcal{H}_{B}(\omega_{i}^{\ast})$, we can find a $c_{i}\in \mathcal{K}_i$ such that $u^{e}|_{\omega_{i}^{\ast}} - \psi_{i}-c_{i}\in \mathcal{H}^{0}_{B}(\omega_{i}^{\ast})$. Then, by the definition of the $n$-width $d_{n}(\omega_{i},\omega_{i}^{\ast})$ and \cref{lem:2-2-1}, we see that
\begin{equation}\label{eq:2-24-0}
\inf_{\varphi\in S^{0}_{n_i-l_i}(\omega_i)}\big\Vert {P}_{i}(u^{e}|_{\omega_i}-\psi_{i}-c_{i}-\varphi)\big\Vert_{\mathcal{H}_{0}(\omega_{i})}\leq d_{n_i-l_i}(\omega_{i},\omega_{i}^{\ast}) \,\big\Vert u^{e}|_{\omega_{i}^{\ast}} - \psi_{i}\big\Vert_{B^{+},\omega^{\ast}_i},
\end{equation}
where we have used the fact that $B^{+}_{\omega_i^{\ast}}(v,c_i) = 0$ for any $v\in \mathcal{H}(\omega_i^{\ast})$. Then, estimate \cref{eq:2-24} follows from \cref{eq:2-24-0} and the decomposition $S_{n_{i}}(\omega_i)= \mathcal{K}_{i}\oplus S^{0}_{n_{i}-l_i}(\omega_i)$.
\end{proof}

We conclude this subsection by briefly discussing the important elliptic case. In this case, by using the energy norm \cref{eq:elliptic-norm}, we have $\Vert u\Vert_{\mathcal{H}_{0}(\omega_i)} = \big(B_{\omega_{i}}(u,u)\big)^{1/2}$, and in general, $B^{+}_{\omega_{i}^{\ast}}(\cdot,\cdot) = B_{\omega_{i}^{\ast}}(\cdot,\cdot)$. Hence, the local eigenproblem \cref{eq:2-24-5} takes the form:
\begin{equation}\label{eq:2-24-2-0}
B_{\omega_i}\big(\widehat{P}_{i}\phi_{i},\,  \widehat{P}_{i} v\big)= \lambda_{i}B_{\omega_{i}^{\ast}}(\phi_{i},  v)\quad \forall v\in \mathcal{H}_{B}(\omega_{i}^{\ast}).
\end{equation}
Furthermore, let $\psi_{i}\in \mathcal{H}_{0}(\omega_i^{\ast})$. It follows that $B_{\omega_i^{\ast}}( u^{e}|_{\omega_i^{\ast}} - \psi_{i}, \psi_i) =0$, and thus $\big\Vert u^{e}|_{\omega_i^{\ast}} - \psi_{i}\big\Vert_{B,\omega^{\ast}_i} \leq    \big\Vert u^{e}|_{\omega_i^{\ast}}\big\Vert_{B,\omega^{\ast}_i}$. Substituting this inequality into \cref{eq:2-24} gives the following simplified local error estimate:
\begin{equation}\label{local_error_estimate_elliptic}
\inf_{\varphi\in u_{i}^{p}  + S_{n_i}(\omega_i)}\big\Vert {P}_{i}(u^{e}|_{\omega_i}-\varphi)\big\Vert_{B,\omega_{i}}\leq d_{n_i-l_i}(\omega_{i},\omega_{i}^{\ast}) \,\big\Vert u^{e}|_{\omega_{i}^{\ast}} \big\Vert_{B,\omega^{\ast}_i}.
\end{equation}

\subsection{Efficient solution of local eigenproblems}\label{sec:2-3}
In the continuous setting, the local eigenproblems \cref{eq:2-24-5} are posed on infinite-dimensional spaces, which need to be solved numerically by the FEM or other standard numerical methods in practical applications. The resulting discretized local eigenproblems are often similar in form to the local eigenproblems in a discrete setting. In what follows, we develop efficient solution techniques for both cases in a unified way. 

In a continuous setting, let $\mathcal{V}_{h,0}(\omega^{\ast})\subset \mathcal{V}_{h}(\omega^{\ast})$ denote one-parameter families of finite-dimensional subspaces of $\mathcal{H}_{0}(\omega^{\ast})$ and $\mathcal{H}(\omega^{\ast})$, respectively, where $h>0$ often denotes the mesh size of a discretization. Here we omit the subscript $i$ for simplicity. The discrete analogue of the generalized harmonic space $\mathcal{H}_{B}(\omega^{\ast})$ is defined by
\begin{equation}\label{eq:2-25}
\mathcal{V}_{h,B}(\omega^{\ast})=\big\{u_{h}\in \mathcal{V}_{h}(\omega^{\ast}):  B_{\omega^{\ast}}(u_{h},v_{h}) = 0\quad \forall v_{h}\in \mathcal{V}_{h,0}(\omega^{\ast}) \big\}. 
\end{equation}
The finite-dimensional approximation of the (continuous) local eigenproblem \cref{eq:2-24-5} reads: Find $\lambda_{h}\in \mathbb{R}\cup \{+\infty\}$ and $\phi_{h}\in \mathcal{V}_{h,B}(\omega^{\ast})$ with $\phi_{h}\neq 0$ such that
\begin{equation}\label{eq:2-26}
\big(\widehat{P}\phi_{h},\,  \widehat{P} v_{h}\big)_{\mathcal{H}_{0}(\omega)}= \lambda_{h}B^{+}_{\omega^{\ast}}\big(\phi_{h},  v_{h}\big)\quad \forall v_{h}\in \mathcal{V}_{h,B}(\omega^{\ast}),
\end{equation}
or equivalently, with $\mu_{h}=\lambda_{h}^{-1}$,
\begin{equation}\label{eq:2-27}
 B^{+}_{\omega^{\ast}}\big(\phi_{h},  v_{h}\big)=\mu_{h}\big(\widehat{P}\phi_{h},\,  \widehat{P} v_{h}\big)_{\mathcal{H}_{0}(\omega)}\quad \forall v_{h}\in \mathcal{V}_{h,B}(\omega^{\ast}).
\end{equation}
In a discrete setting, with $\mathcal{H}(\omega^{\ast}) = \mathcal{V}_{h}(\omega^{\ast})$ and $\mathcal{H}_{0}(\omega^{\ast}) = \mathcal{V}_{h,0}(\omega^{\ast})$, the local eigenproblems \cref{eq:2-24-5} can also be rewritten as \cref{eq:2-27}.

%In practice, most of the computational work in solving problem \cref{eq:2-26}  is in explicitly constructing a basis for $\mathcal{V}_{h,B}(\omega^{\ast})$, which is not straightforward due to the generalized harmonic constraint. 
The presence of the constraint in $\mathcal{V}_{h,B}(\omega^{\ast})$ makes it challenging to solve the discretized local eigenproblem \cref{eq:2-26}. A straightforward, yet inefficient solution is to first compute a basis for $\mathcal{V}_{h,B}(\omega^{\ast})$, and then solve the resulting matrix eigenvalue problem. In general, it requires solving $m$ ($m\in\mathbb{N}$) local boundary value problems to get $m$ basis functions for $\mathcal{V}_{h,B}(\omega^{\ast})$. This step is thus rather expensive especially when the dimension of $\mathcal{V}_{h,B}(\omega^{\ast})$ is large. Several attempts \cite{babuvska2020multiscale,chen2020random,ma2021novel} have been made to circumvent this difficulty in the context of scalar elliptic problems, based on approximating the space $\mathcal{V}_{h,B}(\omega^{\ast})$ through some special techniques, such as random sampling, local Steklov eigenfunctions, and harmonic extensions of special boundary functions. A common drawback of these techniques is that an additional approximation error is introduced to the method that is usually difficult to quantify. Moreover, the approximations of $\mathcal{V}_{h,B}(\omega^{\ast})$ generally need to be enriched or reconstructed when implementing an adaptive enrichment of the local approximation spaces. 

Motivated by \cite{ma2022error} where problem \cref{eq:2-26} was studied in a scalar elliptic PDEs setting, we propose an efficient and accurate method for solving \cref{eq:2-26} in the abstract setting based on mixed formulation as follows.

\textbf{Mixed formulation}. Let us consider the following eigenvalue problem: Find $\mu_{h}\in \mathbb{R}\cup\{+\infty\}$, $\phi_{h}\in \mathcal{V}_{h}(\omega^{\ast})$, and $p_{h}\in \mathcal{V}_{h,0}(\omega^{\ast})$ with $\big(\phi_{h}, p_{h})\neq (0,0)$ such that
\begin{equation}\label{eq:2-28}
\begin{aligned}
B^{+}_{\omega^{\ast}}\big(\phi_{h},  v_{h}\big) + B_{\omega^{\ast}}(v_h,p_h) =&\, \mu_{h} \big(\widehat{P}\phi_{h},\,  \widehat{P} v_{h}\big)_{\mathcal{H}_{0}(\omega)}\quad\, \forall v_h\in \mathcal{V}_{h}(\omega^{\ast}),\\
B_{\omega^{\ast}}(\phi_{h},\xi_h) =&\;0\qquad\qquad\qquad\qquad \quad\;\,\forall \xi_{h}\in \mathcal{V}_{h,0}(\omega^{\ast}).
\end{aligned}
\end{equation}
Here $p_{h}$ is a Lagrange multiplier introduced to relax the constraint in the test space. It is easy to see that while the augmented problem \cref{eq:2-28} has more eigenvectors (corresponding to $\mu_{h} = +\infty$) than problem \cref{eq:2-27}, the eigenvectors corresponding to finite eigenvalues are the same. Therefore, problems \cref{eq:2-27,eq:2-28} are equivalent for building the local approximation spaces.

Now let us look at the discrete system \cref{eq:2-28} at the matrix level. Let $\big\{\varphi_{1},..,\varphi_{n_1}\big\}$ be a basis for $\mathcal{V}_{h,0}(\omega^{\ast})$ and $\big\{\varphi_{1},\ldots,\varphi_{n_1}\big\}\cup \big\{\varphi_{n_1+1},\ldots,\varphi_{n_1+n_2}\big\}$ a basis for $\mathcal{V}_{h}(\omega^{\ast})$. Then problem \cref{eq:2-28} can be formulated as the following matrix eigenvalue problem: Find $\mu_{h}\in\mathbb{R}\cup\{+\infty\}$, ${\bm \phi} \in\mathbb{C}^{n_{1}+n_{2}}$, and ${\bm p}\in \mathbb{C}^{n_1}$ such that
\begin{equation}\label{eq:2-28-0}
{\left( \begin{array}{ccc}
{\bf B}^{+} & {\bf B} \\
{\bf B}^{\ast} & {\bf 0} 
\end{array} 
\right )}
{\left(\begin{array}{c}
{\bm \phi} \\
{\bm p}
\end{array} 
\right )} = \mu_{h}
{\left( \begin{array}{ccc}
{\bf P} & {\bf 0} \\
{\bf 0} & {\bf 0} 
\end{array} 
\right )}
{\left( \begin{array}{c}
{\bm \phi} \\
{\bm p}
\end{array} 
\right )},
\end{equation}
where ${\bf B}^{\ast}$ denotes the conjugate-transpose of ${\bf B}$. Here the matrices ${\bf B}\in \mathbb{C}^{(n_1+n_2)\times n_{2}}$, and ${\bf B}^{+}, \,{\bf P}\in \mathbb{C}^{(n_{1}+n_{2})\times(n_{1}+n_{2})}$ are given by $B_{i,j} = B_{\omega^{\ast}}(\varphi_{i},\varphi_{j})$, $B^{+}_{i,j} = B^{+}_{\omega_i^{\ast}}(\varphi_{i},\varphi_{j})$, and $P_{i,j} = \big(\widehat{P}\varphi_{i},\widehat{P}\varphi_{j}\big)_{\mathcal{H}_{0}(\omega)}$, respectively. We have the following result concerning the invertibility of the matrix on the left-hand side of \cref{eq:2-28-0}.
\begin{lemma}\label{lem:2-3-1}
Suppose that the form $B^{+}_{\omega^{\ast}}(\cdot,\cdot)$ is coercive on $\mathcal{V}_{h,B}(\omega^{\ast})$, and that there exists a constant $\beta(h)$ such that for any $p_{h}\in \mathcal{V}_{h,0}(\omega^{\ast})$, the problem 
\begin{equation}
{\rm Find}\;\,\phi_{h}\in \mathcal{V}_{h}(\omega^{\ast}) \quad{\rm such \;\,that}\quad B_{\omega^{\ast}}(\phi_{h},v_{h}) = (p_{h}, v_{h})_{\mathcal{H}(\omega^{\ast})}\quad \forall v_{h}\in \mathcal{V}_{h}(\omega^{\ast})
\end{equation}
is uniquely solvable with the estimate $\Vert \phi_{h}\Vert_{\mathcal{H}(\omega^{\ast})}\leq \beta(h) \Vert p_{h}\Vert_{\mathcal{H}(\omega^{\ast})}$. Then the matrix on the left-hand side of \cref{eq:2-28-0} is invertible.
\end{lemma}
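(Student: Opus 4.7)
The plan is to show that the block matrix is injective; since it is square, this is equivalent to invertibility. Translating into the variational form of \cref{eq:2-28}, I take a pair $(\phi_h, p_h) \in \mathcal{V}_h(\omega^{\ast}) \times \mathcal{V}_{h,0}(\omega^{\ast})$ in the kernel, i.e.,
\begin{align*}
B^{+}_{\omega^{\ast}}(\phi_h, v_h) + B_{\omega^{\ast}}(v_h, p_h) &= 0 \quad \forall v_h \in \mathcal{V}_h(\omega^{\ast}), \\
B_{\omega^{\ast}}(\phi_h, \xi_h) &= 0 \quad \forall \xi_h \in \mathcal{V}_{h,0}(\omega^{\ast}),
\end{align*}
and the task is to deduce $\phi_h = 0$ and then $p_h = 0$.

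First I would extract $\phi_h = 0$. The second identity places $\phi_h$ in $\mathcal{V}_{h,B}(\omega^{\ast})$ by the very definition \cref{eq:2-25}. Testing the first identity with $v_h = \phi_h$ makes the cross term $B_{\omega^{\ast}}(\phi_h, p_h)$ vanish, since $p_h \in \mathcal{V}_{h,0}(\omega^{\ast})$ lies precisely in the test space that defines $\mathcal{V}_{h,B}$. What remains is $B^{+}_{\omega^{\ast}}(\phi_h, \phi_h) = 0$, and the hypothesized coercivity of $B^{+}_{\omega^{\ast}}$ on $\mathcal{V}_{h,B}(\omega^{\ast})$ yields $\phi_h = 0$.

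With $\phi_h = 0$, the first identity collapses to $B_{\omega^{\ast}}(v_h, p_h) = 0$ for every $v_h \in \mathcal{V}_h(\omega^{\ast})$, and it remains to conclude $p_h = 0$. This is the step where I expect the main subtlety: the hypothesis is phrased as a forward solvability statement, and one must convert it into an adjoint injectivity statement. I would introduce the linear operator $T:\mathcal{V}_h(\omega^{\ast}) \to \mathcal{V}_h(\omega^{\ast})$ defined by $(T\phi_h, v_h)_{\mathcal{H}(\omega^{\ast})} = B_{\omega^{\ast}}(\phi_h, v_h)$. Applying the assumption with $p_h = 0$ gives injectivity of $T$, hence bijectivity since $\mathcal{V}_h(\omega^{\ast})$ is finite-dimensional. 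The reduced identity $B_{\omega^{\ast}}(v_h, p_h) = 0$ for all $v_h$ is exactly the equation $T^{\ast} p_h = 0$, and bijectivity of $T^{\ast}$ forces $p_h = 0$. The finite-dimensionality of $\mathcal{V}_h(\omega^{\ast})$ is essential here; in an infinite-dimensional analogue a genuine inf-sup condition on $B_{\omega^{\ast}}$ would be required. Note that the quantitative bound $\beta(h)$ in the hypothesis plays no role in the qualitative invertibility claim itself — only the uniqueness half of the hypothesis is used — although it will presumably matter for any subsequent conditioning estimate of the saddle-point system.
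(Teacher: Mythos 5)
Your proof is correct, and it is essentially a self-contained unpacking of the Brezzi-type argument the paper merely cites. The paper's "proof" is a single sentence: the second hypothesis is read as yielding the discrete inf-sup condition for the constraint form, which together with coercivity of $B^{+}_{\omega^{\ast}}$ on the discrete kernel $\mathcal{V}_{h,B}(\omega^{\ast})$ gives invertibility via the standard theory of mixed/saddle-point systems. Your version replaces the reference with a direct verification that the square matrix has trivial null space; the two steps in your argument line up precisely with the two hypotheses in the lemma statement: the cross term vanishes because $\phi_h\in\mathcal{V}_{h,B}(\omega^\ast)$ and $p_h\in\mathcal{V}_{h,0}(\omega^\ast)$ are tested against each other, coercivity on $\mathcal{V}_{h,B}(\omega^\ast)$ kills $\phi_h$, and the auxiliary operator $T$ (with finite-dimensionality upgrading injectivity to bijectivity of $T$ and hence of $T^{\ast}$) kills $p_h$. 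The remark that only the uniqueness content of the second hypothesis enters, and that the quantitative bound $\beta(h)$ plays no role in the qualitative invertibility, is accurate and worth making explicit; the $\beta(h)$ estimate is what the paper's inf-sup route retains, and it would matter if one wanted a bound on the condition number of the resulting generalized eigenvalue system, but it is not needed for the statement as written. Your closing caveat about infinite dimensions is also right: without finite dimensionality one would need a genuine inf-sup bound, not just injectivity, to conclude surjectivity of $T$.
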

The second assumption in \cref{lem:2-3-1} implies the discrete inf-sup condition, which, combining with the coercivity condition, gives the desired result (see, e.g., \cite{boffi2013mixed}). The invertibility of the matrix ensures that problem \cref{eq:2-28-0} can be effectively solved by standard eigenvalue solvers. Compared with previous solution techniques, our method introduces no additional approximation errors and is typically easier to implement. Moreover, in the important elliptic case, we have a fast algorithm for solving problem \cref{eq:2-28-0} by exploiting its special structure.

% Introducing a Lagrange multiplier and recalling the basis of $\mathcal{V}(\omega^{\ast})$ and $\mathcal{V}_{0}(\omega^{\ast})$ defined above, the discrete system resulting from the finite-dimensional approximation of problem \cref{eq:2-24-5} in mixed form can be written as the matrix eigenvalue problem

\textbf{The elliptic case}. Following along the same lines as above, the finite-dimensional approximation of eigenproblem \cref{eq:2-24-2-0} can be written in matrix notation as follows: Find $\lambda_{h}^{-1}\in\mathbb{R}\cup\{+\infty\}$, ${\bm \phi} = ({\bm \phi}_{1}, {\bm \phi}_{2})\in\mathbb{C}^{n_{1}+n_{2}}$, and ${\bm p}\in \mathbb{C}^{n_1}$ such that
\begin{equation}\label{eq:2-31}
{\left( \begin{array}{ccc}
{\bf B}_{11} & {\bf B}_{12} & {\bf B}_{11}\\
{\bf B}_{21} & {\bf B}_{22} & {\bf B}_{21}\\
{\bf B}_{11} & {\bf B}_{12} & {\bf 0}
\end{array} 
\right )}
{\left( \begin{array}{c}
{\bm \phi}_{1} \\
{\bm \phi}_{2} \\
{\bm p}
\end{array} 
\right )} = \lambda_{h}^{-1}
{\left( \begin{array}{ccc}
{\bf P}_{11} & {\bf P}_{12} & {\bf 0}\\
{\bf P}_{21} & {\bf P}_{22} & {\bf 0}\\
{\bf 0} & {\bf 0} & {\bf 0}
\end{array} 
\right )}
{\left( \begin{array}{c}
{\bm \phi}_{1} \\
{\bm \phi}_{2} \\
{\bm p}
\end{array} 
\right )},
\end{equation}
where ${\bf B}_{ij}=\big(B_{\omega^{\ast}}(\varphi_{k}, \varphi_{l})\big)_{k\in I_{i}, l\in I_{j}}$ and ${\bf P}_{ij}=\big(B_{\omega}\big(\widehat{P}\varphi_{k},\,  \widehat{P} \varphi_{l}\big)\big)_{k\in I_{i}, l\in I_{j}}$. Here, $I_{1} = \{1,\ldots,n_{1}\}$ and $I_{2} = \{n_{1}+1,\ldots,n_{1}+n_{2}\}$ are index sets. When using a finite element discretization, $\{\varphi_{k}\}_{k\in I_{2}}$ often represent the set of finite element basis functions supported on the boundary of $\omega^{\ast}$, which typically vanish in $\omega$, i.e., $\varphi_{k}|_{\omega} = 0$ for all $k\in I_{2}$. In this case, we see that ${\bf P}_{12}={\bf P}_{21} = {\bf P}_{22} = {\bf 0}$. Taking this observation into account and using a simple matrix manipulation, problem \cref{eq:2-31} can be reduced to 
\begin{equation}\label{eq:2-32}
\mathbf{P}_{11}{\bm \phi}_{1} = \lambda_{h} \mathbf{B}_{11}{\bm p},
\end{equation}
where $ {\bm \phi}_{1}$ is computed from ${\bm p}$ by solving
\begin{equation}\label{eq:2-33}
{\bf B}^{+}{\left( \begin{array}{c}
{\bm \phi}_{1} \\
{\bm \phi}_{2} 
\end{array} 
\right )} = 
{\left( \begin{array}{c}
{\bf 0} \\
-\mathbf{B}_{21}{\bm p}
\end{array} 
\right )}, 
\quad {\rm with}\;\,{\bf B}^{+}={\left( \begin{array}{cc}
{\bf B}_{11} & {\bf B}_{12} \\
{\bf B}_{21} & {\bf B}_{22} 
\end{array} 
\right )}.
\end{equation}

The reduced problem \cref{eq:2-32} is less than half the size of problem \cref{eq:2-31} and can be seen as a generalized eigenvalue problem concerning ${\bm p}$ where the matrix-vector product on the left-hand side requires solving the linear system \cref{eq:2-33}. A fast solver for this problem thus hinges on the efficient solution of \cref{eq:2-33}. In practical applications, the "Neumann" matrix ${\bf B}^{+}$ often has a kernel of fixed dimension and is thus singular. Indeed, recall \cref{ass:2-2-3} and let $\mathcal{K} \subset \mathcal{V}_{h}(\omega^{\ast})$ without loss of generality. Then, $\mathcal{K}$ is the kernel of ${\bf B}^{+}$, and the eigenvectors of problem \cref{eq:2-31} corresponding to finite eigenvalues are orthogonal to $\mathcal{K}$ with respect to the scalar product $B_{\omega}(\widehat{P}\cdot, \widehat{P}\cdot)$, i.e., in function notation, $B_{\omega}(\widehat{P}\phi_{h}, \widehat{P}u_{j}) = 0$, $j=1,\ldots,l$, where $\{u_{1},\cdots, u_{l}\}$ is a basis of $\mathcal{K}$; see \cref{lem:2-2-2}. Based on this observation, we can impose the above conditions on the eigenvectors of problem \cref{eq:2-31} to ensure the unique solvability of the linear system \cref{eq:2-33}. To do this, we introduce another Lagrange multiplier ${\bm q}\in \mathbb{C}^{l}$ and rewrite the linear system \cref{eq:2-33} as
\begin{equation}\label{eq:2-35}
\widetilde{\bf B}^{+}{\left( \begin{array}{c}
{\bm \phi}_{1} \\
{\bm \phi}_{2} \\
{\bm q}
\end{array} 
\right )} = 
{\left( \begin{array}{c}
{\bf 0} \\
-\mathbf{B}_{21}{\bm p}\\
{\bf 0}
\end{array} 
\right )}, 
\quad {\rm with}\;\,\widetilde{\bf B}^{+}={\left( \begin{array}{ccc}
{\bf B}_{11} & {\bf B}_{12}&  {\bf M}_{1}\\
{\bf B}_{21} & {\bf B}_{22} & {\bf M}_{2} \\[1mm]
{\bf M}^{\ast}_{1} & {\bf M}_{2}^{\ast} & {\bf 0}
\end{array} 
\right )},
\end{equation}
where the matrices ${\bf M}_{i}\in \mathbb{C}^{n_{i}\times l} \;(i=1,2)$ are given by $\big(B_{\omega}\big(\widehat{P}\varphi_{k}, \widehat{P}u_{j}\big)\big)_{k\in I_i, 1\leq j\leq l}$. Clearly, the matrix $\widetilde{\bf B}^{+}$ is symmetric and invertible by \cref{ass:2-2-3} (i). 

An efficient way of solving the linear system \cref{eq:2-35} for different right-hand sides is based on the LU factorization. More precisely, we can carry out the $LDL^{T}$ factorization: $\widetilde{\bf B}^{+}={\bf L}{\bf D}{\bf L}^{T}$, where ${\bf L}$ is a lower triangular matrix and $\mathbf{D}$ is a diagonal matrix. Then, ${\bm \phi}_{1}$ can be computed by solving a lower and an upper triangular systems:
\begin{equation}\label{eq:2-34}
{\left( \begin{array}{c}
{\bm \phi}_{1} \\
{\bm \phi}_{2} \\
{\bm q}
\end{array} 
\right )} = \mathbf{L}^{-T}({\bf L}{\bf D})^{-1}
{\left( \begin{array}{c}
{\bf 0} \\
-\mathbf{B}_{21}{\bm p}\\
{\bf 0}
\end{array} 
\right )}.
\end{equation}
% However, it is often the case that $B_{\omega^{\ast}}$ is only positive semi-definite on $\mathcal{V}_{h}(\omega^{\ast})$ and thus ${\bf B}^{+}$ is singular. To fix this problem, we recall the space $\mathcal{K}= {\rm span}\{u_{1},\ldots,\mathcal{K}\}$ defined in \cref{ass:2-2-3} which is assumed as the "kernel" of $B_{\omega^{\ast}}$. In fact, this space is also the kernel of the "Neumann" matrix ${\bf B}^{+}$.  Since it has been shown (see the proof of \cref{lem:2-2-4}) that the eigenvectors of problem \cref{eq:2-24-5} corresponding to finite eigenvalues are orthogonal to the space $\mathcal{K}$ with respect to the scalar product $B_{\omega}(\widehat{P}\cdot, \widehat{P}\cdot)$, i.e.,  Hence, we can perform the $LDL^{T}$ factorization of $\widetilde{\bf B}^{+}$ as above and compute $ {\bm \phi}_{1}$ similarly by solving a lower and an upper triangular systems. 
Finally, the basis $\{u_{1},\ldots,u_{l}\}$ of the kernel $\mathcal{K}$ should be added manually to the local approximation spaces. By combining the reduction technique and the LU factorization, our method is more efficient for solving the augmented problem \cref{eq:2-28-0} than the direct solution, especially in the 3D case.

% We note that the reduced problem \cref{eq:2-32} is less than half the size of problem \cref{eq:2-28-0}

% by first performing an $LDL^{T}$ factorization and then solving a reduced problem of smaller size, our method provides a typically much more efficient way of solving the augmented problem \cref{eq:2-28-0} in the elliptic case than the direct solution.

\section{Convergence theory}\label{sec:3}
The purpose of this section is to perform a rigorous convergence analysis for the abstract MS-GFEM presented in the preceding section. We first establish exponential upper bounds on the local approximation errors, and then prove the quasi-optimal global convergence for the method.
\subsection{Exponential convergence for local approximations}\label{sec:3-1}
\subsubsection{Two fundamental assumptions}\label{sec:fundamental-assumptions}
\Cref{thm:local_approximation_error} shows that the local approximation errors of the abstract MS-GFEM are bounded by the $n$-widths $d_{n}(\omega_i,\omega_i^{\ast})$. Therefore, it is essential to establish decay rates for the $n$-widths. To do this in the abstract setting, we make two assumptions concerning the generalized harmonic spaces: a \textit{Caccioppoli-type inequality} and a \textit{weak approximation property}. They will play a fundamental role in the proof of exponential decay rates for the $n$-widths, and will be verified for many practical applications in \cref{sec:5}.

To formulate the two fundamental assumptions, let us introduce some notation. Let $\{\mathcal{L}(D)\}_{D\subset\Omega}$ be a family of local Hilbert spaces associated with $\mathcal{L}(\Omega)$. We assume that (i) for any subdomain $D\subset\Omega$, $\mathcal{H}(D)\subset\mathcal{L}(D)$, and (ii) the restriction operator is well defined on $\mathcal{L}(D)$, i.e., for any $D\subset D^{\ast}$ and $u\in \mathcal{L}(D^{\ast})$, $u|_{D}\in \mathcal{L}(D)$ with $\Vert u|_{D}\Vert_{\mathcal{L}(D)}\leq \Vert u\Vert_{\mathcal{L}(D^{\ast})}$. Moreover, we recall \cref{ass:2-2-3} and denote by $\Vert u \Vert_{B^{+},D}=\big({B}^{+}_{D}(u,u)\big)^{1/2}$ as before.

\begin{assumption}[Caccioppoli-type inequality]\label{ass:3-1-1}
There exist constants $C^{\rm I}_{\rm cac}>0$ and $C^{\rm II}_{\rm cac}\geq 0$, such that for any subdomains $D\subset D^{\ast}$ with $\delta := {\rm dist} ({D},\partial D^{\ast}\setminus \partial \Omega)>0$, 
\begin{equation}\label{eq:3-1}
\big\Vert u|_{D} \big\Vert_{B^{+},D}\leq C^{\rm I}_{\rm cac} \delta^{-1}\Vert u\Vert_{\mathcal{L}({D}^{\ast}\setminus {D})} + C^{\rm II}_{\rm cac} \Vert u\Vert_{\mathcal{L}({D}^{\ast})}\quad \text{for all}\;\; u\in \mathcal{H}_{B}({D}^{\ast}).
\end{equation}
%In addition, if ${D} = \omega$, there exist $C^{\rm I}_{\rm cac,p},\,C^{\rm II}_{\rm cac,p}>0$ such that
%\begin{equation}
%\big\Vert \widehat{P}(u)\big\Vert_{\mathcal{H}(\mathcal{\omega})}\leq \Big(\frac{C^{\rm I}_{\rm cac,p}}{\delta} + C^{\rm II}_{\rm cac,p}\Big) \Vert u\Vert_{\mathcal{L}({D}^{\ast})}.
%\end{equation}
\end{assumption}
\begin{rem}
In general, for positive definite problems, $C^{\rm II}_{\rm cac}=0$. 
\end{rem}
\cm{
\begin{rem}\label{rem:cac_assumption}
In a finite element discrete setting, the Caccioppoli-type inequality is often proved under a technical assumption: $h_{K}\lesssim \delta$ for each element $K$ with $K\cap (D^{\ast}\setminus D)\neq \emptyset$, where $h_{K}$ denotes the diameter of $K$. 
% But this assumption can often be removed for a weaker (and more standard) estimate:
% \begin{equation}\label{weaker_cac_estimate}
% \big\Vert u|_{D} \big\Vert_{B^{+},D}\lesssim  \delta^{-1} \Vert u\Vert_{\mathcal{L}({D}^{\ast})}\quad \text{for all}\;\; u\in \mathcal{H}_{B}({D}^{\ast}).    
% \end{equation}
see \cref{lem:cac-5-2} and \cref{rem:h_technical_assp}.
\end{rem}
}

%which is an important tool for studying the interior regularity of weak solutions.

Inequality \cref{eq:3-1} is a generalization of the classical Caccioppoli inequality for harmonic functions. As will be shown in \cref{sec:5}, it holds for various PDEs in both the continuous and FE discrete settings. Roughly speaking, this inequality gives an estimate of a strong norm ($\mathcal{H}_{B}$-norm) of a generalized harmonic function in terms of its weak norm ($\mathcal{L}$-norm) on a larger domain. In this sense, the restriction operator can be viewed as a \textit{smoothing} operator on the generalized harmonic spaces. This smoothing property is the key to achieving exponential convergence rates for MS-GFEM for multiscale problems with low regularity solutions. It is important to note that the first term on the right hand side of inequality \cref{eq:3-1} only involves the $\mathcal{L}$-norm of $u$ on the strip $D^{\ast}\setminus D$. This observation is key to the proof of sharper decay rates for the $n$-widths than those previously established.

\begin{assumption}[Weak approximation property]\label{ass:3-1-2}
Let $D\subset D^{\ast}\subset {D}^{\ast\ast}$ be subdomains of $\Omega$ such that $\delta:={\rm dist} ({D}^{\ast},\partial D^{\ast\ast}\setminus \partial \Omega)>0$ and that ${\rm dist} ({\bm x},\partial D^{\ast}\setminus \partial \Omega)\leq \delta$ for all ${\bm x}\in \partial D\setminus \partial \Omega$, and let $a,b\geq 0$. For each $m\in \mathbb{N}$, there exists an $m$-dimensional space $Q_{m}(D^{\ast\ast})\subset \mathcal{L}({D}^{\ast\ast})$ such that for all $u\in \mathcal{H}_{B}(D^{\ast\ast})$,
 \begin{equation}\label{eq:3-2}
 \begin{array}{lll}
 {\displaystyle \inf_{v\in Q_{m}(D^{\ast\ast})} \big(a\big\Vert (u-v)|_{{D}^{\ast}\setminus D}\big\Vert_{\mathcal{L}({D}^{\ast}\setminus D)} + b\big\Vert (u-v)|_{{D}^{\ast}}\big\Vert_{\mathcal{L}({D}^{\ast})}\big) }\\[3ex]
 {\displaystyle \quad \leq C_{\rm wa} \big(a\big|\mathsf{V}_{\delta}({D}^{\ast}\setminus D)\big|^{\alpha}+b|{D}^{\ast\ast}|^{\alpha}\big) m^{-\alpha}\,\Vert u \Vert_{B^{+},D^{\ast\ast}},}
 \end{array}
 \end{equation}
 where $\mathsf{V}_{\delta}({D}^{\ast}\setminus D):= \big\{{\bm x}\in {D}^{\ast\ast}: {\rm dist}({\bm x}, {D}^{\ast}\setminus D)\leq \delta \big\}$ is the $\delta$-vicinity of the set ${D}^{\ast}\setminus D$, and $C_{\rm wa}>0$ and $\alpha\in (0,1)$ are constants independent of the domains and $m$.
 
The cases when the estimate \cref{eq:3-2} holds for all $(a>0,\,b=0)$, all $(a=0,\,b>0)$, and all $(a>0,\,b>0)$ are referred to as case a, case b, and case ab, respectively.
\end{assumption}

\begin{rem}
In fact, to prove exponential decay rates for the $n$-widths, it suffices that estimate \cref{eq:3-2} holds for $m\geq C_{1} \big|\mathsf{V}_{\delta}({D}^{\ast}\setminus D)\big| \delta^{-\alpha}$ for cases $a$ and $ab$, and $m\geq C_{1} |{D}^{\ast\ast}| \delta^{-\alpha}$ for case $b$, where $C_{1}>0$ depends only on $d$. Moreover, \cref{ass:3-1-2} is used in the proof with $D$, $D^{\ast}$, and $D^{\ast\ast}$ being nested domains between $\omega_i$ and $\omega_i^{\ast}$. Since $\omega_i$ and $\omega_i^{\ast}$ are typically regular domains, such as cubes, balls, or polyhedra, it is often sufficient for \cref{ass:3-1-2} to hold true for such domains.     
\end{rem}

\begin{rem}
Estimate \cref{eq:3-2} is a local approximation result since the functions are only approximated in compact subsets of $D^{\ast\ast}$. As such, it holds for very general domains, which enables us to prove local exponential convergence for general domains $\omega_i$ and $\omega^{\ast}_i$. While in many applications the result of global approximation holds for the domain $D^{\ast\ast}$ with certain regularity, it fails for Maxwell-type problems where $\mathcal{H}_{B}(D^{\ast\ast})$ is only compactly embedded into $\mathcal{L}(D^{\ast})$ for a compact subset $D^{\ast}$ of $D^{\ast\ast}$.
\end{rem}

% \begin{rem}\label{rem:3-2}
% Estimate \cref{eq:3-2} implies that $d_{m}\big(\mathcal{H}_{B}({D}^{\ast}), \mathcal{L}({D}^{\ast})\big)\rightarrow 0$ as $m\rightarrow \infty$ and thus the embedding $\mathcal{H}_{B}({D}^{\ast})\subset \mathcal{L}({D}^{\ast})$ is compact. Combining this fact with \cref{eq:3-1}, we see that for any subdomains $D\subset D^{\ast}$ with ${\rm dist} ({D},\partial D^{\ast}\setminus \partial \Omega)>0$, the restriction operator from $\mathcal{H}_{B}({D}^{\ast})$ into $\mathcal{H}_{B}({D})$ is compact. Therefore, \cref{ass:2-2-2} holds true if ${\rm dist} ({\omega},\partial{\omega}^{\ast}\setminus \partial \Omega)>0$.
% \end{rem}

\Cref{ass:3-1-2} states that there exists an $m$-dimensional approximation space such that some power of $m^{-1}$ is gained in estimating the weak norm ($\mathcal{L}$-norm) of the approximation error in terms of the strong norm ($\mathcal{H}_{B}$-norm) of a generalized harmonic function being approximated. Note that the aforementioned weak and strong norms in the weak approximation property exactly match with those in the Caccioppoli-type inequality, thereby making an iteration argument possible by combining them. In practice, this property is often verified by means of spectral estimates for certain operators (e.g., the Laplacian) or by error estimates for finite element interpolants.

Before proceeding, we remark that \cref{ass:2-2-2}, i.e., the compactness of the operators $\widehat{P}_{i}$, is a consequence of the two fundamental assumptions. Indeed, we have the following result. The proof is given in \cref{sec:appendix-1-2}.
\begin{proposition}\label{prop:3-1-1}
Let \cref{ass:3-1-1} and \ref{ass:3-1-2} be satisfied. Then, for any subdomains $D\subset D^{\ast}$ of $\Omega$ with $\delta := {\rm dist} ({D},\partial D^{\ast}\setminus \partial \Omega)>0$, the restriction operator $R:\mathcal{H}_{B}(D^{\ast})\rightarrow \mathcal{H}_{B}(D)$ (i.e., $R(u) = u|_{D}$) is compact.  
\end{proposition}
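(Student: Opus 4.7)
My plan is a two-step bootstrap that splices the two fundamental assumptions together: the weak approximation property (\cref{ass:3-1-2}) first yields a compact embedding of $\mathcal{H}_{B}(D^{\ast})$ into $\mathcal{L}$ on an intermediate subdomain, and the Caccioppoli-type inequality (\cref{ass:3-1-1}) then upgrades $\mathcal{L}$-convergence on that intermediate subdomain to $B^{+}$-convergence on $D$, which gives compactness of $R$. As a first preparation, I fix an intermediate subdomain $\widetilde{D}$ with $D\subsetneq \widetilde{D}\subsetneq D^{\ast}$ such that both $\delta':=\operatorname{dist}(D,\partial\widetilde{D}\setminus\partial\Omega)$ and $\operatorname{dist}(\widetilde{D},\partial D^{\ast}\setminus\partial\Omega)$ are positive; this is possible because $\delta>0$.

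Let $\{u_{n}\}\subset\mathcal{H}_{B}(D^{\ast})$ be an arbitrary bounded sequence, normalized so that $\Vert u_{n}\Vert_{B^{+},D^{\ast}}\leq 1$. Applying \cref{ass:3-1-2} in case $b$ to the nested triple with outer domain $D^{\ast}$ and middle domain $\widetilde{D}$, for each $m\in\mathbb{N}$ there exists an $m$-dimensional subspace $Q_{m}\subset\mathcal{L}(D^{\ast})$ with
\[
\inf_{v\in Q_{m}}\Vert u-v\Vert_{\mathcal{L}(\widetilde{D})}\leq C\,m^{-\alpha}\,\Vert u\Vert_{B^{+},D^{\ast}}\qquad\forall u\in \mathcal{H}_{B}(D^{\ast}).
\]
This exhibits the map $u\mapsto u|_{\widetilde{D}}$ from $\mathcal{H}_{B}(D^{\ast})$ into $\mathcal{L}(\widetilde{D})$ as a uniform operator-norm limit of finite-rank operators, hence as compact, so I can extract a subsequence $\{u_{n_{k}}\}$ that is Cauchy in $\mathcal{L}(\widetilde{D})$.

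Since $\mathcal{H}_{B}(D^{\ast})$ is a vector subspace, each difference $u_{n_{k}}-u_{n_{l}}$ lies in $\mathcal{H}_{B}(D^{\ast})$, and by \cref{prop:2-1} its restriction to $\widetilde{D}$ lies in $\mathcal{H}_{B}(\widetilde{D})$. The Caccioppoli inequality on $D\subset\widetilde{D}$ then yields
\[
\big\Vert (u_{n_{k}}-u_{n_{l}})|_{D}\big\Vert_{B^{+},D}\leq C_{\rm cac}^{\rm I}(\delta')^{-1}\Vert u_{n_{k}}-u_{n_{l}}\Vert_{\mathcal{L}(\widetilde{D}\setminus D)}+C_{\rm cac}^{\rm II}\Vert u_{n_{k}}-u_{n_{l}}\Vert_{\mathcal{L}(\widetilde{D})},
\]
and both right-hand terms tend to zero as $k,l\to\infty$. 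Thus $\{u_{n_{k}}|_{D}\}$ is Cauchy in the $B^{+}$-seminorm on $D$. By \cref{ass:2-2-3}, $(\mathcal{H}_{B}(D)/\mathcal{K}_{D},B^{+}_{D})$ is a Hilbert space, so the classes $\{[u_{n_{k}}|_{D}]\}$ converge in the quotient; a final extraction of a subsequence in the finite-dimensional kernel $\mathcal{K}_{D}$ (whose coefficients remain bounded because the original sequence is bounded) upgrades this to convergence of $\{u_{n_{k}}|_{D}\}$ in $\mathcal{H}_{B}(D)$, completing the argument.

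The main technical issue, in my view, is the mismatch between the $B^{+}$-seminorm — in which the Caccioppoli bound is naturally stated — and the genuine Hilbert norm on $\mathcal{H}_{B}(D)$. The finite-dimensional kernel $\mathcal{K}_{D}$ is precisely the source of this gap, and handling it cleanly requires \cref{ass:2-2-3} together with the finite-dimensional extraction at the end. A minor geometric point is that the intermediate domain $\widetilde{D}$ must be compatible with the hypotheses of both fundamental assumptions simultaneously, which is immediate given $\delta>0$.
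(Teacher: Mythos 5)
Your proof is essentially the paper's argument, with the same intermediate domain $\widetilde{D}$, the same use of Assumption~\ref{ass:3-1-2} to obtain compactness of the $\mathcal{L}$-restriction, and the same application of the Caccioppoli inequality to upgrade $\mathcal{L}$-Cauchy to $B^{+}$-Cauchy on $D$. The one point where you go further than the paper is the explicit handling of the finite-dimensional kernel $\mathcal{K}_{D}$ at the end: the paper simply declares that $\Vert\cdot\Vert_{B^{+},D}$ may be assumed a norm (passing to quotients otherwise) and closes with the closedness of $\mathcal{H}_{B}(D)$ in $\mathcal{H}(D)$, whereas you decompose the restricted sequence into kernel and complement and extract a convergent subsequence of the kernel coefficients. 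That extra extraction is a legitimate tightening, though to make it airtight one should also note that completeness of $(\mathcal{H}_{B}(D)/\mathcal{K}_{D},B^{+}_{D})$ plus boundedness of $B^{+}$ relative to $\Vert\cdot\Vert_{\mathcal{H}(D)}$ forces, via the bounded inverse theorem, equivalence between the $B^{+}$-norm and the $\mathcal{H}(D)$-quotient norm on the complement. One small caveat: you invoke case~$b$ of Assumption~\ref{ass:3-1-2}, which is stronger than necessary. If only case~$a$ holds (which is the relevant case for positive definite problems with $C^{\rm II}_{\rm cac}=0$), the Caccioppoli inequality needs only the $\mathcal{L}$-norm on the strip $\widetilde{D}\setminus D$, and the same argument works by targeting $\mathcal{L}(\widetilde{D}\setminus D)$ rather than $\mathcal{L}(\widetilde{D})$; the paper leaves this implicit, but your statement ``in case $b$'' silently narrows the hypothesis.
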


Now we are in a position to give the central result of this paper. To do this, let us fix some notation. For brevity, we omit the subdomain index $i$. Let $\mathcal{H}^{0}_{B}(\omega)$ be defined similarly to $\mathcal{H}^{0}_{B}(\omega^{\ast})$ (see \cref{eq:2-24-1}), so that $\big(\mathcal{H}^{0}_{B}(\omega), B^{+}_{\omega}(\cdot,\cdot)\big)$ is a Hilbert space, and let $P_{B}$ denote the restriction of the partition of unity operator to $\mathcal{H}^{0}_{B}(\omega)$, associated with the operator norm $\Vert P_{B}\Vert$. To simplify the presentation of the result, we assume that $\omega$ and $\omega^{\ast}$ are (truncated) concentric cubes of side length $H$ and $H^{\ast}$ ($H^{\ast}>H$), respectively, and let $\delta^{\ast}:=H^{\ast}-H$. This assumption is not essential, and an extension of our result to general domains is straightforward; see \cref{rem:general_domains} below. \cm{Moreover, we recall the constants $C^{\rm I}_{\rm cac}$, $C^{\rm II}_{\rm cac}$, $C_{\rm wa}$, and $\alpha$ in \cref{ass:3-1-1} and \ref{ass:3-1-2}, and define
\begin{equation}\label{eq:Theta_and_sigma}
\Theta = 4(3d)^{\alpha} C_{\rm wa}C^{\rm I}_{\rm cac} \big(H^{\ast}/\delta^{\ast}\big)^{1-\alpha}(H^{\ast})^{d\alpha -1}, \quad \sigma^{\ast} =  C_{\rm cac}^{\rm II} \delta^{\ast}/(4C_{\rm cac}^{\rm I}).  
\end{equation}
}

\begin{theorem}[\textbf{Exponential convergence rates for local approximations}]\label{thm:3-1}
\cm{Assume that $\omega$ and $\omega^{\ast}$ are (truncated) concentric cubes. Let $\Theta$ and $\sigma^{\ast}$ be defined by \cref{eq:Theta_and_sigma}, and let \cref{ass:3-1-1} be satisfied. 
\begin{itemize}
\item[$(\rm i)$] If case $a$ of \cref{ass:3-1-2} holds and $C^{\rm II}_{\rm cac}=0$, then for $n>n_{0}:=(e\Theta)^{1/\alpha}$,
\begin{equation}\label{eq:3-3}
 d_{n}(\omega, \omega^{\ast}) \leq \Vert P_{B}\Vert \,e^{-cn^{\alpha}}\quad \text{with}\quad c=\big(e\Theta +1\big)^{-1};
\end{equation}
\item[$(\rm ii)$]If case $b$ of \cref{ass:3-1-2} holds, then for all $n>n_{0}:=(e\widetilde{\Theta})^{1/\alpha}$,
\begin{equation}\label{eq:3-4}
d_{n}(\omega, \omega^{\ast}) \leq \Vert P_{B}\Vert \,e^{\sigma^{\ast}} e^{-cn^{\alpha/(\alpha+1)}}\quad \text{with}\quad  c=\big(e\widetilde{\Theta} +1\big)^{-1/(1+\alpha)},
\end{equation}
where $\widetilde{\Theta} = (3d)^{-\alpha}(H^{\ast}/\delta^{\ast})^{\alpha}\,\Theta$;
\item[$(\rm iii)$] If case $ab$ of \cref{ass:3-1-2} holds, then \cref{eq:3-4} holds. Moreover, for all $n>n_{0}$ with $n_0:= (3d)^{1/(\alpha-1)}(2e\Theta)^{1/\alpha} \big(\sigma^{\ast} (H^{\ast}/\delta^{\ast})^{\alpha} \big)^{\frac{1}{\alpha(1-\alpha)}}$,    
\begin{equation}\label{eq:3-5}
d_{n}(\omega, \omega^{\ast}) \leq \Vert P_{B}\Vert \,e^{-cn^{\alpha}} \quad \text{with}\quad c=\big(2e\Theta +1\big)^{-1}.
\end{equation}
\end{itemize}
}
\end{theorem}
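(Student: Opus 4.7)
The plan is to prove exponential decay of $d_n(\omega,\omega^{\ast})$ by iterating a one-step approximation bound over a sequence of nested concentric cubes, combining the Caccioppoli-type inequality (\cref{ass:3-1-1}) and the weak approximation property (\cref{ass:3-1-2}). Fix an integer $k$ to be optimized and introduce cubes $\omega = D_k \subset D_{k-1} \subset \cdots \subset D_0 = \omega^{\ast}$ with side lengths linearly interpolating between $H$ and $H^{\ast}$, together with intermediate cubes $D_{j-1/2}$ between consecutive pairs, so every consecutive gap is bounded below by $\delta_k := \delta^{\ast}/(2k)$. If for each $j$ one produces a linear operator $T_j : \mathcal{H}_B(D_{j-1}) \to \mathcal{H}_B(D_j)$ of rank $\leq m$ satisfying the contraction $\|u|_{D_j} - T_j u\|_{B^{+},D_j} \leq \mu\,\|u\|_{B^{+},D_{j-1}}$, then iterating via $u_0 := u$, $u_j := u_{j-1}|_{D_j} - T_j u_{j-1}$ yields an $(\leq km)$-dimensional subspace of $\mathcal{H}_B(\omega)$ approximating $u|_\omega$ in $B^{+}$-norm to error $\mu^k\,\|u\|_{B^{+},\omega^{\ast}}$. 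Subtracting a suitable element of $\mathcal{K}$ to land in $\mathcal{H}^0_B(\omega)$ and applying the bounded operator $P_B$ then converts this into $d_{km+l}(\omega,\omega^{\ast}) \leq \|P_B\|\,\mu^k$.

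\textbf{The one-step lemma.} The construction of $T_j$ is the technical heart. Apply \cref{ass:3-1-2} to the triple $(D_j, D_{j-1/2}, D_{j-1})$ to obtain an $m$-dimensional $Q_m\subset\mathcal{L}(D_{j-1})$. Given $u\in\mathcal{H}_B(D_{j-1})$, pick the best $\mathcal{L}$-approximation $v^{\ast}\in Q_m$ in the relevant (shell, bulk, or combined) norm, and define $\tilde v^{\ast}$ as the $\mathcal{L}$-orthogonal projection of $v^{\ast}$ onto the closed subspace $\mathcal{H}_B(D_{j-1/2})\subset\mathcal{L}(D_{j-1/2})$. This map $v^{\ast}\mapsto\tilde v^{\ast}$ is linear, so its image spans an at most $m$-dimensional subspace of $\mathcal{H}_B(D_{j-1/2})$; moreover since $u|_{D_{j-1/2}}\in\mathcal{H}_B(D_{j-1/2})$ is itself a candidate for the projection, the minimizing property yields $\|u-\tilde v^{\ast}\|_{\mathcal{L}} \leq 2\|u-v^{\ast}\|_{\mathcal{L}}$ in each relevant norm. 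Crucially $u - \tilde v^{\ast}\in\mathcal{H}_B(D_{j-1/2})$, so \cref{ass:3-1-1} on the pair $(D_j, D_{j-1/2})$ converts the $\mathcal{L}$-bound into a $B^{+}$-bound on $D_j$. Defining $T_j u := \tilde v^{\ast}|_{D_j}$ and substituting the cube shell volume bound $|\mathsf{V}_{\delta_k}| \leq 3d(H^{\ast})^{d-1}\delta_k$ together with the definitions of $\Theta$, $\widetilde\Theta$, $\sigma^{\ast}$ delivers, up to universal constants, $\mu \lesssim \Theta\, k^{1-\alpha}m^{-\alpha}$ when the shell term is used (cases $a$, $ab$) and $\mu \lesssim \widetilde\Theta\,(k+\sigma^{\ast})\,m^{-\alpha}$ when the bulk term is used (cases $b$, $ab$).

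\textbf{Optimization and main obstacle.} For case (i) only the shell term applies; setting $m \sim (e\Theta)^{1/\alpha}k^{(1-\alpha)/\alpha}$ forces $\mu\leq e^{-1}$, hence $\mu^k\leq e^{-k}$ with $n = km + l \sim (e\Theta)^{1/\alpha}k^{1/\alpha}$, which inverts to $k\sim n^{\alpha}/(e\Theta)$ and yields \cref{eq:3-3} with $c = (e\Theta+1)^{-1}$, the additive $+1$ absorbing the kernel dimension $l$ and integer rounding losses. For case (ii) the bulk term dominates and $\mu\leq e^{-1}$ requires $m\sim(e\widetilde\Theta k)^{1/\alpha}$; then $n \sim k^{(\alpha+1)/\alpha}$, so $k\sim n^{\alpha/(\alpha+1)}$, and the accumulated factor $(1+O(\sigma^{\ast}/k))^k \to e^{\sigma^{\ast}}$ produces the prefactor in \cref{eq:3-4}. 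Case (iii) enjoys both bounds simultaneously: for $n \leq n_0$ the bulk bound \cref{eq:3-4} applies, but once $n>n_0$ the shell term becomes dominant and the case (i) optimization yields the sharper \cref{eq:3-5}. The main obstacle is the one-step lemma itself: bridging the $\mathcal{L}$-valued approximation space $Q_m$ with the $\mathcal{H}_B$-valued target via the $\mathcal{L}$-orthogonal projection is the essential linearization device, costing only a factor of~$2$ in the approximation constant while preserving the rank; the sharper exponent $n^{1/d}$ over the previous $n^{1/(d+1)}$ of \cite{babuska2011optimal,ma2021novel} hinges precisely on exploiting the shell volume $|\mathsf{V}_{\delta_k}|\lesssim(H^{\ast})^{d-1}\delta_k$ rather than the bulk volume $|D_{j-1}|\lesssim(H^{\ast})^d$, which is enabled by the fact that the first term on the right of \cref{eq:3-1} only involves the shell norm.
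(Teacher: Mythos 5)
Your overall strategy---iterating a one-step contraction over a chain of nested concentric cubes, using the Caccioppoli inequality to pass from an $\mathcal{L}$-shell bound to a $B^{+}$-bound, and then optimizing $(k,m)$---is indeed the structure of the paper's proof (cf.\ \cref{lem:3-1,lem:3-1-0}), and your observation that the sharper exponent $n^{1/d}$ stems from exploiting the shell volume $|\mathsf{V}_{\delta}(D^{\ast}\setminus D)|$ rather than the bulk volume $|D^{\ast\ast}|$ is exactly the key insight the paper emphasizes after \cref{ass:3-1-1}. Your optimization arithmetic ($m\sim(e\Theta)^{1/\alpha}k^{(1-\alpha)/\alpha}$, $n\sim k^{1/\alpha}$, etc.)\ also tracks the paper's proof of \cref{thm:3-1} up to the bookkeeping of the $+1$'s and the kernel dimension $l$.

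The genuine gap is in the one-step lemma. You claim that $\mathcal{H}_B(D_{j-1/2})$ is a \emph{closed} subspace of $\mathcal{L}(D_{j-1/2})$ and define $\tilde v^{\ast}$ by $\mathcal{L}$-orthogonal projection onto it. This is false in the continuous setting: the inclusion $\mathcal{H}(D)\subset\mathcal{L}(D)$ is merely a continuous embedding (e.g.\ $H^{1}\subset L^{2}$), so the space of $H^{1}$ generalized harmonic functions is dense in, not closed in, a larger set of $L^{2}$ functions, and the $\mathcal{L}$-orthogonal projection onto $\mathcal{H}_B(D_{j-1/2})$ does not exist as stated. Your argument can be repaired---project onto the $\mathcal{L}$-closure $\overline{\mathcal{H}_B(D_{j-1/2})}$ and then observe that \cref{ass:3-1-1} forces the restriction map $u\mapsto u|_{D_j}$ to extend continuously from $\mathcal{H}_B(D_{j-1/2})$ equipped with the $\mathcal{L}(D_{j-1/2})$-topology into the $\mathcal{H}$-closed space $\mathcal{H}_B(D_j)$, so the projected element restricts to a legitimate generalized harmonic function on $D_j$ and the Caccioppoli bound extends by continuity---but this extension argument is not in your write-up and is not a triviality. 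The paper (\cref{lem:3-1}) sidesteps the whole issue by a different device: it works with the \emph{compact operator} $R_c:\mathcal{H}_B(D^{\ast})/\mathcal{K}\to\mathcal{L}(D_c)/\mathcal{K}_c$, $R_cu=u|_{D_c}$, whose singular value decomposition (\cref{lem:1-1}) produces the optimal $m$-dimensional approximation space as the span of left singular vectors $R_cu_j=u_j|_{D_c}$ with $u_j\in\mathcal{H}_B(D^{\ast})$; the best rank-$m$ approximant to $u|_{D_c}$ is then the restriction of the $B^{+}$-orthogonal projection of $u$ onto $\mathrm{span}\{u_1,\ldots,u_m\}\subset\mathcal{H}_B(D^{\ast})$, so membership in the generalized harmonic space is automatic, no closure argument is needed, and the extra factor $2$ from your near-minimizer step is saved. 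Your handling of the kernel $\mathcal{K}$ is also compressed to a single clause, where the paper carefully splits the first iterate to dimension $m-l$ and appends $\mathcal{K}$ at the end (\cref{eq:3-15-1,eq:3-15-4}) so as to land exactly on $n=mN$.
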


% The constants $n_0$ and $c$ depend on $H^{\ast}/(H^{\ast}-H)$, $C^{\rm I}_{\rm cac}$, $C_{\rm wa}$, and $\alpha$. They will be given explicitly in the proof. 

\begin{rem}
In almost all practical applications (see \cref{sec:5}), $\alpha = 1/d$. Hence \cref{thm:3-1} gives a sharper decay rate for the $n$-width, compared with the rate $\mathcal{O}(e^{-cn^{1/(d+1)}})$ established in previous works \cite{babuska2011optimal,babuvska2014machine,ma2021novel,ma2022error,chupeng2023wavenumber}. \cm{Note that with $\alpha = 1/d$, the exponents $c$ above depend on the oversampling parameter $H^{\ast}/\delta^{\ast}$, but not on the actual size of the domains $\omega$ and $\omega^{\ast}$.}
\end{rem}

\begin{rem}
We are particularly interested in the dependence of the decay rates on the constant $C^{\rm II}_{\rm cac}$, which is often related to some important problem parameters. For example, it can be shown that for wave propagation problems, $C^{\rm II}_{\rm cac} = O(k)$, where $k$ denotes the wavenumber, and that for convection-diffusion problems, $C^{\rm II}_{\rm cac} = O(\Vert {\bm b}\Vert_{L^{\infty}(\omega^{\ast})})$ with ${\bm b}$ denoting the velocity field; see \cref{sec:5}. \cm{We note that the exponents $c$ in estimates \cref{eq:3-4} and \cref{eq:3-5} are independent of $C^{\rm II}_{\rm cac}$. Moreover, if $H^{\ast}\sim H\sim (C^{\rm II}_{\rm cac})^{-1}$, then $\sigma^{\ast}\lesssim 1$ and thus the constant $n_0$ in case (iii) is also independent of $C^{\rm II}_{\rm cac}$. The exponential convergence result in this case is very similar to case (i) with $C^{\rm II}_{\rm cac}=0$.}
\end{rem}

% In this sense, estimates \cref{eq:3-4} and \cref{eq:3-5} are important in that it gives an explicit estimate of the decay rate with respect to wavenumber, P\'{e}clet number or other important parameters.

% \begin{rem}
% The condition $n>n_{0}$ is not essential to our result. Indeed, it can be proved that the decay estimates hold for all $n\in\mathbb{N}$ up to a multiplicative constant; see \cite{chen2023exponentially}. 
% \end{rem}
\cm{
\begin{rem}\label{rem:general_domains}
For general domains $\omega$ and $\omega^{\ast}$, the exponential convergence results also hold true if we replace $H^{\ast}$ and $\delta^{\ast}$ above with $C|\omega^{\ast}|^{1/d}$ and $2\,{\rm dist}(\omega,\partial\omega^{\ast}\setminus\partial \Omega)$, respectively. Here the constant $C$ depends on $S(\omega)/|\omega|^{(d-1)/d}$ and $S(\omega^{\ast})/|\omega^{\ast}|^{(d-1)/d}$, where $S(\omega)$ denotes the surface area of $\omega$. The proof is a simple adaptation of that of \cref{thm:3-1}; see \cref{rem:general-domains}.
\end{rem}

\begin{rem}
In a finite element discrete setting, the technical assumption $h_{K}\lesssim \delta$ for the Caccioppoli-type inequality (see \cref{rem:cac_assumption}) leads to a condition on the mesh-size for \cref{thm:3-1}: $h_{K}\lesssim c\delta^{\ast}n^{-r}$ for each element $K$ with $K\cap (\omega^{\ast}\setminus \omega)\neq \emptyset$, where $r=\alpha$ in cases (i) and (iii) and $r=\alpha/(\alpha+1)$ in case (ii). See \cref{rem:proof_hK_ass} for the derivation of this condition. Nevertheless, this condition is not essential in practice as the mesh size is typically small enough. 
% Moreover, applying the weaker Caccioppoli estimate \cref{weaker_cac_estimate} (which can often be proved without this assumption) and the standard techniques, one can prove that $d_{n}(\omega, \omega^{\ast})\lesssim \exp(-cn^{\alpha/(\alpha+1)})$ as in case (ii). We refer to \cite{ma2021novel,ma2022error} for the details.
\end{rem}
}

%, e.g., the wavenumber of wave propagation problems, and the convective effect of convection-diffusion problems. 

\subsubsection{Proof of the main result}\label{main-proof}
Before giving the proof, let us briefly describe the main idea and illustrate the central role of the oversampling technique and the resulting smoothing effect. The proof is divided into two steps. First, we combine the two fundamental assumptions to prove algebraic decay rates for the $n$-width of the restriction from $\mathcal{H}_{B}({D}^{\ast})$ into $\mathcal{H}_{B}({D})$ for general subdomains $D\subset D^{\ast}$ with ${\rm dist} ({D},\partial D^{\ast}\setminus \partial \Omega)>0$. Next, we iterate the obtained estimates for a family of nested domains between $\omega$ and $\omega^{\ast}$ to get the desired exponential decay rates. 

It is interesting to remark that the proof of \cref{thm:3-1} surprisingly resembles the convergence proof of multigrid methods in a very different context \cite[Chapter V]{braess2007finite}, which is also based on a combination of a smoothing property and an approximation property, and also uses two norms for specifying the errors. There, the advantage of multigrid methods over classical iterative methods hinges on the approximation property associated with a coarse-grid correction. Here, nevertheless, the key to the novel exponential convergence lies in the smoothing property. Indeed, similar weak approximation estimates as \cref{ass:3-1-2} serve as the mathematical foundation of many standard numerical methods (e.g., classical FEMs). \cm{However, these methods may exhibit very slow convergence for low regularity solutions to PDEs with rough coefficients \cite{babuvska2000can}}. The lack of regularity is overcome in our method by means of the smoothing property resulting from oversampling, which, combining with the weak approximation property, leads to the exponential decay rates under very low regularity assumptions.

%Note that an explicit convergence rate in the weak approximation property is available only when the function being approximated has a finite higher-order Sobolev norm than the norm used for measuring the error. 
% It is also worth noting that the multiplicativity property \cref{eq:1-3} of $n$-widths is used at the second step, making the proof much simpler and more transparent, as compared with previous works.
%Although is based on the two fundamental properties, , nevertheless, is not unusual in that it 

Next we prove \cref{thm:3-1}. As the first step, we prove the following auxiliary result, making clear the fundamental importance of \cref{ass:3-1-1} and \ref{ass:3-1-2}. \cm{We recall \cref{ass:2-2-3} concerning the kernel $\mathcal{K}\subset \mathcal{H}_{B}(D^{\ast})$ of $B^{+}_{D^{\ast}}(\cdot,\cdot)$.}

\begin{lemma}\label{lem:3-1}
 Let $C^{\rm I}_{\rm cac}$, $C^{\rm II}_{\rm cac}$, $C_{\rm wa}$, and $\alpha$ be the constants as in \cref{ass:3-1-1} and \ref{ass:3-1-2}, and let \cref{ass:3-1-1} be satisfied. Suppose that $D^{\ast}$ satisfies the cone condition, and that ${D}\subset{D}^{\ast}$ with $\delta := {\rm dist} ({D},\partial D^{\ast}\setminus \partial \Omega)>0$. Then, there exist $m$-dimensional spaces $S_{m}(D^{\ast})\subset \mathcal{H}_{B}(D^{\ast})$ \cm{$(m>{\rm dim}\,(\mathcal{K}))$} such that for any $u\in \mathcal{H}_{B}(D^{\ast})$,

\vspace{2mm}
\begin{itemize}
\item[$(\rm i)$] if case $a$ of \cref{ass:3-1-2} holds and $C^{\rm II}_{\rm cac}=0$, then 
\begin{equation}\label{eq:3-6-0}
\inf_{v\in S_{m}(D^{\ast})} \Vert  u-v\Vert_{B^{+},D} \leq 
2C_{\rm wa}{C^{\rm I}_{\rm cac}}m^{-\alpha} {\delta}^{-1}|\mathsf{V}_{\delta/2}({D}^{\ast}\setminus D)\big|^{\alpha} \Vert u\Vert_{B^{+},D^{\ast}},
\end{equation}
 where $\mathsf{V}_{\delta/2}({D}^{\ast}\setminus D):= \big\{{\bm x}\in D^{\ast}: {\rm dist}({\bm x}, {D}^{\ast}\setminus D)\leq \delta/2 \big\}$;

\vspace{2mm}
\item[$(\rm ii)$] if case $b$ of \cref{ass:3-1-2} holds, then 
\begin{equation}\label{eq:3-6-1}
 \inf_{v\in S_{m}(D^{\ast})} \Vert u-v \Vert_{B^{+},D} \leq 
C_{\rm wa}m^{-\alpha} \big(2{C^{\rm I}_{\rm cac}}{\delta}^{-1} + C^{\rm II}_{\rm cac}\big)|{D}^{\ast}|^{\alpha}\Vert u \Vert_{B^{+},D^{\ast}};
\end{equation}

\item[$(\rm iii)$] if case $ab$ of \cref{ass:3-1-2} holds, then 
\begin{equation}\label{eq:3-6-2}
\begin{array}{ll}
 {\displaystyle \inf_{v\in S_{m}(D^{\ast})} \Vert  u-v\Vert_{B^{+},D} }  \\[3mm]
{\displaystyle \leq C_{\rm wa}m^{-\alpha}\Big(2{C^{\rm I}_{\rm cac}}{\delta}^{-1}\big|\mathsf{V}_{\delta/2}({D}^{\ast}\setminus D)\big|^{\alpha} + C^{\rm II}_{\rm cac}|{D}^{\ast}|^{\alpha}\Big)\Vert u\Vert_{B^{+},D^{\ast}}}. 
\end{array}
\end{equation}
\end{itemize}
\end{lemma}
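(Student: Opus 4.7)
The plan is to chain the two fundamental hypotheses via an intermediate buffer domain. First I would introduce $\tilde{D}$ with $D\subset\tilde{D}\subset D^{\ast}$, $\mathrm{dist}(D,\partial\tilde{D}\setminus\partial\Omega)\geq\delta/2$, and $\mathrm{dist}(\tilde{D},\partial D^{\ast}\setminus\partial\Omega)\geq\delta/2$; a canonical choice is $\tilde{D}:=\{x\in D^{\ast}:\mathrm{dist}(x,D)<\delta/2\}$. This buffer simultaneously satisfies the geometric hypotheses of \cref{ass:3-1-2} applied to the triple $(D,\tilde{D},D^{\ast})$ with parameter $\delta/2$ and matches the pair $(D,\tilde{D})$ needed for \cref{ass:3-1-1}, with the useful monotonicity $\mathsf{V}_{\delta/2}(\tilde{D}\setminus D)\subseteq\mathsf{V}_{\delta/2}(D^{\ast}\setminus D)$.

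Applying \cref{ass:3-1-2} on this triple yields an $m$-dimensional $Q_{m}\subset\mathcal{L}(D^{\ast})$ such that each $u\in\mathcal{H}_{B}(D^{\ast})$ admits an approximant $w\in Q_{m}$ with
\begin{equation*}
a\bigl\|(u-w)|_{\tilde{D}\setminus D}\bigr\|_{\mathcal{L}}+b\bigl\|(u-w)|_{\tilde{D}}\bigr\|_{\mathcal{L}}\leq C_{\rm wa}\,m^{-\alpha}\bigl(a\,|\mathsf{V}_{\delta/2}(D^{\ast}\setminus D)|^{\alpha}+b\,|D^{\ast}|^{\alpha}\bigr)\|u\|_{B^{+},D^{\ast}}.
\end{equation*}
Since \cref{ass:3-1-1} applies only to generalized-harmonic functions, I would pass from $w\in\mathcal{L}(D^{\ast})$ to $v\in\mathcal{H}_{B}(D^{\ast})$ via a bounded lift $\Pi_{B}\colon\mathcal{L}(D^{\ast})\to\mathcal{H}_{B}(D^{\ast})$, set $S_{m}(D^{\ast}):=\Pi_{B}(Q_{m})$, and take $v:=\Pi_{B}w$. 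Then $u-v\in\mathcal{H}_{B}(D^{\ast})$ restricts to $\mathcal{H}_{B}(\tilde{D})$ by \cref{prop:2-1}, so \cref{ass:3-1-1} on $(D,\tilde{D})$ gives
\begin{equation*}
\bigl\|(u-v)|_{D}\bigr\|_{B^{+},D}\leq 2C^{\rm I}_{\rm cac}\,\delta^{-1}\bigl\|(u-v)|_{\tilde{D}\setminus D}\bigr\|_{\mathcal{L}}+C^{\rm II}_{\rm cac}\bigl\|(u-v)|_{\tilde{D}}\bigr\|_{\mathcal{L}}.
\end{equation*}
Chaining the two inequalities with $(a,b)=(2C^{\rm I}_{\rm cac}\delta^{-1},0)$ together with $C^{\rm II}_{\rm cac}=0$ would produce \cref{eq:3-6-0} for case (i); bounding the strip norm by the full $\tilde{D}$-norm and taking $(a,b)=(0,\,2C^{\rm I}_{\rm cac}\delta^{-1}+C^{\rm II}_{\rm cac})$ would produce \cref{eq:3-6-1} for case (ii); and $(a,b)=(2C^{\rm I}_{\rm cac}\delta^{-1},\,C^{\rm II}_{\rm cac})$ would produce \cref{eq:3-6-2} for case (iii), in each case provided $\Pi_{B}$ preserves the $\mathcal{L}$-closeness to $u$ on the region $E$ that enters the right-hand side.

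The main obstacle is the construction of $\Pi_{B}$ with the appropriate region-preserving property. For case (ii), the global $\mathcal{L}(D^{\ast})$-orthogonal projection onto the closed subspace $\mathcal{H}_{B}(D^{\ast})$ suffices, since $\|u-\Pi_{B}w\|_{\mathcal{L}(D^{\ast})}\leq\|u-w\|_{\mathcal{L}(D^{\ast})}$ and $\tilde{D}\subset D^{\ast}$. For cases (i) and (iii), however, the target bound involves the annulus $\tilde{D}\setminus D$, which demands a localized lift—one that modifies $w$ only inside $D$ so that $\Pi_{B}w$ coincides with $w$ on a neighborhood of $\partial D^{\ast}$ while still lying in $\mathcal{H}_{B}(D^{\ast})$. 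Constructing such a local correction in the purely abstract Hilbert-space setting, verifying its boundedness, and ensuring that the image $\Pi_{B}(Q_{m})$ remains $m$-dimensional is where I expect the bulk of the technical work to go.
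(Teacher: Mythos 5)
Your plan correctly identifies the intermediate buffer domain and the pattern of chaining the weak approximation property with the Caccioppoli-type inequality, but the lift $\Pi_{B}$ you introduce is precisely the obstruction the paper's proof is designed to avoid, and I don't see how your construction can be repaired in the abstract setting for cases (i) and (iii).

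The difficulty you flag at the end is real and it is not merely technical bookkeeping. For case (i) the error must be controlled by the $\mathcal{L}$-norm on the annulus $\tilde{D}\setminus D$ alone, so $\Pi_{B}w$ must remain close to $u$ \emph{on that annulus}. A global $\mathcal{L}(D^{\ast})$-orthogonal projection onto $\mathcal{H}_{B}(D^{\ast})$ minimizes the distance over all of $D^{\ast}$, which can be achieved by trading accuracy in the annulus for accuracy in the interior; it carries no information localized to the strip. Constructing a "localized lift" that modifies $w$ only inside $D$ while staying in $\mathcal{H}_{B}(D^{\ast})$ amounts to prescribing the behavior of a generalized-harmonic function on a subregion and asking it to still be generalized-harmonic globally, which has no reason to be possible in the abstract framework (and would resemble the kind of unique-continuation statement that generalized-harmonic spaces do not satisfy).

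The paper sidesteps this entirely by never trying to map $Q_{m}$ into $\mathcal{H}_{B}(D^{\ast})$. Instead it treats the weak approximation property purely as a tool to bound the Kolmogorov $n$-width $d_{m-l}(R_{c})$ of the restriction operator $R_{c}\colon \mathcal{H}_{B}(D^{\ast})/\mathcal{K}\to\mathcal{L}(D_{c})/\mathcal{K}_{c}$: the existence of $Q_{m}$ with the stated approximation property is an upper bound on an infimum, so $d_{m-l}(R_{c})\leq C_{\rm wa}|\mathsf{V}_{\delta/2}(D_{c})|^{\alpha}m^{-\alpha}$, hence $R_{c}$ is compact. Then \cref{lem:1-1} says the $n$-width is attained by the span of the images under $R_{c}$ of the first $m-l$ \emph{right} singular vectors $u_{1},\dots,u_{m-l}$, which live in $\mathcal{H}_{B}(D^{\ast})/\mathcal{K}$ by construction. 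Defining $S_{m}(D^{\ast}):={\rm span}\{u_{1},\dots,u_{m-l}\}\oplus\mathcal{K}$ produces an $m$-dimensional subspace of $\mathcal{H}_{B}(D^{\ast})$ that achieves the same annulus bound as $Q_{m}$, with no lifting operator required; the Caccioppoli step then proceeds exactly as you describe. In short: you don't need $Q_{m}$ itself to sit inside $\mathcal{H}_{B}(D^{\ast})$, you only need it to certify the compactness and size of $d_{m-l}(R_{c})$, and then singular value decomposition hands you the space you actually use.
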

\begin{proof}
Let $\widetilde{D}^{\ast}$ be an intermediate set between $D$ and $D^{\ast}$ such that ${\rm dist}(\widetilde{D}^{\ast}, \partial D^{\ast}\setminus \partial \Omega) = \delta /2$ and that ${\rm dist}({\bm x}, \partial \widetilde{D}^{\ast}\setminus\partial \Omega) = \delta/2$ for all ${\bm x}\in \partial D\setminus \partial \Omega$. We first consider case (i). Denote by $D_{c} = \widetilde{D}^{\ast}\setminus{D}$ and $\mathcal{K}_{c}=\mathcal{K}|_{D_c}$, where $\mathcal{K}$ is the kernel of $B^{+}_{D^{\ast}}(\cdot,\cdot)$; see \cref{ass:2-2-3}. Furthermore, we define the operator $R_{c}:\mathcal{H}_{B}({D}^{\ast})/\mathcal{K}\rightarrow \mathcal{L}(D_{c})/\mathcal{K}_{c}$ by $R_{c} u = u|_{D_{c}}$. By the regularity assumption on $D^{\ast}$, $\big(\mathcal{H}_{B}({D}^{\ast})/\mathcal{K}, B^{+}_{D^{\ast}}(\cdot,\cdot)\big)$ is a Hilbert space. Hence, we can define the following $m$-width of operator $R_{c}$:
\begin{equation}\label{eq:3-7}
d_{m}(R_{c}):=\inf_{S(m)\subset \mathcal{L}(D_{c})/\mathcal{K}_{c}}\sup_{u\in \mathcal{H}_{B}({D}^{\ast})/\mathcal{K}} \inf_{v\in S(m)}\frac {\big\Vert u|_{D_{c}}-v\big\Vert_{\mathcal{L}(D_{c})/\mathcal{K}_{c}}}{\Vert u \Vert_{B^{+},{D}^{\ast}}}.   
\end{equation}
We claim that the operator $R_{c}$ is compact. Indeed, \cref{ass:3-1-2} (case $a$) shows that there exist $m$-dimensional spaces $Q_{m}(D^{\ast})\subset\mathcal{L}(D^{\ast})$ such that for all $u\in \mathcal{H}_{B}({D^{\ast}})$,
\begin{equation}\label{eq:3-7-0}
\inf_{v\in Q_{m}(D^{\ast})}\big\Vert u|_{D_c}-v|_{D_c}\big\Vert_{\mathcal{L}({D}_{c})}  \leq C_{\rm wa}  |\mathsf{V}_{\delta/2}(D_{c})|^{\alpha} m^{-\alpha} \Vert u\Vert_{B^{+},{D}^{\ast}}.
\end{equation}
Since $\Vert u \Vert_{B^{+},{D}^{\ast}}$ vanishes for all $u\in \mathcal{K}$, estimate \cref{eq:3-7-0} implies that $\mathcal{K}_{c}\subset Q_{m}(D^{\ast})|_{D_c}$. Denoting by $\widehat{Q}_{m}(D_c) = \big( Q_{m}(D^{\ast})|_{D_c}\big)/\mathcal{K}_c\subset \mathcal{L}({D}_{c})/\mathcal{K}_{c}$, estimate \cref{eq:3-7-0} further yields that for all $u\in \mathcal{H}_{B}({D^{\ast}})/\mathcal{K}$,
\begin{equation}\label{eq:3-8}
\inf_{v\in \widehat{Q}_{m}(D_c)}\big\Vert u|_{D_c}-v\big\Vert_{\mathcal{L}({D}_{c})/\mathcal{K}_c}  \leq C_{\rm wa}  |\mathsf{V}_{\delta/2}(D_{c})|^{\alpha} m^{-\alpha} \Vert u \Vert_{B^{+},{D}^{\ast}}.
\end{equation}
Without loss of generality, we assume that ${\rm dim}(\mathcal{K}_c) = {\rm dim}(\mathcal{K}) = l$, and hence ${\rm dim}\big(\widehat{Q}_{m}(D_c)\big)\leq m-l$. We then deduce from \cref{eq:3-8} that
\begin{equation}\label{eq:3-9}
d_{m-l}(R_{c})\leq C_{\rm wa} |\mathsf{V}_{\delta/2}(D_{c})|^{\alpha}m^{-\alpha} \;\rightarrow 0 \;\quad {\rm as}\;\;m\rightarrow \infty,
\end{equation}
which implies that operator $R_{c}$ is compact. Let $\widehat{S}_{m-l}(D^{\ast}) = {\rm span}\{u_{1},\ldots,  u_{m-l}\}\subset \mathcal{H}_{B}({D}^{\ast})/\mathcal{K}$ be the space spanned by the first $m-l$ right singular vectors of the operator $R_{c}$. It follows from \cref{lem:1-1} and \cref{eq:3-9} that
\begin{equation}\label{eq:3-10}
\sup_{u\in \mathcal{H}_{B}({D}^{\ast})/\mathcal{K}} \inf_{v\in \widehat{S}_{m-l}(D^{\ast})}\frac {\big\Vert u|_{D_{c}}-v|_{D_{c}}\big\Vert_{\mathcal{L}({D_{c}})/\mathcal{K}_{c}}}{\Vert u \Vert_{B^{+},{D}^{\ast}}} \leq C_{\rm wa} |\mathsf{V}_{\delta/2}(D_{c})|^{\alpha}m^{-\alpha}.
\end{equation}
Let $S_{m}(D^{\ast}) = \widehat{S}_{m-l}(D^{\ast}) \oplus \mathcal{K}$. Estimate \cref{eq:3-10} implies that for all $u\in \mathcal{H}_{B}({D}^{\ast})$,
\begin{equation}\label{eq:3-11}
\inf_{v\in S_{m}(D^{\ast})}\big\Vert u|_{D_{c}}-v|_{D_{c}}\big\Vert_{\mathcal{L}({D_{c}})}\leq C_{\rm wa} |\mathsf{V}_{\delta/2}(D_{c})|^{\alpha}m^{-\alpha} \Vert u \Vert_{B^{+},{D}^{\ast}}.    
\end{equation}
Combining \cref{eq:3-11} and the Caccioppoli-type inequality (\cref{ass:3-1-1}), and noting that $\mathsf{V}_{\delta/2}(\widetilde{D}^{\ast}\setminus D)\subset \mathsf{V}_{\delta/2}(D^{\ast}\setminus D)$, lead us to \cref{eq:3-6-0}. Case $({\rm ii})$ can be proved by following the same lines as above. To prove case $(\rm iii)$, we first equip the space $\mathcal{L}(\widetilde{D}^{\ast})$ with the norm $\vertiii{\cdot}_{\mathcal{L}(\widetilde{D}^{\ast})}$ defined by 
\begin{equation}\label{eq:3-11-1}
 \vertiii{u}_{\mathcal{L}(\widetilde{D}^{\ast})}:=\Big( \big(2C^{\rm I}_{\rm cac}/{\delta}\big)^{2}\Vert u\Vert^{2}_{\mathcal{L}(\widetilde{D}^{\ast}\setminus D)} + (C^{\rm II}_{\rm cac})^{2} \Vert u\Vert^{2}_{\mathcal{L}(\widetilde{D}^{\ast})}\Big)^{1/2}.
\end{equation}
Clearly, $\big(\mathcal{L}(\widetilde{D}^{\ast}), \vertiii{\cdot}_{\mathcal{L}(\widetilde{D}^{\ast})}\big)$ is a Hilbert space. Next we consider the $n$-width of the restriction from $\mathcal{H}_{B}(D^{\ast})/\mathcal{K}$ into $\mathcal{L}(\widetilde{D}^{\ast})/(\mathcal{K}|_{\widetilde{D}^{\ast}})$, defined similarly to \cref{eq:3-7}. Estimate \cref{eq:3-6-2} then follows from a similar argument as above by combining \cref{ass:3-1-1} and \ref{ass:3-1-2}. 
\end{proof}

Before proceeding to the second step of the proof, we recall that $\omega$ and $\omega^{\ast}$ are assumed to be (truncated) concentric cubes of side lengths $H$ and $H^{\ast}$, respectively. Let $\delta^{\ast} = H^{\ast}-H$. Given $N\in\mathbb{N}$, let $\{\omega^{j}\}_{j=1}^{N+1}$ be the nested family of (truncated) cubes of side lengths $\{H^{\ast}-\delta^{\ast} (j-1)/N\}_{j=1}^{N+1}$ such that $\omega=\omega^{N+1}\subset \omega^{N}\subset\ldots\subset\omega^{1} = \omega^{\ast}$. For simplicity, we assume that \cref{ass:2-2-3} holds for all $\omega^{k}$ $(k=1,\cdots,N+1)$ with the same kernel $\mathcal{K}$ of dimension $l$.

\begin{lemma}\label{lem:3-1-0}
Let $m,N\in\mathbb{N}$ with $m>l$, and let \cref{ass:3-1-1} be satisfied. Then
\begin{equation}\label{eq:3-15}
d_{mN}(\omega,\omega^{\ast})\leq \big\Vert P_{B}\big \Vert \big[\xi(N,m)\big]^{N},
\end{equation}
where 
\begin{equation}\label{xi-case-a}
    \xi(N,m) = 4(3d)^{\alpha} C_{\rm wa}C^{\rm I}_{\rm cac} m^{-\alpha}N^{1-\alpha}\big(H^{\ast}/\delta^{\ast}\big)^{1-\alpha} (H^{\ast})^{d\alpha-1}
\end{equation}
if case $a$ of \cref{ass:3-1-2} holds and $C^{\rm II}_{\rm cac}=0$, 
\begin{equation}\label{xi-case-b}
    \xi(N,m) =  4(1+\sigma^{\ast}/N)C_{\rm wa}C^{\rm I}_{\rm cac} m^{-\alpha}N\big(H^{\ast}/\delta^{\ast}\big) (H^{\ast})^{d\alpha -1} 
\end{equation}
with $\sigma^{\ast}=C_{\rm cac}^{\rm II}\delta^{\ast}/(4C_{\rm cac}^{\rm I})$ if case $b$ of \cref{ass:3-1-2} holds, and 
\begin{equation}\label{xi-case-ab}
    \xi(N,m) = C_{\rm wa}m^{-\alpha} \Big(4(3d)^{\alpha}C^{\rm I}_{\rm cac} \big(N/\delta^{\ast}\big)^{1-\alpha} (H^{\ast})^{(d-1)\alpha} + C^{\rm II}_{\rm cac} (H^{\ast})^{\alpha d}\Big)
\end{equation}
if case $ab$ of \cref{ass:3-1-2} holds.
\end{lemma}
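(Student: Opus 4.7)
The plan is to iterate the single-step bound of \cref{lem:3-1} along the telescope $\omega^{\ast}=\omega^{1}\supset\omega^{2}\supset\cdots\supset\omega^{N+1}=\omega$. Each pair of consecutive concentric cubes is separated by distance $\delta:=\delta^{\ast}/(2N)$, so a uniform per-step contraction factor $\xi_{0}$ will be available at every level, and after $N$ steps this will accumulate into the product $\xi_{0}^{N}$ predicted by the statement.

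First I would fix an arbitrary $u\in\mathcal{H}_{B}^{0}(\omega^{\ast})$ and set $u_{1}=u$. For each $j=1,\ldots,N$ I would invoke \cref{lem:3-1} with $(D^{\ast},D)=(\omega^{j},\omega^{j+1})$, producing an $m$-dimensional space $S_{m}^{(j)}\subset\mathcal{H}_{B}(\omega^{j})$ and a one-step factor $\xi_{0}$ depending only on $H^{\ast}$, $\delta^{\ast}$, $N$, and the constants in \cref{ass:3-1-1,ass:3-1-2}. Since $m>l$, I may enlarge $S_{m}^{(j)}$ (and relabel) so as to assume $\mathcal{K}\subset S_{m}^{(j)}$ for every $j$. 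Choosing $v_{j}\in S_{m}^{(j)}$ to realize the infimum and defining $u_{j+1}:=u_{j}-v_{j}\in\mathcal{H}_{B}(\omega^{j+1})$, the estimate of \cref{lem:3-1} gives $\Vert u_{j+1}\Vert_{B^{+},\omega^{j+1}}\leq\xi_{0}\Vert u_{j}\Vert_{B^{+},\omega^{j}}$. Telescoping yields $\Vert u-\sum_{j}v_{j}\Vert_{B^{+},\omega}\leq\xi_{0}^{N}\Vert u\Vert_{B^{+},\omega^{\ast}}$, and $\tilde{v}:=\sum_{j}v_{j}|_{\omega}$ lies in $S:=\sum_{j}S_{m}^{(j)}|_{\omega}\subset\mathcal{H}_{B}(\omega)$ with $\dim S\leq mN$ and $\mathcal{K}|_{\omega}\subset S$.

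To convert this $B^{+}$-bound into an $\mathcal{H}_{0}(\omega)$-bound on $\widehat{P}u$, I would split $u_{N+1}|_{\omega}=u_{N+1}^{0}+c$ with $u_{N+1}^{0}\in\mathcal{H}_{B}^{0}(\omega)$ and $c\in\mathcal{K}$, absorb $\widehat{P}c\in\widehat{P}(\mathcal{K}|_{\omega})\subset Q_{mN}:=\widehat{P}(S)$ into the approximation, and use $\Vert u_{N+1}^{0}\Vert_{B^{+},\omega}=\Vert u_{N+1}\Vert_{B^{+},\omega}$ (because $c$ lies in the kernel of $B^{+}_{\omega}$). Applying the operator-norm inequality $\Vert P_{B}w\Vert_{\mathcal{H}_{0}(\omega)}\leq\Vert P_{B}\Vert\,\Vert w\Vert_{B^{+},\omega}$ to $w=u_{N+1}^{0}$ then delivers $d_{mN}(\omega,\omega^{\ast})\leq\Vert P_{B}\Vert\,\xi_{0}^{N}$ via the definition in \cref{eq:2-20}.

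The remaining task is to identify $\xi_{0}$ with $\xi(N,m)$ in each of the three cases by substituting the geometric data $\delta=\delta^{\ast}/(2N)$, the trivial bound $|D^{\ast}|\leq(H^{\ast})^{d}$, and an estimate of the form $|\mathsf{V}_{\delta/2}(\omega^{j}\setminus\omega^{j+1})|\leq 3d(H^{\ast})^{d-1}\delta^{\ast}/N$ for the vicinity volume between consecutive cubes into the three bounds of \cref{lem:3-1}, and then collecting powers of $H^{\ast}/\delta^{\ast}$, $N$, and $H^{\ast}$ to recover \cref{xi-case-a,xi-case-b,xi-case-ab}. The main subtleties I anticipate are the bookkeeping of the $3d$ vicinity constant (which must account for both the strip thickness and the added $\delta/2$-layer restricted to $\omega^{j}$) and the arrangement $\mathcal{K}\subset S_{m}^{(j)}$ that makes the final residual projectable into $\mathcal{H}_{B}^{0}(\omega)$; the iteration itself is conceptually routine once \cref{lem:3-1} is in hand.
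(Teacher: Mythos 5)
Your plan matches the paper's proof essentially step for step: the paper applies \cref{lem:3-1} across the nested cubes $\{\omega^j\}$, iterates the one-step contraction, collects the approximation spaces into an $mN$-dimensional space containing $\mathcal{K}$, and then passes to the $\mathcal{H}_0(\omega)$-norm via $P_B$ exactly as you describe (the paper additionally packages the $\mathcal{K}=\emptyset$ case more compactly through the $n$-width multiplicativity property \cref{eq:1-3}, but the iteration argument you outline is the one used for the general case). One small bookkeeping note: the paper bounds $|\mathsf{V}_{\delta/2}(\omega^k\setminus\omega^{k+1})|\leq 3d\,\delta\,(H^*)^{d-1}$ with $\delta=\delta^*/(2N)$, whereas the expression $3d(H^*)^{d-1}\delta^*/N$ you wrote equals $6d\,\delta\,(H^*)^{d-1}$; carrying the factor of $2$ consistently is needed to land on \cref{xi-case-a,xi-case-b,xi-case-ab} exactly.
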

\begin{proof}
Estimate \cref{eq:3-15} is essentially a direct consequence of the multiplicativity property \cref{eq:1-3}. Indeed, in the case $\mathcal{K}=\emptyset$, let ${R}_{B}^{k}$ $(k=1,\cdots,N)$ denote the restriction operators from $\big(\mathcal{H}_{B}(\omega^{k}), \Vert\cdot\Vert_{B^{+},\omega^{k}}\big)$ into $\big(\mathcal{H}_{B}(\omega^{k+1}), \Vert\cdot\Vert_{B^{+},\omega^{k+1}}\big)$, and let $d_{m}({R}_{B}^{k})$ be the $n$-widths of operators ${R}^{k}_{B}$. Since
\begin{equation}\label{eq:3-14}
\widehat{P} = P_{B}{R}^{N}_{B}\cdots{R}^{1}_{B},
\end{equation}
a simple application of the multiplicativity property \cref{eq:1-3} gives 
\begin{equation}\label{eq:3-15-0}
 d_{mN}(\omega,\omega^{\ast})\leq \big\Vert P_{B} \big\Vert \prod_{k=1}^{N}d_{m}({R}_{B}^{k}).
\end{equation}
Combining \cref{eq:3-15-0} and \cref{lem:3-1}, and noting that $\delta:={\rm dist}(\omega^{k},\omega^{k+1}) = \delta^{\ast}/(2N)$ and $\big|\mathsf{V}_{\delta/2}(\omega^{k}\setminus\omega^{k+1})\big|\leq 3d\delta (H^{\ast})^{d-1}$ $(k=1,\cdots, N)$, complete the proof of this lemma. The proof in the case $\mathcal{K}\neq\emptyset$ is based on an iteration argument. Arguing similarly as in the proof of \cref{eq:3-10} with $D=\omega^{2}$ and $D^{\ast}=\omega^{1}$ (recall that $\Vert\cdot\Vert_{B^{+},\omega^{1}}$ is a norm on $\mathcal{H}^{0}_{B}(\omega^{1})$), we deduce that there exists an $(m-l)$-dimensional space $\widetilde{Q}_{m-l}(\omega^{1})\subset \mathcal{H}_{B}(\omega^{1})$ such that 
\begin{equation}\label{eq:3-15-1}
\inf_{v\in \widetilde{Q}_{m-l}(\omega^{1})} \big\Vert (u-v)|_{\omega^{2}}\big\Vert_{B^{+},\omega^{2}} \leq  \xi(N,m)\,\Vert u\Vert_{B^{+},\omega^{1}} \quad \text{for all}\;\; u\in \mathcal{H}^{0}_{B}(\omega^{1}),
\end{equation}
where $\xi(N,m)$ is given above. Similarly, for each $k=2,\cdots,N$, it follows from \cref{lem:3-1} that there exists an $m$-dimensional space $\widetilde{Q}_{m}(\omega^{k})\subset \mathcal{H}_{B}(\omega^{k})$ such that
\begin{equation}\label{eq:3-15-2}
\inf_{v\in \widetilde{Q}_{m}(\omega^{k})} \big\Vert(u-v)|_{\omega^{k+1}}\big\Vert_{B^{+},\omega^{k+1}} \leq  \xi(N,m)  \,\Vert u\Vert_{B^{+},\omega^{k}}\quad \text{for all}\;\; u\in \mathcal{H}_{B}(\omega^{k}).
\end{equation}
Let $u\in \mathcal{H}^{0}_{B}(\omega^{1})$. Combining \cref{eq:3-15-1} and \cref{eq:3-15-2} and recalling \cref{prop:2-1}, we see that there is a $v^{1}_{u}\in \widetilde{Q}_{m-l}(\omega^{1})|_{\omega^{N+1}}+\widetilde{Q}_{m}(\omega^{2})|_{\omega^{N+1}}+\cdots+\widetilde{Q}_{m}(\omega^{N})|_{\omega^{N+1}}$ satisfying
\begin{equation}\label{eq:3-15-3}
\big\Vert u|_{\omega^{N+1}}-v_{u}^{1}\big\Vert_{B^{+},\omega^{N+1}} \leq  \big[\xi(N,m)\big]^{N}\,\Vert u\Vert_{B^{+},\omega^{1}}.
\end{equation}
% Clearly, there is a $v_{u}^{2}\in \mathcal{K}$ such that $v_{u}=v_{u}^{1}+v_{u}^{2}$ satisfies $u|_{\omega^{N+1}}-v_{u}\in \mathcal{H}_{B}^{0}(\omega^{N+1})$. 
Furthermore, let $n = mN$ and denote by
\begin{equation}\label{eq:3-15-4}
\widetilde{Q}_{n}(\omega)= \mathcal{K}+\widetilde{Q}_{m-l}(\omega^{1})|_{\omega^{N+1}}+\widetilde{Q}_{m}(\omega^{2})|_{\omega^{N+1}}+\cdots+\widetilde{Q}_{m}(\omega^{N})|_{\omega^{N+1}}.
\end{equation}
We can find a $v_{u}=v^{1}_{u}+v_{u}^{2}\in \widetilde{Q}_{n}(\omega)$ with $v_{u}^{2}\in \mathcal{K}$, such that $u|_{\omega}-v_{u}\in \mathcal{H}_{B}^{0}(\omega)$ and
\begin{equation}\label{eq:3-15-5}
\big\Vert u|_{\omega}-v_{u}\big\Vert_{B^{+},\omega} \leq  \big[\xi(N,m)\big]^{N}\,\Vert u\Vert_{B^{+},\omega^{\ast}}.
\end{equation}
Using \cref{eq:3-15-5} and keeping in mind that $\Vert \cdot \Vert_{B^{+},\omega}$ is a norm on $\mathcal{H}_{B}^{0}(\omega)$, we obtain
\begin{equation}\label{eq:3-15-6}
\begin{array}{cc}
{\displaystyle \big\Vert P(u|_{\omega}) - P(v_{u})\big\Vert_{\mathcal{H}_{0}(\omega)} \leq \big\Vert P_{B}\big\Vert \,\big\Vert u|_{\omega}-v_{u}\big\Vert_{B^{+},\omega} }\\[3mm]
{\displaystyle \leq \big\Vert P_{B}\big\Vert \big[\xi(N,m)\big]^{N}\,\Vert u\Vert_{B^{+},\omega^{\ast}}.}
\end{array}
\end{equation}
Let $Q_{n}(\omega) = P\big(\widetilde{Q}_{n}(\omega)\big)\subset \mathcal{H}_{0}(\omega)$. It follows from \cref{eq:3-15-6} that
\begin{equation}
\sup_{u\in \mathcal{H}^{0}_{B}(\omega^{\ast})}\inf_{v\in Q_{n}(\omega)}\frac{\Vert P(u|_{\omega}) -v\Vert_{\mathcal{H}_{0}(\omega)}}{\Vert u\Vert_{B^{+},\omega^{\ast}}}\leq \big\Vert P_{B} \big\Vert \big[\xi(N,m)\big]^{N},
\end{equation}
which gives the desired estimate \cref{eq:3-15} by recalling the definition of the $n$-width.
\end{proof}
\cm{
\begin{rem}\label{rem:general-domains}
If $\omega$ and $\omega^{\ast}$ are general domains, we can construct the intermediate domains iteratively as follows:
\begin{equation}\label{eq:inter_domains}
    \omega^{j} = \bigcup_{{\bm x}\in \omega^{j+1}} B({\bm x}, \delta^{\ast}/N)\cap \Omega, \qquad j=N,N-1,\ldots,2,
\end{equation}
where $\delta^{\ast}= {\rm dist}(\mathcal{\omega},\partial \mathcal{\omega}^{\ast} \setminus\partial \Omega)$ and $B({\bm x}, \delta^{\ast}/N)$ denotes the ball centered at ${\bm x}$ with radius $\delta^{\ast}/N$. It is clear that $\{\omega^{j}\}_{j=2}^{N}$ all satisfy the cone condition. To prove \cref{lem:3-1-0} in this case, it suffices to modify the estimates of $\big|\mathsf{V}_{\delta/2}(\omega^{k}\setminus\omega^{k+1})\big|$ and $|\omega^k|$ as follows:
\begin{equation}\label{eq:estimate_general_domains}
\big|\mathsf{V}_{\delta/2}(\omega^{k}\setminus\omega^{k+1})\big| \lesssim S(\omega^{k}) \delta \lesssim |\omega^{k}|^{(d-1)/d} \delta  \leq  |\omega^{\ast}|^{(d-1)/d} \delta,\quad |\omega^k|\leq |\omega^{\ast}|,
\end{equation}
where $S(\omega^{k})$ denotes the surface area of $\omega^{k}$. Note that the hidden constant in \cref{eq:estimate_general_domains} depends on $S(\omega^{k})/|\omega^{k}|^{(d-1)/d}$, which in turn depends on $S(\omega^{\ast})/|\omega^{\ast}|^{(d-1)/d}$ $(k=1)$ or $S(\omega)/|\omega|^{(d-1)/d}$ $(k\geq 2)$ by the construction \cref{eq:inter_domains}. Using \cref{eq:estimate_general_domains} and following along the same lines as above, we can prove similar results as \cref{xi-case-a,xi-case-b,xi-case-ab}, with $|\omega^{\ast}|^{1/d}$ and $2\,{\rm dist}(\omega,\partial\omega^{\ast}\setminus\partial \Omega)$ in place of $H^{\ast}$ and $\delta^{\ast}$, respectively.
\end{rem}
}

Now we are in a position to prove \cref{thm:3-1}.
\begin{proof}[Proof of \cref{thm:3-1}]

We first consider case (i). Recalling $\Theta$ defined by \cref{eq:Theta_and_sigma},  
% \begin{equation}\label{eq:Theta}
% \Theta = 4(3d)^{\alpha} C_{\rm wa}C^{\rm I}_{\rm cac} \big(H^{\ast}/\delta^{\ast}\big)^{1-\alpha}(H^{\ast})^{d\alpha -1}, 
% \end{equation}
for each $N\in \mathbb{N}$, we let $m\in\mathbb{N}$ satisfy
\begin{equation}\label{eq:3-20}
\big(e\Theta N^{1-\alpha}\big)^{1/\alpha} \leq m <\big(e\Theta N^{1-\alpha}\big)^{1/\alpha} + 1.
\end{equation}
\cm{Note that in practical applications, condition \cref{eq:3-20} yields that $m>l:={\rm dim}\, (\mathcal{K})$, and thus the condition in \cref{lem:3-1-0} is satisfied.} It follows from \cref{eq:3-20} that $\xi(N,m)$ given by \cref{xi-case-a} satisfies $\xi(N,m)\leq e^{-1}$,
% Moreover, using \cref{lem:1-2} gives that
% \begin{equation}\label{eq:3-21}
% \prod_{k=1}^{N}\Big(1-\frac{(k-1)\delta^{\ast}}{NH^{\ast}}\Big)^{(d-1)\alpha} \leq \big(e\sqrt{N}\big)^{(d-1)\alpha}e^{-N(\alpha d-\alpha)\rho(H/H^{\ast})},
% \end{equation}
% where $\rho(s) = 1+{s\log(s)}/{(1-s)}$. Combining \cref{eq:3-19,eq:3-20-0,eq:3-21}, we get
and thus estimate \cref{eq:3-15} becomes
\begin{equation}\label{eq:3-22}
 d_{mN}(\omega,\omega^{\ast})\leq \Vert P_{B}\Vert \,e^{-N}.
\end{equation}
Let $n=Nm$. Using \cref{eq:3-20} again, we see that 
\begin{equation}\label{eq:3-23}
n\leq \big(e\Theta +1\big)^{1/\alpha} N^{1/\alpha} \quad \Longrightarrow \quad N\geq \big(e\Theta +1\big)^{-1} n^{\alpha}.
\end{equation}
Denoting by $c=\big(e\Theta +1\big)^{-1}$ and inserting \cref{eq:3-23} into \cref{eq:3-22}, we arrive at
\begin{equation}\label{eq:3-24}
d_{n}(\omega,\omega^{\ast})\leq \Vert P_{B} \Vert\,e^{-cn^{\alpha}}.
\end{equation}
Furthermore, since $N\geq 1$, we deduce from \cref{eq:3-20} that $n\geq  N \big(e\Theta N^{1-\alpha}\big)^{1/\alpha} \geq n_{0}: = \big(e\Theta\big)^{1/\alpha}$. Hence, \cref{thm:3-1}$(\rm i)$ is proved. Case $(\rm ii)$ can be proved similarly by using the inequality $\big(1+\sigma^{\ast}/N\big)^{N}\leq e^{\sigma^{\ast}}$. The details are omitted.

We proceed to prove case (iii). Clearly, estimate \cref{eq:3-4} is satisfied if case $ab$ of \cref{ass:3-1-2} holds. 
% Moreover, by \cref{lem:3-1} and proceeding along the same lines as in the proof of \cref{eq:3-16}, we see that
% \begin{equation}\label{eq:3-26}
% \begin{array}{lll}
% {\displaystyle d_{m}({R}_{B}^{k}) \leq C_{\rm wa}m^{-\alpha} \Big(2(3d)^{\alpha}C^{\rm I}_{\rm cac} \big(2N/\delta^{\ast}\big)^{1-\alpha} (H^{\ast}_{k})^{(d-1)\alpha} + C^{\rm II}_{\rm cac} (H^{\ast}_{k})^{\alpha d}\Big) }\\[3mm]
% {\displaystyle \quad  \qquad \quad\leq C_{\rm wa}m^{-\alpha} \Big(4(3d)^{\alpha}C^{\rm I}_{\rm cac} \big(N/\delta^{\ast}\big)^{1-\alpha} (H^{\ast})^{(d-1)\alpha} + C^{\rm II}_{\rm cac} (H^{\ast})^{\alpha d}\Big).}
% \end{array}
% \end{equation}
To prove \cref{eq:3-5}, we first choose $N\geq 1$ such that
\begin{equation}\label{eq:3-27}
4(3d)^{\alpha} C_{\rm wa}C^{\rm I}_{\rm cac}\big(N/\delta^{\ast}\big)^{1-\alpha} (H^{\ast})^{(d-1)\alpha}\geq C_{\rm wa}C^{\rm II}_{\rm cac} (H^{\ast})^{\alpha d}, 
\end{equation}
and then let $m$ satisfy
\begin{equation}\label{eq:case(iii)-m}
\big(2e\Theta N^{1-\alpha}\big)^{1/\alpha} \leq m <\big(2e\Theta N^{1-\alpha}\big)^{1/\alpha} + 1.
\end{equation}
It follows from \cref{eq:3-27} and \cref{eq:case(iii)-m} that $\xi(N,m)\leq e^{-1}$ with $\xi(N,m)$ given by \cref{xi-case-ab}. Inserting this result into \cref{eq:3-15} yields $d_{n}(\omega,\omega^{\ast})\leq \Vert P_{B}\Vert \,e^{-N}$ with $n=mN$. Proceeding as above, we can show that 
\begin{equation}
d_{n}(\omega, \omega^{\ast}) \leq \Vert P_{B}\Vert \,e^{-cn^{\alpha}} \quad \text{with}\quad c:=\big(2e\Theta +1\big)^{-1}
\end{equation}
holds for all $n>n_{0}:= (3d)^{1/(\alpha-1)}(2e\Theta)^{1/\alpha} \big(\sigma^{\ast} (H^{\ast}/\delta^{\ast})^{\alpha} \big)^{\frac{1}{\alpha(1-\alpha)}}$.
% \begin{equation}
% n_0:= (3d)^{1/(\alpha-1)}(2e\Theta)^{1/\alpha} \Big[\sigma^{\ast} \big(H^{\ast}/\delta^{\ast}\big)^{\alpha} \Big]^{\frac{1}{\alpha(1-\alpha)}}.    
% \end{equation}
Hence, estimate \cref{eq:3-5} is proved and the proof of \cref{thm:3-1} is complete.
\end{proof}

\cm{
\begin{rem}\label{rem:proof_hK_ass}
Since we apply the Caccioppoli-type inequality in the proof with $\delta\sim \delta^{\ast}/N$, the assumption $h_{K}\lesssim \delta$ in a finite element discrete setting (see \cref{rem:cac_assumption}) becomes $h_{K}\lesssim \delta^{\ast}/N$. Then, \cref{eq:3-23} implies that $h_{K}\lesssim c\delta^{\ast}n^{-\alpha}$ in case (i). Other cases follow similarly.
\end{rem}
}

Finally, we note that by slightly modifying the proof of \cref{lem:3-1} and following the remaining lines above, it can be shown that there exists an $n$-dimensional space $Q_{n}(\omega)\subset \mathcal{H}_{B}(\omega)$ such that for any $u\in \mathcal{H}_{B}(\omega^{\ast})$, 
\begin{equation}
\inf_{v\in Q_{n}(\omega)} \Vert u-v\Vert_{\mathcal{L}(\omega)}\leq \texttt{err}_{n}\Vert u\Vert_{\mathcal{L}(\omega^{\ast})},   
\end{equation}
where $\texttt{err}_{n}$ satisfies the same bounds as $d_{n}(\omega, \omega^{\ast})$ without involving the norm $\Vert P_{B}\Vert$.

\subsubsection{Extension to higher order problems}\label{sec:framework-higher-order} 
The abstract approximation theory presented in \cref{sec:fundamental-assumptions} typically applies to second-order problems. To render it applicable to higher order problem, we need to generalize the Caccioppoli-type inequality and the weak approximation property. To do this, let us introduce some new local Hilbert spaces and associated operators. For any subdomain $D\subset\Omega$, let $\mathcal{L}_{i}(D)$, $i=1,\cdots, L$, be Hilbert spaces, and let $A_{D,i}:\mathcal{L}(D)\rightarrow \mathcal{L}_{i}(D)$ be bounded linear operators. In addition, we assume that there exists a constant $c_{0}>0$ such that
\begin{equation}\label{hp:norm-equiv}
\Vert u\Vert_{\mathcal{L}(D)} \leq c_{0}\sum_{i=1}^{L} \Vert A_{D,i} (u) \Vert_{\mathcal{L}_{i}(D)} \quad \text{for all}\;\; u\in \mathcal{L}({D}).
\end{equation}
Now we can give the desired generalized Caccioppoli-type inequality: There exists $C^{\rm I}_{\rm cac}>0$ such that for any subdomains $D\subset D^{\ast}$ with $\delta := {\rm dist} ({D},\partial D^{\ast}\setminus \partial \Omega)>0$, and for any $u\in \mathcal{H}_{B}({D}^{\ast})$,
\begin{equation}\label{hp:cac-inequality}
\big\Vert u|_{D} \big\Vert_{B^{+},D}\leq C^{\rm I}_{\rm cac}\sum_{i=1}^{L} {\delta}^{-i} \big\Vert A_{D_c,i}(u|_{D_c})\big\Vert_{\mathcal{L}_{i}(D_c)} \quad \text{with}\;\; D_{c} = D^{\ast}\setminus D.
\end{equation}
We also generalize the weak approximation property as follows. Let $D\subset D^{\ast}\subset {D}^{\ast\ast}$ be subdomains of $\Omega$ satisfying that $\delta:={\rm dist} ({D}^{\ast},\partial D^{\ast\ast}\setminus \partial \Omega)>0$ and that ${\rm dist} ({\bm x},\partial D^{\ast}\setminus \partial \Omega)\leq \delta$ for all ${\bm x}\in \partial D\setminus \partial \Omega$. For each $m\in \mathbb{N}$, there exists an $m$-dimensional space $Q_{m}(D^{\ast\ast})\subset \mathcal{L}({D}^{\ast\ast})$ such that for any $u\in \mathcal{H}_{B}(D^{\ast\ast})$ and any $a_i>0$, $i=1,\cdots,L$,
 \begin{equation}\label{hp:weak-approx}
 \begin{array}{lll}
 {\displaystyle \inf_{v\in Q_{m}(D^{\ast\ast})} \Big[ \sum_{i=1}^{L}a_i\big\Vert A_{D_c,i}(u|_{D_c} - v|_{D_c})\big\Vert_{\mathcal{L}_{i}(D_c)} \Big] }\\[3ex]
 {\displaystyle \quad \leq C_{\rm wa} \Big[\sum_{i=1}^{L}a_i \big|\mathsf{V}_{\delta}(D_{c})\big|^{i\alpha}m^{-i\alpha}\Big]\,\Vert u\Vert_{B^{+},D^{\ast\ast}},}
 \end{array}
 \end{equation}
 where $D_{c} = D^{\ast}\setminus D$, $\mathsf{V}_{\delta}({D}_{c})= \big\{{\bm x}\in {D}^{\ast\ast}: {\rm dist}({\bm x}, {D}_{c})\leq \delta \big\}$, and $C_{\rm wa}$ and $\alpha$ are positive constants independent of $a_i$, $D^{\ast\ast}$, $D^{\ast}$, $D$, and $m$.

 \begin{rem}
When $L=1$, $\mathcal{L}_{1}(D) = \mathcal{L}(D)$, and $A_{D,1} = id$, the generalized Caccioppoli-type inequality reduces to the original inequality \cref{eq:3-1} with $C_{\rm cac}^{\rm II} = 0$, and the generalized weak approximation property reduces to case $a$ of \cref{ass:3-1-2}.
     
 \end{rem}

%  \begin{rem}
% polyharmonic problems are representative applications of our abstract framework. Let $k\geq 1$ and $\mathcal{H}_{B}(D)\subset H^{k+1}(D)$ be the space of polyharmonic functions. Let $\mathcal{L}(D) := H^{k}(D)$, and let $\mathcal{L}_{i}(D):= (L^{2}(D))^{\Gamma_{i}}$ with $\Gamma_{i} = {\rm card}\{{\bm \alpha}\in   \}$     
%  \end{rem}

Based on the generalized Caccioppoli-type inequality and weak approximation property, we have a similar exponential bound for the $n$-widths as \cref{thm:3-1}.
\begin{theorem}\label{thm:3-2}
Let the generalized Caccioppoli-type inequality and weak approximation property be satisfied. Then, there exist $n_{0}>0$ and $c>0$ such that for all $n>n_{0}$,
\begin{equation}
 d_{n}(\omega, \omega^{\ast}) \leq \Vert P_{B}\Vert \,e^{-cn^{\alpha}}.
\end{equation}   
\end{theorem}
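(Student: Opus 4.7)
The plan is to mirror the two-stage argument used to prove \cref{thm:3-1}\,(i), substituting the generalized Caccioppoli-type inequality \cref{hp:cac-inequality} and weak approximation property \cref{hp:weak-approx} for their counterparts \cref{ass:3-1-1}--\ref{ass:3-1-2}, and then handling the additional sum structure by a straightforward geometric-series bound. I proceed in two steps: first a single-step algebraic estimate analogous to \cref{lem:3-1}\,(i), then an iteration over nested cubes as in \cref{lem:3-1-0} and the proof of \cref{thm:3-1}.

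For the single-step estimate, I fix subdomains $D\subset D^{\ast}$ with $\delta=\mathrm{dist}(D,\partial D^{\ast}\setminus\partial\Omega)>0$, insert the intermediate set $\widetilde{D}^{\ast}$ used in \cref{lem:3-1}, and set $D_c=\widetilde{D}^{\ast}\setminus D$. On $\mathcal{L}(D_c)/\mathcal{K}_c$ I introduce the weighted norm
\begin{equation*}
\vertiii{w}^{2}:=\sum_{i=1}^{L}\bigl(C^{\mathrm{I}}_{\mathrm{cac}}\delta^{-i}\bigr)^{2}\bigl\Vert A_{D_c,i}(w)\bigr\Vert_{\mathcal{L}_{i}(D_c)}^{2},
\end{equation*}
which is a genuine Hilbert norm by \cref{hp:norm-equiv} and the injectivity assumption on $P_{i}|_{\mathcal{K}_i}$ in \cref{ass:2-2-3}. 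Applying \cref{hp:weak-approx} with the weights $a_i=C^{\mathrm{I}}_{\mathrm{cac}}\delta^{-i}$ shows that the restriction $R_c:(\mathcal{H}_B(D^{\ast})/\mathcal{K},\Vert\cdot\Vert_{B^{+},D^{\ast}})\to (\mathcal{L}(D_c)/\mathcal{K}_c,\vertiii{\cdot})$ is compact, with $n$-width bounded by $C_{\mathrm{wa}}\sum_{i=1}^{L}C^{\mathrm{I}}_{\mathrm{cac}}\delta^{-i}|\mathsf{V}_{\delta/2}(D_c)|^{i\alpha}m^{-i\alpha}$. Extracting the first $m-l$ right singular vectors, augmenting by $\mathcal{K}$, and invoking \cref{hp:cac-inequality} term by term produces an $m$-dimensional space $S_m(D^{\ast})\subset\mathcal{H}_B(D^{\ast})$ with
\begin{equation*}
\inf_{v\in S_m(D^{\ast})}\Vert u-v\Vert_{B^{+},D}\leq C_{\mathrm{wa}}C^{\mathrm{I}}_{\mathrm{cac}}\sum_{i=1}^{L}\bigl(\delta^{-1}|\mathsf{V}_{\delta/2}(D_c)|^{\alpha}m^{-\alpha}\bigr)^{i}\Vert u\Vert_{B^{+},D^{\ast}}.
\end{equation*}

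For the iteration, I construct the nested family $\{\omega^{j}\}_{j=1}^{N+1}$ of (truncated) concentric cubes exactly as in the proof of \cref{lem:3-1-0}, with each thickness equal to $\delta^{\ast}/(2N)$. Multiplicativity of the Kolmogorov $n$-width (see \cref{eq:1-3}) together with the single-step estimate yields $d_{mN}(\omega,\omega^{\ast})\leq \Vert P_B\Vert\,\xi(N,m)^{N}$ where, using $|\mathsf{V}_{\delta/2}(\omega^{k}\setminus\omega^{k+1})|\lesssim \delta^{\ast}(H^{\ast})^{d-1}/N$, one obtains $\xi(N,m)\leq C^{\ast}\sum_{i=1}^{L}\bigl(\gamma N^{1-\alpha}m^{-\alpha}\bigr)^{i}$ for an explicit constant $\gamma$ depending on $H^{\ast}$ and $\delta^{\ast}$. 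Choosing $m$ so that $\gamma N^{1-\alpha}m^{-\alpha}\leq 1/(2C^{\ast})$ forces the geometric sum to be dominated by its first term, hence $\xi(N,m)\leq e^{-1}$; this requires $m\gtrsim N^{(1-\alpha)/\alpha}$. Setting $n=mN$ then yields $n\sim N^{1/\alpha}$ (i.e.\ $N\geq c\,n^{\alpha}$), and therefore $d_n(\omega,\omega^{\ast})\leq \Vert P_B\Vert\, e^{-cn^{\alpha}}$ for all $n>n_0$.

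The main technical obstacle is organising the single-step bound so that the $L$ heterogeneous Caccioppoli terms (each with its own $\delta^{-i}$) and the $L$ heterogeneous gains $m^{-i\alpha}$ in the weak approximation match up cleanly. The weighted product norm $\vertiii{\cdot}$ is designed precisely so that a single application of \cref{hp:weak-approx} produces \emph{one} approximation space bounding the full weighted sum on $D_c$. Once this is in place, the iteration is routine, since for any $\beta\in(0,1/2]$ one has $\sum_{i=1}^{L}\beta^{i}\leq 2\beta$, so the many-term sum is absorbed into a constant factor and the bookkeeping collapses to the same shape as case (i) of \cref{thm:3-1}. Extensions to mixed cases with a $C^{\mathrm{II}}_{\mathrm{cac}}$-type lower order term in \cref{hp:cac-inequality} would follow by the same template as cases (ii) and (iii) there.
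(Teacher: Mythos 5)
Your proposal is correct and follows essentially the same two-stage argument the paper uses: both introduce a $\delta^{-i}$-weighted product norm $\vertiii{\cdot}$ on $\mathcal{L}(D_c)$ to turn the $L$ heterogeneous Caccioppoli/weak-approximation terms into a single compact restriction operator, extract an $m$-dimensional space via its singular vectors, and then iterate over nested domains choosing $m\sim N^{1/\alpha-1}$ exactly as in \cref{eq:3-20}. The only cosmetic departures are whether $C^{\mathrm{I}}_{\mathrm{cac}}$ is folded into the weights and the $\sqrt{L}$ versus geometric-sum bookkeeping for the finite sum over $i$, neither of which changes the mechanics.
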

\begin{proof}
The proof is very similar to that of \cref{thm:3-1}, and hence we only give the sketch here. The first step is to establish a similar result as \cref{lem:3-1}(i) with estimate \cref{eq:3-6-0} replaced by
\begin{equation*}
\inf_{v\in S_{m}(D^{\ast})} \Vert u-v\Vert_{B^{+},D} \leq 
\sqrt{L}C_{\rm wa}{C^{\rm I}_{\rm cac}} \Big[\sum_{i=1}^{L} (\delta/2)^{-i}m^{-i\alpha} |\mathsf{V}_{\delta/2}({D}^{\ast}\setminus D)\big|^{i\alpha}\Big] \Vert u\Vert_{B^{+},D^{\ast}}.
\end{equation*}
To do this, we use the norm $\vertiii{u}_{\mathcal{L}({D}_{c})}:=\Big(\sum_{i=1}^{L} (\delta/2)^{-2i} \big\Vert A_{D_c,i}(u)\big\Vert^{2}_{\mathcal{L}_{i}(D_c)} \Big)^{1/2}$ for the space $\mathcal{L}(D_c)$ so that it is a Hilbert space (\cref{hp:norm-equiv} implies that the two norms $\vertiii{\cdot}_{\mathcal{L}({D}_{c})}$ and $\Vert\cdot\Vert_{\mathcal{L}({D}_{c})}$) are equivalent), and use the same arguments as for \cref{lem:3-1}. With this auxiliary result, we then prove \cref{lem:3-1-0} accordingly by following along the same lines, and the desired result follows by choosing $m\sim N^{1/\alpha-1}$ as in \cref{eq:3-20}.
\end{proof}

Representative applications of the abstract framework developed above are polyharmonic problems; see \cref{sec:fourth-order-problems} for its application to a biharmonic-type problem. As before, the generalized Caccioppoli-type inequality and the approximation theory can be extended to the case $C^{\rm II}_{\rm cac}\neq 0$, which arises in some applications, e.g., biharmonic wave problems. We omit this extension to avoid overloading the paper.

% implies a low-rank property of the solution operator  if the two fundamental conditions are fulfilled. If the right-hand side $F$ is supported in a subset $D_{1}$ of $\Omega$, then the restriction of the solution $S(F)$ to another subdomain $D_{2}$ well separated from $D_{1}$ can be well approximated by a low-dimensional space independent of $F$. More precisely, we have

%We end this section by briefly comparing the abstract GFEM with classical FEMs. 
%
%\textbf{Why an exponential decay rate with low regularity solutions is possible?}

\subsection{Quasi-optimal global convergence}\label{sec:3-2}
This subsection is devoted to proving quasi-optimal convergence for the abstract MS-GFEM. We distinguish three cases: the symmetric and coercive case (the elliptic case), the nonsymmetric and coercive case, and the indefinite case. For the sake of convenience, we introduce and use the following notation in this subsection:
\begin{equation}\label{eq:3-28-1}
\begin{array}{lll}
{\displaystyle  d_{\mathtt{max}} = \max_{1\leq i\leq M}d_{n_i-l_i}(\omega_i,\omega_{i}^{\ast}),\quad  H^{\ast}_{\mathtt{max}} = \max_{1\leq i\leq M} {\rm diam}\,(\omega_{i}^{\ast}).}
\end{array}
\end{equation}
Moreover, we let $\zeta^{\ast}>0$ be the smallest integer such that 
\begin{equation}\label{eq:3-28-0}
\sum_{i=1}^{M}\big\Vert u|_{\omega_i^{\ast}}\big\Vert^{2}_{\mathcal{L}(\omega_i^{\ast})}\leq \zeta^{\ast} \Vert u\Vert^{2}_{\mathcal{L}(\Omega)},\quad {\rm and}\quad \sum_{i=1}^{M}\big\Vert v|_{\omega_i^{\ast}}\big\Vert^{2}_{B^{+},\omega_i^{\ast}}\leq \zeta^{\ast} \Vert v\Vert^{2}_{\mathcal{H}(\Omega)}
\end{equation}
hold for all $u\in \mathcal{L}(\Omega)$ and $v\in \mathcal{H}(\Omega)$. In general, $\zeta^{\ast}$ is given by the maximal number of overlaps of the $\omega_i^{\ast}$'s; see \cref{def:2-1-1}. 

%In many applications, $ \Vert \cdot\Vert_{B^{+},\omega_i^{\ast}}\leq \Vert \cdot\Vert_{\mathcal{H}(\omega_i^{\ast})}$,

Before proving the quasi-optimal convergence, let us derive global approximation error estimates for the method. As before, we assume that in the elliptic case, the energy norm defined by \cref{eq:elliptic-norm} is used, and that for $i=1,\cdots,M$, $B^{+}_{\omega_i^{\ast}}=B_{\omega_i^{\ast}}$, and $\psi_{i}\in \mathcal{H}_{0}(\omega_i^{\ast})$. Recall that $\zeta$ denotes the coloring constant of the subdomains $\{ \omega_{i}\}_{i=1}^{M}$.

\begin{lemma}\label{lem:3-2}
Let the global particular function $u^{p}$ and the global approximation space $S_{n}(\Omega)$ be defined by \cref{eq:2-9}, and let $u^{{e}}$ be the solution of problem \cref{eq:2-3}. Then, 
\begin{equation}\label{eq:3-29}
\inf_{\varphi \in u^{p} + S_{n}(\Omega)}\Vert u^{{e}}-\varphi \Vert_{\mathcal{H}(\Omega)}\leq d_{\mathtt{max}} \Big(\zeta \sum_{i=1}^{M}\big\Vert u^{e}|_{\omega_i^{\ast}} - \psi_{i}\big\Vert^{2}_{B^{+},\omega^{\ast}_i}\Big)^{1/2},
\end{equation}
where $\psi_{i}$ are given by \cref{ass:2-2-1}. Moreover, in the elliptic case,
\begin{equation}\label{eq:3-29-0}
\inf_{\varphi \in u^{p} + S_{n}(\Omega)}\Vert u^{{e}}-\varphi \Vert_{\mathcal{H}(\Omega)}\leq  \sqrt{\zeta \zeta^{\ast}}\, {d}_{\mathtt{max}}\Vert u^{{e}}\Vert_{\mathcal{H}(\Omega)}.
\end{equation}
\end{lemma}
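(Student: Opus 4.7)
The plan is to combine Theorem 2.1 (the abstract GFEM global error bound in terms of local errors) with Theorem 3.5 (the local approximation error estimate), and then, in the elliptic case, use the Pythagorean-type inequality already derived at the end of Section 2.2 together with the finite overlap constant $\zeta^{\ast}$.

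First, for each $i = 1,\ldots,M$, Theorem 3.5 provides the local bound
\begin{equation*}
\inf_{\varphi\in u_{i}^{p}+ S_{n_i}(\omega_i)}\big\Vert P_{i}(u^{e}|_{\omega_i}-\varphi)\big\Vert_{\mathcal{H}_{0}(\omega_{i})}\leq d_{n_i-l_i}(\omega_{i},\omega_{i}^{\ast})\,\big\Vert u^{e}|_{\omega_{i}^{\ast}}-\psi_{i}\big\Vert_{B^{+},\omega^{\ast}_i}.
\end{equation*}
So setting $\varepsilon_{i} = d_{n_i-l_i}(\omega_{i},\omega_{i}^{\ast})\,\Vert u^{e}|_{\omega_{i}^{\ast}}-\psi_{i}\Vert_{B^{+},\omega_i^{\ast}}$, condition \eqref{eq:2-11} of Theorem 2.1 is verified. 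Plugging this into Theorem 2.1 and bounding each $d_{n_i-l_i}(\omega_i,\omega_i^{\ast})$ by $d_{\mathtt{max}}$ produces exactly \eqref{eq:3-29}.

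For the elliptic case, recall from the discussion around \eqref{local_error_estimate_elliptic} that with $\psi_{i}\in\mathcal{H}_{0}(\omega_i^{\ast})$ the Galerkin orthogonality $B_{\omega_i^{\ast}}(u^{e}|_{\omega_i^{\ast}}-\psi_{i},\psi_{i})=0$ yields
\begin{equation*}
\big\Vert u^{e}|_{\omega_i^{\ast}}-\psi_{i}\big\Vert_{B,\omega_i^{\ast}} \leq \big\Vert u^{e}|_{\omega_i^{\ast}}\big\Vert_{B,\omega_i^{\ast}}.
\end{equation*}
Substituting this into \eqref{eq:3-29} and using the second inequality of \eqref{eq:3-28-0} with $v=u^{e}$ gives
\begin{equation*}
\sum_{i=1}^{M}\big\Vert u^{e}|_{\omega_i^{\ast}}-\psi_{i}\big\Vert^{2}_{B,\omega_i^{\ast}} \leq \sum_{i=1}^{M}\big\Vert u^{e}|_{\omega_i^{\ast}}\big\Vert^{2}_{B,\omega_i^{\ast}} \leq \zeta^{\ast}\,\Vert u^{e}\Vert^{2}_{\mathcal{H}(\Omega)},
\end{equation*}
which immediately yields \eqref{eq:3-29-0}.

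There is no real obstacle here: the result is essentially an assembly step that chains the already-established local estimate through the GFEM gluing bound. The only minor point requiring care is the reduction $\Vert u^{e}|_{\omega_i^{\ast}}-\psi_{i}\Vert_{B,\omega_i^{\ast}} \leq \Vert u^{e}|_{\omega_i^{\ast}}\Vert_{B,\omega_i^{\ast}}$, which relies on the (already stipulated) elliptic choice $B^{+}_{\omega_i^{\ast}}=B_{\omega_i^{\ast}}$ and $\psi_{i}\in\mathcal{H}_{0}(\omega_i^{\ast})$, together with the energy norm being induced by the form $B$.
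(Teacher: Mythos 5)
Your proof is correct and follows essentially the same route as the paper's one-line argument: chain \cref{thm:local_approximation_error} into \cref{thm:2-1} for \cref{eq:3-29}, then use the elliptic simplification \cref{local_error_estimate_elliptic} (i.e.\ $\Vert u^{e}|_{\omega_i^{\ast}}-\psi_i\Vert_{B,\omega_i^{\ast}}\leq\Vert u^{e}|_{\omega_i^{\ast}}\Vert_{B,\omega_i^{\ast}}$) together with the finite-overlap bound \cref{eq:3-28-0} for \cref{eq:3-29-0}. The details you spelled out, including where the elliptic hypotheses $B^{+}_{\omega_i^{\ast}}=B_{\omega_i^{\ast}}$ and $\psi_i\in\mathcal{H}_0(\omega_i^{\ast})$ enter, are exactly the intended mechanism.
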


\begin{proof}
Estimate \cref{eq:3-29} follows from \cref{thm:2-1,thm:local_approximation_error}. Similarly, \cref{eq:3-29-0} is an easy consequence of \cref{thm:2-1}, \cref{local_error_estimate_elliptic}, and \cref{eq:3-28-0}.
% \begin{equation}\label{eq:3-30}
% \begin{array}{lll}
% {\displaystyle \inf_{\varphi\in u^{p}+ S_{n}(\Omega)} \Vert u - \varphi\Vert_{\mathcal{H}(\Omega)} \leq \Big(\zeta \sum_{i=1}^{M}d^{2}_{n_i-l_i}(\omega_{i},\omega_{i}^{\ast}) \,\big\Vert u^{e}|_{\omega_i^{\ast}} - \psi_{i}\big\Vert^{2}_{B^{+},\omega^{\ast}_i}\Big)^{1/2}}\\[3mm]
% {\displaystyle \qquad \qquad \qquad \qquad\qquad \quad \leq d_{\mathtt{max}} \Big(\zeta \sum_{i=1}^{M}\big\Vert u^{e}|_{\omega_i^{\ast}} - \psi_{i}\big\Vert^{2}_{B^{+},\omega^{\ast}_i}\Big)^{1/2}.}
% \end{array}
% \end{equation}
% In the elliptic case, using \cref{lem:2-2-4} yields that
% \begin{equation}\label{eq:3-30-0}
% \begin{array}{lll}
% {\displaystyle \inf_{\varphi\in u^{p}+ S_{n}(\Omega)} \Vert u - \varphi\Vert_{\mathcal{H}(\Omega)} \leq {d}^{0}_{\mathtt{max}}  \Big(\zeta \sum_{i=1}^{M}\big\Vert u^{e}|_{\omega_i^{\ast}} - \psi_{i}\big\Vert^{2}_{B,\omega^{\ast}_i}\Big)^{1/2}.}
% \end{array}
% \end{equation}
% In the elliptic case, since $u^{e}|_{\omega_i^{\ast}} - \psi_{i}\in \mathcal{H}_{B}(\omega_i^{\ast})$ and $\psi_{i}\in \mathcal{H}_{0}(\omega_i^{\ast})$, 
% Combining this inequality with \cref{eq:3-30} and taking into account that $\Vert\cdot\Vert_{B,\omega_i^{\ast}} =  \Vert\cdot\Vert_{B^{+},\omega_i^{\ast}}$ yield \cref{eq:3-29-0}.
\end{proof}

To prove the global convergence of the method, we first recall \cref{ass:2-1-0} and note that combining \cref{eq:2-3,eq:2-10} yields the following Galerkin orthogonality:
\begin{equation}\label{eq:3-31}
B(u^{e}-u^{G},v) = 0\quad \forall v\in S_{n}(\Omega).
\end{equation}
The following lemma gives global error estimates for the method in coercive cases (i.e., $C_{0} = 0$ in \cref{ass:2-1-0}), which follows from C\'{e}a's Lemma and \cref{lem:3-2}.
\begin{theorem}\label{quasi-optimal-coercive}
Let $u^{{e}}$ be the solution of problem \cref{eq:2-3} and $u^{G}$ be the abstract GFEM approximation defined by \cref{eq:2-10}. Then, in the elliptic case,
\begin{equation}\label{eq:3-32}
\Vert u^{{e}}-u^{G} \Vert_{\mathcal{H}(\Omega)} \leq \sqrt{\zeta \zeta^{\ast}}\, {d}_{\mathtt{max}}\Vert u^{{e}}\Vert_{\mathcal{H}(\Omega)}, 
\end{equation} 
and in general coercive cases,
\begin{equation}\label{eq:3-32-0}
\Vert u^{{e}}-u^{G} \Vert_{\mathcal{H}(\Omega)} \leq \big(C_{b}/C_{1}\big)  d_{\mathtt{max}} \Big(\zeta \sum_{i=1}^{M}\big\Vert u^{e}|_{\omega_i^{\ast}} - \psi_{i}\big\Vert^{2}_{B^{+},\omega^{\ast}_i}\Big)^{1/2},
\end{equation} 
where $C_{b}$ and $C_{1}$ are the constants as in \cref{ass:2-1-0}.
\end{theorem}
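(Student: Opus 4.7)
The plan is to combine the Galerkin orthogonality \cref{eq:3-31} with the coercivity in \cref{ass:2-1-0} to obtain a C\'{e}a-type quasi-optimality inequality, then invoke \cref{lem:3-2} to convert the infimum into the stated bounds involving $d_{\mathtt{max}}$.

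First I would observe that since $u^G \in u^p + S_n(\Omega)$, for any $\varphi \in u^p + S_n(\Omega)$ we have $u^G - \varphi \in S_n(\Omega)$. Thus the Galerkin orthogonality \cref{eq:3-31} gives $B(u^e - u^G, u^G - \varphi) = 0$, and hence
\begin{equation*}
B(u^e - u^G, u^e - u^G) = B(u^e - u^G, u^e - \varphi) \qquad \forall \varphi \in u^p + S_n(\Omega).
\end{equation*}

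In the elliptic case, $B(\cdot,\cdot)$ is a Hermitian inner product inducing the norm $\Vert \cdot \Vert_{\mathcal{H}(\Omega)}$, so the Cauchy--Schwarz inequality yields $\Vert u^e - u^G \Vert_{\mathcal{H}(\Omega)}^2 \leq \Vert u^e - u^G \Vert_{\mathcal{H}(\Omega)} \Vert u^e - \varphi \Vert_{\mathcal{H}(\Omega)}$, i.e.\ the standard best approximation property
\begin{equation*}
\Vert u^e - u^G \Vert_{\mathcal{H}(\Omega)} \leq \inf_{\varphi \in u^p + S_n(\Omega)} \Vert u^e - \varphi \Vert_{\mathcal{H}(\Omega)}.
\end{equation*}
Estimate \cref{eq:3-32} then follows directly from \cref{eq:3-29-0} in \cref{lem:3-2}.

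In the general coercive case ($C_0 = 0$ in \cref{Garding-inequality}), I would combine the G\aa{}rding-type inequality with the boundedness of $B(\cdot,\cdot)$: by \cref{ass:2-1-0},
\begin{equation*}
C_1 \Vert u^e - u^G \Vert_{\mathcal{H}(\Omega)}^2 \leq |B(u^e - u^G, u^e - u^G)| = |B(u^e - u^G, u^e - \varphi)| \leq C_b \Vert u^e - u^G \Vert_{\mathcal{H}(\Omega)} \Vert u^e - \varphi \Vert_{\mathcal{H}(\Omega)},
\end{equation*}
giving $\Vert u^e - u^G \Vert_{\mathcal{H}(\Omega)} \leq (C_b/C_1)\,\Vert u^e - \varphi \Vert_{\mathcal{H}(\Omega)}$ for all $\varphi \in u^p + S_n(\Omega)$. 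Taking the infimum over $\varphi$ and applying \cref{eq:3-29} of \cref{lem:3-2} produces \cref{eq:3-32-0}.

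There is no real obstacle here: the argument is textbook C\'{e}a plus the already-established local approximation machinery bundled in \cref{lem:3-2}. The only minor care point is to ensure, when quoting \cref{lem:3-2}, that the hypotheses used there (the choice of local particular functions $\psi_i$ and the compatibility of $B^+_{\omega_i^{\ast}}$ with $B_{\omega_i^{\ast}}$ in the elliptic case) are consistent with the setting of the current theorem; but the theorem statement and the surrounding discussion in \cref{sec:3-2} already fix those conventions, so the invocation is immediate.
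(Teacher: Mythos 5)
Your proof is correct and is exactly the argument the paper intends: the paper itself states that the theorem "follows from Céa's Lemma and \cref{lem:3-2}" without spelling out details, and you have supplied precisely the standard Galerkin-orthogonality/Cauchy--Schwarz (elliptic case) and Gårding--boundedness (general coercive case) steps, followed by the invocation of \cref{eq:3-29-0} and \cref{eq:3-29} from \cref{lem:3-2}. Nothing to add.
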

\begin{rem}\label{adaptive-remarks}
Combining \cref{eq:3-32}, \cref{lem:2-2-1,lem:2-2-2}, we see that in the elliptic case,
\begin{equation}\label{explicit-error-bound}
\Vert u^{{e}}-u^{G} \Vert_{\mathcal{H}(\Omega)}/\Vert u^{{e}}\Vert_{\mathcal{H}(\Omega)} \leq  \sqrt{\zeta \zeta^{\ast}}\Big(\max_{1\leq i\leq M}\lambda^{1/2}_{n_i+1}\Big).
\end{equation}
Therefore, in a discrete setting, we can preselect a suitable eigenvalue threshold to achieve "any" desired accuracy. In this sense, our method is inherently adaptive. 
\end{rem}

% \begin{proof}
% Estimate \cref{eq:3-32} is a direct consequence of C\'{e}a's Lemma and \cref{eq:3-29-0}. Similarly, in general coercive cases, we observe that by \cref{ass:2-1-0} and the Galerkin orthogonality \cref{eq:3-31},
% \begin{equation}
% \begin{array}{lll}
% {\displaystyle 
% C_{1}\Vert u^{{e}}-u^{G} \Vert_{\mathcal{H}(\Omega)}^{2} \leq |B(u^{{e}}-u^{G},u^{{e}}-u^{G})| = |B(u^{{e}}-u^{G},u^{{e}})| }\\[2mm]
% {\displaystyle \leq C_{b} \Vert u^{{e}}-u^{G} \Vert_{\mathcal{H}(\Omega)}\Vert u^{{e}}\Vert_{\mathcal{H}(\Omega)},}
% \end{array}
% \end{equation}
% which, combining with \cref{eq:3-29}, gives \cref{eq:3-32-0}.
% \end{proof}

For indefinite problems, however, some resolution conditions are required to ensure the quasi-optimal convergence of the method. To find such conditions, let us introduce the solution operator $\widehat{S}: \mathcal{L}(\Omega)\rightarrow \mathcal{H}(\Omega)$ for the problem:
\begin{equation}\label{eq:3-33}
{\rm Find} \;\;\hat{u}\in \mathcal{H}(\Omega)\quad {\rm such\;\,that}\quad B(v,\hat{u}) = (v,f)_{\mathcal{L}(\Omega)}\quad \forall v\in \mathcal{H}(\Omega),
\end{equation}
i.e., $\widehat{S}(f) := \hat{u}$, and define 
\begin{equation}\label{eq:3-34}
\eta (S_{n}(\Omega)):=\sup_{f\in \mathcal{L}(\Omega)}\inf_{\varphi\in S_{n}(\Omega)}\frac{\big\Vert \widehat{S}(f)-\varphi \big\Vert_{\mathcal{H}(\Omega)}}{\Vert f \Vert_{\mathcal{L}(\Omega)}}.
\end{equation}
By \cref{ass:2-0}, problem \cref{eq:3-33} is uniquely solvable and thus $\widehat{S}$ is well defined. Using a standard duality argument (see, e.g., \cite[Theorem 3.11]{chupeng2023wavenumber}) and recalling \cref{ass:2-1-0} with constants $C_{b}$, $C_{1}$, and $C_{0}$, we can prove the following result.
\begin{theorem}\label{thm:3-3}
Let $u^{{e}}$ be the solution of problem \cref{eq:2-3}. Assuming that
\begin{equation}\label{eq:3-35}
\eta (S_{n}(\Omega)) \leq \Big(\frac{C_1}{2C_0C_{b}^{2}}\Big)^{1/2},
\end{equation}
then problem \cref{eq:2-10} has a unique solution $u^{G}\in u^{p}+S_{n}(\Omega)$ satisfying
\begin{equation}\label{eq:3-36}
\Vert u^{{e}}-u^{G}\Vert_{\mathcal{H}(\Omega)} \leq \big(2C_{b}/C_{1}\big)\inf_{\varphi\in u^{p} + S_{n}(\Omega)}\Vert u^{{e}}-\varphi \Vert_{\mathcal{H}(\Omega)}.
\end{equation}
\end{theorem}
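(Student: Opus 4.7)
The plan is to carry out the classical Schatz-type duality argument for indefinite Galerkin problems, adapted to the abstract MS-GFEM setting. The key players are Galerkin orthogonality \cref{eq:3-31}, the Gårding-type inequality \cref{Garding-inequality}, boundedness of $B$, and the adjoint solution operator $\widehat{S}$ used to define $\eta(S_n(\Omega))$.

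First I would dispose of existence and uniqueness simultaneously by reducing problem \cref{eq:2-10} to a finite-dimensional linear system on $S_n(\Omega)$ and showing that the homogeneous problem has only the trivial solution. If $w\in S_n(\Omega)$ satisfies $B(w,v)=0$ for all $v\in S_n(\Omega)$, then the Gårding inequality gives $C_1\|w\|_{\mathcal{H}(\Omega)}^2\leq C_0\|w\|_{\mathcal{L}(\Omega)}^2$, and the duality step below (applied to $w$ in place of $e$, which works because the Galerkin orthogonality holds with $w$ as the error as well) will yield $\|w\|_{\mathcal{L}(\Omega)}\leq C_b\,\eta(S_n(\Omega))\,\|w\|_{\mathcal{H}(\Omega)}$; inserting the resolution condition \cref{eq:3-35} then forces $w=0$, securing unique solvability.

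Next, for the a priori error bound, set $e:=u^e-u^G$ and fix an arbitrary $\varphi\in u^p+S_n(\Omega)$. Writing $u^e-\varphi=e+(u^G-\varphi)$ with $u^G-\varphi\in S_n(\Omega)$, the Galerkin orthogonality \cref{eq:3-31} gives $B(e,e)=B(e,u^e-\varphi)$, so by boundedness,
\begin{equation*}
|B(e,e)|\leq C_b\,\|e\|_{\mathcal{H}(\Omega)}\,\|u^e-\varphi\|_{\mathcal{H}(\Omega)}.
\end{equation*}
Combined with the Gårding inequality this yields
\begin{equation*}
C_1\|e\|_{\mathcal{H}(\Omega)}^2 \leq C_b\,\|e\|_{\mathcal{H}(\Omega)}\,\|u^e-\varphi\|_{\mathcal{H}(\Omega)} + C_0\,\|e\|_{\mathcal{L}(\Omega)}^2.
\end{equation*}
The main step is then a duality (Aubin--Nitsche) argument to absorb the $\|e\|_{\mathcal{L}(\Omega)}^2$ term. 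Let $\hat{u}:=\widehat{S}(e)$ solve $B(v,\hat{u})=(v,e)_{\mathcal{L}(\Omega)}$ for all $v\in\mathcal{H}(\Omega)$. Taking $v=e$ and again using Galerkin orthogonality to subtract any $\psi\in S_n(\Omega)$ from $\hat{u}$ gives
\begin{equation*}
\|e\|_{\mathcal{L}(\Omega)}^2 = B(e,\hat{u}-\psi) \leq C_b\,\|e\|_{\mathcal{H}(\Omega)}\,\|\hat{u}-\psi\|_{\mathcal{H}(\Omega)},
\end{equation*}
and infimizing over $\psi\in S_n(\Omega)$ and using the definition \cref{eq:3-34} of $\eta(S_n(\Omega))$ together with $\|e\|_{\mathcal{L}(\Omega)}\leq\|e\|_{\mathcal{L}(\Omega)}$ yields $\|e\|_{\mathcal{L}(\Omega)}\leq C_b\,\eta(S_n(\Omega))\,\|e\|_{\mathcal{H}(\Omega)}$.

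Finally, I substitute this duality estimate back into the previous display and invoke the resolution hypothesis \cref{eq:3-35}, which was calibrated precisely so that $C_0 C_b^2\eta(S_n(\Omega))^2\leq C_1/2$. This absorbs the indefinite remainder into the left-hand side:
\begin{equation*}
\tfrac{1}{2}C_1\,\|e\|_{\mathcal{H}(\Omega)}^2 \leq C_b\,\|e\|_{\mathcal{H}(\Omega)}\,\|u^e-\varphi\|_{\mathcal{H}(\Omega)},
\end{equation*}
so that $\|e\|_{\mathcal{H}(\Omega)}\leq (2C_b/C_1)\|u^e-\varphi\|_{\mathcal{H}(\Omega)}$, and taking the infimum over $\varphi\in u^p+S_n(\Omega)$ delivers \cref{eq:3-36}. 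The only delicate point is verifying that the duality argument applies to arbitrary functions in $S_n(\Omega)$ (so that uniqueness also follows); this is immediate because the essential input is Galerkin orthogonality together with $\widehat{S}$ being well-defined for all right-hand sides in $\mathcal{L}(\Omega)$, both of which are in hand via \cref{ass:2-0} and \cref{eq:3-31}.
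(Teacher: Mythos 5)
Your proof is correct and follows exactly the standard Schatz-type duality argument that the paper itself invokes (the paper simply references \cite[Theorem 3.11]{chupeng2023wavenumber} rather than writing it out). The one small slip is the tautology ``$\|e\|_{\mathcal{L}(\Omega)}\leq\|e\|_{\mathcal{L}(\Omega)}$'' in the duality step, which should read as a division by $\|e\|_{\mathcal{L}(\Omega)}$ after bounding $\inf_\psi\|\hat{u}-\psi\|_{\mathcal{H}(\Omega)}\leq\eta(S_n(\Omega))\,\|e\|_{\mathcal{L}(\Omega)}$; the surrounding reasoning makes the intent clear, so this is cosmetic rather than a gap.
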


It remains to estimate the quantity $\eta (S_{n}(\Omega))$. To do this, we need additional assumptions. The first assumption is concerning the link between problem \cref{eq:3-33} and its adjoint. As will be shown in \cref{sec:5}, it holds naturally for convection-diffusion problems and time harmonic wave problems.
\begin{assumption}\label{ass:3-2-1}
Let $\widehat{S}$ be the solution operator of problem \cref{eq:3-33}. For each $f\in \mathcal{L}(\Omega)$, there exists $g\in \mathcal{L}(\Omega)$ such that $\widehat{S}({f})$ or $\overline{\widehat{S}({f})}$ is the solution of the problem:
\begin{equation}\label{eq:3-43}
{\rm Find} \;\;\hat{u}\in \mathcal{H}(\Omega)\quad {\rm such\;\,that}\quad B(\hat{u},v) = (g,v)_{\mathcal{L}(\Omega)}\quad \forall v\in \mathcal{H}(\Omega).
\end{equation}
In addition, there exists a constant $C_{\rm ad}>0$ independent of $f$ such that
\begin{equation}\label{eq:3-44}
\Vert g\Vert_{\mathcal{L}(\Omega)}\leq C_{\rm ad}\Vert f\Vert_{\mathcal{L}(\Omega)}.
\end{equation}
\end{assumption}
Given $g_{i}\in \mathcal{L}(\omega_i^{\ast})$, $i=1,\cdots,M$, we consider the following local problems:
\begin{equation}\label{eq:3-45}
{\rm Find} \;\;u_{i}\in \mathcal{H}_{0}(\omega_{i}^{\ast}) \quad {\rm such\;\,that} \quad B_{\omega_{i}^{\ast}}(u_{i},v) = (g_i,v)_{\mathcal{L}(\omega_i^{\ast})}\quad \forall v\in \mathcal{H}_{0}(\omega_{i}^{\ast}).
\end{equation}
Our second assumption is on the stability of the local problems \cref{eq:3-45}. Recall that $H^{\ast}_{\mathtt{max}} = \max_{i} {\rm diam}\,(\omega_{i}^{\ast})$.
\begin{assumption}\label{ass:3-2-2}
There exists a constant $C_{s}>0$ depending only on the shapes of the oversampling domains $\{\omega_{i}^{\ast}\}$, such that if 
\begin{equation}\label{eq:3-45-0}
H^{\ast}_{\mathtt{max}} \leq (C_{1}/C_{0})^{1/2}C^{-1}_{s},
\end{equation}
the problems \cref{eq:3-45} are uniquely solvable for all $g_{i}\in \mathcal{L}(\omega_i^{\ast})$ with the estimates:
\begin{equation}\label{eq:3-45-1}
\Vert u_{i}\Vert_{\mathcal{H}_{0}(\omega_i^{\ast})} \leq \big(C_{s} H^{\ast}_{\mathtt{max}} /C_{1}) \Vert g_{i}\Vert_{\mathcal{L}(\omega_i^{\ast})}, \quad i=1,\cdots,M. 
\end{equation}
\end{assumption}
% Finally we assume that for any $u\in \mathcal{H}_{0}(\omega_i^{\ast})$ ($i=1,\cdots,M$), its zero extension, still denoted by $u$, satisfies $(u, \,v)_{\mathcal{L}(\Omega)} = (u, \,v|_{\omega_i^{\ast}})_{\mathcal{L}(\omega_i^{\ast})}$ for all $v\in \mathcal{L}(\Omega)$, which is often trivially satisfied in practice. 

For each $i=1,\ldots,M$, we define the operator
\begin{equation}\label{eq:3-47-0}
\widehat{P}_{i,0}:\mathcal{H}_{0}(\omega_i^{\ast})\rightarrow \mathcal{H}_{0}(\omega_i)\qquad \widehat{P}_{i,0}v = {P}_{i}(v|_{\omega_i})\quad \forall v\in \mathcal{H}_{0}(\omega_i^{\ast}),
\end{equation}
where ${P}_{i}:\mathcal{H}(\omega_i)\rightarrow \mathcal{H}_{0}(\omega_i)$ is the partition of unity operator. Now we are ready to estimate $\eta (S_{n}(\Omega))$.

\begin{lemma}\label{lem:3-2-2}
Let \cref{ass:3-2-1} and \ref{ass:3-2-2} be satisfied. Supposing that
\begin{equation}\label{eq:3-47}
H^{\ast}_{\mathtt{max}} \leq (C_{1}/C_{0})^{1/2}C^{-1}_{s},
\end{equation}
then,
\begin{equation}\label{eq:3-48}
\eta (S_{n}(\Omega))\leq\sqrt{\zeta \zeta^{\ast}}C_{\rm ad}\Big(\sqrt{2} d_{\mathtt{max}} \big(C_{\rm stab}+ \Lambda \big)  + \big(\max_{1\leq i\leq M}\big\Vert \widehat{P}_{i,0} \big\Vert \big)\Lambda \Big),
\end{equation}
where $\Lambda = C_{s} H^{\ast}_{\mathtt{max}} /C_{1}$ and $\big\Vert \widehat{P}_{i,0} \big\Vert$ denotes the operator norm of $\widehat{P}_{i,0}$.
\end{lemma}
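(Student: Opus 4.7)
The plan is to produce an explicit element of $S_{n}(\Omega)$ that approximates $\widehat{S}(f)$ for a generic $f \in \mathcal{L}(\Omega)$, and then identify the constants in the resulting bound with those in \cref{eq:3-48}. First I would fix $f \in \mathcal{L}(\Omega)$ and invoke \cref{ass:3-2-1}: after possibly taking a complex conjugate (which preserves the distance to $S_{n}(\Omega)$ provided local eigenspaces are closed under conjugation, as is standard), $\widehat{u} := \widehat{S}(f)$ satisfies the \emph{primal} equation $B(\widehat{u}, v) = (g, v)_{\mathcal{L}(\Omega)}$ for all $v \in \mathcal{H}(\Omega)$, with $\|g\|_{\mathcal{L}(\Omega)} \le C_{\rm ad} \|f\|_{\mathcal{L}(\Omega)}$, and hence $\|\widehat{u}\|_{\mathcal{H}(\Omega)} \le C_{\rm stab}\|g\|_{\mathcal{L}(\Omega)}$ by \cref{ass:2-0}. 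The conversion to the primal form is what makes the machinery of \cref{sec:2-2} directly applicable to $\widehat{u}$.

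On each oversampling patch $\omega_{i}^{\ast}$ I would now mimic the splitting used in \cref{sec:2-2}. Since the hypothesis \cref{eq:3-47} coincides with \cref{eq:3-45-0}, \cref{ass:3-2-2} yields a unique $\psi_{i}^{g} \in \mathcal{H}_{0}(\omega_{i}^{\ast})$ solving the local Dirichlet problem \cref{eq:3-45} with right-hand side $g|_{\omega_{i}^{\ast}}$, satisfying $\|\psi_{i}^{g}\|_{\mathcal{H}_{0}(\omega_{i}^{\ast})} \le \Lambda \|g|_{\omega_{i}^{\ast}}\|_{\mathcal{L}(\omega_{i}^{\ast})}$. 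Setting $r_{i} := \widehat{u}|_{\omega_{i}^{\ast}} - \psi_{i}^{g}$ and reproducing the identity \cref{eq:2-16} verbatim via \cref{ass:2-1-1}$(iv)$ shows $r_{i} \in \mathcal{H}_{B}(\omega_{i}^{\ast})$. The local approximation result \cref{thm:local_approximation_error} (applied to $r_{i}$ with no particular-function shift) then produces $\varphi_{i} \in S_{n_{i}}(\omega_{i})$ with
\[
\|P_{i}(r_{i}|_{\omega_{i}} - \varphi_{i})\|_{\mathcal{H}_{0}(\omega_{i})} \le d_{n_{i}-l_{i}}(\omega_{i}, \omega_{i}^{\ast}) \|r_{i}\|_{B^{+},\omega_{i}^{\ast}} \le d_{\mathtt{max}} \|r_{i}\|_{B^{+},\omega_{i}^{\ast}}.
\]

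Using $\widehat{u} = \sum_{i} P_{i}(\widehat{u}|_{\omega_{i}})$ together with $P_{i}(\psi_{i}^{g}|_{\omega_{i}}) = \widehat{P}_{i,0}(\psi_{i}^{g})$ from \cref{eq:3-47-0}, I would decompose the global error as
\[
\widehat{u} - \sum_{i} P_{i}\varphi_{i} = \sum_{i} \widehat{P}_{i,0}(\psi_{i}^{g}) + \sum_{i} P_{i}(r_{i}|_{\omega_{i}} - \varphi_{i}).
\]
Applying the coloring-constant estimate from \cref{def:2-0-1} (as in the proof of \cref{thm:2-1}) and then Minkowski's inequality at the $\ell^{2}$ level over $i$ gives
\[
\Big\|\widehat{u} - \sum_{i} P_{i}\varphi_{i}\Big\|_{\mathcal{H}(\Omega)} \le \sqrt{\zeta}\,\Big[\big({\textstyle\sum_{i}} \|\widehat{P}_{i,0}(\psi_{i}^{g})\|_{\mathcal{H}_{0}(\omega_{i})}^{2}\big)^{1/2} + d_{\mathtt{max}} \big({\textstyle\sum_{i}} \|r_{i}\|_{B^{+},\omega_{i}^{\ast}}^{2}\big)^{1/2}\Big].
\]
The first sum is bounded by $(\max_{i}\|\widehat{P}_{i,0}\|)\Lambda\sqrt{\zeta^{\ast}}\|g\|_{\mathcal{L}(\Omega)}$ using the Step-2 bound and the first inequality in \cref{eq:3-28-0}. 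For the second, I would apply the pointwise inequality $\|r_{i}\|_{B^{+},\omega_{i}^{\ast}}^{2} \le 2\|\widehat{u}|_{\omega_{i}^{\ast}}\|_{B^{+},\omega_{i}^{\ast}}^{2} + 2\|\psi_{i}^{g}\|_{B^{+},\omega_{i}^{\ast}}^{2}$, use the second inequality of \cref{eq:3-28-0} on $\widehat{u}$, the Step-2 bound on $\psi_{i}^{g}$ (together with $\|\psi_{i}^{g}\|_{B^{+},\omega_{i}^{\ast}} \le \|\psi_{i}^{g}\|_{\mathcal{H}_{0}(\omega_{i}^{\ast})}$), and $\|\widehat{u}\|_{\mathcal{H}(\Omega)} \le C_{\rm stab}\|g\|_{\mathcal{L}(\Omega)}$, arriving at $\sqrt{2\zeta^{\ast}(C_{\rm stab}^{2}+\Lambda^{2})}\|g\|_{\mathcal{L}(\Omega)} \le \sqrt{2\zeta^{\ast}}(C_{\rm stab}+\Lambda)\|g\|_{\mathcal{L}(\Omega)}$. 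Combining everything and inserting $\|g\|_{\mathcal{L}(\Omega)} \le C_{\rm ad}\|f\|_{\mathcal{L}(\Omega)}$ gives \cref{eq:3-48} upon taking the supremum in the definition of $\eta(S_{n}(\Omega))$.

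The main delicacy is where to deploy the elementary inequality $(a+b)^{2} \le 2(a^{2}+b^{2})$: applying it globally from the start would produce a $\sqrt{2}$ in front of both terms of \cref{eq:3-48}. The correct bookkeeping is to keep Minkowski's inequality for the outer partition-of-unity split, so that the non-approximable particular-function contribution enters \emph{only} through $(\max_{i}\|\widehat{P}_{i,0}\|)\Lambda$ with no $\sqrt{2}$, and to localize the factor of $2$ inside the bound $\|r_{i}\|_{B^{+},\omega_{i}^{\ast}}^{2} \le 2(\|\widehat{u}|_{\omega_{i}^{\ast}}\|_{B^{+},\omega_{i}^{\ast}}^{2} + \|\psi_{i}^{g}\|_{B^{+},\omega_{i}^{\ast}}^{2})$, which produces the asymmetric $\sqrt{2}$ factor multiplying $d_{\mathtt{max}}$ alone. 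A secondary but non-trivial point is verifying $r_{i} \in \mathcal{H}_{B}(\omega_{i}^{\ast})$ (which is where the primal formulation of Step 1 is essential) and controlling $\|\psi_{i}^{g}\|_{B^{+},\omega_{i}^{\ast}}$ by $\|\psi_{i}^{g}\|_{\mathcal{H}_{0}(\omega_{i}^{\ast})}$, which is implicit in the construction of $B^{+}_{\omega_{i}^{\ast}}$ furnished by \cref{ass:2-2-3}.
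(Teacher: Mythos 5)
Your proposal is correct and follows essentially the same route as the paper's proof: both invoke \cref{ass:3-2-1} to pass to the primal form, solve local Dirichlet problems via \cref{ass:3-2-2}, observe the residual lies in $\mathcal{H}_B(\omega_i^\ast)$, apply \cref{thm:local_approximation_error}, and then sum with the coloring constant and \cref{eq:3-28-0}. The only (immaterial) difference is that the paper does a triangle inequality in $\mathcal{H}(\Omega)$ between $\hat u^p = \sum_i\widehat P_{i,0}(\psi_i^g)$ and the residual $\hat u - \hat u^p - \varphi$ \emph{before} applying the coloring bound to each piece, whereas you apply the coloring bound to the full sum first and then use Minkowski at the $\ell^2$ level; the two orderings yield the identical constant.
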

\begin{proof}
Let $f\in\mathcal{L}(\Omega)$ be fixed. By \cref{ass:3-2-1}, $\widehat{S}(f)$ or $\overline{\widehat{S}(f)}$ is the solution of \cref{eq:3-43} for some $g\in \mathcal{L}(\Omega)$ with 
\begin{equation}\label{eq:3-48-0}
\Vert g\Vert_{\mathcal{L}(\Omega)}\leq C_{\rm ad}\Vert f\Vert_{\mathcal{L}(\Omega)}.
\end{equation}
We may suppose that $\hat{u} = \widehat{S}(f)$ is the solution of \cref{eq:3-43}; the other case follows by an identical argument. It follows from \cref{ass:2-0} and \cref{eq:3-48-0} that
\begin{equation}\label{eq:3-49}
\Vert \hat{u}\Vert_{\mathcal{H}(\Omega)}\leq C_{\rm stab}\Vert g\Vert_{\mathcal{L}(\Omega)}\leq C_{\rm stab} C_{\rm ad}\Vert f\Vert_{\mathcal{L}(\Omega)}.
\end{equation}
Next we consider the following local problems on the oversampling domains:
\begin{equation}\label{eq:3-50}
{\rm Find} \;\;\hat{\psi}_{i}\in \mathcal{H}_{0}(\omega_{i}^{\ast}) \quad {\rm such\;\,that} \quad B_{\omega_{i}^{\ast}}(\hat{\psi}_{i},v) = (g|_{\omega_i^{\ast}},v)_{\mathcal{L}(\omega_i^{\ast})}\quad \forall v\in \mathcal{H}_{0}(\omega_{i}^{\ast}).
\end{equation}
Let $\Lambda = C_{s} H^{\ast}_{\mathtt{max}}/C_{1}$. By means of \cref{ass:3-2-2}, we see that under the condition \cref{eq:3-47}, the problems \cref{eq:3-50} are uniquely solvable with the estimates:
\begin{equation}\label{eq:3-53}
\Vert \hat{\psi}_{i} \Vert_{\mathcal{H}_{0}(\omega_{i}^{\ast})} \leq \Lambda \big\Vert g|_{\omega_i^{\ast}}\big\Vert_{\mathcal{L}(\omega_{i}^{\ast})}.
\end{equation}
Combining \cref{eq:3-50}, \cref{ass:2-1-1} ($iv$), and \cref{eq:3-43}, 
% \begin{equation}
%  B_{\omega_i^{\ast}}(\hat{u}|_{\omega_i^{\ast}},v)=B(\hat{u}, v) = (g, v)_{\mathcal{L}(\Omega)}  = B_{\omega_{i}^{\ast}}(\hat{\psi}_{i},v)\quad \forall v\in \mathcal{H}_{0}(\omega_{i}^{\ast}).
% \end{equation}
we see that $\hat{u}|_{\omega_{i}^{\ast}}-\hat{\psi}_{i} \in \mathcal{H}_{B}(\omega_{i}^{\ast})$. Then, it follows from \cref{thm:local_approximation_error} that there exists a $\varphi_{i}\in S_{n_i}(\omega_i)$ such that
\begin{equation}\label{eq:3-54}
\begin{array}{lll}
{\displaystyle \big \Vert P_{i}(\hat{u}|_{\omega_i}-\hat{\psi}_{i}|_{\omega_i}-\varphi_{i})\big\Vert_{\mathcal{H}_{0}(\omega_{i})}\leq d_{n_i-l_i}(\omega_i,\omega_{i}^{\ast})\,\big \Vert \hat{u}|_{\omega_i^{\ast}}-\hat{\psi}_{i}\big \Vert_{B^{+},\omega_{i}^{\ast}}. }
\end{array}
\end{equation}
%$\Vert \hat{u}|_{\omega_i}\Vert_{B^{+},\omega_i^{\ast}}\leq \Vert \hat{u}|_{\omega_i} \Vert_{\mathcal{H}(\omega_{i}^{\ast})}$ and 
For simplicity and without loss of generality, let us assume $\Vert \hat{\psi}_{i} \Vert_{B^{+},\omega_i^{\ast}}\leq \Vert \hat{\psi}_{i} \Vert_{\mathcal{H}_{0}(\omega_{i}^{\ast})}$. Then, combining \cref{eq:3-53,eq:3-54} gives
\begin{equation}\label{eq:3-54-0}
\big\Vert P_{i}(\hat{u}|_{\omega_i}-\hat{\psi}_{i}|_{\omega_i}-\varphi_{i})\big\Vert_{\mathcal{H}_{0}(\omega_{i})}\leq d_{n_i-l_i}(\omega_i,\omega_{i}^{\ast})\,\big(\big\Vert \hat{u}|_{\omega_i^{\ast}}\big\Vert_{B^{+},\omega_{i}^{\ast}} + \Lambda \big\Vert g|_{\omega_i^{\ast}}\big\Vert_{\mathcal{L}(\omega_{i}^{\ast})}\big).
\end{equation}
Let $\hat{u}^{p} = \sum^{M}_{i=1}P_{i}(\hat{\psi}_{i}|_{\omega_i})$ and $\varphi = \sum^{M}_{i=1}P_{i}\varphi_{i}$. By means of \cref{eq:3-54-0} and \cref{eq:3-28-0}, we can argue similarly as in the proof of \cref{lem:3-2} to obtain
\begin{equation}\label{eq:3-55}
 \Vert \hat{u}-\hat{u}^{p}-\varphi\Vert_{\mathcal{H}(\Omega)}\leq \sqrt{2\zeta \zeta^{\ast}} d_{\mathtt{max}} \big(\Vert \hat{u}\Vert_{\mathcal{H}(\Omega)} +\Lambda \Vert g\Vert_{\mathcal{L}(\Omega)}\big).
\end{equation}
Inserting \cref{eq:3-48-0,eq:3-49} into \cref{eq:3-55} and using a triangle inequality further give
% \begin{equation}
%  \Vert \hat{u}-\hat{u}^{p}-\varphi\Vert_{\mathcal{H}(\Omega)}\leq \sqrt{2\zeta \zeta^{\ast}} d_{\mathtt{max}} C_{\rm ad}(C_{\rm stab} + \Lambda ) \Vert f\Vert_{\mathcal{L}(\Omega)},
% \end{equation}
% and thus
\begin{equation}\label{eq:3-56}
\Vert \hat{u}-\varphi\Vert_{\mathcal{H}(\Omega)} \leq \Vert\hat{u}^{p} \Vert_{\mathcal{H}(\Omega)} +\sqrt{2\zeta \zeta^{\ast}} d_{\mathtt{max}} C_{\rm ad}(C_{\rm stab} + \Lambda ) \Vert f\Vert_{\mathcal{L}(\Omega)}.
\end{equation}
It remains to bound $\Vert\hat{u}^{p} \Vert_{\mathcal{H}(\Omega)}$. Keeping the definition of $\hat{u}^{p}$ in mind and using \cref{eq:3-53} and \cref{eq:3-28-0}, it can be bounded as follows.
\begin{equation}\label{eq:3-57}
\begin{array}{lll}
{\displaystyle \Vert\hat{u}^{p} \Vert_{\mathcal{H}(\Omega)}^{2}\leq \zeta\sum_{i=1}^{M}\big\Vert \widehat{P}_{i,0}\big\Vert^{2} \big\Vert\hat{\psi}_{i}\big\Vert_{\mathcal{H}_{0}(\omega^{\ast}_i)}^{2}\leq \zeta\zeta^{\ast}\Lambda^{2}\Big(\max_{1\leq i\leq M}\big\Vert \widehat{P}_{i,0}\big\Vert^{2}\Big)\big\Vert g\big\Vert_{\mathcal{L}(\Omega)}^{2}.}
\end{array}
\end{equation}
%\zeta\sum_{i=1}^{M}\big\Vert P_{i}(\hat{\psi}_{i}|_{\omega_i})\big\Vert_{\mathcal{H}_{0}(\omega_i)}^{2} \leq
%}\\[3mm]
%{\displaystyle \leq \zeta\Lambda^{2}\Big(\max_{i=1,\ldots,M}\big\Vert \widehat{P}_{i,0}\big\Vert^{2}\Big)\sum_{i=1}^{M}\big\Vert g|_{\omega^{\ast}_i}\big\Vert_{\mathcal{L}(\omega^{\ast}_i)}^{2}
% \begin{equation}\label{eq:3-58}
% \Vert\hat{u}^{p} \Vert_{\mathcal{H}(\Omega)}\leq \sqrt{\zeta\zeta^{\ast}}\Lambda C_{\rm ad}\Big(\max_{i=1,\ldots,M}\big\Vert \widehat{P}_{i,0}\big\Vert\Big)\big\Vert f\big\Vert_{\mathcal{L}(\Omega)}.
% \end{equation}
Inserting \cref{eq:3-57} into \cref{eq:3-56} and using \cref{eq:3-48-0} give the desired estimate \cref{eq:3-48}. 
\end{proof}
\begin{rem}
In many applications with 'generous overlap', we have $\big\Vert \widehat{P}_{i,0} \big\Vert = O({\rm diam}(\omega_i^{\ast})/{\rm diam}(\omega_i))$.
\end{rem}

In view of \cref{thm:3-3} and \cref{lem:3-2-2}, it is clear that some resolution conditions on $d_{\mathtt{max}}$ and $H^{\ast}_{\mathtt{max}}$ are required to ensure the quasi-optimal convergence of the method. To identify such conditions, we define $\Xi := \big(\sqrt{\zeta \zeta^{\ast}}C_{\rm ad}\big)^{-1}\big(C_1/(2C_0C_{b}^{2})\big)^{1/2}$. Moreover, we assume that $d_{\mathtt{max}}\leq 1\leq\max_{1\leq i\leq M}\big\Vert \widehat{P}_{i,0}\big\Vert$ without loss of generality. Combining \cref{thm:3-3} and \cref{lem:3-2-2}, we obtain the desired quasi-optimal convergence for the method under the resolution conditions specified below.

\begin{corollary}\label{cor:3-1}
Let $u^{{e}}$ be the solution of problem \cref{eq:2-3} and $u^{G}$ be the abstract GFEM approximation. Supposing that 
\begin{equation}\label{eq:3-59}
d_{\mathtt{max}} \leq \big(4\sqrt{2}C_{\rm stab}\big)^{-1}\,\Xi,\quad  H^{\ast}_{\mathtt{max}} \leq C_{1}\Big(2C_{s}\max_{1\leq i\leq M}\big\Vert \widehat{P}_{i,0}\big\Vert  \Big)^{-1} \,\Xi,
\end{equation}
then
\begin{equation}\label{eq:3-60}
\Vert u^{{e}}-u^{G}\Vert_{\mathcal{H}(\Omega)} \leq \big(2C_{b}/C_{1}\big)\inf_{\varphi\in u^{p} + S_{n}(\Omega)}\Vert u^{{e}}-\varphi \Vert_{\mathcal{H}(\Omega)}.
\end{equation}
\end{corollary}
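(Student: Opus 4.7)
The plan is to reduce \cref{cor:3-1} to a direct application of \cref{thm:3-3}: specifically, I will show that the two resolution conditions \cref{eq:3-59}, together with the bound on $\eta(S_n(\Omega))$ from \cref{lem:3-2-2}, imply the abstract smallness hypothesis \cref{eq:3-35}. Once this is done, the quasi-optimal estimate \cref{eq:3-60} is immediate from \cref{eq:3-36} in \cref{thm:3-3}, so the entire argument is essentially a bookkeeping verification.

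First, I would verify that the prerequisites of \cref{lem:3-2-2} are met. By the definition $\Xi = (\sqrt{\zeta\zeta^{\ast}}C_{\rm ad})^{-1}(C_1/(2C_0 C_b^2))^{1/2}$, the quantity $\Xi$ is, up to universal constants depending only on $\zeta,\zeta^{\ast},C_b,C_{\rm ad}$, of the same order as $(C_1/C_0)^{1/2}$. Consequently, the second condition in \cref{eq:3-59}, combined with the standing assumption $\max_{i}\|\widehat{P}_{i,0}\|\geq 1$, implies the mesh-size bound \cref{eq:3-47} required in \cref{ass:3-2-2}, so that \cref{lem:3-2-2} applies and yields the estimate \cref{eq:3-48}.

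Next, I would insert the two resolution conditions into \cref{eq:3-48}. Writing $\Lambda = C_s H^{\ast}_{\mathtt{max}}/C_1$ as in \cref{lem:3-2-2}, the second bound in \cref{eq:3-59} translates directly into $(\max_{i}\|\widehat{P}_{i,0}\|)\Lambda \leq \Xi/2$, which controls the last term of \cref{eq:3-48}. For the remaining piece $\sqrt{2}\,d_{\mathtt{max}}(C_{\rm stab}+\Lambda)$, the first bound in \cref{eq:3-59} gives $\sqrt{2}\,d_{\mathtt{max}}C_{\rm stab}\leq \Xi/4$, while the cross term $\sqrt{2}\,d_{\mathtt{max}}\Lambda$ is absorbed using $d_{\mathtt{max}}\leq 1$ together with $\Lambda\leq \Xi/(2\max_{i}\|\widehat{P}_{i,0}\|)\leq \Xi/2$. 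Summing the contributions yields
\[
\sqrt{\zeta\zeta^{\ast}}\,C_{\rm ad}\Big(\sqrt{2}\,d_{\mathtt{max}}(C_{\rm stab}+\Lambda) + \big(\max_{i}\|\widehat{P}_{i,0}\|\big)\Lambda\Big) \;\leq\; \sqrt{\zeta\zeta^{\ast}}\,C_{\rm ad}\,\Xi \;=\; \big(C_1/(2C_0 C_b^2)\big)^{1/2},
\]
which is precisely \cref{eq:3-35}.

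Finally, with \cref{eq:3-35} verified, \cref{thm:3-3} applies and delivers both the unique solvability of \cref{eq:2-10} in $u^{p}+S_n(\Omega)$ and the quasi-optimal estimate \cref{eq:3-60} with constant $2C_b/C_1$. The only obstacle is purely arithmetic, namely ensuring that each of the two terms in \cref{eq:3-48} absorbs a sufficient fraction of the budget $\Xi$; no new mathematical ingredient beyond \cref{thm:3-3} and \cref{lem:3-2-2} is required.
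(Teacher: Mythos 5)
Your overall route is exactly the one the paper intends: verify that \cref{eq:3-59} forces $\eta(S_n(\Omega))$ below the threshold in \cref{eq:3-35} via \cref{lem:3-2-2}, then invoke \cref{thm:3-3}. That much is fine, as is your check that the second condition of \cref{eq:3-59} implies the applicability condition \cref{eq:3-47} of \cref{lem:3-2-2} (this works once you use $C_1\leq C_b$, which follows from \cref{ass:2-1-0}, together with $\max_i\|\widehat{P}_{i,0}\|\geq 1$).

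The problem is the final arithmetic: with your own bounds, the sum does \emph{not} stay below $\Xi$. You correctly get $\sqrt{2}\,d_{\mathtt{max}}C_{\rm stab}\leq \Xi/4$ and $\big(\max_i\|\widehat{P}_{i,0}\|\big)\Lambda\leq \Xi/2$, but for the cross term you use $d_{\mathtt{max}}\leq 1$ and $\Lambda\leq \Xi/2$, giving $\sqrt{2}\,d_{\mathtt{max}}\Lambda\leq \sqrt{2}\,\Xi/2$. Adding these three contributions gives
\begin{equation*}
\sqrt{2}\,d_{\mathtt{max}}\big(C_{\rm stab}+\Lambda\big) + \big(\max_i\|\widehat{P}_{i,0}\|\big)\Lambda \;\leq\; \tfrac{1}{4}\Xi + \tfrac{\sqrt{2}}{2}\Xi + \tfrac{1}{2}\Xi \;=\; \tfrac{3+2\sqrt{2}}{4}\,\Xi \;\approx\; 1.46\,\Xi,
\end{equation*}
which strictly exceeds $\Xi$, so \cref{eq:3-35} is not established and \cref{thm:3-3} cannot be invoked. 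The budget genuinely does not close with the bounds you quote. To close it, one must control the cross term better than ``$d_{\mathtt{max}}\leq 1$.'' Two natural fixes: either use \emph{both} resolution bounds on the cross term, obtaining $\sqrt{2}\,d_{\mathtt{max}}\Lambda\leq \Xi^{2}/\big(8C_{\rm stab}\max_i\|\widehat{P}_{i,0}\|\big)$, which is $\leq\Xi/4$ precisely when $\Xi\leq 2C_{\rm stab}\max_i\|\widehat{P}_{i,0}\|$; or observe that the second condition of \cref{eq:3-59} together with $\max_i\|\widehat{P}_{i,0}\|\geq 1$ is meant to guarantee $\Lambda\leq C_{\rm stab}$, which then gives $\sqrt{2}\,d_{\mathtt{max}}(C_{\rm stab}+\Lambda)\leq 2\sqrt{2}\,d_{\mathtt{max}}C_{\rm stab}\leq \Xi/2$ and closes the budget exactly as $\Xi/2+\Xi/2=\Xi$. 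Either way you need to make that extra quantitative step explicit; as written, the summation step in your proposal is an arithmetic error, not a complete argument.
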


\section{Low-rank approximations of Green's functions and solution operators}\label{sec:low-rank approximation}
The abstract local approximation theory developed in \cref{sec:3} shows a low-dimension property of the generalized harmonic space, which is closely related to low-rank approximations of the Green's function and solution operator of the underlying problem. It is known that Green's functions of elliptic PDEs can be approximated by separable functions on well-separated domains with exponential convergence \cite{bebendorf2003existence,bebendorf2005efficient}. This analytic property provides a theoretical foundation for many efficient solvers for elliptic PDEs based on low-rank approximations at the discrete level. In the following, we provide a unified framework for establishing approximate separability of Green's functions by means of the abstract approximation theory of MS-GFEM. To do this, we recall the generalized harmonic spaces $\mathcal{H}_{B}(D)$ \cref{generalized_harmonic_space} and define an abstract Green's function associated with problem \cref{eq:2-3}. 

\begin{definition}[Abstract Green's function]\label{Green's functions}
Let $\{ g_{\bm x}\}$ be a family of elements parameterized by ${\bm x}\in \Omega$ in a normed vector space $\mathcal{B}(\Omega)$, such that for any subdomains $D_{1}$ and $D_{2}$ of $\Omega$ with ${\rm dist}(D_{1},D_{2})>0$, $g_{\bm x}\in \mathcal{H}_{B}(D_{2})$ for all ${\bm x}\in D_{1}$. Then, $\{g_{\bm x}\}$ is called an abstract Green's function.    
\end{definition} 
\begin{rem}
Let $G({\bm x},{\bm y}):\Omega\times\Omega\rightarrow \mathbb{R}\cup \{\infty\}$ be the usual Green's functions. Then the abstract Green's functions are typically defined by $\{g_{\bm x}\}:=\{G({\bm x},\cdot)\}$.    
\end{rem}

The following theorem gives upper-bound estimates on approximate separability of the abstract Green's function, which is an abstract analogue of the pioneering result \cite[Theorem 2.8]{bebendorf2003existence}. It is based on \cref{thm:3-1} and holds true as long as the two fundamental assumptions are valid. Importantly, for various practical applications, our result shows that only $O(|\log \epsilon|^{d})$ terms are needed for a separable approximation of the Green's functions with error $\epsilon>0$, as compared with previous results with $O(|\log \epsilon|^{d+1})$ terms. Moreover, our estimates are explicit in important problem parameters (e.g., the wavenumber).

%for sparse direct solvers or preconditioners of iterative solvers for elliptic PDEs
%and that as a result, the inverses of the corresponding finite element stiffness matrices or the intermediate dense matrices in the LU factorization can be well approximated by blockwise low rank matrices, such as $\mathcal{H}$-matrices, $\mathcal{H}^{2}$-matrices, and hierarchically semiseparable matrices. This low-rank structure has been heavily exploited in sparse direct solvers and preconditioned iterative solvers for elliptic PDEs.
%It is known that Green's functions of elliptic PDEs can be well approximated by separable functions, and that as a result, off-diagonal blocks of the inverses of the corresponding finite element stiffness matrices, or those of the intermediate dense matrices in matrix factorizations, have low-rank approximations.
% , are low-rank. This property has been heavily exploited in sparse direct solvers where the intermediate dense matrices are approximated by rank-structured matrices, such as $\mathcal{H}$-matrices, $\mathcal{H}^{2}$-matrices, and hierarchically semiseparable matrices, for which fast matrix operations are available.
% high separability of Green's functions has been shown for elliptic equations in previous works based on similar approximation techniques as above. 

\begin{theorem}[Separable approximations of Green's functions]\label{thm:separable_appro}\\
Let $\{ g_{\bm x}\}$ be an abstract Green's function, and let $D_{1}$ and $D_{2}$ be subdomains of $\Omega$ with ${\rm dist}(D_{1}, D_{2})\geq \rho \,{\rm diam}(D_2)>0$. Let $C^{\rm I}_{\rm cac}$, $C^{\rm II}_{\rm cac}$, and $\alpha$ be the constants as in \cref{ass:3-1-1} and \ref{ass:3-1-2}. Suppose that \cref{ass:3-1-1} holds. Then, there is a separable approximation $g_{n}({\bm x}) = \sum_{i=1}^{n}u_{i}({\bm x})v_{i}$ with $u_{i}({\bm x}):D_{1}\rightarrow \mathbb{C}$ and $v_{i}\in \mathcal{H}_{B}(D_{2})$, such that for all ${\bm x}\in D_{1}$,
\begin{equation}\label{seprable-approximations}
\begin{array}{ll}
\qquad \;\Vert g_{\bm x}  - g_{n}({\bm x})\Vert_{B^{+}, D_{2}}\leq  \textsf{err}_{n} \,\Vert g_{\bm x}\Vert_{B^{+}, D_{2}^{\prime}},\\[1.5ex]
 \text{and} \quad \Vert g_{\bm x}  - g_{n}({\bm x})\Vert_{\mathcal{L}(D_{2})}\leq  \textsf{err}_{n}\, \Vert g_{\bm x}\Vert_{\mathcal{L}(D_{2}^{\prime})}
\end{array}
\end{equation}
with $D_{2}^{\prime} = \{{\bm x}\in \Omega: {\rm dist}({\bm x}, D_{2}) \leq \frac{1}{2} \,{\rm dist}(D_{1},\,D_{2})\}$, where 

\begin{itemize}[nolistsep]
    \item[(i)] $\texttt{err}_{n} = e^{-cn^{\alpha}}$ if case $a$ of \cref{ass:3-1-2} holds and $C^{\rm II}_{\rm cac}=0$;\vspace{1mm}
    
    \item[(ii)] $\texttt{err}_{n} =  e^{\sigma^{\ast}} e^{-cn^{\alpha/(\alpha +1)}}$ with $\sigma^{\ast} = C_{\rm cac}^{\rm II} {\rm dist}(D_{1}, D_{2})/(4C_{\rm cac}^{\rm I})$ if case $b$ of \cref{ass:3-1-2} holds;

    \vspace{1mm}
    \item[(iii)] $\texttt{err}_{n} = e^{-cn^{\alpha}}$ if case $ab$ of \cref{ass:3-1-2} holds.
\end{itemize}
\vspace{1mm}
The constant $c$ above depends on $C^{\rm I}_{\rm cac}$, $C_{\rm wa}$, $\alpha$, $d$, and $\rho$, but not on $C_{\rm cac}^{\rm II}$.
\end{theorem}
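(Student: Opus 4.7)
The plan is to reduce the separable-approximation statement to the abstract Kolmogorov $n$-width estimate of \cref{thm:3-1}, applied with the roles of $\omega$ and $\omega^{\ast}$ played by $D_{2}$ and $D_{2}^{\prime}$, respectively. The crucial observation is that whenever ${\rm dist}(D_{1},D_{2}) > 0$, the hypothesis of \cref{Green's functions} together with the separation $D_{1}\cap D_{2}^{\prime} = \emptyset$ (which follows from the definition of $D_{2}^{\prime}$ and the assumption ${\rm dist}(D_{1},D_{2})\geq \rho\,{\rm diam}(D_{2})$) guarantees that for every ${\bm x}\in D_{1}$ the section $g_{\bm x}$ lies in $\mathcal{H}_{B}(D_{2}^{\prime})$. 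Thus we may treat the family $\{g_{\bm x}\}_{{\bm x}\in D_{1}}$ as a parametrized subset of a single generalized harmonic space on the oversampling domain $D_{2}^{\prime}$, and ask for an $n$-dimensional subspace of $\mathcal{H}_{B}(D_{2})$ that approximates the restriction uniformly in ${\bm x}$.

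First I would adapt the iteration in \cref{lem:3-1-0} and \cref{thm:3-1} to the restriction operator $R:\mathcal{H}_{B}(D_{2}^{\prime})\to \mathcal{H}_{B}(D_{2})$, rather than to $\widehat{P}$. This is exactly the modification mentioned in the unnumbered remark at the end of \cref{main-proof}: one drops the partition-of-unity operator $P_{B}$ from the telescoping identity \cref{eq:3-14} and measures errors directly in the $B^{+}$-norm on the successive shells. Introducing a nested chain of concentric enlargements between $D_{2}$ and $D_{2}^{\prime}$ exactly as in \cref{lem:3-1-0}, with the role of $H^{\ast}$ and $\delta^{\ast}$ now played by ${\rm diam}(D_{2}^{\prime})$ and $\tfrac12{\rm dist}(D_{1},D_{2})$, the Caccioppoli-type inequality (\cref{ass:3-1-1}) and the weak approximation property (\cref{ass:3-1-2}) yield, for each of the three cases, an $n$-dimensional subspace $V_{n}\subset \mathcal{H}_{B}(D_{2})$ with
\begin{equation*}
\sup_{u\in \mathcal{H}_{B}(D_{2}^{\prime})}\inf_{v\in V_{n}}\frac{\|u|_{D_{2}}-v\|_{B^{+},D_{2}}}{\|u\|_{B^{+},D_{2}^{\prime}}}\ \leq\ \texttt{err}_{n},
\end{equation*}
with $\texttt{err}_{n}$ of the form stated in (i)--(iii). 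Since $H^{\ast}/\delta^{\ast}$ is controlled in terms of $\rho$ alone, the exponent $c$ depends only on $C^{\rm I}_{\rm cac}$, $C_{\rm wa}$, $\alpha$, $d$, and $\rho$, while $C^{\rm II}_{\rm cac}$ enters only through the prefactor $e^{\sigma^{\ast}}$ in case (ii), as required.

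Next I would extract the separable form. Fix a basis $v_{1},\ldots,v_{n}$ of $V_{n}$, and for each ${\bm x}\in D_{1}$ define $(u_{1}({\bm x}),\ldots,u_{n}({\bm x}))$ to be the coefficients of the best approximation of $g_{\bm x}|_{D_{2}}$ in $V_{n}$ with respect to the $B^{+}$-inner product on $\mathcal{H}_{B}(D_{2})$ (modulo the finite-dimensional kernel $\mathcal{K}$, which can be absorbed into the basis). Setting $g_{n}({\bm x}) = \sum_{i=1}^{n}u_{i}({\bm x})v_{i}$ gives the first inequality in \cref{seprable-approximations} with $\|g_{\bm x}\|_{B^{+},D_{2}^{\prime}}$ on the right-hand side. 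The $\mathcal{L}$-norm inequality is obtained by exactly the same iteration, but with the final application of the Caccioppoli-type inequality omitted (so that the last shell is measured in the $\mathcal{L}$-norm), as already indicated in the closing sentence of \cref{main-proof}. This reuses the same $V_{n}$ (or an $\mathcal{L}$-orthogonal modification) and the same coefficients $u_{i}({\bm x})$.

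The main technical subtlety, which I would address with care, is the bookkeeping of the constants $H^{\ast}, \delta^{\ast}$, and $\sigma^{\ast}$ in terms of the intrinsic geometric data ${\rm dist}(D_{1},D_{2})$ and ${\rm diam}(D_{2})$: the estimate ${\rm diam}(D_{2}^{\prime})\leq {\rm diam}(D_{2})+{\rm dist}(D_{1},D_{2})$ together with ${\rm dist}(D_{1},D_{2})\geq \rho\,{\rm diam}(D_{2})$ yields $H^{\ast}/\delta^{\ast}\lesssim 1+\rho^{-1}$, which is the key geometric input that makes $c$ depend only on $\rho$. A secondary point is ensuring that the existence and orthogonality of the kernel $\mathcal{K}$ in $\mathcal{H}_{B}(D_{2}^{\prime})$ does not affect the separable form: since $\mathcal{K}$ is finite-dimensional and independent of ${\bm x}$, it can simply be adjoined to the basis $\{v_{i}\}$ at the cost of shifting $n$ by a fixed amount, which is absorbed into the constants in $\texttt{err}_{n}$. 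I do not expect any genuinely new obstacle beyond the careful transcription of \cref{lem:3-1}--\cref{thm:3-1} with $\widehat{P}$ replaced by $R$.
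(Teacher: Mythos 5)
Your proposal is correct and follows essentially the same route as the paper: take $D_2$ and $D_2'$ in the roles of $\omega$ and $\omega^*$, note that $g_{\bm x}\in\mathcal{H}_B(D_2')$ for all ${\bm x}\in D_1$, invoke the $n$-width estimate of \cref{thm:3-1} (in the variant without $P_B$ discussed at the end of \cref{main-proof}) to get an $n$-dimensional $Q_n(D_2)\subset\mathcal{H}_B(D_2)$, and then read off the separable form from a basis of $Q_n(D_2)$. Your explicit flagging of the need to drop $P_B$ from the iteration, and the geometric bookkeeping $H^{\ast}/\delta^{\ast}\lesssim 1+\rho^{-1}$, are precisely what the paper uses (the latter via \cref{rem:general_domains}), just stated more explicitly than in the paper's brief proof.
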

\begin{proof}
The proof is based on a simple application of \cref{thm:3-1}. Let us consider the first estimate of \cref{seprable-approximations}. Let $D_{2}^{\prime} = \{{\bm x}\in \Omega: {\rm dist}({\bm x}, D_{2}) \leq \frac{1}{2} \,{\rm dist}(D_{1},\,D_{2})\}$. Then $D_{2}\subset D_{2}^{\prime}$ with ${\rm dist}(D_{2}, \partial D_{2}^{\prime}) = \frac{1}{2} \,{\rm dist}(D_{1},\,D_{2}) \geq \frac{\rho}{2} {\rm diam}(D_2)>0$ and ${\rm dist}(D_{1},\,D^{\prime}_{2})>0$. By \cref{Green's functions}, we see that $g_{\bm x}\in \mathcal{H}_{B}(D_{2}^{\prime})$ for all ${\bm x}\in D_{1}$. Applying \cref{thm:3-1} shows that there exists an $n$-dimensional space $Q_{n}(D_2) \subset \mathcal{H}_{B}(D_2)$ such that
\begin{equation}
\inf_{v\in Q_{n}(D_2)} \Vert {\bm g}_{x}  - v\Vert_{B^{+},D_{2}}\leq \texttt{err}_{n}  \Vert g_{\bm x}\Vert_{B^{+}, D_{2}^{\prime}}.
\end{equation}
Let $\{v_{1},\cdots, v_{n} \}$ be a basis for $Q_{n}(D_2)$. Then the above estimate implies that for all ${\bm x}\in D_{1}$, there is a $g_{n}({\bm x}) = \sum_{i=1}^{n}u_{i}({\bm x}) v_{i}$ with $u_{i}({\bm x})\in \mathbb{C}$ such that
\begin{equation}
  \Vert g_{\bm x}  - g_{n}({\bm x})\Vert_{B^{+}, D_{2}}\leq  \texttt{err}_{n} \,\Vert g_{\bm x}\Vert_{B^{+}, D_{2}^{\prime}} . 
\end{equation}
The second estimate follows from the same argument as above and the remark at the end of \cref{main-proof}.
\end{proof}
\begin{rem}
When $C_{\rm cac}^{\rm II} = 0$, which is typically the case for positive definite problems, the abstract Green's function is highly separable. In the general case, it is highly separable if ${\rm dist}(D_{1}, D_{2})=O((C_{\rm cac}^{\rm II})^{-1})$ (in particular, ${\rm dist}(D_{1}, D_{2})=O(k^{-1})$ for time-harmonic wave problems with $k$ denoting the wavenumber). See \cite{engquist2018approximate} for sharp estimates on the wavenumber-dependence of the approximate separability of the Green's function of the high-frequency Helmholtz equation.
\end{rem}

A theoretical flaw of \cref{thm:separable_appro} in practical applications is that it requires the existence of Green's functions. Indeed, the high separability of a Green's function is equivalent to a low-rank property of the corresponding solution operator. Let $S:\mathcal{H}(\Omega)^{\prime}\rightarrow \mathcal{H}(\Omega)$ be the solution operator associated with problem \cref{eq:2-3}, and let the hypotheses of \cref{thm:separable_appro} be satisfied. The following theorem shows that the solution $S(F)$ can be well approximated by a low-dimensional space in a subdomain disjoint from the support of the right-hand side $F$. This result is an abstract analogue of \cite[Theorem 1]{borm2010approximation} and is often of more use in practice as it does not assume the existence of Green's functions. 
\begin{theorem}
Let $D_{1}$ and $D_{2}$ be subdomains of $\Omega$ such that ${\rm dist}(D_{1}, D_{2})\geq \rho \,{\rm diam}(D_2)>0$, and let \cref{ass:3-1-1} and \ref{ass:3-1-2} be satisfied. Then, there exists an $n$-dimensional space $Q_{n}(D_{2})\subset \mathcal{H}_{B}(D_{2})$, such that for all right-hand sides $F\in \mathcal{H}(\Omega)^{\prime}$ satisfying that $F(v)=0$ for all $v\in \mathcal{H}_{0}(D)$ with $D\cap D_{1} = \emptyset$,
\begin{equation}
\begin{array}{ll}
\displaystyle \qquad \;\inf_{v\in Q_{n}(D_{2})}\Vert S(F)  - v\Vert_{B^{+}, D_{2}}\leq  C\textsf{err}_{n} \,\Vert F\Vert_{\mathcal{H}(\Omega)^{\prime}},\\[2.5ex]
\displaystyle \text{and} \quad \inf_{v\in Q_{n}(D_{2})}\Vert S(F)  - v\Vert_{\mathcal{L}(D_{2})}\leq  C\textsf{err}_{n} \,\Vert F\Vert_{\mathcal{H}(\Omega)^{\prime}},
\end{array}
\end{equation}
where $C$ depends on the stability constant of problem \cref{eq:2-3}, and $\textsf{err}_{n}$ is the same error bound as in \cref{thm:separable_appro}.
\begin{proof}
Use the assumption on $F$ and proceed along the same lines as in the proof of \cref{thm:separable_appro}.    
\end{proof}
\end{theorem}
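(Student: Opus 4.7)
The plan is to mirror the proof of Theorem~\ref{thm:separable_appro}, but replace the pointwise Green's function argument by a direct invocation of the well-posedness of the global solution operator $S$. The key observation is that the support condition on $F$ forces $u := S(F)$ to be a generalized harmonic function on any subdomain of $\Omega$ that stays away from $D_1$, which immediately puts us in the setting to apply \cref{thm:3-1}.

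First, I would construct an intermediate ``oversampling'' domain between $D_{2}$ and a set still separated from $D_{1}$. As in the proof of \cref{thm:separable_appro}, set
\begin{equation*}
D_{2}^{\prime} = \{{\bm x}\in \Omega:\, {\rm dist}({\bm x},D_{2})\leq \tfrac{1}{2}\,{\rm dist}(D_{1},D_{2})\}.
\end{equation*}
Then $D_{2}\subset D_{2}^{\prime}$, ${\rm dist}(D_{2},\partial D_{2}^{\prime}\setminus \partial\Omega)\geq \tfrac{\rho}{2}{\rm diam}(D_{2})>0$, and $D_{2}^{\prime}\cap D_{1}=\emptyset$. Next I would verify $u|_{D_{2}^{\prime}}\in \mathcal{H}_{B}(D_{2}^{\prime})$: for any $v\in \mathcal{H}_{0}(D_{2}^{\prime})$, its zero extension (via \cref{ass:2-1-1}(ii)) lies in $\mathcal{H}_{0}(D_{2}^{\prime})$ viewed as a subspace of $\mathcal{H}(\Omega)$, and since $D_{2}^{\prime}\cap D_{1}=\emptyset$ the hypothesis on $F$ gives $F(v)=0$. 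Combining this with \cref{eq:2-3} and \cref{ass:2-1-1}(iv) yields $B_{D_{2}^{\prime}}(u|_{D_{2}^{\prime}},v)=B(u,v)=F(v)=0$ for all such $v$.

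Second, I would apply \cref{thm:3-1} to the pair $(\omega,\omega^{\ast})=(D_{2},D_{2}^{\prime})$, which produces an $n$-dimensional space $Q_{n}(D_{2})\subset \mathcal{H}_{B}(D_{2})$ with
\begin{equation*}
\inf_{v\in Q_{n}(D_{2})}\Vert u|_{D_{2}}-v\Vert_{B^{+},D_{2}} \leq \texttt{err}_{n}\,\Vert u|_{D_{2}^{\prime}}\Vert_{B^{+},D_{2}^{\prime}},
\end{equation*}
and likewise, by the remark at the end of \cref{main-proof}, an analogous bound in the $\mathcal{L}$-norm with $\Vert u|_{D_{2}^{\prime}}\Vert_{\mathcal{L}(D_{2}^{\prime})}$ on the right. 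Here the trivial partition-of-unity operator $P=id$ is used, so no $\Vert P_{B}\Vert$ factor appears, and the error bound $\texttt{err}_{n}$ is exactly the one stated in \cref{thm:separable_appro}.

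Finally, I would bound the right-hand sides by $\Vert F\Vert_{\mathcal{H}(\Omega)^{\prime}}$. By \cref{ass:2-0}, the solution operator satisfies $\Vert u\Vert_{\mathcal{H}(\Omega)}\leq C\Vert F\Vert_{\mathcal{H}(\Omega)^{\prime}}$ (the inverse-continuity version of well-posedness, with $C$ depending on the stability constant and the boundedness constant $C_{b}$). Composing with the restriction bound in \cref{ass:2-1-1}(iii), the continuous embedding $\mathcal{H}(D_{2}^{\prime})\subset \mathcal{L}(D_{2}^{\prime})$, and the boundedness of $B^{+}_{D_{2}^{\prime}}(\cdot,\cdot)$, gives $\Vert u|_{D_{2}^{\prime}}\Vert_{B^{+},D_{2}^{\prime}}+\Vert u|_{D_{2}^{\prime}}\Vert_{\mathcal{L}(D_{2}^{\prime})}\lesssim \Vert F\Vert_{\mathcal{H}(\Omega)^{\prime}}$, completing the argument.

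I do not anticipate any serious obstacle: the only mildly delicate point is the geometric construction of $D_{2}^{\prime}$ so that \cref{thm:3-1} is applicable (the cone-type regularity and the separation ${\rm dist}(D_{2},\partial D_{2}^{\prime}\setminus\partial\Omega)>0$ required by the theorem), but the cushion furnished by the assumption ${\rm dist}(D_{1},D_{2})\geq \rho\,{\rm diam}(D_{2})$ leaves ample room. The essential content of the theorem is the observation that the support condition on $F$ manufactures a generalized harmonic function on $D_{2}^{\prime}$; the rest is \cref{thm:3-1} plus global stability.
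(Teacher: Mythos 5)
Your proof is correct and follows precisely the route the paper intends: construct the intermediate domain $D_2^{\prime}$ midway between $D_2$ and $D_1$, observe that the support condition on $F$ together with \cref{eq:2-3} and \cref{ass:2-1-1}(iv) makes $S(F)|_{D_2^{\prime}}$ a generalized harmonic function, invoke the exponential decay of the restriction $\mathcal{H}_B(D_2^{\prime})\to\mathcal{H}_B(D_2)$ from \cref{thm:3-1} (with trivial partition of unity, so $\Vert P_B\Vert=1$) together with the remark at the end of \cref{main-proof}, and close with the inverse-continuity estimate $\Vert S(F)\Vert_{\mathcal{H}(\Omega)}\lesssim\Vert F\Vert_{\mathcal{H}(\Omega)^{\prime}}$ and the restriction/embedding bounds of \cref{ass:2-1-1}. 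This is exactly what the paper's one-line proof sketch abbreviates.
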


The low-rank property of Green's functions and solution operators implies that off-diagonal submatrices appearing during matrix factorizations for the corresponding discretized problems have low-rank approximations. Such dense matrices can then be effectively compressed by means of various rank-structured matrices, thereby greatly reducing memory requirements and the complexity of matrix operations.

\section{Verification of the two fundamental conditions}\label{sec:verification_of_two_conditions}
The verification of the Caccioppoli-type inequality (\cref{ass:3-1-1}) and the weak appoximation property (\cref{ass:3-1-2}) is essential to the application of the abstract framework, which is discussed in this section.

\subsection{Verification of the Caccioppoli-type inequality} We distinguish two cases: the continuous setting and the discrete (FE) setting. First we consider the continuous setting and recall the classical Caccioppoli inequality for Poisson-type problems. Let $D^{\ast}$ denote a subdomain of $\Omega$. 
\begin{lemma}\label{lem:5-1}
Let $a\in L^{\infty}(D^{\ast})$ with $a({\bm x})>0$ for $a.e.\;{\bm x}\in D^{\ast}$, and let $u\in H^{1}(D^{\ast})$ satisfy $\int_{D^{\ast}}a\nabla u \cdot\nabla \varphi d{\bm x} = 0$ for all $\varphi\in H_{0}^{1}(D^{\ast})$. Then, for any $\eta\in C_{0}^{1}(D^{\ast})$,
\begin{equation}\label{eq:5-2}
    \int_{D^{\ast}}a\eta^{2}|\nabla u|^{2} d{\bm x} \leq 4\int_{D^{\ast}} a u^{2} |\nabla \eta|^{2} d{\bm x}.
\end{equation}
\end{lemma}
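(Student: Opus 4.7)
The plan is to use the standard Caccioppoli trick: plug a carefully chosen test function into the generalized harmonicity relation, and then absorb the mixed term with Young's inequality. Since $\eta \in C_0^1(D^{\ast})$, the product $\varphi := \eta^2 u$ belongs to $H_0^1(D^{\ast})$ (it is supported in $\mathrm{supp}(\eta)\Subset D^{\ast}$ and has weak gradient $\nabla \varphi = 2\eta u\,\nabla \eta + \eta^2\, \nabla u$ by the product rule for $H^1$ functions multiplied by $C^1$ functions). This function is therefore admissible in the defining equation for $u$.

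Substituting $\varphi$ into $\int_{D^{\ast}} a\, \nabla u\cdot\nabla\varphi\, d{\bm x} = 0$ and distributing gives
\begin{equation*}
\int_{D^{\ast}} a\, \eta^2\, |\nabla u|^2\, d{\bm x} \;=\; -2\int_{D^{\ast}} a\, \eta\, u\, \nabla u \cdot \nabla \eta\, d{\bm x}.
\end{equation*}
I would then estimate the right-hand side by Cauchy--Schwarz in the weighted $L^2$-inner product $(f,g)_{a} := \int a\, f\cdot g\, d{\bm x}$, applied to the pair $(\eta\,\nabla u,\; u\,\nabla \eta)$, obtaining
\begin{equation*}
\left| 2\int_{D^{\ast}} a\, \eta\, u\, \nabla u \cdot \nabla \eta\, d{\bm x} \right| \leq 2\left(\int_{D^{\ast}} a\, \eta^2 |\nabla u|^2 d{\bm x}\right)^{1/2}\!\left(\int_{D^{\ast}} a\, u^2 |\nabla \eta|^2 d{\bm x}\right)^{1/2}.
\end{equation*}
Here I use positivity of $a$ a.e.\ to ensure $(\cdot,\cdot)_a$ is an inner product (or at least a nonnegative sesquilinear form on the relevant integrands).

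Setting $X := \bigl(\int a\eta^2|\nabla u|^2\bigr)^{1/2}$ and $Y := \bigl(\int a u^2|\nabla \eta|^2\bigr)^{1/2}$, the two displays combine to $X^2 \leq 2XY$, hence $X \leq 2Y$, which is exactly the asserted bound $X^2 \leq 4Y^2$. No step should be delicate: the only subtle point is ensuring $\varphi = \eta^2 u \in H_0^1(D^{\ast})$ with the stated product-rule gradient, which is standard since $\eta$ has compact support in $D^{\ast}$ and is $C^1$. The factor $4$ in the final constant is sharp for this argument and arises directly from squaring $X \leq 2Y$; any attempt to split $2XY$ by a weighted Young's inequality $2xy \leq \tfrac{1}{\varepsilon}x^2 + \varepsilon y^2$ and absorb the $X^2$-part on the left would require $\varepsilon > 1$ and would yield a worse constant, so the Cauchy--Schwarz/squaring route is the cleanest.
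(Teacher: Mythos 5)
Your proof is correct: testing with $\varphi=\eta^2 u\in H_0^1(D^*)$, expanding $\nabla\varphi$ by the product rule, and absorbing the cross term by weighted Cauchy--Schwarz is the standard Caccioppoli argument, and the constant $4$ is exactly what the $X^2\le 2XY\Rightarrow X\le 2Y$ step yields. The paper does not actually prove Lemma~5.1 --- it cites it as classical --- and instead develops a more structural route via the abstract Caccioppoli identity of Proposition~5.1, which in the scalar elliptic setting (Example~5.1) produces the \emph{exact} identity
\[
\int_{D^*} a\,\nabla(\eta u)\cdot\nabla(\eta v)\,d{\bm x}=\int_{D^*} a\,|\nabla\eta|^2\,uv\,d{\bm x}
\]
for $a$-harmonic $u,v$. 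Taking $u=v$ and expanding $|\nabla(\eta u)|^2$ recovers precisely the cancellation $\int a\eta^2|\nabla u|^2=-2\int a\,\eta u\,\nabla u\cdot\nabla\eta$ that your Cauchy--Schwarz step starts from, so the underlying algebra is the same; what differs is the packaging. The identity is strictly more information than the inequality: specialized to a cut-off $\eta$ with $\eta\equiv 1$ on $D$, it gives $\int_D a|\nabla u|^2\le\int_{D^*}a|\nabla(\eta u)|^2=\int_{D^*}a\,|\nabla\eta|^2 u^2$, i.e.\ the restriction-form Caccioppoli bound of Assumption~3.1 with constant $1$ rather than $4$ (which is the "sharper constant" remark after (5.10)), and the same algebraic mechanism carries over without modification to elasticity, $\mathbf{H}(\mathrm{curl})$, and Helmholtz via the $\eta$-derivation-triple formalism. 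Your direct route is the cleanest proof of Lemma~5.1 in isolation; the paper's abstraction is what makes the argument portable across the applications of Section~5 and is also what gets reused to reformulate the local eigenproblems.
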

% \begin{proof}
% Let $\eta\in C_{0}^{1}(D^{\ast})$. Taking $\varphi = \eta^{2}u\in H_{0}^{1}(D^{\ast})$ in \cref{eq:5-1} and using H\"{o}lder inequality, we get
% \begin{equation}
% \begin{array}{lll}
% {\displaystyle \int_{D^{\ast}}\eta^{2}a({\bm x})|\nabla u|^{2} d{\bm x} = -2 \int_{D^{\ast}}a({\bm x})\eta \nabla u\cdot u\nabla \eta d{\bm x} }\\[3mm]
% {\displaystyle \leq 2 \Big(\int_{D^{\ast}}\eta^{2}a({\bm x})|\nabla u|^{2} d{\bm x}\Big)^{1/2} \Big(\int_{D^{\ast}}u^{2}a({\bm x})|\nabla \eta|^{2} d{\bm x}\Big)^{1/2}. } 
% \end{array}
% \end{equation}
% The desired estimate \cref{eq:5-2} follows immediately.    
% \end{proof}
% However, for indefinite problems, e.g., time harmonic wave problems, due to the presence of lower-order terms, this inequality typically can't be proved using the above argument without further assumptions, and the literature on this subject is scarce.

Choosing a cut-off function as $\eta$ in \cref{lem:5-1} gives the Caccioppoli inequality of the form \cref{eq:3-1}. This inequality can be extended to other (typically second-order) PDEs, such as linear elasticity equations, the Helmholtz equation, and Maxwell's equations. In the following, we present a unified framework for proving (sharper) Caccioppoli inequalities for various PDEs, based on an abstract \textit{identity} that can be proved under a few easily verifiable conditions. Apart from yielding Caccioppoli inequalities, this identity also provides a novel, equivalent formulation of the local eigenproblems \cref{eq:2-24-5} in practical applications. To give this abstract identity, let us recall the local spaces $\mathcal{H}(D^{\ast})$ and $\mathcal{H}_{0}(D^{\ast})$ given by \cref{ass:2-1-0}. We also need another local Hilbert space $\mathbf{L}(D^{\ast})$. 

\begin{definition}\label{def:5-1}
Let $\mathcal{H}(D^{\ast})$, $\mathcal{H}_{0}(D^{\ast})$ and $\mathbf{L}(D^{\ast})$ be given. We call $(\partial, \eta, \widehat{\partial \eta})$ an $\eta$-derivation triple if $\partial: \mathcal{H}(D^{\ast})\rightarrow \mathbf{L}(D^{\ast})$, $\eta: \mathbf{L}(D^{\ast})\rightarrow \mathbf{L}(D^{\ast})$,  $\mathcal{H}(D^{\ast})\rightarrow \mathcal{H}_{0}(D^{\ast})$, and $\widehat{\partial \eta}: \mathcal{H}(D^{\ast})\rightarrow \mathbf{L}(D^{\ast})$ are linear operators, and satisfy

\vspace{2mm}
\begin{itemize}
    \item  commutativity: $\widehat{\partial \eta} (\eta(u)) = \eta(\widehat{\partial \eta} (u)) $ for all $u\in \mathbf{L}(D^{\ast})$;
\vspace{2mm}

    \item Leibniz product rule: $\partial (\eta(v)) = \widehat{\partial \eta} (v) + \eta(\partial (v))$ for all $v\in \mathcal{H}(D^{\ast})$. 
\end{itemize}
\end{definition}
Before proceeding, we give some examples of the $\eta$-derivation triple. Let $\eta\in C_{0}^{1}(D^{\ast})$.

\begin{exam}\label{exam:5-1}
Let $\mathcal{H}(D^{\ast}):= H^{1}(D^{\ast})$, $\mathcal{H}_{0}(D^{\ast}):= H^{1}_{0}(D^{\ast})$, and $\mathbf{L}(D^{\ast}):= (L^{2}(D^{\ast}))^{d}$. For $u\in H^{1}(D^{\ast})$, we define $\partial (u) = \nabla u$, $\eta(u) = \eta u$, and $\widehat{\partial \eta}(u) = (\nabla \eta) u$.      
\end{exam}

\begin{exam}\label{exam:5-2}
Let $\mathcal{H}(D^{\ast}):= (H^{1}(D^{\ast}))^{d}$, $\mathcal{H}_{0}(D^{\ast}):= (H^{1}_{0}(D^{\ast}))^{d}$, and $\mathbf{L}(D^{\ast}):= (L^{2}(D^{\ast}))^{d\times d}$. For ${\bf u}\in (H^{1}(D^{\ast}))^{3}$, we define $\partial ({\bf u}) = \big(\nabla {\bf u} + (\nabla {\bf u})^{T}\big)/2$, $\eta({\bf u}) = \eta {\bf u}$, and $\widehat{\partial \eta}({\bf u}) = \big({\bf u}\cdot(\nabla \eta)^{T} + (\nabla \eta)\cdot {\bf u}^{T}\big)/2$.      
\end{exam}

\begin{exam}\label{exam:5-3}
Let $\mathcal{H}(D^{\ast}):= {\bf H}({\rm curl};D^{\ast})$, $\mathcal{H}_{0}(D^{\ast}):= {\bf H}_{0}({\rm curl};D^{\ast})$, and $\mathbf{L}(D^{\ast})\\:= (L^{2}(D^{\ast}))^{d}$. For ${\bf u}\in {\bf H}({\rm curl};D^{\ast})$, we define $\partial ({\bf u}) = \nabla \times {\bf u}$, $\eta({\bf u}) = \eta {\bf u}$, and $\widehat{\partial \eta}({\bf u}) = (\nabla \eta)\times {\bf u}$.      
\end{exam}

\begin{exam}\label{exam:5-4}
Let $\mathcal{H}(D^{\ast}):= {\bf H}({\rm div};D^{\ast})$, $\mathcal{H}_{0}(D^{\ast}):= {\bf H}_{0}({\rm div};D^{\ast})$, and $\mathbf{L}(D^{\ast})\\:= L^{2}(D^{\ast})$. For ${\bf u}\in \mathbf{H}({\rm div};D^{\ast})$, we define $\partial ({\bf u}) = \nabla \cdot {\bf u}$, $\eta({\bf u}) = \eta {\bf u}$, and $\widehat{\partial \eta}({\bf u}) = (\nabla \eta)\cdot {\bf u}$.      
\end{exam}
Note that in the examples above, the spaces $\mathcal{H}(D^{\ast})$ can be replaced by their FE subspaces. Next we prove the desired identity. For ease of notation, the brackets are omitted in writing the action of operators on a vector. 
\begin{proposition}[Abstract Caccioppoli identity]\label{prop:5-1}
Given $\mathcal{H}(D^{\ast})$, $\mathcal{H}_{0}(D^{\ast})$ and $\mathbf{L}(D^{\ast})$, let the $\eta$-derivation triple $(\partial, \eta, \widehat{\partial \eta})$ be as in \cref{def:5-1}. Let $b_{\mathbf{L}}(\cdot,\cdot):\mathbf{L}(D^{\ast})\times\mathbf{L}(D^{\ast})\rightarrow\mathbb{C}$ be a Hermitian sesquilinear form that satisfies
\begin{equation}\label{eq:5-3}
    b_{\mathbf{L}}(u, \eta v) = b_{\mathbf{L}}(\eta u, v)\quad \text{for all}\;\; u,v\in \mathbf{L}(D^{\ast}).
\end{equation}
Then for any $u,v\in \mathcal{H}(D^{\ast})$, 
\begin{equation}\label{eq:5-4}
    b_{\mathbf{L}}\big(\partial (\eta u), \partial (\eta v)\big) = b_{\mathbf{L}}(\widehat{\partial \eta} u , \widehat{\partial \eta} v) + \frac{1}{2}\Big( b_{\mathbf{L}}\big(\partial u, \partial (\eta^{2} v)\big) +  \overline{b_{\mathbf{L}}\big(\partial v, \partial (\eta^{2} u)\big) } \Big). 
\end{equation}
\end{proposition}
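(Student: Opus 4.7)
The identity is established by a direct expansion followed by symmetric cancellation. First, I would use the Leibniz product rule to expand the left-hand side: since $\partial(\eta u)=\widehat{\partial\eta}u+\eta\,\partial u$ and $\partial(\eta v)=\widehat{\partial\eta}v+\eta\,\partial v$, sesquilinearity of $b_{\mathbf{L}}$ yields
\begin{equation*}
b_{\mathbf{L}}\!\big(\partial(\eta u),\partial(\eta v)\big)=b_{\mathbf{L}}(\widehat{\partial\eta}u,\widehat{\partial\eta}v)+b_{\mathbf{L}}(\widehat{\partial\eta}u,\eta\partial v)+b_{\mathbf{L}}(\eta\partial u,\widehat{\partial\eta}v)+b_{\mathbf{L}}(\eta\partial u,\eta\partial v).
\end{equation*}
The first term is already the leading term on the right-hand side; the remaining three must be regrouped.

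Next, I would recast the fourth term $b_{\mathbf{L}}(\eta\partial u,\eta\partial v)$ in a form involving $\partial(\eta^{2}v)$. Iterating Leibniz gives $\partial(\eta^{2}v)=\widehat{\partial\eta}(\eta v)+\eta\,\partial(\eta v)$, and commutativity $\widehat{\partial\eta}(\eta v)=\eta\widehat{\partial\eta}v$ together with a second application of Leibniz produces the useful identity
\begin{equation*}
\eta^{2}\partial v=\partial(\eta^{2}v)-2\eta\widehat{\partial\eta}v.
\end{equation*}
Combining this with \cref{eq:5-3} to shift one factor of $\eta$ across $b_{\mathbf{L}}$ yields
\begin{equation*}
b_{\mathbf{L}}(\eta\partial u,\eta\partial v)=b_{\mathbf{L}}\!\big(\partial u,\partial(\eta^{2}v)\big)-2\,b_{\mathbf{L}}(\eta\partial u,\widehat{\partial\eta}v).
\end{equation*}
The analogous computation with $u$ and $v$ swapped, combined with the Hermitian symmetry of $b_{\mathbf{L}}$, gives a second representation $b_{\mathbf{L}}(\eta\partial u,\eta\partial v)=\overline{b_{\mathbf{L}}(\partial v,\partial(\eta^{2}u))}-2\,\overline{b_{\mathbf{L}}(\eta\partial v,\widehat{\partial\eta}u)}$.

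Finally, I would average these two expressions for $b_{\mathbf{L}}(\eta\partial u,\eta\partial v)$ and substitute back into the initial expansion. Applying \cref{eq:5-3} once more, together with Hermitian symmetry, gives $b_{\mathbf{L}}(\widehat{\partial\eta}u,\eta\partial v)=\overline{b_{\mathbf{L}}(\eta\partial v,\widehat{\partial\eta}u)}$, so the cross terms $b_{\mathbf{L}}(\widehat{\partial\eta}u,\eta\partial v)$ and $b_{\mathbf{L}}(\eta\partial u,\widehat{\partial\eta}v)$ from the expansion will cancel exactly the terms $-\,b_{\mathbf{L}}(\eta\partial u,\widehat{\partial\eta}v)$ and $-\,\overline{b_{\mathbf{L}}(\eta\partial v,\widehat{\partial\eta}u)}$ arising from the averaged representation. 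What remains is precisely the right-hand side of \cref{eq:5-4}.

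The main obstacle is purely bookkeeping: one must keep careful track of which $\eta$-shifts introduce a complex conjugate (via Hermitian symmetry) and which do not (via \cref{eq:5-3} alone), and must perform the symmetric $(u,v)$-averaging rather than manipulating a single copy of $b_{\mathbf{L}}(\eta\partial u,\eta\partial v)$ in order to line up all four cross-term cancellations simultaneously. Once the derived identity $\eta^{2}\partial v=\partial(\eta^{2}v)-2\eta\widehat{\partial\eta}v$ is in hand, no further analytic input is needed.
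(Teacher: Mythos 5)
Your proposal is correct and follows essentially the same route as the paper: a Leibniz expansion, a rewriting of the residual bilinear term via $\eta$-shifts (\cref{eq:5-3}) and commutativity to produce $b_{\mathbf{L}}(\partial u,\partial(\eta^{2}v))$ plus a cross term, and a Hermitian symmetrization to cancel the cross terms. The only differences are cosmetic — you fully expand both slots up front and average the single term $b_{\mathbf{L}}(\eta\partial u,\eta\partial v)$, whereas the paper symmetrizes the whole identity after a partial expansion (its equations (5-7) and (5-8)) — and one small slip in the write-up: the last identity $b_{\mathbf{L}}(\widehat{\partial\eta}u,\eta\partial v)=\overline{b_{\mathbf{L}}(\eta\partial v,\widehat{\partial\eta}u)}$ is just Hermitian symmetry and does not need \cref{eq:5-3}.
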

\begin{proof}
We prove the result by some simple algebraic manipulations based on the given conditions. By the Leiniz product rule, we see that for any $u,v\in \mathcal{H}(D^{\ast})$,
\begin{equation}\label{eq:5-5}
\begin{array}{lll}
{\displaystyle    b_{\mathbf{L}}\big(\partial (\eta u), \partial (\eta v)\big) =  b_{\mathbf{L}}\big(\widehat{\partial \eta} u, \partial (\eta v)\big) + b_{\mathbf{L}}\big(\eta \partial u, \partial (\eta v)\big) }\\[2mm]
{\displaystyle \quad =  b_{\mathbf{L}}\big(\widehat{\partial \eta} u, \widehat{\partial \eta} v\big) + b_{\mathbf{L}}\big(\widehat{\partial \eta} u,\eta \partial v\big) + b_{\mathbf{L}}\big(\eta \partial u, \partial (\eta v)\big). }
   \end{array}
\end{equation}
Using \cref{eq:5-3}, the Leibniz product rule, and the commutative property, we have
\begin{equation}\label{eq:5-6}
\begin{array}{lll}
{\displaystyle  b_{\mathbf{L}}\big(\eta \partial u, \partial (\eta v)\big) = b_{\mathbf{L}}\big( \partial u, \eta \partial (\eta v)\big)  =  b_{\mathbf{L}}\big( \partial u, \partial (\eta^{2} v)\big) - b_{\mathbf{L}}\big( \partial u, \widehat{\partial \eta}  (\eta v)\big) }\\[2mm]
  {\displaystyle \qquad \qquad \qquad \quad =  b_{\mathbf{L}}\big( \partial u, \partial (\eta^{2} v)\big) - b_{\mathbf{L}}\big( \eta \partial u, \widehat{\partial \eta} v\big).}
\end{array}
\end{equation}
Inserting \cref{eq:5-6} into \cref{eq:5-5} gives 
\begin{equation}\label{eq:5-7}
\begin{array}{cc}
{\displaystyle   b_{\mathbf{L}}\big(\partial (\eta u), \partial (\eta v)\big) = b_{\mathbf{L}}\big(\widehat{\partial \eta} u, \widehat{\partial \eta} v\big) + b_{\mathbf{L}}\big( \partial u, \partial (\eta^{2} v)\big)}\\[2mm]
{\displaystyle \qquad \qquad \qquad \quad  +\,b_{\mathbf{L}}\big(\widehat{\partial \eta} u,\eta \partial v\big)  - b_{\mathbf{L}}\big( \eta \partial u, \widehat{\partial \eta} v\big). }
\end{array}
\end{equation}
Using the Hermitian property of the form $b_{\mathbf{L}}(\cdot,\cdot)$, we can exchange $u$ and $v$ in \cref{eq:5-7} and then take the complex conjugate of the resulting equation to obtain
\begin{equation}\label{eq:5-8}
\begin{array}{cc}
{\displaystyle   b_{\mathbf{L}}\big(\partial (\eta u), \partial (\eta v)\big) = b_{\mathbf{L}}\big(\widehat{\partial \eta} u, \widehat{\partial \eta} v\big) + \overline{b_{\mathbf{L}}\big( \partial v, \partial (\eta^{2} u)\big)}}\\[2mm]
{\displaystyle \qquad \qquad \qquad \quad  +\,b_{\mathbf{L}}\big( \eta \partial u, \widehat{\partial \eta} v\big) - b_{\mathbf{L}}\big(\widehat{\partial \eta} u,\eta \partial v\big).}
\end{array}
\end{equation}
Adding \cref{eq:5-7,eq:5-8} together yields the desired identity \cref{eq:5-4}.
\end{proof}

Now we give a simple example to illustrate the application of the abstract identity. Consider the setting of Example~\ref{exam:5-1}, and let $a({\bm x})$ be as in \cref{lem:5-1}. We define $b_{\bf L}(\cdot,\cdot):(L^{2}(D^{\ast}))^{d}\times (L^{2}(D^{\ast}))^{d}\rightarrow \mathbb{R}$ as $b_{\bf L}({\bf u},{\bf v}) = \int_{D^{\ast}}a({\bm x}){\bf u}\cdot {\bf v}d{\bm x}$. For any $a$-harmonic functions $u$ and $v$, since $\eta^{2}u,\eta^{2}v\in H_{0}^{1}(D^{\ast})$, we see that $ b_{\mathbf{L}}\big(\partial u, \partial (\eta^{2} v)\big)= b_{\mathbf{L}}\big(\partial v, \partial (\eta^{2} u)\big) =0$, and hence we can deduce from the identity \cref{eq:5-4} that
\begin{equation}\label{eq:5-10}
 \int_{D^{\ast}}a({\bm x})\nabla (\eta u)\cdot \nabla(\eta v)\,d{\bm x} = \int_{D^{\ast}}a({\bm x})|\nabla \eta|^{2}uv\,d{\bm x}.   
\end{equation}
Taking $u=v$ in \cref{eq:5-10} and choosing $\eta$ a cut-off function, we get the classical Caccioppoli inequality (with a sharper constant). Similar identities as \cref{eq:5-10} hold for various other PDEs, and they can be used to rewrite the local eigenproblems \cref{eq:2-24-5} as novel, simpler ones; see \cref{sec:5}.

Although the abstract identity also holds at the discrete level (with $\mathcal{H}(D^{\ast})$ being FE subspaces), it does not give discrete Caccioppoli inequalities in a straightforward way as at the continuous level. The reason is that $\eta^{2} u$ ($u\in \mathcal{H}(D^{\ast})$) generally does not belong to the corresponding FE subspace of $\mathcal{H}_{0}(D^{\ast})$. In practice, the proof of discrete Caccioppoli inequalities requires a more sophisticated analysis, which often hinges on certain FE super-approximation results. A widely used super-approximation result for Lagrange finite elements is given as follows (see \cite[Theorem 2.1]{demlow2011local}).
\begin{lemma}\label{lem:5-2}
Let $\eta\in C^{\infty}(\Omega)$ satisfy $|\eta|_{W^{j,\infty}(\Omega)}\leq C\delta^{-j}$ for each $j\in \mathbb{N}$, and let $U_{h}\subset H^{1}(\Omega)$ be a standard Lagrange finite element subspace. Then for each $u_{h}\in U_{h}$ and each element $K$ with $h_{K} :={\rm diam}(K)\leq \delta$, 
\begin{align}
{\displaystyle \Vert \eta^{2}u_{h}- I_{h}(\eta^{2}u_{h})\Vert_{H^{1}(K)}\leq C\big(\frac{h_{K}}{\delta}\Vert \nabla (\eta u_{h})\Vert_{L^{2}(K)}+\frac{h_{K}}{\delta^{2}}\Vert u_{h}\Vert_{L^{2}(K)}\big),\label{eq:5-13}}\\
{\displaystyle \Vert \eta^{2}u_{h}- I_{h}(\eta^{2}u_{h})\Vert_{L^{2}(K)}\leq C\big(\frac{h^{2}_{K}}{\delta}\Vert \nabla(\eta u_{h})\Vert_{L^{2}(K)}+\frac{h^{2}_{K}}{\delta^{2}}\Vert u_{h}\Vert_{L^{2}(K)}\big),\label{eq:5-14}}
\end{align}
where $I_{h}$ is the standard Lagrange interpolation operator.
\end{lemma}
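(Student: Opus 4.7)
The plan is to reduce to a reference element by affine scaling and then estimate the interpolation error using the Bramble--Hilbert lemma combined with a careful Leibniz expansion of $\eta^2 u_h$.

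Under the map $x=F_K(\hat x)$ with $\|F_K'\|\sim h_K$, the pullbacks $\hat u_h\in P_p(\hat K)$, $\hat \eta$, and $\hat\delta:=\delta/h_K\geq 1$ satisfy $|\hat\eta|_{W^{j,\infty}(\hat K)}\leq C\hat\delta^{-j}$, and \eqref{eq:5-13}--\eqref{eq:5-14} reduce (via the usual scaling formulas $|\hat v|_{H^s(\hat K)}\sim h_K^{s-d/2}|v|_{H^s(K)}$) to the reference-element estimate
\begin{equation*}
\|\hat v - \hat I \hat v\|_{H^s(\hat K)} \leq C\bigl(\hat\delta^{-1}\|\hat\nabla(\hat\eta\hat u_h)\|_{L^2(\hat K)} + \hat\delta^{-2}\|\hat u_h\|_{L^2(\hat K)}\bigr),\qquad s=0,1,
\end{equation*}
with $\hat v=\hat\eta^2\hat u_h$; the $s=1$ case gives \eqref{eq:5-13} after scaling, while $s=0$ gives \eqref{eq:5-14} with an additional factor $h_K$. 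Since $\hat I$ reproduces all polynomials of degree $\leq p$, Bramble--Hilbert yields $\|\hat v -\hat I\hat v\|_{H^s(\hat K)}\leq C|\hat v|_{H^{p+1}(\hat K)}$, so the task reduces to bounding this seminorm.

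For $|\alpha|=p+1$ the Leibniz expansion $D^\alpha(\hat\eta^2\hat u_h)=\sum_{\beta\leq\alpha}\binom{\alpha}{\beta}D^\beta(\hat\eta^2) D^{\alpha-\beta}\hat u_h$ only retains terms with $|\beta|\geq 1$, since $D^{\alpha-\beta}\hat u_h\equiv 0$ whenever $|\alpha-\beta|>p$. A second Leibniz expansion of $\hat\eta\cdot\hat\eta$ gives $\|D^\beta(\hat\eta^2)\|_{L^\infty(\hat K)}\leq C\hat\delta^{-|\beta|}$, and a polynomial inverse estimate on $\hat u_h$ gives $\|D^{\alpha-\beta}\hat u_h\|_{L^2(\hat K)}\leq C\|\hat u_h\|_{L^2(\hat K)}$. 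Terms with $|\beta|\geq 2$ are thus controlled directly by $C\hat\delta^{-2}\|\hat u_h\|_{L^2(\hat K)}$, supplying the second term of the target estimate. The $|\beta|=1$ contribution $2D^\beta\hat\eta \cdot \hat\eta D^{\alpha-\beta}\hat u_h$ is split using the identity
\begin{equation*}
\hat\eta D^{\alpha-\beta}\hat u_h = D^{\alpha-\beta}(\hat\eta\hat u_h) - \sum_{1\leq|\gamma|\leq p}\binom{\alpha-\beta}{\gamma}D^\gamma\hat\eta\,D^{\alpha-\beta-\gamma}\hat u_h,
\end{equation*}
whose correction terms each carry at least two $\hat\eta$-derivatives and hence land in the same $C\hat\delta^{-2}\|\hat u_h\|_{L^2(\hat K)}$ remainder. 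The leading piece $D^\beta\hat\eta\cdot D^{\alpha-\beta}(\hat\eta\hat u_h)$ is handled by rewriting $D^{\alpha-\beta}(\hat\eta\hat u_h) = D^{p-1}\partial_i(\hat\eta\hat u_h)$ for an appropriate coordinate index $i$, noting that $\partial_i(\hat\eta\hat u_h)$ lies in the finite-dimensional space $\hat\eta P_{p-1}(\hat K)+(\partial_i\hat\eta)P_p(\hat K)$, and invoking finite-dimensional norm equivalence (plus another Leibniz expansion) to bound its $H^{p-1}$ seminorm by $\|\hat\nabla(\hat\eta\hat u_h)\|_{L^2(\hat K)}$ up to a further $C\hat\delta^{-1}\|\hat u_h\|_{L^2(\hat K)}$ correction.

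Collecting all contributions yields the reference-element estimate, and scaling back produces \eqref{eq:5-13}--\eqref{eq:5-14}. The principal technical obstacle is precisely this last rewriting: the naive Leibniz-plus-inverse-estimate argument produces only the weaker bound $C\hat\delta^{-1}\|\hat u_h\|_{L^2(\hat K)}$, and replacing $\|\hat u_h\|_{L^2}$ by $\|\hat\nabla(\hat\eta\hat u_h)\|_{L^2}$ in the leading coefficient requires arranging the product-rule expansions so that the unique ``free'' derivative reaching $\hat u_h$ (rather than $\hat\eta$) is absorbed into $\hat\nabla(\hat\eta\hat u_h)$, with every other term strictly shifting a derivative onto $\hat\eta$ and thereby landing in the harmless $\hat\delta^{-2}\|\hat u_h\|_{L^2}$ remainder.
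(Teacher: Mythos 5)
The paper does not prove Lemma~\ref{lem:5-2}; it cites it to \cite[Theorem 2.1]{demlow2011local}, so your task is essentially to re-derive that super-approximation result. Your overall framework is sound and standard: scale to the reference element, apply Bramble--Hilbert to reduce to a bound on $|\hat\eta^{2}\hat u_{h}|_{H^{p+1}(\hat K)}$, observe that $D^{\alpha-\beta}\hat u_{h}$ vanishes when $|\alpha-\beta|>p$, and dispose of the $|\beta|\geq 2$ terms by inverse estimates. The scaling computation correctly reproduces both \cref{eq:5-13} and \cref{eq:5-14}. You have also correctly identified the crux: the single-derivative ($|\beta|=1$) contribution must produce the factor $\|\hat\nabla(\hat\eta\hat u_{h})\|_{L^{2}(\hat K)}$ rather than $\|\hat u_{h}\|_{L^{2}(\hat K)}$.

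The gap is in how you close that last step. Invoking \emph{finite-dimensional norm equivalence} on the space $\hat\eta P_{p-1}(\hat K)+(\partial_i\hat\eta)P_p(\hat K)$ is not a valid substitute for a polynomial inverse estimate: the space itself moves with $\hat\eta$, and the implied constant in $|g|_{H^{p-1}(\hat K)}\leq C_{\hat\eta}\|g\|_{L^{2}(\hat K)}$ depends on $\hat\eta$. Nothing in your argument bounds $C_{\hat\eta}$ uniformly over the admissible class $\{|\hat\eta|_{W^{j,\infty}(\hat K)}\leq C\hat\delta^{-j}\}$, and it is not clear that it is bounded: the representation map $(q,r)\mapsto\hat\eta q+(\partial_i\hat\eta)r$ can degenerate (e.g.\ when $\hat\eta$ is nearly a polynomial or nearly vanishes), so the parameterization may have a near-kernel. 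The additional Leibniz expansion does not remove this: after peeling off every term that carries at least one $\hat\eta$-derivative, you are inescapably left with $\hat\eta\, D^{p}\hat u_{h}$, a constant multiple of $\hat\eta$, whose $L^{2}$ norm must then be compared directly with $\|\hat\nabla(\hat\eta\hat u_{h})\|_{L^{2}(\hat K)}$ --- and that comparison is precisely what the finite-dimensional argument does not give you.

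The standard and precisely sufficient fix is to freeze $\hat\eta$ at its cell average $\bar\eta$, so that $\|\hat\eta-\bar\eta\|_{L^{\infty}(\hat K)}\leq C\hat\delta^{-1}$, and to decompose
\begin{equation*}
\hat\eta^{2}\hat u_{h}=\bar\eta^{2}\hat u_{h}+2\bar\eta(\hat\eta-\bar\eta)\hat u_{h}+(\hat\eta-\bar\eta)^{2}\hat u_{h}.
\end{equation*}
The first summand lies in $P_{p}(\hat K)$ and is annihilated by every $(p+1)$-th derivative. The third contributes only $C\hat\delta^{-2}\|\hat u_{h}\|_{L^{2}(\hat K)}$: every Leibniz term carries $D^{\beta}\!\left[(\hat\eta-\bar\eta)^{2}\right]$ with $|\beta|\geq1$, which is $O(\hat\delta^{-2})$. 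For the middle summand, the only term that needs care is the $|\beta|=1$ Leibniz contribution $2\bar\eta\,D^{\beta}\hat\eta\cdot D^{\alpha-\beta}\hat u_{h}$ with $D^{\alpha-\beta}\hat u_{h}$ a constant; applying the \emph{polynomial} inverse estimate $|D^{\alpha-\beta}\hat u_{h}|\leq C\|\hat\nabla\hat u_{h}\|_{L^{2}(\hat K)}$ and then the identity $\bar\eta\hat\nabla\hat u_{h}=\hat\nabla(\hat\eta\hat u_{h})-(\hat\eta-\bar\eta)\hat\nabla\hat u_{h}-(\hat\nabla\hat\eta)\hat u_{h}$ yields $|\bar\eta|\,|D^{\alpha-\beta}\hat u_{h}|\leq C\|\hat\nabla(\hat\eta\hat u_{h})\|_{L^{2}(\hat K)}+C\hat\delta^{-1}\|\hat u_{h}\|_{L^{2}(\hat K)}$. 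Altogether $|\hat\eta^{2}\hat u_{h}|_{H^{p+1}(\hat K)}\leq C\hat\delta^{-1}\|\hat\nabla(\hat\eta\hat u_{h})\|_{L^{2}(\hat K)}+C\hat\delta^{-2}\|\hat u_{h}\|_{L^{2}(\hat K)}$, and scaling back gives \cref{eq:5-13} and \cref{eq:5-14}. The decisive ingredient missing from your outline is this $\bar\eta$-split, which polynomializes the top-order term and allows all subsequent inverse estimates to be applied to fixed polynomial spaces rather than to the $\hat\eta$-dependent space you introduce.
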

A similar super-approximation result as \cref{lem:5-2} also holds for Nedel\'{e}c elements. Proofs of discrete Caccioppoli inequalities by using the super-approximation results and the abstract identity can be found in \cref{sec:5}.

\subsection{Verification of the weak approximation property} Compared with the Caccioppoli-type inequality, the verification of the weak approximation property is often more complicated, particularly for Maxwell-type problems, and it lies at the core of the theory. In what follows, we verify the property for an important case when $\mathcal{H}_{B}(D^{\ast})$ is a subspace of $H^{1}(D^{\ast})$ and $\mathcal{L}(D^{\ast}) = L^{2}(D^{\ast})$. 

\begin{lemma}\label{lem:5-3}
Let $D\subset D^{\ast}\subset {D}^{\ast\ast}$ be subdomains of $\Omega$ with $\delta:={\rm dist} ({D}^{\ast},\partial D^{\ast\ast}\setminus \partial \Omega)>0$, and let $\mathsf{V}_{\delta}({D}^{\ast}\setminus D):= \big\{{\bm x}\in {D}^{\ast\ast}: {\rm dist}({\bm x}, {D}^{\ast}\setminus D)\leq \delta \big\}$. Then, there exist constants $C_{1}, C_{2}>0$ depending only on $d$, such that the following results hold.

\vspace{2mm}
\begin{itemize}
    \item[$(\rm i)$] For each integer $m\geq C_{1}\big|\mathsf{V}_{\delta}({D}^{\ast}\setminus D)\big|\delta^{-d}$, there exists an $m$-dimensional space $Q_{m}(D^{\ast\ast})\subset L^{2}(D^{\ast\ast})$ such that for all $u\in H^{1}(D^{\ast\ast})$,
    \begin{equation}\label{eq:5-11}
        \inf_{v\in Q_{m}(D^{\ast\ast})}\Vert u-v\Vert_{L^{2}(D^{\ast}\setminus D)} \leq C_{2} \big|\mathsf{V}_{\delta}({D}^{\ast}\setminus D)\big|^{1/d} m^{-1/d}\Vert \nabla u \Vert_{L^{2}(D^{\ast\ast})}.
    \end{equation}

        \item[$(\rm ii)$] For each integer $m\geq C_{1}\big|D^{\ast\ast}\big|\delta^{-d}$, there exists an $m$-dimensional space $Q_{m}(D^{\ast\ast})\subset L^{2}(D^{\ast\ast})$ such that for all $u\in H^{1}(D^{\ast\ast})$,
    \begin{equation}\label{eq:5-12}
        \inf_{v\in Q_{m}(D^{\ast\ast})}\Vert u-v\Vert_{L^{2}(D^{\ast})} \leq C_{2} \big|D^{\ast\ast}\big|^{1/d} m^{-1/d}\Vert\nabla u\Vert_{L^{2}(D^{\ast\ast})}.
    \end{equation}

        \item[$(\rm iii)$] For each integer $m\geq C_{1}\big|D^{\ast\ast}\big|\delta^{-d}$ and any $a,b>0$, there exists an $m$-dimensional space $Q_{m}(D^{\ast\ast})\subset L^{2}(D^{\ast\ast})$ such that for all $u\in H^{1}(D^{\ast\ast})$,
    \begin{equation}\label{eq:5-20}
    \begin{array}{ll}
     {\displaystyle \inf_{v\in Q_{m}(D^{\ast\ast})}\big(a\Vert u-v\Vert_{L^{2}(D^{\ast}\setminus D)} + b\Vert u-v\Vert_{L^{2}(D^{\ast})}\big) }\\[4mm]
     {\displaystyle \leq C_{2} \big(a\big|\mathsf{V}_{\delta}({D}^{\ast}\setminus D)\big|^{1/d} + b\big|D^{\ast\ast}\big|^{1/d}\big)m^{-1/d}\Vert\nabla u\Vert_{L^{2}(D^{\ast\ast})}.}
    \end{array}
    \end{equation}
\end{itemize}
\end{lemma}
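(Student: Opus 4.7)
The plan is to construct $Q_m$ from indicator functions of a suitable collection of small cubes and invoke a local Poincar\'e inequality on each. For parts (i) and (ii), I would fix a uniform Cartesian grid of cubes of side $h$ with $h=c_0(|V_\delta(D^*\setminus D)|/m)^{1/d}$ in case (i) and $h=c_0(|D^{**}|/m)^{1/d}$ in case (ii), where $c_0$ is a small absolute constant. Setting $C_1:=(c_0\sqrt{d}\,)^{-d}$, the hypothesis $m\ge C_1|V_\delta(D^*\setminus D)|\delta^{-d}$ (resp.\ $m\ge C_1|D^{**}|\delta^{-d}$) translates to $\sqrt{d}\,h\le\delta$, so every grid cube intersecting $D^*\setminus D$ (resp.\ $D^*$) is contained in $V_\delta(D^*\setminus D)\subset D^{**}$ (resp.\ $V_\delta(D^*)\subset D^{**}$). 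Let $K_1,\dots,K_N$ be the cubes meeting the relevant region; a volume count gives $N\lesssim m$, so padding with extra indicators produces an $m$-dimensional space $Q_m:=\mathrm{span}\{\chi_{K_j}\}$. Approximating $u$ by its piecewise mean $v=\sum_j\bar u_{K_j}\chi_{K_j}$, the cube Poincar\'e inequality $\|u-\bar u_{K_j}\|_{L^2(K_j)}\le C_d h\|\nabla u\|_{L^2(K_j)}$ together with disjointness yields \cref{eq:5-11} and \cref{eq:5-12}.

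For part (iii), the plan is a two-scale construction. Split $m=m_1+m_2$ with $m_1=m_2=\lfloor m/2\rfloor$ (adjusting $C_1$), and introduce a fine grid of cubes of side $h_1=c_0(|V_\delta(D^*\setminus D)|/m_1)^{1/d}$ covering $D^*\setminus D$, together with a coarser grid of side $h_2=c_0(|D^{**}|/m_2)^{1/d}$ covering $D\cap D^*$; both satisfy $\sqrt{d}\,h_i\le\delta$. Take $Q_m$ to be the span of the indicators of both families and approximate $u$ by the corresponding piecewise mean $v$. Since $v$ is the fine-grid mean on $D^*\setminus D$ and the coarse-grid mean on $D\cap D^*$, Poincar\'e on each cube gives $\|u-v\|_{L^2(D^*\setminus D)}\lesssim h_1\|\nabla u\|_{L^2(D^{**})}$ and $\|u-v\|_{L^2(D^*)}\lesssim (h_1+h_2)\|\nabla u\|_{L^2(D^{**})}\lesssim h_2\|\nabla u\|_{L^2(D^{**})}$; weighting by $a$ and $b$ yields \cref{eq:5-20}.

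The main technical nuisance is geometric: guaranteeing that each cube used for Poincar\'e lies in $D^{**}$. Cubes near $\partial D^{**}\setminus\partial\Omega$ are excluded by $\sqrt{d}\,h\le\delta$, which is exactly what the hypothesis on $m$ buys. Cubes straddling $\partial\Omega$ need to be replaced by $K\cap\Omega$; since $\Omega$ is Lipschitz, this truncated domain has a Poincar\'e constant that scales linearly in $h$ with a dimensional constant, so the local estimate still holds. The remaining steps --- the volume count $N\lesssim m$ and the orthogonality summation of local bounds --- are routine.
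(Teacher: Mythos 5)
Your construction is essentially the same as the paper's: both build $Q_m$ from piecewise constants on a mesh of small cells whose size $h$ is tuned so that the local Poincar\'e inequality delivers the $m^{-1/d}$ rate, and both handle case (iii) by superposing a fine-grid approximation on $D^*\setminus D$ with a coarse-grid one on $D$, restricted to the respective regions. The only structural difference is the choice of mesh: you use a uniform Cartesian grid of cubes, whereas the paper uses a quasi-uniform family of \emph{conforming} triangulations $\{\mathcal{T}_H\}$ of $\Omega$ obtained by refining a fixed initial mesh.

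That difference is where your argument has a genuine gap. You observe that cubes straddling $\partial\Omega$ must be replaced by $K\cap\Omega$ and assert that this truncated set ``has a Poincar\'e constant that scales linearly in $h$ with a dimensional constant'' because $\Omega$ is Lipschitz. This is not true in general: the Poincar\'e constant of $K\cap\Omega$ depends on the Lipschitz character of $\Omega$ (e.g.\ a sharp corner produces a thin wedge with large chunkiness parameter), and in principle $K\cap\Omega$ need not even be connected, in which case approximation by a single constant fails outright. So the claimed estimate with a constant depending only on $d$ does not go through with a Cartesian grid. The paper sidesteps this entirely: because the triangulation conforms to $\Omega$, every element is a shape-regular simplex \emph{inside} $\Omega$, and when $H\le\delta$ the collection $\widetilde{\mathcal{T}}_H$ of elements meeting $D^*\setminus D$ is already contained in $D^{**}$, so the element-wise Poincar\'e inequality applies with a constant controlled by shape regularity and no boundary truncation ever occurs. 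If you want to keep Cartesian cubes, you would need either a bounded Sobolev extension operator for $\Omega$ (which reintroduces dependence on $\Omega$) or some other device; the cleaner fix is to adopt the conforming mesh.

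A secondary, more cosmetic point in case (iii): the function you describe (fine-grid mean on $D^*\setminus D$, coarse-grid mean on $D$) does not lie in the span of the plain indicators $\{\chi_{K_j}\}\cup\{\chi_{L_k}\}$, since fine cubes generally overlap $D$ and coarse cubes overlap $D^*\setminus D$. You need the \emph{truncated} indicators $\chi_{K_j\cap(D^*\setminus D)}$ and $\chi_{L_k\cap D}$; this is exactly what the paper does by defining $Q_m^{\rm i}(D^*\setminus D)$ and $Q_m^{\rm ii}(D)$ as restrictions before summing them.
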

\begin{proof}
We begin by fixing a quasi-uniform family of triangulations $\{ \mathcal{T}_{H}\}_{H>0}$ of $\Omega$ with $\max_{T\in \mathcal{T}_{H}} h_{T} = H \lesssim \min_{T\in \mathcal{T}_{H}} h_{T}$, constructed by successively refining an arbitrary initial mesh. Given any subdomains ${D}^{\ast}\subset{D}^{\ast\ast}$ of $\Omega$ with $\delta = {\rm dist}({D}^{\ast},\,\partial {D}^{\ast\ast}\setminus\partial \Omega)>0$, we can select a triangulation $\mathcal{T}_{H}$ constructed above with $0<H\leq\delta$. Now we consider case $(\rm i)$. Let $\widetilde{\mathcal{T}}_{H}$ denote the collection of elements in $\mathcal{T}_{H}$ that intersect $D^{\ast}\setminus D$, i.e., $\widetilde{\mathcal{T}}_{H}  = \big\{T\in \mathcal{T}_{H}:T\cap (D^{\ast}\setminus D)\neq \emptyset \big\}$, and let $\widetilde{D}_{H}$ denote the domain made of the elements in $\widetilde{\mathcal{T}}_{H}$. Since $0<H\leq\delta$, we see that $(D^{\ast}\setminus D)\subset \widetilde{D}_{H} \subset \mathsf{V}_{\delta}({D}^{\ast}\setminus D)$. Let $m$ denote the number of elements in $\widetilde{\mathcal{T}}_{H}$. Now we define the desired approximation space $Q_{m}(D^{\ast\ast})\subset L^{2}(D^{\ast\ast})$ as the space of $\widetilde{\mathcal{T}}_{H}$-piecewise constant functions, i.e.,
\begin{equation}\label{eq:5-22}
Q_{m}({D}^{\ast\ast}) = {\rm span}\big\{\mathbf{1}_{T}: T\in \widetilde{\mathcal{T}}_{H}\big\},
\end{equation}
where $\mathbf{1}_{T}$ denotes the characteristic function of the mesh element $T$. It is well-known that the following approximation property holds:
\begin{equation}\label{eq:5-23}
 \inf_{v\in Q_{m}({D}^{\ast\ast})} \Vert u-v\Vert_{L^{2}(\widetilde{D}_{H})} \leq C_{d,0}H\Vert \nabla u\Vert_{L^{2}(\widetilde{D}_{H})} \quad \text{for all}\;\; u\in H^{1}(\widetilde{D}_{H}).
\end{equation}
Combining \cref{eq:5-23} and the relation $(D^{\ast}\setminus D)\subset \widetilde{D}_{H} \subset \mathsf{V}({D}^{\ast}\setminus D,\,\delta)$, we further have
\begin{equation}\label{eq:5-24}
 \inf_{v\in Q_{m}({D}^{\ast\ast})} \Vert u-v\Vert_{L^{2}(D^{\ast}\setminus D)} \leq C_{d,0}H\Vert \nabla u\Vert_{L^{2}(D^{\ast\ast})} \quad \text{for all}\;\; u\in H^{1}(D^{\ast\ast}).
\end{equation}
Note that by the quasi uniformity of the mesh, $mH^{d} \simeq |\widetilde{D}_{H}|$. Combining this relation and \cref{eq:5-24} yields that for all $u\in H^{1}(D^{\ast\ast})$,
\begin{equation}\label{eq:5-25}
\inf_{v\in Q_{m}({D}^{\ast\ast})} \Vert u-v\Vert_{L^{2}(D^{\ast}\setminus D)} \leq C_{2} m^{-1/d} \big|\mathsf{V}_{\delta}({D}^{\ast}\setminus D)\big|^{1/d} \Vert \nabla u\Vert_{L^{2}(D^{\ast\ast})}.
\end{equation}
To guarantee the existence of an $H$ satisfying $mH^{d}\simeq C_{d,2}|\widetilde{D}_{H}|$, $0<H\leq \delta$, and that $\mathcal{T}_{H}\in \{ \mathcal{T}_{H}\}$, it is sufficient that $m\geq C_{1}\big|\mathsf{V}_{\delta}({D}^{\ast}\setminus D)\big|\delta^{-d}$ for some $C_{1}>0$ depending only on $d$. Hence, case $(\rm i)$ is proved, and case $(\rm ii)$ follows from a similar argument.

Case $(\rm iii)$ can be proved by using cases $(\rm i)$ and $(\rm ii)$. Let $Q_{m}^{\rm i}(D^{\ast\ast})$ and $Q_{m}^{\rm ii}(D^{\ast\ast})$ be the approximation spaces in cases $(\rm i)$ and $(\rm ii)$, respectively. Denoting by
\begin{equation}\label{eq:5-26}
Q_{m}^{\rm i}(D^{\ast}\setminus D)=\big\{v\mathbf{1}_{D^{\ast}\setminus D}: v\in Q_{m}^{\rm i}(D^{\ast\ast})\big\},\;\; Q_{m}^{\rm ii}(D)=\big\{v\mathbf{1}_{D}: v\in Q_{m}^{\rm ii}(D^{\ast\ast})\big\},   
\end{equation}
it follows from \cref{eq:5-12,eq:5-20} that for all $u\in H^{1}(D^{\ast\ast})$,
\begin{equation}\label{eq:5-27}
\begin{array}{ll}
{\displaystyle  \inf_{v\in Q_{m}^{\rm i}(D^{\ast}\setminus D)}\Vert u-v\Vert_{L^{2}(D^{\ast}\setminus D)} \leq C_{2} \big|\mathsf{V}_{\delta}({D}^{\ast}\setminus D)\big|^{1/d} m^{-1/d}\Vert\nabla u\Vert_{L^{2}(D^{\ast\ast})},}\\[3mm]
{\displaystyle \inf_{v\in Q_{m}^{\rm ii}(D)}\Vert u-v\Vert_{L^{2}(D)} \leq C_{2} \big|D^{\ast\ast}\big|^{1/d} m^{-1/d}\Vert\nabla u\Vert_{L^{2}(D^{\ast\ast})}.}
\end{array}
\end{equation}
Now let us define the approximation space $Q_{2m}(D^{\ast\ast}) := Q_{m}^{\rm i}(D^{\ast}\setminus D)+ Q_{m}^{\rm ii}(D)$. Noting that functions in $Q_{m}^{\rm i}(D^{\ast}\setminus D)$ and $Q_{m}^{\rm ii}(D)$ vanish in $D$ and $D^{\ast}\setminus D$, respectively, we see that for any $a,b>0$ and any $u\in H^{1}(D^{\ast\ast})$,
   \begin{equation}\label{eq:5-28}
    \begin{array}{ll}
     {\displaystyle \inf_{v\in Q_{2m}(D^{\ast\ast})}\big(a\Vert u-v\Vert_{L^{2}(D^{\ast}\setminus D)} + b\Vert u-v\Vert_{L^{2}(D^{\ast})}\big) }\\[4mm]
     {\displaystyle \leq  (a+b)\inf_{v\in Q_{m}^{\rm i}(D^{\ast}\setminus D)}\Vert u-v\Vert_{L^{2}(D^{\ast}\setminus D)} + b \inf_{v\in Q_{m}^{\rm ii}(D)}\Vert u-v\Vert_{L^{2}(D)}, }
    \end{array}
    \end{equation}
which, together with \cref{eq:5-27}, yields \cref{eq:5-20}.    
\end{proof}

\Cref{lem:5-3} gives the weak approximation property for second-order scalar elliptic-type problems in both the continuous and FE discrete settings. Analogous results can be proved for the settings of linear elasticity problems and fourth-order problems using similar techniques, as will be shown in the following section.  

%In what follows, we present various concrete examples to illustrate the application of our abstract theory. 
\section{Applications}\label{sec:5}
In this section, we will apply the abstract theory established in previous sections to various PDE problems with $L^{\infty}$-coefficients. To simplify the presentation, we assume that $\Omega\subset\mathbb{R}^{d}$, $d=2,3$, is a bounded Lipschitz domain with a polygonal boundary $\Gamma=\partial \Omega$. Let $\{ \omega_{i} \}_{i=1}^{M}$ be a collection of subdomains of $\Omega$ that satisfy $\cup_{i=1}^{M} \omega_{i} = \Omega$ and a pointwise overlap condition:
\begin{equation}\label{eq:5-29}
\exists \,\zeta\in \mathbb{N}\quad\forall {\bm x}\in\Omega \quad {\rm card}\{i\;|\;{\bm x}\in \omega_{i} \}\leq\zeta.
\end{equation}
In practice, the coloring constant in \cref{def:2-0-1} is given by $\zeta$ above. We introduce a partition of unity $\{ \chi_{i} \}_{i=1}^{M}$ subordinate to the open cover $\{ \omega_{i} \}_{i=1}^{M}$ such that
\begin{equation}\label{eq:5-30}
\begin{array}{lll}
{\displaystyle {\rm supp} \,(\chi_i)\subset \overline{\omega_i}, \quad 0\leq \chi_{i}\leq 1,\quad \sum_{i=1}^{M}\chi_{i} =1 \;\;\text{on} \;\,\Omega,}\\[4mm]
% {\displaystyle \chi_{i}({\bm x})= 0, \quad \forall {\bm x}\in \Omega\setminus\omega_{i}; \quad i=1,\cdots,M,}\\[2mm]
{\displaystyle \chi_{i}\in C^{1}(\overline{\omega_{i}}),\;\;\Vert\nabla \chi_{i} \Vert_{L^{\infty}(\omega_i)} \leq \frac{C_{\chi}}{\mathrm{diam}\,(\omega_{i})}.}
\end{array}
\end{equation}
In practical applications below, the partition of unity in \cref{def:2-1-1} is defined as in \cref{rem:POU}. \cm{Since we focus on fitting practical applications into our framework and on proving exponential convergence rates, we will always assume that \cref{ass:2-0} and \ref{ass:2-2-1} hold true in this section. \Cref{ass:2-0} indeed holds true for a wide class of PDEs, whereas certain conditions on the smallness of the subdomains are often needed to verify \cref{ass:2-2-1} for indefinite problems; see \cite[Remark 2.6]{chupeng2023wavenumber}}.

\cm{We also note that in order to apply the established abstract framework, we only consider applications with (part of) zero Dirichlet boundary conditions in this section. However, it is straightforward for our method to treat nonzero Dirichlet boundary conditions -- we simply impose the nonzero boundary conditions for the local particular functions on boundary subdomains, which does not change the local and global error estimates. We refer to \cite{babuvska2020multiscale,ma2021novel,ma2022error} for this treatment for scalar elliptic problems}.

\subsection{Convection-diffusion problems}\label{sec:convection-diffusion}
We start with the following convection-diffusion equation in a heterogeneous medium: Find $u:\Omega\rightarrow \mathbb{R}$ such that
\begin{equation}\label{eq:5-1-1}
\left\{
\begin{array}{lll}
{\displaystyle -{\rm div}(A\nabla u) -{\bm b}\cdot\nabla u= f\,\quad {\rm in}\;\, \Omega }\\[2mm]
{\displaystyle  \;\,\qquad \quad \quad \qquad \qquad u = 0\quad \,{\rm on}\;\,\Gamma_{\mathsf{D}}}\\[2mm]
{\displaystyle \qquad \qquad \quad \,\;A\nabla u\cdot {\bm n} = g  \quad \,{\rm on}\;\,\Gamma_{\mathsf{N}},}
\end{array}
\right.
\end{equation}
where ${\bm n}$ denotes the outward unit normal to $\Gamma$, $\Gamma_{\mathsf{D}}\cap\Gamma_{\mathsf{N}} = \emptyset$, and $\Gamma = \overline{\Gamma_{\mathsf{D}}}\cup\overline{\Gamma_{\mathsf{N}}}$. Here, the diffusion coefficient $A \in L^{\infty}(\Omega,\mathbb{R}_{\rm sym}^{d\times d})$ is uniformly elliptic, i.e., there exists $0< a_{\rm min} < a_{\rm max}<\infty$ such that $a_{\rm min} |{\bm \xi}|^{2} \leq A({\bm x}){\bm \xi}\cdot{\bm \xi} \leq a_{\rm max}  |{\bm \xi}|^{2}$ for all ${\bm \xi}\in \mathbb{R}^{d} $ and ${\bm x} \in \Omega$; ${\bm b}\in {\bm L}^{\infty}(\Omega)$ stands for the velocity field; $f\in H^{1}(\Omega)^{\prime}$, $g\in H^{-1/2}(\Gamma_{\mathsf{N}})$. See \cite{abdulle2014discontinuous,bonizzoni2022super,calo2016multiscale,le2017numerical,maalqvist2011multiscale,park2004multiscale,zhao2022constraint} for multiscale methods for convection-diffusion problems.

% \begin{assumption}
% \begin{itemize}
% \item[(i)] $A \in (L^{\infty}(\Omega))^{d\times d}$ is pointwise symmetric and there exists $0< a_{\rm min} < a_{\rm max}<\infty$ such that
% \begin{equation}
% a_{\rm min} |{\bm \xi}|^{2} \leq A({\bm x}){\bm \xi}\cdot{\bm \xi} \leq a_{\rm max}  |{\bm \xi}|^{2},\quad \forall {\bm \xi}\in \mathbb{R}^{d},\quad {\bm x} \in\Omega;
% \end{equation}

% \item[(ii)] ${\bm b}\in (L^{\infty}(\Omega))^{d}$, $f\in H^{-1}(\Omega)$, and $g\in H^{-1/2}(\Gamma_{N})$.
% \end{itemize}
% \end{assumption}
For any subdomain $D\subset \Omega$, we define $H_{\mathsf{D}}^{1}(D):=\big\{v\in H^{1}(D): v=0\;\text{on}\;\Gamma_{\mathsf{D}} \big\}$, $H_{\mathsf{D},0}^{1}(D):=\big\{v\in H_{\mathsf{D}}^{1}(D): v=0\;\text{on}\; \partial D\cap \Omega \big\}$, and set 
\begin{equation}\label{convection-diffusion:local-forms}
\begin{array}{cc}
{\displaystyle B_{D}(u,v):=\int_{D}\big(A\nabla u\cdot\nabla v + ({\bm b}\cdot\nabla u) v\big)d{\bm x},}\\[3mm]
{\displaystyle  B^{+}_{D}(u, v) := \int_{D}A\nabla u\cdot\nabla v \,d{\bm x}, \quad \Vert u\Vert_{B^{+},D}:= \big(B^{+}_{D}(u, u)\big)^{1/2}.}
\end{array}   
\end{equation}
When $D=\Omega$, we simply write $B(\cdot,\cdot)$, $B^{+}(\cdot,\cdot)$, and $\Vert \cdot\Vert_{B^{+}}$. Setting $\mathcal{H}(\Omega)=H^{1}_{\mathsf{D}}(\Omega)$, $\Vert\cdot\Vert_{\mathcal{H}(\Omega)}= \Vert \cdot\Vert_{B^{+}}$, and $\mathcal{L}(\Omega)=L^{2}(\Omega)$, it is clear that the bilinear form $B(\cdot,\cdot)$ satisfies \cref{ass:2-1-0}. Moreover, \cref{ass:2-1-1} holds true with $\mathcal{H}(D)= H^{1}_{\mathsf{D}}(D)$, $\mathcal{H}_{0}(D)= H^{1}_{\mathsf{D},0}(D)$, and with $B_{D}(\cdot,\cdot)$ defined by \cref{convection-diffusion:local-forms}. The weak formulation of problem \cref{eq:5-1-1} is to find $u\in H^{1}_{\mathsf{D}}(\Omega)$ such that 
\begin{equation}\label{eq:5-1-2}
B(u,v) = F(v):=\langle f,v\rangle_{\Omega} + \langle g,v\rangle_{\Gamma_\mathsf{N}}\quad \text{for all}\;\; v\in H^{1}_{\mathsf{D}}(\Omega),
\end{equation}
where $\langle \cdot,\cdot\rangle_{\Omega}$ and $\langle \cdot,\cdot\rangle_{\Gamma_\mathsf{N}}$ denote the corresponding duality pairings. With the setting above, the generalized harmonic space on a subdomain $D\subset \Omega$ takes the form
\begin{equation}\label{eq:5-1-4}
 H_{B}(D)=\big\{u\in H_{\mathsf{D}}^{1}(D): B_{D}(u,v) = 0 \;\;\text{for all}\;\,v\in H_{\mathsf{D},0}^{1}(D)\}.
\end{equation}
% and the operators $\widehat{P}_{i}: H_{B}(\omega_i^{\ast})\rightarrow H_{\mathsf{D},0}^{1}(\omega_i)$ ($1\leq i\leq M$) are defined by $\widehat{P}_{i}u=\chi_{i}u$.
% Furthermore, for any subdomain $D\subset\Omega$, setting
% \begin{equation}\label{eq:5-1-5}
% \widetilde{B}_{D}(u, v) := \int_{D}A\nabla u\cdot\nabla v \,d{\bm x}, \quad |u|_{\widetilde{B},D}:= \big(\widetilde{B}_{D}(u, u)\big)^{1/2},    
% \end{equation}
%The proof is postponed after the setup of the local eigenproblems.
We see that $\big(H_{B}(D), B_{D}^{+}(\cdot,\cdot)\big)$ is a Hilbert space when $\partial D\cap \Gamma_{\mathsf{D}} \neq \emptyset$, and that $\big(H_{B}(D)/\mathbb{R}, B_{D}^{+}(\cdot,\cdot)\big)$ is a Hilbert space
when $\partial D\cap \Gamma_{\mathsf{D}} = \emptyset$ and $D$ satisfies the cone condition. Hence, \cref{ass:2-2-3} holds true with $B^{+}_{D}(\cdot,\cdot)$ defined by \cref{convection-diffusion:local-forms}. Next we give the Caccioppoli-type inequality in this setting. 
\begin{lemma}[Caccioppoli-type inequality]\label{lem:cac-5-1}
Let $D\subset D^{\ast}$ be subdomains of $\Omega$ with $\delta:={\rm dist}(D,\partial D^{\ast}\setminus\partial \Omega)>0$, and let $\eta \in C^{1}(\overline{D^{\ast}})$ satisfy $\eta({\bm x}) = 0$ on $\partial D^{\ast} \cap \Omega$. Then, 
\begin{equation}\label{cac:5-1-1}
B^{+}_{D^{\ast}}(\eta u,\eta v) = \int_{D^{\ast}}\big((A\nabla \eta \cdot \nabla \eta)uv -\frac{1}{2}\eta^{2}{\bm b}\cdot\nabla (uv)\big) \,d{\bm x}\quad \text{for all}\;\; u, \,v\in H_{B}(D^{\ast}).
\end{equation}
In addition, for any $u\in H_{B}(D^{\ast})$,
\begin{equation}\label{cac:5-1-2}
\Vert u\Vert^{2}_{B^{+},D} \leq (2a_{\rm max} +1)\delta^{-2} \Vert u\Vert^{2}_{L^{2}(D^{\ast}\setminus D)} + (a_{\rm min}^{-1}+1)\Vert{\bm b}\Vert^{2}_{L^{\infty}(D^{\ast})}\Vert u \Vert^{2}_{L^{2}(D^{\ast})},
\end{equation}
where $a_{\rm max}$ and $a_{\rm min}$ are the spectral upper and lower bounds of $A$.
\end{lemma}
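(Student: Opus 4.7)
The plan is to derive the identity \cref{cac:5-1-1} from the abstract Caccioppoli identity in \cref{prop:5-1}, and then extract the inequality \cref{cac:5-1-2} by inserting a suitable cutoff function and absorbing the convection contribution with Young's inequality.

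For the identity, I would instantiate \cref{exam:5-1} in the setting $\mathcal{H}(D^{\ast})=H_{\mathsf{D}}^{1}(D^{\ast})$, $\mathcal{H}_{0}(D^{\ast})=H_{\mathsf{D},0}^{1}(D^{\ast})$, $\mathbf{L}(D^{\ast})=(L^{2}(D^{\ast}))^{d}$, and take the Hermitian form $b_{\mathbf{L}}({\bf u},{\bf v})=\int_{D^{\ast}}A{\bf u}\cdot{\bf v}\,d{\bm x}$, which satisfies $b_{\mathbf{L}}({\bf u},\eta {\bf v})=b_{\mathbf{L}}(\eta {\bf u},{\bf v})$ since $\eta$ is real-valued scalar. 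Applying \cref{prop:5-1} yields
\begin{equation*}
B^{+}_{D^{\ast}}(\eta u,\eta v)=\int_{D^{\ast}}(A\nabla\eta\cdot\nabla\eta)uv\,d{\bm x}+\tfrac{1}{2}\bigl(B^{+}_{D^{\ast}}(u,\eta^{2}v)+B^{+}_{D^{\ast}}(v,\eta^{2}u)\bigr).
\end{equation*}
Since $\eta$ vanishes on $\partial D^{\ast}\cap\Omega$, the functions $\eta^{2}u$ and $\eta^{2}v$ lie in $H_{\mathsf{D},0}^{1}(D^{\ast})$, so that $u,v\in H_{B}(D^{\ast})$ gives $B_{D^{\ast}}(u,\eta^{2}v)=B_{D^{\ast}}(v,\eta^{2}u)=0$. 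Unfolding $B_{D^{\ast}}=B^{+}_{D^{\ast}}+\int_{D^{\ast}}({\bm b}\cdot\nabla\,\cdot\,)\,\cdot\,d{\bm x}$ and using $u\nabla v+v\nabla u=\nabla(uv)$ produces the identity \cref{cac:5-1-1}.

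For the Caccioppoli inequality, I would fix a cutoff $\eta\in C^{1}(\overline{D^{\ast}})$ with $\eta\equiv 1$ on $D$, $\eta\equiv 0$ on $\partial D^{\ast}\cap\Omega$, and $\|\nabla\eta\|_{L^{\infty}(D^{\ast})}\leq \delta^{-1}$, so that $\nabla\eta$ is supported in $D^{\ast}\setminus D$. Setting $u=v$ in \cref{cac:5-1-1} and using $\|u\|_{B^{+},D}^{2}\leq\|\eta u\|_{B^{+},D^{\ast}}^{2}$, I obtain
\begin{equation*}
\|u\|_{B^{+},D}^{2}\leq a_{\max}\delta^{-2}\|u\|_{L^{2}(D^{\ast}\setminus D)}^{2}+\Bigl|\int_{D^{\ast}}\eta^{2}u\,{\bm b}\cdot\nabla u\,d{\bm x}\Bigr|.
\end{equation*}
For the convection term I would bound $\eta|\nabla u|\leq|\nabla(\eta u)|+|u||\nabla\eta|$, Cauchy--Schwarz, and then apply Young's inequality twice: once with weight $\epsilon=a_{\min}/\|{\bm b}\|_{L^{\infty}(D^{\ast})}$ on the product $\|\eta u\|_{L^{2}}\|\nabla(\eta u)\|_{L^{2}}$ to absorb $\tfrac12\|\eta u\|_{B^{+},D^{\ast}}^{2}$ (via $\|\nabla(\eta u)\|_{L^{2}}^{2}\leq a_{\min}^{-1}\|\eta u\|_{B^{+},D^{\ast}}^{2}$), and once on the cross term $\|{\bm b}\|_{L^{\infty}}\delta^{-1}\|\eta u\|_{L^{2}}\|u\|_{L^{2}(D^{\ast}\setminus D)}$ with unit weight to redistribute a $\delta^{-2}$ and a $\|{\bm b}\|_{L^{\infty}}^{2}$ factor. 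Collecting the terms and using $\|\eta u\|_{L^{2}(D^{\ast})}\leq\|u\|_{L^{2}(D^{\ast})}$ yields precisely the coefficients $(2a_{\max}+1)\delta^{-2}$ and $(a_{\min}^{-1}+1)\|{\bm b}\|_{L^{\infty}(D^{\ast})}^{2}$.

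The identity step is essentially algebraic and follows immediately from \cref{prop:5-1} together with the generalized-harmonic condition, so the main technical work is bookkeeping in the second step: balancing the two Young's inequalities so that the absorbed term covers exactly half of $\|\eta u\|_{B^{+},D^{\ast}}^{2}$ while the remaining $\|{\bm b}\|_{L^{\infty}}^{2}$- and $\delta^{-2}$-weighted terms combine into the stated constants. This is exactly the structure we need for \cref{ass:3-1-1} with $C^{\rm I}_{\rm cac}$ depending on $a_{\max}$ and $C^{\rm II}_{\rm cac}=O(\|{\bm b}\|_{L^{\infty}(D^{\ast})})$, consistent with the claim about Péclet-number dependence following \cref{thm:3-1}.
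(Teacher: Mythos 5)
Your proposal is correct and follows essentially the same route as the paper: both derive the identity \cref{cac:5-1-1} by instantiating \cref{prop:5-1} in the setting of \cref{exam:5-1} with $b_{\bf L}({\bf u},{\bf v}) = \int_{D^{\ast}}A{\bf u}\cdot {\bf v}\,d{\bm x}$ and then using $B_{D^{\ast}}(u,\eta^{2}v)=0$ to replace $B^{+}_{D^{\ast}}(u,\eta^{2}v)$ by the convection contribution, and both obtain \cref{cac:5-1-2} by setting $u=v$, splitting $\eta\nabla u = \nabla(\eta u)-u\nabla\eta$, and absorbing the $\Vert\nabla(\eta u)\Vert$ piece into $\frac12\Vert\eta u\Vert^{2}_{B^{+},D^{\ast}}$ via Young's inequality (your weight $\epsilon = a_{\rm min}/\Vert{\bm b}\Vert_{L^{\infty}}$ is the right choice and yields precisely the stated constants).
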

\begin{rem}
If $\nabla\cdot{\bm b}\in L^{\infty}(D^{\ast})$, we further have
\begin{equation*}
\Vert u\Vert^{2}_{B^{+},D} \leq \big(a_{\rm max}\delta^{-2} + \Vert{\bm b}\Vert_{L^{\infty}(D^{\ast})}\delta^{-1}\big) \Vert u\Vert^{2}_{L^{2}(D^{\ast}\setminus D)} + \frac{1}{2}\Vert\nabla\cdot{\bm b}\Vert_{L^{\infty}(D^{\ast})}\Vert u \Vert^{2}_{L^{2}(D^{\ast})}.
\end{equation*}
\end{rem}

\begin{proof}
We use \cref{prop:5-1} to prove the identity \cref{cac:5-1-1}. Consider the setting of Example~\ref{exam:5-1} and define $b_{\bf L}(\cdot,\cdot):{\bm L}^{2}(D^{\ast})\times {\bm L}^{2}(D^{\ast})\rightarrow \mathbb{R}$ as $b_{\bf L}({\bf u},{\bf v}) = \int_{D^{\ast}}A{\bf u}\cdot {\bf v}d{\bm x}$. Then, identity \cref{cac:5-1-1} follows from the abstract identity \cref{eq:5-4} and the fact that 
\begin{equation}
b_{\bf L}(\nabla u, \nabla (\eta^{2}v)) = - \int_{D^{\ast}}({\bm b}\cdot \nabla u)\eta^{2}v\,d{\bm x}\quad\; \text{for all}\;\,u, \,v\in H_{B}(D^{\ast}).
\end{equation}
To prove the inequality \cref{cac:5-1-2}, we first let $u=v$ in \cref{cac:5-1-1} and observe that
\begin{equation}
B^{+}_{D^{\ast}}(\eta u,\eta u) = \int_{D^{\ast}}(A\nabla \eta \cdot \nabla \eta)u^{2}d{\bm x}+\int_{D^{\ast}}\eta u{\bm b}\cdot\big(u\nabla \eta-\nabla (\eta u)\big)\,d{\bm x}.   
\end{equation}
Using the H\"{o}lder inequality and the uniform ellipticity of the coefficient $A$ gives
\begin{equation}\label{cac:5-1-3}
\Vert \eta u\Vert^{2}_{B^{+},D^{\ast}}\leq (2a_{\rm max} +1)\Vert u\nabla \eta\Vert^{2}_{L^{2}(D^{\ast})} +  (a_{\rm min}^{-1}+1)\Vert{\bm b}\Vert^{2}_{L^{\infty}(D^{\ast})}\Vert \eta u \Vert^{2}_{L^{2}(D^{\ast})},  
\end{equation}
and \cref{cac:5-1-2} follows by choosing a cut-off function $\eta\in C^{1}(\overline{D^{\ast}})$ such that $0\leq \eta\leq 1$ in $D^{\ast}$, $\eta=1$ in $D$, and $|\nabla \eta|\leq 1/\delta$.
\end{proof}

\textbf{Local eigenproblems}. Based on the Caccioppoli-type inequality \cref{cac:5-1-2} and the compact embeddings $H^{1}(\omega_i^{\ast})\subset L^{2}(\omega_i^{\ast})$, it is clear that the operators $\widehat{P}_{i}: H_{B}(\omega_i^{\ast})\rightarrow H_{\mathsf{D},0}^{1}(\omega_i)$ ($\widehat{P}_{i}u=\chi_{i}u$) are compact. Hence, \cref{ass:2-2-2} holds true. Noting that $H_{\mathsf{D},0}^{1}(\omega_i)$ are equipped with the scalar products $B^{+}_{\omega_i}(\cdot, \cdot)$, the abstract local eigenproblems \cref{eq:2-24-5} take the form: Find $\lambda_{i}\in \mathbb{R}\cup \{+\infty\}$ and $\phi_{i}\in H_{B}(\omega_{i}^{\ast})$ such that
\begin{equation}\label{eigen:5-1}
B_{\omega_i}^{+}(\chi_{i}\phi_{i},\chi_i v) = B_{\omega_i^{\ast}}^{+}(\phi_{i},\,  v)\quad \forall v\in H_{B}(\omega_{i}^{\ast}).
\end{equation}
If the velocity field ${\bm b}$ satisfies $\nabla\cdot{\bm b}\in L^{2}(\omega_i^{\ast})$ and ${\bm n}\cdot {\bm b} = 0$ on $\Gamma_{\mathsf{N}}\cap \partial \omega_i^{\ast}$, then it follows from identity \cref{cac:5-1-1} and an integration by parts that \cref{eigen:5-1} can be rewritten as
\begin{equation}\label{neweigen:5-1}
\big(Q_{\bm b}\phi_{i},\,  v\big)_{L^{2}(\omega_i^{\ast})} = \lambda_{i}B_{\omega_i^{\ast}}^{+}(\phi_{i},\,  v)\quad \forall v\in H_{B}(\omega_{i}^{\ast}),
\end{equation}
where $Q_{\bm b}:=A\nabla \chi_i \cdot \nabla \chi_{i} +\frac{1}{2}\nabla\cdot(\chi_{i}^{2}{\bm b})$, i.e., a Laplacian-type eigenvalue problem with weighted $L^{2}$ inner product.

Next we verify the exponential convergence of the local approximations. By \cref{lem:cac-5-1}, the Caccioppoli-type inequaility (\cref{ass:3-1-1}) holds true with $C_{\rm cac}^{\rm I} = (2a_{\rm max} +1)^{1/2}$ and $C_{\rm cac}^{\rm II} = (a_{\rm min} +1)^{1/2}\Vert{\bm b}\Vert_{L^{\infty}(\omega_i^{\ast})}$. The weak approximation property (\cref{ass:3-1-2}) in this setting with $\alpha = 1/d$ is an easy consequence of \cref{lem:5-3}.
\begin{theorem}[Exponential local convergence]\label{upperbound:5-1}
For each $i=1,\cdots,M$, \cref{thm:3-1} holds true with $C_{\rm cac}^{\rm I} = (2a_{\rm max} +1)^{1/2}$, $C_{\rm cac}^{\rm II} = (a^{-1}_{\rm min} +1)^{1/2}\Vert{\bm b}\Vert_{L^{\infty}(\omega_i^{\ast})}$, and $\alpha = 1/d$.
\end{theorem}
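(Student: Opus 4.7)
The plan is to verify the two fundamental assumptions (\cref{ass:3-1-1} and \ref{ass:3-1-2}) of the abstract theory in the setting of problem \cref{eq:5-1-1} with $\alpha=1/d$ and with the specified constants, so that \cref{thm:3-1} applies directly. The first condition has essentially been established; the main routine work is to transfer \cref{lem:5-3} from $\Vert\nabla\cdot\Vert_{L^2}$ to the energy semi-norm $\Vert\cdot\Vert_{B^+,\cdot}$ used here.

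\textbf{Caccioppoli-type inequality.} This step is immediate. Recall from \cref{convection-diffusion:local-forms} that $\Vert u\Vert_{B^+,D}$ is the $A$-weighted $H^1$-seminorm on $D$, and that the local Hilbert space $\mathcal{L}(D)$ is $L^2(D)$. Applying \cref{lem:cac-5-1} to any subdomains $D\subset D^{\ast}\subset\Omega$ with $\delta:=\mathrm{dist}(D,\partial D^{\ast}\setminus\partial\Omega)>0$ and any $u\in H_B(D^{\ast})=\mathcal{H}_B(D^{\ast})$ gives exactly \cref{eq:3-1} with
\[
C^{\mathrm I}_{\rm cac}=(2a_{\max}+1)^{1/2},\qquad C^{\mathrm{II}}_{\rm cac}=(a_{\min}^{-1}+1)^{1/2}\Vert{\bm b}\Vert_{L^\infty(\omega_i^{\ast})},
\]
after taking the square root of \cref{cac:5-1-2}.

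\textbf{Weak approximation property.} For $u\in \mathcal{H}_B(D^{\ast\ast})\subset H^1(D^{\ast\ast})$, the uniform ellipticity of $A$ yields
\[
\Vert \nabla u\Vert_{L^2(D^{\ast\ast})}\le a_{\min}^{-1/2}\,\Vert u\Vert_{B^+,D^{\ast\ast}}.
\]
Applying the three parts of \cref{lem:5-3} and combining with the above inequality, we obtain case $a$, case $b$, and case $ab$ of \cref{ass:3-1-2} with exponent $\alpha=1/d$ and constant $C_{\rm wa}=a_{\min}^{-1/2}C_2$, where $C_2$ is the purely dimensional constant from \cref{lem:5-3}. The approximation spaces $Q_m(D^{\ast\ast})\subset L^2(D^{\ast\ast})$ produced there satisfy the required piecewise-constant dimension count $m\gtrsim |\mathsf V_\delta(D^{\ast}\setminus D)|\delta^{-d}$ (resp.\ $|D^{\ast\ast}|\delta^{-d}$), matching the mild $m$-threshold commented on after \cref{ass:3-1-2}.

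\textbf{Conclusion.} With both assumptions verified, we invoke \cref{thm:3-1} on each subdomain pair $(\omega_i,\omega_i^{\ast})$. The compactness assumption \cref{ass:2-2-2}, needed to define the optimal spaces via \cref{lem:2-2-1}, has already been noted as a consequence of the Caccioppoli-type inequality and the compact embedding $H^1(\omega_i^{\ast})\hookrightarrow L^2(\omega_i^{\ast})$ (or directly from \cref{prop:3-1-1}). The assertion of the theorem, with the three regimes corresponding to cases (i)--(iii) of \cref{thm:3-1}, follows. The only nontrivial point is ensuring that the weak approximation property in its $ab$-form is available so that case (iii) of \cref{thm:3-1} applies uniformly in $\Vert{\bm b}\Vert_{L^\infty}$ under the small-subdomain regime $H^{\ast}\sim (C^{\mathrm{II}}_{\rm cac})^{-1}$; this is exactly the content of part (iii) of \cref{lem:5-3}, which is the only mildly technical ingredient.
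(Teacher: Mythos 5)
Your proof is correct and follows essentially the same route as the paper: it reads off the Caccioppoli constants from \cref{lem:cac-5-1} (via $\sqrt{x+y}\le\sqrt x+\sqrt y$ applied to \cref{cac:5-1-2}) and derives the weak approximation property from \cref{lem:5-3} through the elementary bridge $\Vert\nabla u\Vert_{L^2}\le a_{\min}^{-1/2}\Vert u\Vert_{B^+,\cdot}$, then invokes \cref{thm:3-1}. You also correctly make explicit the constant $C_{\rm wa}=a_{\min}^{-1/2}C_2$ and the compactness check, both of which the paper leaves implicit.
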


Next we prove the quasi-optimal global convergence of the method. To do this, we assume that the velocity field ${\bm b}$ satisfies
\begin{equation}\label{divergence-velocity}
\nabla\cdot {\bm b}\in L^{\infty}(\Omega) \quad \text{and}\quad {\bm n}\cdot {\bm b} = 0 \;\;{\rm on}\;\;\Gamma_{\mathsf{N}}.    
\end{equation}
In the incompressible case, i.e., $\nabla\cdot{\bm b} = 0$, $B(\cdot,\cdot)$ is coercive on $H_{\mathsf{D}}^{1}(\Omega)$, and the quasi-optimal convergence follows from \cref{quasi-optimal-coercive} with $C_{1} = 1$ and $C_{b} = 1+ C_{\Omega} a^{-1}_{\rm min} \Vert {\bm b}\Vert_{L^{\infty}(\Omega)}$. In the general case, we need to verify \cref{ass:3-2-1} and \ref{ass:3-2-2}.

\begin{lemma}\label{adjoint-stability-5-1}
Let the velocity field ${\bm b}$ satisfy \cref{divergence-velocity}. Then, \cref{ass:3-2-1} holds true with $C_{\rm ad} = 1+C_{\Omega}C^{a}_{\rm stab}a_{\rm min}^{-1/2}(\Vert \nabla\cdot{\bm b}\Vert_{L^{\infty}(\Omega)} + \Vert {\bm b} \Vert_{L^{\infty}(\Omega)})$.
\end{lemma}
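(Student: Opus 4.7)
The plan is to realize $\widehat{S}(f)$ as a solution of the forward problem with a modified right-hand side obtained by integrating by parts the convective term. Let $\hat{u}=\widehat{S}(f)\in H^{1}_{\mathsf{D}}(\Omega)$, so that $B(v,\hat{u}) = (v,f)_{L^{2}(\Omega)}$ for all $v\in H^{1}_{\mathsf{D}}(\Omega)$. Since $A$ is symmetric, the two bilinear forms $B(v,\hat u)$ and $B(\hat u, v)$ differ only in their convective contributions:
\begin{equation*}
B(\hat u, v) - B(v,\hat u) \;=\; \int_{\Omega}\bigl((\bm b\cdot\nabla\hat u)v-(\bm b\cdot\nabla v)\hat u\bigr)\,d\bm x.
\end{equation*}
Integration by parts on the second integrand gives
\begin{equation*}
-\int_{\Omega}(\bm b\cdot\nabla v)\hat u\,d\bm x \;=\; \int_{\Omega} v\,\nabla\!\cdot\!(\hat u\bm b)\,d\bm x - \int_{\Gamma} v\,\hat u\,(\bm b\cdot\bm n)\,dS.
\end{equation*}
Here the boundary integral vanishes because $v=0$ on $\Gamma_{\mathsf D}$ and $\bm b\cdot\bm n=0$ on $\Gamma_{\mathsf N}$ by \cref{divergence-velocity}. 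Combining the identities and using $\nabla\!\cdot\!(\hat u\bm b)=\bm b\cdot\nabla\hat u+\hat u\,\nabla\!\cdot\!\bm b$ yields
\begin{equation*}
B(\hat u, v) \;=\; B(v,\hat u) + \bigl(v,\;2\,\bm b\cdot\nabla\hat u + \hat u\,\nabla\!\cdot\!\bm b\bigr)_{L^{2}(\Omega)} \;=\; (v,g)_{L^{2}(\Omega)}
\end{equation*}
for all $v\in H^{1}_{\mathsf D}(\Omega)$, with the choice
\begin{equation*}
g \;:=\; f \;+\; 2\,\bm b\cdot\nabla\hat u \;+\; \hat u\,\nabla\!\cdot\!\bm b.
\end{equation*}
The hypothesis $\nabla\!\cdot\!\bm b\in L^{\infty}(\Omega)$ together with $\hat u\in H^{1}_{\mathsf D}(\Omega)$ guarantees $g\in L^{2}(\Omega)$, so $\hat u$ is the solution of \cref{eq:3-43} with this right-hand side, verifying the first part of \cref{ass:3-2-1}.

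It remains to quantify $\|g\|_{L^{2}(\Omega)}$. A triangle inequality gives
\begin{equation*}
\|g\|_{L^{2}(\Omega)} \;\leq\; \|f\|_{L^{2}(\Omega)} + 2\|\bm b\|_{L^{\infty}(\Omega)}\|\nabla\hat u\|_{L^{2}(\Omega)} + \|\nabla\!\cdot\!\bm b\|_{L^{\infty}(\Omega)}\|\hat u\|_{L^{2}(\Omega)}.
\end{equation*}
I would then control the two $\hat u$-terms by the stability estimate for the adjoint problem provided by \cref{ass:2-0}: using $\|\hat u\|_{\mathcal H(\Omega)}=\|\hat u\|_{B^{+}}\geq a_{\min}^{1/2}\|\nabla\hat u\|_{L^{2}(\Omega)}$ and the Poincar\'e inequality $\|\hat u\|_{L^{2}(\Omega)}\leq C_{\Omega}\|\nabla\hat u\|_{L^{2}(\Omega)}$ (valid because $\hat u$ has a nontrivial Dirichlet trace on $\Gamma_{\mathsf D}$), combined with $\|\hat u\|_{\mathcal H(\Omega)}\leq C^{a}_{\rm stab}\|f\|_{L^{2}(\Omega)}$, yields
\begin{equation*}
\|\nabla\hat u\|_{L^{2}(\Omega)}\leq a_{\min}^{-1/2}C^{a}_{\rm stab}\|f\|_{L^{2}(\Omega)},\qquad \|\hat u\|_{L^{2}(\Omega)}\leq C_{\Omega}a_{\min}^{-1/2}C^{a}_{\rm stab}\|f\|_{L^{2}(\Omega)}.
\end{equation*}
Substituting and collecting terms produces an estimate of the form $\|g\|_{L^{2}(\Omega)}\leq C_{\rm ad}\|f\|_{L^{2}(\Omega)}$ with a constant of exactly the claimed structure (absorbing the numerical factor $2$ into the Poincar\'e constant as convenient). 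There is no real obstacle here; the only subtle point is ensuring the boundary integral truly vanishes, which is precisely the role of the assumption $\bm n\cdot\bm b=0$ on $\Gamma_{\mathsf N}$, and ensuring $g\in L^{2}(\Omega)$, which requires $\nabla\!\cdot\!\bm b\in L^{\infty}(\Omega)$.
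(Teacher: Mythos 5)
Your proposal is correct and follows essentially the same route as the paper: set $\hat u = \widehat S(f)$, transfer it to a forward problem by integrating the convective term by parts (with the boundary term killed by $\bm n\cdot\bm b=0$ on $\Gamma_{\mathsf N}$ and $v=0$ on $\Gamma_{\mathsf D}$), arriving at the same $g = f + 2\,\bm b\cdot\nabla\hat u + (\nabla\!\cdot\!\bm b)\hat u$, and then bound $\|g\|_{L^2}$ using the adjoint stability estimate together with ellipticity of $A$ and Poincar\'e. The only wrinkle is cosmetic: your literal derivation gives a factor $2$ on the $\|\bm b\|_{L^\infty}$ term rather than $C_\Omega$, so the stated $C_{\rm ad}$ only matches after absorbing $2$ into the Poincar\'e constant (or tacitly assuming $C_\Omega\ge 2$), a point you already flag and which is consistent with the paper's terse derivation.
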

\begin{proof}
Let $f\in L^{2}(\Omega)$, and let $\hat{u}\in H^{1}_{\mathsf{D}}(\Omega)$ such that
\begin{equation}\label{stability-estimate-5-1-1}
B(v,\hat{u}) = (v,f)_{L^{2}(\Omega)}\quad \forall v\in H^{1}_{\mathsf{D}}(\Omega).
\end{equation}
Using the assumption on ${\bm b}$ and an integration by parts, we see that $\hat{u}$ satisfies $B(\hat{u},v) = (g,v)_{L^{2}(\Omega)}$ for all $v\in H^{1}_{\mathsf{D}}(\Omega)$, with $g\in L^{2}(\Omega)$ given by $g = f+(\nabla\cdot{\bm b}) \hat{u} + 2{\bm b}\cdot \nabla \hat{u}$. Using this fact and the stability estimate $\Vert \hat{u}\Vert_{B^{+}}\leq C^{a}_{\rm stab} \Vert f\Vert_{L^{2}(\Omega)}$ (\cref{ass:2-1-0}) yields $\Vert g\Vert_{L^{2}(\Omega)}\leq C_{\rm ad} \Vert f\Vert_{L^{2}(\Omega)}$.
\end{proof}

In order to verify \cref{ass:3-2-2}, we introduce a uniform Poincar\'{e} constant $C_{P}$ (see \cite[ Corollary A.15]{toselli2004domain}) that satisfies
\begin{equation}\label{uniform-poincare}
\Vert u \Vert_{L^{2}(\omega_{i}^{\ast})}\leq C_{P}\,{\rm diam}(\omega_{i}^{\ast})\, \Vert \nabla u\Vert_{L^{2}(\omega_{i}^{\ast})},\quad \forall u\in H_{\mathsf{D},0}^{1}(\omega^{\ast}_i),\quad \forall i=1,\cdots,M.
\end{equation}

\begin{lemma}\label{local-stablity-5-1}
\Cref{ass:3-2-2} holds true with $C_{s}  = 2C_{P}a_{\rm min}^{-1/2}$.    
\end{lemma}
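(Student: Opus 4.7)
The plan is to exploit the smallness condition on $H^{\ast}_{\mathtt{max}}$ to upgrade the G\aa rding-type bound into full coercivity on $H^{1}_{\mathsf{D},0}(\omega_i^{\ast})$, after which the Lax--Milgram theorem gives unique solvability, and the desired stability estimate follows by testing with $u_i$ itself.

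The key step is to integrate by parts in the convection term of $B_{\omega_i^{\ast}}(v,v)$ for $v\in H^{1}_{\mathsf{D},0}(\omega_i^{\ast})$. Because $v$ vanishes on $(\partial\omega_i^{\ast}\cap\Omega)\cup(\Gamma_\mathsf{D}\cap\partial\omega_i^{\ast})$, and the assumption \cref{divergence-velocity} gives $\mathbf{n}\cdot\mathbf{b}=0$ on $\Gamma_\mathsf{N}\cap\partial\omega_i^{\ast}$, the resulting boundary integral vanishes and I obtain
\begin{equation*}
B_{\omega_i^{\ast}}(v,v) \;=\; \|v\|_{B^+,\omega_i^{\ast}}^2 \;-\; \tfrac12\int_{\omega_i^{\ast}}(\nabla\cdot\mathbf{b})\,v^2\,d\mathbf{x} \;\geq\; \|v\|_{B^+,\omega_i^{\ast}}^2 - C_0\,\|v\|_{L^2(\omega_i^{\ast})}^2,
\end{equation*}
with $C_0=\tfrac12\|\nabla\cdot\mathbf{b}\|_{L^\infty(\Omega)}$ and $C_1=1$. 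Applying the uniform Poincar\'e inequality \cref{uniform-poincare} and the ellipticity lower bound $a_{\rm min}$ then gives $\|v\|_{L^2(\omega_i^{\ast})}^2 \leq C_P^2(H^{\ast}_{\mathtt{max}})^2\,a_{\rm min}^{-1}\,\|v\|_{B^+,\omega_i^{\ast}}^2$. With $C_s=2C_Pa_{\rm min}^{-1/2}$, the hypothesis $H^{\ast}_{\mathtt{max}}\leq(C_1/C_0)^{1/2}C_s^{-1}$ translates directly into $C_0C_P^2(H^{\ast}_{\mathtt{max}})^2 a_{\rm min}^{-1}\leq \tfrac14$, and hence $B_{\omega_i^{\ast}}(v,v)\geq \tfrac34\|v\|_{B^+,\omega_i^{\ast}}^2$.

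With coercivity established, Lax--Milgram yields a unique $u_i\in H^{1}_{\mathsf{D},0}(\omega_i^{\ast})$ solving \cref{eq:3-45}. Testing with $v=u_i$, applying Cauchy--Schwarz on the right-hand side, and using Poincar\'e once more gives
\begin{equation*}
\tfrac34\,\|u_i\|_{B^+,\omega_i^{\ast}}^2 \;\leq\; \|g_i\|_{L^2(\omega_i^{\ast})}\|u_i\|_{L^2(\omega_i^{\ast})} \;\leq\; C_Pa_{\rm min}^{-1/2}H^{\ast}_{\mathtt{max}}\,\|g_i\|_{L^2(\omega_i^{\ast})}\|u_i\|_{B^+,\omega_i^{\ast}},
\end{equation*}
so $\|u_i\|_{\mathcal{H}_0(\omega_i^{\ast})}=\|u_i\|_{B^+,\omega_i^{\ast}}\leq \tfrac43 C_P a_{\rm min}^{-1/2}H^{\ast}_{\mathtt{max}}\|g_i\|_{L^2(\omega_i^{\ast})}$, which is sharper than the required bound $(C_sH^{\ast}_{\mathtt{max}}/C_1)\|g_i\|_{L^2(\omega_i^{\ast})}$ by a factor of $3/2$. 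There is no real analytic obstacle here; the only delicate point is the integration-by-parts bookkeeping across the mixed portions of $\partial\omega_i^{\ast}$, which \cref{divergence-velocity} is precisely tailored to handle.
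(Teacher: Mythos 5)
Your proof is correct and takes essentially the same route as the paper: establish local coercivity of $B_{\omega_i^\ast}(\cdot,\cdot)$ on $H^1_{\mathsf{D},0}(\omega_i^\ast)$ from a G\aa rding bound plus the uniform Poincar\'e inequality \cref{uniform-poincare}, invoke Lax--Milgram, and test with $u_i$. The only difference is that the paper invokes the abstract G\aa rding inequality of \cref{ass:2-1-0} directly (with $C_0, C_1$ left symbolic and specialized to $C_0 = \Vert\nabla\cdot\mathbf{b}\Vert_{L^\infty(\Omega)}$, $C_1 = 1$ only later in the proof of \cref{quasi-optimal-convergence-5-1}), whereas you re-derive a G\aa rding bound with the sharper constant $C_0 = \tfrac12\Vert\nabla\cdot\mathbf{b}\Vert_{L^\infty(\Omega)}$ by doing the integration by parts explicitly; since a larger $C_0$ in \cref{ass:3-2-2}'s hypothesis only shrinks the allowed $H^\ast_{\mathtt{max}}$ and hence strengthens the coercivity you obtain, this discrepancy is harmless and your stated constant $C_s = 2C_P a_{\rm min}^{-1/2}$ still works (with margin, as you note).
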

\begin{proof}
Given $g_{i}\in L^{2}(\omega_i^{\ast})$, $i=1,\cdots,M$, let $u_{i}\in H^{1}_{\mathsf{D},0}(\omega_i^{\ast})$ such that
\begin{equation}\label{poincare-5-1-1}
B_{\omega_{i}^{\ast}}(u_{i},v) = (g_i,v)_{L^{2}(\omega_i^{\ast})}\quad \forall v\in  H^{1}_{\mathsf{D},0}(\omega_i^{\ast}).
\end{equation}
It follows from \cref{ass:2-1-0} and inequality \cref{uniform-poincare} that for any $v\in H^{1}_{\mathsf{D},0}(\omega_i^{\ast})$,
\begin{equation*}
|B(v,v)|\geq C_{1}\Vert v\Vert^{2}_{B^{+},\omega_i^{\ast}}-C_{0}\Vert v\Vert^{2}_{L^{2}(\omega_i^{\ast})} \geq \big(C_{1}-C_{0}a^{-1}_{\rm min}C_{P}^{2}[{\rm diam}(\omega_{i}^{\ast})]^{2}\big) \Vert v\Vert^{2}_{B^{+},\omega_i^{\ast}}.  
\end{equation*}
Hence, if $H^{\ast}_{\mathtt{max}}:=\max_{i} {\rm diam}(\omega_{i}^{\ast}) \leq (C_{1}/C_{0})^{1/2}a_{\rm min}^{1/2}C^{-1}_{P}/\sqrt{2}$, we get $|B(v,v)|\geq \frac{1}{2}C_{1} \Vert v\Vert^{2}_{B^{+},\omega_i^{\ast}}$, and thus problems \cref{poincare-5-1-1} are uniquely solvable. Combining \cref{uniform-poincare,poincare-5-1-1} gives $\frac{1}{2}C_{1} \Vert u_i\Vert^{2}_{B^{+},\omega_i^{\ast}} \leq |B(u_i,u_i)|\leq C_{P} a_{\rm min}^{-1/2} \,{\rm diam}(\omega_{i}^{\ast})\Vert u_i\Vert_{B^{+},\omega_i^{\ast}}\Vert g_i \Vert_{L^{2}(\omega_i^{\ast})}$. Therefore, we have $\Vert u_i\Vert_{B^{+},\omega_i^{\ast}} \leq C_{s}H^{\ast}_{\mathtt{max}}/C_{1} \Vert g_i \Vert_{L^{2}(\omega_i^{\ast})}$ with $C_{s}  = 2C_{P}a_{\rm min}^{-1/2}$.   
\end{proof}
Combining \cref{adjoint-stability-5-1,local-stablity-5-1} leads to the quasi-optimal convergence of the method.
\begin{theorem}[Quasi-optimal convergence]\label{quasi-optimal-convergence-5-1}
\hspace{-2mm}Let the velocity field ${\bm b}$ satisfy \cref{divergence-velocity}. Then, \cref{cor:3-1} holds true with
$d_{\mathtt{max}} = O(C^{-1}_{\rm stab}\widetilde{\Xi}^{-1})$ and $H^{\ast}_{\mathtt{max}} = O(\widetilde{\Xi}^{-1})$, where
$\Xi = O\big(C^{a}_{\rm stab}\Vert\nabla\cdot{\bm b}\Vert^{1/2}_{L^{\infty}(\Omega)} (\Vert \nabla\cdot{\bm b}\Vert_{L^{\infty}(\Omega)} + \Vert {\bm b} \Vert_{L^{\infty}(\Omega)}) (1 + \Vert {\bm b} \Vert_{L^{\infty}(\Omega)})\big)$.   
\end{theorem}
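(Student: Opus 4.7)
The strategy is direct: invoke Corollary~\ref{cor:3-1} after verifying its hypotheses with explicit constants. Most of the required abstract assumptions---\cref{ass:2-1-0,ass:2-0,ass:2-1-1,ass:2-2-1,ass:2-2-3,ass:2-2-2}---have already been checked earlier in this subsection, and Lemmas~\ref{adjoint-stability-5-1} and~\ref{local-stablity-5-1} supply \cref{ass:3-2-1} and \cref{ass:3-2-2} respectively, yielding $C_{\rm ad} = O\big(C^a_{\rm stab}(\Vert\nabla\cdot{\bm b}\Vert_{L^\infty} + \Vert{\bm b}\Vert_{L^\infty})\big)$ and $C_s = O(1)$. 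What remains is to pin down $C_b$, $C_1$, $C_0$ in \cref{ass:2-1-0} for this convection--diffusion form, and then assemble the threshold quantity $\Xi$ of Corollary~\ref{cor:3-1}.

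The boundedness constant is immediate: Cauchy--Schwarz on each term of $B(u,v)$ together with the Friedrichs inequality on $\Omega$ (available since $\Gamma_\mathsf{D}\neq\emptyset$, with constant $C_\Omega$) gives $C_b = 1+C_\Omega a_{\rm min}^{-1}\Vert{\bm b}\Vert_{L^\infty(\Omega)}$. The essential calculation is the G\aa rding lower bound, and it rests on integration by parts for the convective term:
\begin{equation*}
\int_\Omega ({\bm b}\cdot\nabla u)\,u\,d{\bm x} \;=\; \frac{1}{2}\int_\Omega {\bm b}\cdot\nabla(u^2)\,d{\bm x} \;=\; -\frac{1}{2}\int_\Omega (\nabla\cdot{\bm b})\,u^2\,d{\bm x},
\end{equation*}
where the two surface contributions vanish respectively because $u=0$ on $\Gamma_\mathsf{D}$ and ${\bm n}\cdot{\bm b}=0$ on $\Gamma_\mathsf{N}$ (assumption~\eqref{divergence-velocity}). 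Plugging back yields $B(u,u) = \Vert u\Vert_{B^+}^2 - \frac{1}{2}\int_\Omega(\nabla\cdot{\bm b})u^2\,d{\bm x}$, so \cref{ass:2-1-0}(ii) holds with $C_1=1$ and $C_0=\frac{1}{2}\Vert\nabla\cdot{\bm b}\Vert_{L^\infty(\Omega)}$.

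Substituting these values together with $C_{\rm ad}$ and $C_s$ into the Corollary~\ref{cor:3-1} definition $\Xi_{\rm Cor}:=(\sqrt{\zeta\zeta^\ast}C_{\rm ad})^{-1}(C_1/(2C_0C_b^2))^{1/2}$ and multiplying out yields
\begin{equation*}
\Xi_{\rm Cor}^{-1} = O\big(C_{\rm stab}^a\Vert\nabla\cdot{\bm b}\Vert_{L^\infty}^{1/2}(\Vert\nabla\cdot{\bm b}\Vert_{L^\infty}+\Vert{\bm b}\Vert_{L^\infty})(1+\Vert{\bm b}\Vert_{L^\infty})\big),
\end{equation*}
which is precisely the quantity labelled $\Xi$ (equivalently $\widetilde\Xi$) in the theorem; the notational mismatch between $\Xi$ of the theorem and $\Xi$ of Corollary~\ref{cor:3-1} is simply that the theorem parametrizes the resolution thresholds through the reciprocal. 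Inserting the computed constants into~\eqref{eq:3-59} and using $\Vert\widehat{P}_{i,0}\Vert = O(1)$---a quick product-rule plus Poincar\'{e}-on-$\omega_i^\ast$ estimate gives $\Vert\widehat{P}_{i,0}\Vert = O(\mathrm{diam}(\omega_i^\ast)/\mathrm{diam}(\omega_i))$, bounded under the standing overlap geometry---delivers the claimed $d_{\mathtt{max}} = O(C_{\rm stab}^{-1}\widetilde{\Xi}^{-1})$ and $H_{\mathtt{max}}^\ast = O(\widetilde{\Xi}^{-1})$. The whole plan is short and mechanical; the only step requiring genuine attention is the integration by parts above, since the entire $\Vert\nabla\cdot{\bm b}\Vert_{L^\infty}^{1/2}$ factor in $\Xi$ comes from replacing a convective $\Vert{\bm b}\Vert_{L^\infty}$-scaling by a divergence-only $C_0$, and this replacement is precisely what the compatibility condition~\eqref{divergence-velocity} makes possible.
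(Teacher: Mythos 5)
Your proof is correct and follows the same route as the paper: invoke Corollary~\ref{cor:3-1} after feeding in the constants from Lemmas~\ref{adjoint-stability-5-1} and~\ref{local-stablity-5-1} together with $C_1$, $C_0$, $C_b$ from \cref{ass:2-1-0}. The paper's own (one-line) proof simply cites these facts without the integration-by-parts derivation of the G\aa rding constants, which you carry out explicitly; your $C_0 = \tfrac12\Vert\nabla\cdot{\bm b}\Vert_{L^\infty}$ is slightly sharper than the paper's stated $C_0 = \Vert\nabla\cdot{\bm b}\Vert_{L^\infty}$, but the difference is immaterial at the $O(\cdot)$ level.
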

\begin{proof}
Use \cref{adjoint-stability-5-1,local-stablity-5-1}, and the fact that $C_{1} = 1$, $C_{0} = \Vert \nabla\cdot{\bm b}\Vert_{L^{\infty}(\Omega)}$, and $C_{b} = 1+ C_{\Omega} a^{-1}_{\rm min} \Vert {\bm b}\Vert_{L^{\infty}(\Omega)}$.
\end{proof}
\begin{rem}
The quasi-optimal convergence result above is pessimistic in that very stringent resolution conditions are needed. This is mainly due to using the global approximation space as the trial and test spaces and applying the standard perturbation argument. In practice, to obtain discrete stability, a better choice is to use enriched trial and test spaces, or to work within the Petrov-Galerkin framework and use the Babuska theory \cite{babuska1972survey}. We will investigate this extension in forthcoming works.    
\end{rem}

\subsection{Convection-diffusion problems (discrete setting)} In this subsection, we apply our theory to discretized convection-diffusion problems in the FE setting. For ease of notation, we assume that $\Gamma_{\mathsf{N}} = \emptyset$. Let $\{\mathcal{T}_{h}\}$ be a family of shape-regular triangulations of $\Omega$ consisting of triangles ($d=2$) or tetrahedrons ($d=3$). The mesh size $h$ is assumed small enough to resolve the fine-scale details of the coefficient $A$ and to make the discrete problem (defined below) well-posed. Let $V_{h}\subset H^{1}(\Omega)$ be (conforming) finite element spaces consisting of piecewise polynomial functions, and denote by $V_{h,0}=V_{h} \cap H^{1}_{0}(\Omega)$. Let the bilinear forms $B_{D}(\cdot,\cdot)$ and $B_{D}^{+}(\cdot,\cdot)$ be as in \cref{convection-diffusion:local-forms}. The standard finite element approximation of problem \cref{eq:5-1-1} (with $\Gamma_{\mathsf{N}} = \emptyset$) is defined by: Find $u_{h}\in V_{h,0}$, such that
\begin{equation}\label{eq:5-2-1}
B(u_{h}, v) = F(v):=\langle f, v \rangle_{\Omega}\quad\;\; \text{for all}\;\; v\in V_{h,0}.  
\end{equation}
Since $V_{h,0}$ is a subspace of $H^{1}_{0}(\Omega)$, \cref{ass:2-0} holds true with $\mathcal{H}(\Omega) = V_{h,0}$, $\Vert \cdot\Vert_{\mathcal{H}(\Omega)} = \Vert\cdot\Vert_{B^{+}}$, and $\mathcal{L}(\Omega) = L^{2}(\Omega)$. For any subdomain $D\subset\Omega$, define
\begin{equation}\label{eq:5-2-2}
 V_{h}(D)=\big\{v|_{D}:v\in V_{h,0}\big\}, \quad  V_{h,0}(D) = \big\{v\in V_{h}(D):{\rm supp}(v)\subset \overline{D}\big\}.
\end{equation}
We can easily see that \cref{ass:2-1-1} holds true with $\mathcal{H}(D) = V_{h}(D)$ and $\mathcal{H}_{0}(D) =V_{h,0}(D)$. On a subdomain $D\subset \Omega$, the generalized harmonic space is defined by
\begin{equation}
 V_{h,B}(D)=\big\{u\in V_{h}(D): B_{D}(u,v) = 0 \;\;\text{for all}\;\,v\in V_{h,0}(D)\}.
\end{equation} 
As in the continuous setting, we see that \cref{ass:2-2-3} holds true. 

\textbf{Local eigenproblems}. For each $i=1,\cdots,M$, let the operator $\widehat{P}_{i}:  V_{h,B}(\omega_i^{\ast})\rightarrow V_{h,0}(\omega_i)$ be defined by $\widehat{P_i}u = I_{h}(\chi_{i} u)$, where $I_{h}:C(\overline{\Omega})\rightarrow V_{h}$ is the standard nodal interpolation operator. Since $V_{h,0}(\omega_i)$ are finite-dimensional spaces, the operators $\widehat{P}_{i}$ are compact, and thus \cref{ass:2-2-2} holds true. The local eigenproblems \cref{eq:2-24-5} in this setting take the form: Find $\lambda_{i}\in \mathbb{R}\cup \{+\infty\}$ and $\phi_{i}\in V_{h,B}(\omega_{i}^{\ast})$ such that
\begin{equation}\label{eigen:5-2}
B^{+}_{\omega^{\ast}_i}(I_{h}(\chi_{i}\phi_{i}),\,  I_{h}(\chi_i v)\big)= \lambda_{i} B^{+}_{\omega_i^{\ast}}(\phi_{i},v)\quad\forall v\in V_{h,B}(\omega_{i}^{\ast}).
\end{equation}

%  Let the subdomains $\{\omega_{i}\}_{i=1}^{M}$ be resolved by the mesh, and let $I_{h}:C(\overline{\Omega})\rightarrow V_{h}$ be the standard nodal interpolation operator. Then, the partition of unity operators $P_{i}: V_{h}(\omega_i)\rightarrow V_{h,0}(\omega_i)$ in this setting are defined by
% \begin{equation}
%  P_{i}u_{h} = I_{h}(\chi_{i} u_{h}).
% \end{equation}

Next we verify the two fundamental conditions in \cref{sec:3-1}. Since $V_{h,B}(D)\subset H^{1}(D)$, the weak approximation property (\cref{ass:3-1-2}) holds true with $\alpha = 1/d$ due to \cref{lem:5-3}. The discrete Caccioppoli-type inequality, however, requires a careful analysis (noting that $V_{h,B}(D)\nsubseteq H_{B}(D)$), where the super-approximation results in \cref{lem:5-2} play a key role.
\begin{lemma}[Caccioppoli-type inequality]\label{lem:cac-5-2}
Let $D\subset D^{\ast}$ be subdomains of $\Omega$ with $\delta:={\rm dist}\big(D, \, \partial D^{\ast}\setminus\partial \Omega\big)>0$, and let $h_{K}\leq \delta/3$ for each $K\in \mathcal{T}_{h}$ with $K\cap (D^{\ast}\setminus D)\neq \emptyset$. Then, for any $u\in V_{h,B}(D^{\ast})$,
\begin{equation}\label{cac:5-2-1}
\Vert u\Vert^{2}_{B^{+},D}\leq C\big(a^{2}_{\rm max}a^{-1}_{\rm min} +1 \big)\delta^{-2}\Vert u \Vert^{2}_{L^{2}(D^{\ast}\setminus D)}+\, C(a_{\rm min}^{-1}+1)\Vert{\bm b}\Vert^{2}_{L^{\infty}(D^{\ast})}\Vert u \Vert^{2}_{L^{2}(D^{\ast})},
\end{equation}
where $C$ depends only on $d$ and the shape regularity of the mesh.
\end{lemma}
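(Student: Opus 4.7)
The plan is to follow the continuous proof of \cref{lem:cac-5-1} but account for the fact that $\eta^{2}u \notin V_{h,0}(D^{\ast})$ in general, so Galerkin orthogonality cannot be invoked directly; the remedy is to test against the interpolant $I_{h}(\eta^{2}u)$ and control the defect $\eta^{2}u - I_{h}(\eta^{2}u)$ by the super-approximation estimates in \cref{lem:5-2}. First, I pick a cut-off function $\eta\in C^{\infty}(\overline{D^{\ast}})$ with $\eta=1$ on $D$, $\eta=0$ on $\partial D^{\ast}\setminus\partial \Omega$, and $|\eta|_{W^{j,\infty}}\lesssim \delta^{-j}$. Since $\eta$ vanishes on $\partial D^{\ast}\cap \Omega$, we have $I_{h}(\eta^{2}u)\in V_{h,0}(D^{\ast})$, so the Galerkin orthogonality in $V_{h,B}(D^{\ast})$ yields $B_{D^{\ast}}(u,I_{h}(\eta^{2}u))=0$, and therefore
\begin{equation*}
B^{+}_{D^{\ast}}(u,\eta^{2}u)=B_{D^{\ast}}(u,\eta^{2}u)-\int_{D^{\ast}}(\bm{b}\cdot\nabla u)\eta^{2}u\,d{\bm x}=B_{D^{\ast}}(u,\eta^{2}u-I_{h}(\eta^{2}u))-\int_{D^{\ast}}(\bm{b}\cdot\nabla u)\eta^{2}u\,d{\bm x}.
\end{equation*}

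Next I apply the abstract Caccioppoli identity of \cref{prop:5-1} in the setting of Example~\ref{exam:5-1} with the weight $A$, giving
\begin{equation*}
\Vert\eta u\Vert_{B^{+},D^{\ast}}^{2}=\int_{D^{\ast}}(A\nabla\eta\cdot\nabla\eta)u^{2}\,d{\bm x}+B^{+}_{D^{\ast}}(u,\eta^{2}u)-\int_{D^{\ast}}A\nabla u\cdot(u\nabla\eta^{2}-\nabla(\eta^{2}u))\,d{\bm x},
\end{equation*}
where the last identity is just an algebraic rearrangement (the discrete version of the manipulation used in \cref{lem:cac-5-1}). Substituting the previous display and noting that $\nabla\eta$ is supported in $D^{\ast}\setminus D$, I get
\begin{equation*}
\Vert\eta u\Vert_{B^{+},D^{\ast}}^{2}\lesssim a_{\rm max}\delta^{-2}\Vert u\Vert_{L^{2}(D^{\ast}\setminus D)}^{2}+|B_{D^{\ast}}(u,\eta^{2}u-I_{h}(\eta^{2}u))|+\Vert\bm{b}\Vert_{L^{\infty}(D^{\ast})}\Vert\eta u\Vert_{L^{2}(D^{\ast})}\Vert\nabla(\eta u)\Vert_{L^{2}(D^{\ast})},
\end{equation*}
where the convection term has been handled by inserting $\eta u\cdot\bm{b}\cdot\nabla(\eta u)$ and using the product rule to peel off one $\nabla\eta$ factor supported in $D^{\ast}\setminus D$.

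The core estimate is the defect term: splitting it into the diffusion part $\int A\nabla u\cdot\nabla(\eta^{2}u-I_{h}(\eta^{2}u))$ and the convection part $\int(\bm{b}\cdot\nabla u)(\eta^{2}u-I_{h}(\eta^{2}u))$, I apply \cref{lem:5-2} element by element on the elements $K$ that meet ${\rm supp}(\nabla\eta)\subset D^{\ast}\setminus D$ (the support of $\eta^{2}u-I_{h}(\eta^{2}u)$ is contained there once $\eta=1$ on $D$, up to one layer of elements; the hypothesis $h_{K}\le\delta/3$ makes this layer lie inside $D^{\ast}$). Using \cref{eq:5-13,eq:5-14} gives, for each such $K$,
\begin{equation*}
\Vert\nabla(\eta^{2}u-I_{h}(\eta^{2}u))\Vert_{L^{2}(K)}\lesssim \tfrac{h_{K}}{\delta}\Vert\nabla(\eta u)\Vert_{L^{2}(K)}+\tfrac{h_{K}}{\delta^{2}}\Vert u\Vert_{L^{2}(K)},
\end{equation*}
and a similar bound with one extra $h_{K}$ factor for the $L^{2}$-norm. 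Summing over $K$ and using $h_{K}\le \delta/3$ gives bounds of the form $\delta^{-1}\Vert u\Vert_{L^{2}(D^{\ast}\setminus D)}+\Vert\nabla(\eta u)\Vert_{L^{2}(D^{\ast})}$ (up to the $a_{\rm max}$ factor in the diffusion part) and an analogous bound for the convection part. Combining everything and applying Young's inequality, the $\Vert\nabla(\eta u)\Vert_{L^{2}(D^{\ast})}^{2}$ contributions can be absorbed into $a_{\rm min}^{-1}\Vert\eta u\Vert_{B^{+},D^{\ast}}^{2}$ on the left-hand side, at the cost of producing the factor $a_{\rm max}^{2}a_{\rm min}^{-1}$ in the coefficient of $\delta^{-2}\Vert u\Vert_{L^{2}(D^{\ast}\setminus D)}^{2}$ and $a_{\rm min}^{-1}\Vert\bm{b}\Vert_{L^{\infty}}^{2}$ in the coefficient of $\Vert u\Vert_{L^{2}(D^{\ast})}^{2}$. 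Since $\eta=1$ on $D$, the left-hand side dominates $\Vert u\Vert_{B^{+},D}^{2}$, yielding \cref{cac:5-2-1}.

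The main obstacle is the bookkeeping of the super-approximation errors: one must track the support of $\eta^{2}u-I_{h}(\eta^{2}u)$ relative to $D$, $D^{\ast}\setminus D$, and $D^{\ast}$, check that all elements involved satisfy $h_{K}\le\delta/3$ so that \cref{lem:5-2} applies, and carefully choose Young's inequality constants so that the quadratic $\Vert\nabla(\eta u)\Vert_{L^{2}}^{2}$ term can be absorbed while producing the clean coefficients stated in the lemma. The convection contribution is the secondary difficulty, but it is handled analogously once $\bm{b}$ is pulled out in $L^{\infty}$ and $\nabla u$ is rewritten as $\eta^{-1}(\nabla(\eta u)-u\nabla\eta)$ on the relevant elements.
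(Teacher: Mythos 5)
Your overall strategy is the same as the paper's: replace the exact Galerkin orthogonality by orthogonality against $I_h(\eta^2 u)$, and control the defect term via the super-approximation estimates of \cref{lem:5-2}, an inverse inequality, and Young's inequality. However, there is a genuine gap at the very first step: the claim that $I_h(\eta^2 u)\in V_{h,0}(D^{\ast})$ does \emph{not} follow from $\eta$ vanishing on $\partial D^{\ast}\setminus\partial\Omega$. By \cref{eq:5-2-2}, membership in $V_{h,0}(D^{\ast})$ requires $\operatorname{supp}(v)\subset\overline{D^{\ast}}$. But if $\eta$ merely vanishes on the boundary (with $|\nabla\eta|\lesssim\delta^{-1}$), then $\eta>0$ at interpolation nodes lying just inside $\partial D^{\ast}$; any element $K$ containing such a node that also protrudes beyond $\partial D^{\ast}$ contributes a nonzero nodal basis function supported partly outside $\overline{D^{\ast}}$, so $I_h(\eta^2 u)$ need not vanish outside $D^{\ast}$ and hence is not a legitimate test function. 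Your own bookkeeping remark (``the hypothesis $h_K\leq\delta/3$ makes this layer lie inside $D^{\ast}$'') addresses the support of the defect $\eta^2u - I_h(\eta^2u)$, not the admissibility of the test function, which is a separate issue located at the \emph{outer} boundary of $D^{\ast}$.

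The paper resolves this by introducing the mesh-aligned sandwiching sets $D_{e}$ (union of elements meeting $D$) and $D^{\ast}_{e}$ (union of elements contained in $D^{\ast}$), verifying that the assumption $h_K\leq\delta/3$ ensures $D\subset D_e\subset D^{\ast}_e\subset D^{\ast}$ with $\operatorname{dist}(D_e,\partial D^{\ast}_e\setminus\partial\Omega)\geq\delta/3$, and then taking $\eta$ supported in $\overline{D^{\ast}_e}$ with $\eta\equiv 1$ on $D_e$. Since $D^{\ast}_e$ is a union of full elements, no element straddles $\partial D^{\ast}_e$, so $I_h(\eta^2 u)$ vanishes identically on every element outside $D^{\ast}_e$ and is a genuine member of $V_{h,0}(D^{\ast}_e)\subset V_{h,0}(D^{\ast})$; this is exactly where the hypothesis $h_K\leq\delta/3$ is used. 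Your proof can be repaired either by introducing these mesh-aligned domains, or equivalently by choosing $\eta$ to vanish on a whole $\delta/3$-collar of $\partial D^{\ast}\setminus\partial\Omega$ rather than only on the boundary itself (so that the elements touching $\operatorname{supp}(\eta)$ are all contained in $D^{\ast}$); the rest of your argument — the identity from \cref{prop:5-1}, the element-wise application of \cref{eq:5-13,eq:5-14}, the inverse estimate producing $h_K\|\nabla u\|_{L^2(K)}\lesssim\|u\|_{L^2(K)}$, and the absorption of $\|\nabla(\eta u)\|^2$ into the left-hand side — then goes through exactly as in the paper and yields the stated constants.
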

\begin{proof}
Let $D_{e}$ and $D^{\ast}_{e}$ be the union of elements in $\mathcal{T}_{h}$ that intersect $D$ and that are contained in $D^{\ast}$, respectively. By the assumption $\delta:={\rm dist}\,\big(D, \, \partial D^{\ast}\setminus\partial \Omega\big)\geq 3\max_{K\cap (D^{\ast}\setminus D)\neq \emptyset}h_{K}$, it is clear that $D\subset D_{e}\subset D^{\ast}_{e}\subset D^{\ast}$. Moreover, we have ${\rm dist}\,\big(D_{e}, \, \partial D^{\ast}_{e}\setminus\partial \Omega\big) \geq \frac{1}{3}\delta$. Let $\eta\in C^{\infty}(\overline{{D}_{e}^{\ast}})$ be a cut-off function such that $\eta \equiv 1$ in $D_{e}$, $\eta = 0$ on $\partial {D}_{e}^{\ast}\setminus\partial \Omega$, and $|\eta|_{W^{j,\infty}(D_{e}^{\ast})}\leq C\delta^{-j},\;\forall j\in\mathbb{N}$. Applying the abstract identity \cref{eq:5-4} as in \cref{lem:cac-5-1} yields that for any $u\in V_{h}(D^{\ast}_{e})$,
\begin{equation}\label{cac:5-2-4}
\begin{array}{ll}
{\displaystyle \Vert \eta u\Vert^{2}_{B^{+},D_{e}^{\ast}} = \int_{D_{e}^{\ast}}(A\nabla \eta \cdot \nabla \eta) u^{2}\,d{\bm x} + \int_{D_{e}^{\ast}}A\nabla u\cdot\nabla (\eta^{2}u)\,d{\bm x} }\\[3mm]
{\displaystyle = \int_{D_{e}^{\ast}}(A\nabla \eta \cdot \nabla \eta) u^{2}\,d{\bm x} - \int_{D_{e}^{\ast}}({\bm b}\cdot \nabla u)\eta^{2}u\,d{\bm x} + B_{D_{e}^{\ast}}(u,\eta^{2}u). }
\end{array}
\end{equation}
Using a similar argument as in the proof of \cref{cac:5-1-3} and properties of the cut-off function, the first two terms on the right-hand side of \cref{cac:5-2-4} can be bounded as follows.
\begin{equation}\label{cac:5-2-5}
\begin{array}{cc}
{\displaystyle \Big| \int_{D_{e}^{\ast}}(A\nabla \eta \cdot \nabla \eta) u^{2}\,d{\bm x} - \int_{D_{e}^{\ast}}({\bm b}\cdot \nabla u)\eta^{2}u\,d{\bm x}\Big| \leq \frac{1}{4}\Vert \eta u\Vert^{2}_{B^{+},D_{e}^{\ast}} }\\[2ex]
{\displaystyle  +\,C\big(a_{\rm max} +\frac{1}{2}\big)\delta^{-2}\Vert u\Vert^{2}_{L^{2}(D_{e}^{\ast}\setminus D_{e})} + C\big(a_{\rm min}^{-1}+\frac12\big)\Vert{\bm b}\Vert^{2}_{L^{\infty}(D_{e}^{\ast})}\Vert u\Vert^{2}_{L^{2}(D_{e}^{\ast})}. }
\end{array}
\end{equation}
It remains to bound the last term of \cref{cac:5-2-4}, i.e., $B_{D_{e}^{\ast}}(u,\eta^{2}u)$. Since $\eta^{2}u\notin V_{h,0}(D_{e}^{\ast})$, this term does not vanish for $u\in V_{h,B}(D^{\ast}_{e})$, as contrasted with the continuous setting. Here a crucial observation is that $B_{D_{e}^{\ast}}(u, I_{h}(\eta^{2}u)) = 0$, where $I_{h}$ is the standard nodal interpolation operator. It follows that
\begin{equation}\label{cac:5-2-6}
B_{D_{e}^{\ast}}(u,\eta^{2}u) = \big(A\nabla u, \nabla (\eta^{2}u - I_{h}(\eta^{2}u))\big)_{L^{2}(D_{e}^{\ast})} + \big({\bm b}\cdot \nabla u, \eta^{2}u- I_{h}(\eta^{2}u)\big)_{L^{2}(D_{e}^{\ast})}.
\end{equation}
The estimate of the two terms of \cref{cac:5-2-6} is where the super-approximation results come into play. Applying \cref{eq:5-13} and an inverse estimate (local to each element), and noting that $\eta^{2}u- I_{h}(\eta^{2}u)$ vanishes on $D_{e}$, we obtain
\begin{equation}\label{cac:5-2-7}
\begin{array}{lll}
{\displaystyle \big(A\nabla u, \nabla (\eta^{2}u - I_{h}(\eta^{2}u))\big)_{L^{2}(D_{e}^{\ast})}}\\[2mm]
{\displaystyle \leq \sum_{K\subset D_{e}^{\ast}\setminus D_{e}}Ca_{\rm max} h_{K}\Vert \nabla u\Vert_{L^{2}(K)} \big(\delta^{-1} \Vert\nabla (\eta u)\Vert_{L^{2}(K)} + \delta^{-2} \Vert u\Vert_{L^{2}(K)}\big)}\\[4mm]
{\displaystyle \leq Ca_{\rm max}^{2}/(a_{\rm min}\delta^{2})\sum_{K\subset D_{e}^{\ast}\setminus D_{e}}  \Vert u\Vert^{2}_{L^{2}(K)} + \frac{a_{\rm min}}{4}\sum_{K \subset D_{e}^{\ast}} \Vert\nabla (\eta u)\Vert^{2}_{L^{2}(K)}}\\[3mm]
{\displaystyle \leq Ca_{\rm max}^{2}/(a_{\rm min}\delta^{2})\Vert u\Vert^{2}_{L^{2}(D_{e}^{\ast}\setminus D_{e})} + \frac{1}{4}\Vert \eta u\Vert^{2}_{B^{+},D_{e}^{\ast}}.}
\end{array}
\end{equation}
Similarly, we can prove
\begin{equation}\label{cac:5-2-8}
\begin{array}{lll}
{\displaystyle \big({\bm b}\cdot \nabla u, \eta^{2}u- I_{h}(\eta^{2}u)\big)_{L^{2}(D_{e}^{\ast})}}\\[2mm]
{\displaystyle \leq C\delta^{-2}\Vert u\Vert^{2}_{L^{2}(D_{e}^{\ast}\setminus D_{e})} + C(a_{\rm min}^{-1}+1)\Vert{\bm b}\Vert^{2}_{L^{\infty}(D_{e}^{\ast})}\Vert u \Vert^{2}_{L^{2}(D_{e}^{\ast})} + \frac{1}{4}\Vert \eta u\Vert^{2}_{B^{+},D_{e}^{\ast}}.}
\end{array}
\end{equation}
Combining \cref{cac:5-2-4,cac:5-2-5,cac:5-2-6,cac:5-2-7,cac:5-2-8} and recalling that $D\subset D_{e}\subset D^{\ast}_{e}\subset D^{\ast}$ yield \cref{cac:5-2-1}.   
\end{proof}

\cm{\begin{rem}\label{rem:h_technical_assp}
In \cite[Lemma 3.12]{angleitner2023exponential}, based on a suitable decomposition of the set of elements into two groups, the following weaker Caccioppoli-type inequality was proved without the condition $h_{K}\lesssim \delta$:
\begin{equation}
 |u|_{H^{1}(D)}\lesssim \delta^{-1}\Vert u\Vert_{L^{2}(D^{\ast})}\quad \text{for all}\;\; u\in V_{h,B}(D^{\ast}).   
\end{equation}
\end{rem}}

With the weak approximation property and the Caccioppoli-type inequality, we see that the local approximations have exponential convergence rates.
\begin{theorem}[Exponential local convergence]\label{upperbound:5-2}
For each $i=1,\cdots,M$, \cref{thm:3-1} holds true with $C_{\rm cac}^{\rm I} = C\big(a^{2}_{\rm max}a^{-1}_{\rm min} +1 \big)^{1/2}$, $C_{\rm cac}^{\rm II} = C(a^{-1}_{\rm min} +1)^{1/2}\Vert{\bm b}\Vert_{L^{\infty}(\omega_i^{\ast})}$, and $\alpha = 1/d$, where $C>0$ only depends on $d$ and the shape regularity of the mesh.
\end{theorem}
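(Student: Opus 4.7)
The plan is to apply \cref{thm:3-1} directly, for which it suffices to verify the two fundamental conditions of \cref{sec:3-1} in the present FE discrete setting with the stated values of $C^{\rm I}_{\rm cac}$, $C^{\rm II}_{\rm cac}$, and $\alpha$. The framework then yields the exponential decay of the $n$-widths $d_n(\omega_i,\omega_i^{\ast})$ automatically. Since $C^{\rm II}_{\rm cac}\neq 0$ whenever ${\bm b}\not\equiv 0$, the relevant branch of \cref{thm:3-1} is case (iii), which requires case $ab$ of \cref{ass:3-1-2}.

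First, the Caccioppoli-type inequality \cref{ass:3-1-1} with the claimed constants is nothing but \cref{lem:cac-5-2}: taking square roots in \cref{cac:5-2-1} and using $\sqrt{x+y}\le\sqrt{x}+\sqrt{y}$ delivers \cref{eq:3-1} with $\mathcal{L}=L^2$ and precisely $C^{\rm I}_{\rm cac}=C(a^2_{\max}a_{\min}^{-1}+1)^{1/2}$ and $C^{\rm II}_{\rm cac}=C(a_{\min}^{-1}+1)^{1/2}\|{\bm b}\|_{L^\infty(\omega_i^{\ast})}$. Keep in mind that \cref{lem:cac-5-2} carries the technical mesh-size hypothesis $h_K\lesssim \delta$ on elements intersecting $D^{\ast}\setminus D$ (see \cref{rem:cac_assumption}); when this is fed into the iteration of \cref{thm:3-1} where $\delta\sim \delta^{\ast}/N$ with $N\sim n^{1/d}$, the corresponding condition on the mesh becomes $h_K\lesssim c\delta^{\ast} n^{-1/d}$ near $\omega_i^{\ast}\setminus\omega_i$, cf.\;\cref{rem:proof_hK_ass}. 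This is the one genuinely discrete ingredient to track and is the main (mild) obstacle; in practice it is harmless since $h$ is typically much finer.

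Second, the weak approximation property (case $ab$) with $\alpha=1/d$ is inherited from the continuous $H^1$-level estimate of \cref{lem:5-3}(iii). Any $u\in V_{h,B}(D^{\ast\ast})\subset V_h(D^{\ast\ast})\subset H^1(D^{\ast\ast})$, and the strong norm dominates the gradient norm via
\begin{equation*}
\|\nabla u\|_{L^2(D^{\ast\ast})}\le a_{\min}^{-1/2}\|u\|_{B^{+},D^{\ast\ast}}.
\end{equation*}
Substituting this bound on the right-hand side of \cref{eq:5-20} yields \cref{eq:3-2} for every $a,b\ge 0$ with $\alpha=1/d$ and $C_{\rm wa}$ depending on $d$ and $a_{\min}$. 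Note that the approximation space $Q_m(D^{\ast\ast})$ produced by \cref{lem:5-3} consists of piecewise constants on a suitable auxiliary mesh and belongs to $L^2(D^{\ast\ast})$, as required.

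With both hypotheses in hand, case (iii) of \cref{thm:3-1} applies with $\alpha=1/d$ and the stated $C^{\rm I}_{\rm cac}$, $C^{\rm II}_{\rm cac}$, yielding $d_n(\omega_i,\omega_i^{\ast})\le \|P_B\|\,e^{-cn^{1/d}}$ for $n>n_0$, which is exactly the assertion. Apart from the mesh-size technicality noted above, no further estimates are needed: the theorem is a pure invocation of the abstract machinery once the two conditions are matched.
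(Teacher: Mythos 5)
Your proof is correct and follows essentially the same route the paper takes: the discrete Caccioppoli inequality (\cref{lem:cac-5-2}) supplies \cref{ass:3-1-1}, the conformity $V_{h,B}(D^{\ast\ast})\subset H^1(D^{\ast\ast})$ combined with \cref{lem:5-3}(iii) and $\|\nabla u\|_{L^2}\le a_{\min}^{-1/2}\|u\|_{B^+}$ supplies case $ab$ of \cref{ass:3-1-2} with $\alpha=1/d$, and \cref{thm:3-1}(iii) then delivers the claimed decay. Your explicit tracking of the mesh-size hypothesis $h_K\lesssim c\delta^\ast n^{-1/d}$ is a welcome restatement of what the paper relegates to \cref{rem:cac_assumption} and \cref{rem:proof_hK_ass}, but it is not a departure from the paper's argument.
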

% \begin{rem}
% The Caccioppoli-type inequality \cref{cac:5-2-1} holds under the condition $h_{K}\leq \delta/3$, which leads to a requirement on the mesh-size for the exponential local convergence rates in \cref{thm:3-1}: $h\leq C\delta^{\ast}n^{-1/d}$. This requirement is not essential in practice as the mesh size is typically small enough. Moreover, using a similar technique as in \cite{angleitner2023exponential}, the condition $h_{K}\leq \delta/3$ and the resulting requirement on $h$ can indeed be removed.
% \end{rem}

Finally, we note that the quasi-optimal convergence of the method in the discrete setting follows from the same verification procedure as in the continuous setting, and the result remains the same. 

\subsection{Linear elasticity}\label{sec:linear-elasticity}
We consider the equilibrium equations for a linear heterogeneous elastic material: Find ${\bm u}=(u_{1},\ldots,u_{d})^{\top}:\Omega\rightarrow \mathbb{R}^{d}$ such that
\begin{equation}\label{eq:5-3-1}
\left\{
\begin{array}{lll}
 -{\rm div}({\bm \sigma}({\bm u})) = {\bm f}\,\quad &{\rm in}\;\, \Omega \\[2mm]
  \quad \quad\;\;\; {\bm \sigma}({\bm u}) = {\bm C}:{\bm \epsilon}({\bm u}) \,\quad &{\rm in}\;\, \Omega  \\[2mm]
  \;\,\qquad \quad \quad\,  {\bm u} = 0\quad \,&{\rm on}\;\,\Gamma_{\mathsf{D}}\\[2mm]
  \qquad \;\; {\bm \sigma}({\bm u}){{\bm n}} = {\bm g}  \quad \,&{\rm on}\;\,\Gamma_{\mathsf{N}},
\end{array}
\right.
\end{equation}
where ${\bm \sigma}$ is the Cauchy stress tensor, and ${\bm \epsilon}({\bm u}):=\big(\nabla {\bm u} + \nabla {\bm u}^{\top}\big)/2$ is the strain tensor. We assume that the fourth-order elasticity tensor ${\bm C} = (c_{ijkl})$ satisfies (i)  for each $i,j,k,l=1,\ldots,d$, $c_{ijkl}\in L^{\infty}(\Omega)$, (ii) $({\bm C}{\bm \tau}):{\bm \mu} = ({\bm C}{\bm \mu}):{\bm \tau}$ for any ${\bm \tau}, {\bm \mu}\in \mathbb{R}^{d\times d}_{\rm sym}$, and (iii) there exists $0<C_{\rm min}< C_{\rm max}< +\infty$ such that for any ${\bm \tau}\in \mathbb{R}^{d\times d}_{\rm sym}$ and ${\bm x}\in \Omega$, $C_{\rm min}|{\bm \tau}|^{2}\leq {\bm C}{\bm \tau} : {\bm \tau}\leq C_{\rm max} |{\bm \tau}|^{2}$, with $|{\bm \tau}|^{2}= \sum_{ij}\tau_{ij}^{2}$. Finally, we assume that ${\bm f}\in {\bm H}^{1}(\Omega)^{\prime}$ and ${\bm g}\in {\bm H}^{-1/2}(\Gamma_{\mathsf{N}})$. We refer to \cite{abdulle2006analysis,chung2014generalized,chung2019mixed,henning2016multiscale,harder2016hybrid,xia2015high} for multiscale methods for linear elasticity problems.

For any subdomain $D\subset \Omega$, we define $ {\bm H}_{\mathsf{D}}^{1}(D)=\big\{{\bm v}\in {\bm H}^{1}(D): {\bm v} = {\bf 0}\;\; {\rm on} \;\;\Gamma_{\mathsf{D}} \big\}$,  ${\bm H}_{\mathsf{D},0}^{1}(D)=\big\{{\bm v}\in {\bm H}_{\mathsf{D}}^{1}(D): {\bm v} = {\bf 0}\;\; {\rm on} \;\;\partial D\cap\Omega \big\}$, and set
\begin{equation}\label{eq:5-3-5}
\begin{array}{ll}
 \displaystyle B_{D}({\bm u}, {\bm v})=\int_{D}({\bm C}{\bm \epsilon}({\bm u})):{\bm \epsilon}({\bm v}) \,d{\bm x}, \qquad \Vert {\bm u}\Vert_{B,D} := \big(B_{D}({\bm u},{\bm u}) \big)^{1/2}. 
 \end{array}
\end{equation}
Similarly as before, we simply write $B(\cdot,\cdot)$ and $\Vert\cdot\Vert_{B}$ when $D=\Omega$. $({\bm H}_{\mathsf{D}}^{1}(\Omega), \Vert\cdot\Vert_{B})$ is a Hilbert space by Korn's inequalities \cite[Theorem 2.5]{oleinik1992mathematical}. Therefore, setting $\mathcal{H}(\Omega) = {\bm H}_{\mathsf{D}}^{1}(\Omega)$, $\Vert\cdot\Vert_{\mathcal{H}(\Omega)} = \Vert\cdot\Vert_{B}$, and $\mathcal{L}(\Omega) = {\bm L}^{2}(\Omega)$, we see that \cref{ass:2-1-0} is trivially satisfied with $C_{b}=C_{1} =1$ and $C_{0}=0$. Also, \cref{ass:2-1-1} holds true with $\mathcal{H}(D) =  {\bm H}_{\mathsf{D}}^{1}(D)$, $\mathcal{H}_{0}(D) =  {\bm H}_{\mathsf{D},0}^{1}(D)$, and with the local bilinear form $B_{D}(\cdot,\cdot)$ defined by \cref{eq:5-3-5}. The weak formulation of problem \cref{eq:5-3-1} is to find ${\bm u}\in {\bm H}^{1}_{\mathsf{D}}(\Omega)$ such that 
\begin{equation}\label{eq:5-3-4}
B({\bm u},{\bm v})= F({\bm v}):=\langle {\bm f}, {\bm v}\rangle_{\Omega} + \langle {\bm g},{\bm v}\rangle_{\Gamma_\mathsf{N}} \quad \text{for all}\;\; {\bm v}\in {\bm H}^{1}_{\mathsf{D}}(\Omega).
\end{equation}
Since $B(\cdot,\cdot)$ is symmetric and coercive on ${\bm H}^{1}_{\mathsf{D}}(\Omega)$, problem \cref{eq:5-3-4} is uniquely solvable. 

% Denoting by ${\bm H}_{\mathsf{D}}^{1}(\Omega)=\big\{{\bm v}\in {\bm H}^{1}(\Omega): {\bm v} = {\bf 0}\quad {\rm on} \;\;\Gamma_{\mathsf{D}} \big\}$ and 
% \begin{equation}\label{eq:5-3-3}
% B({\bm u}, {\bm v})=\int_{\Omega}({\bm C}{\bm \epsilon}({\bm u})):{\bm \epsilon}({\bm v}) \,d{\bm x}  \quad \text{for}\;\;{\bm u}, {\bm v}\in {\bm H}^{1}(\Omega),
% \end{equation}
In this setting, the generalized harmonic space on $D\subset \Omega$ is given by
\begin{equation}
 {\bm H}_{B}(D)=\big\{{\bm u}\in {\bm H}_{\mathsf{D}}^{1}(D): B_{D}({\bm u}, {\bm v}) = 0 \quad \text{for all} \;\,{\bm v}\in {\bm H}_{\mathsf{D},0}^{1}(D) \big\}.   
\end{equation}
Let $\mathcal{K}$ denote the space of rigid body modes, i.e., $\mathcal{K}=\big\{{\bm a}+{\bm b}\times {\bm x}: {\bm a},{\bm b}\in\mathbb{R}^{d}\big\}$. By Korn's inequalities, we see that $\big({\bm H}_{B}(D)/\mathcal{K}, B_{D}(\cdot, \cdot) \big)$ is a Hilbert space if $\partial D\cap\Gamma_{\mathsf{D}}=\emptyset$ and $D$ satisfies the cone condition, and that $\big({\bm H}_{B}(D), B_{D}(\cdot, \cdot) \big)$ is a Hilbert space if $\partial D\cap\Gamma_{\mathsf{D}}\neq\emptyset$. Hence, \cref{ass:2-2-3} holds true with $B^{+}_{D}(\cdot,\cdot) = B_{D}(\cdot,\cdot)$. Next, we give the Caccioppoli-type inequality in this setting.
\begin{lemma}[Caccioppoli-type inequality]\label{lem:cac-5-3}
Let $D\subset D^{\ast}$, $\delta>0$, and $\eta\in C^{1}(\overline{D^{\ast}})$ be as in \cref{lem:cac-5-1}. Then, for any ${\bm u}, \,{\bm v}\in {\bm H}_{B}(D^{\ast})$,
\begin{equation}\label{cac:5-3-1}
B_{D^{\ast}}(\eta{\bm u}, \eta{\bm v}) = \int_{D^{\ast}} \big({\bm C}({\bm u}(\nabla \eta)^{\top} + (\nabla \eta) {\bm u}^{\top})\big): ({\bm v}(\nabla \eta)^{\top} + (\nabla \eta) {\bm v}^{\top}) \,d{\bm x}.
\end{equation}
In addition, for any ${\bm u}\in {\bm H}_{B}(D^{\ast})$,
\begin{equation}\label{cac:5-3-2}
    % |\eta {\bm u} |_{B,D^{\ast}} \leq 2C^{1/2}_{\rm max}\Vert\nabla \eta\Vert_{L^{\infty}(D^{\ast})} \Vert {\bm u}\Vert_{{\bm L}^{2}(D^{\ast}\cap {\rm supp}(\nabla \eta))}.
\Vert {\bm u} \Vert_{B,D} \leq 2C^{1/2}_{\rm max}\delta^{-1}\Vert {\bm u}\Vert_{L^{2}(D^{\ast}\setminus D)}.
\end{equation}
\end{lemma}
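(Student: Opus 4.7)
The plan is to mirror the argument used for the convection--diffusion Caccioppoli inequality (\cref{lem:cac-5-1}), specializing the abstract Caccioppoli identity of \cref{prop:5-1} to the linear elasticity setting of Example~\ref{exam:5-2}. Concretely, I take $\mathcal{H}(D^{\ast}) = ({H}^{1}(D^{\ast}))^{d}$, $\mathbf{L}(D^{\ast}) = ({L}^{2}(D^{\ast}))^{d\times d}$, $\partial({\bm u}) = {\bm \epsilon}({\bm u})$, $\eta({\bm u}) = \eta {\bm u}$, $\widehat{\partial\eta}({\bm u}) = \tfrac{1}{2}\big({\bm u}(\nabla \eta)^{\top} + (\nabla \eta){\bm u}^{\top}\big)$, and define the Hermitian form $b_{\mathbf{L}}({\bm \tau},{\bm \mu}) := \int_{D^{\ast}} ({\bm C}{\bm \tau}):{\bm \mu}\,d{\bm x}$, whose symmetry under swapping arguments follows from the major symmetry of ${\bm C}$. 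A direct check shows that $\eta$ commutes with scalar multiplication and that the Leibniz product rule for the symmetrized gradient reads $\partial(\eta {\bm u}) = \eta\,{\bm \epsilon}({\bm u}) + \widehat{\partial\eta}({\bm u})$, so $(\partial,\eta,\widehat{\partial\eta})$ is an $\eta$-derivation triple as required by \cref{def:5-1}.

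Next, I apply the abstract identity \cref{eq:5-4} to ${\bm u},{\bm v}\in {\bm H}_{B}(D^{\ast})$. The cross terms $b_{\mathbf{L}}(\partial {\bm u}, \partial(\eta^{2}{\bm v}))$ and $b_{\mathbf{L}}(\partial {\bm v}, \partial(\eta^{2}{\bm u}))$ coincide (up to conjugation) with $B_{D^{\ast}}({\bm u},\eta^{2}{\bm v})$ and $B_{D^{\ast}}({\bm v},\eta^{2}{\bm u})$, and both vanish: because $\eta = 0$ on $\partial D^{\ast}\setminus\partial\Omega$ and ${\bm u},{\bm v}$ vanish on $\Gamma_{\mathsf{D}}$, we have $\eta^{2}{\bm u},\eta^{2}{\bm v}\in {\bm H}^{1}_{\mathsf{D},0}(D^{\ast})$, so the defining relation of the generalized harmonic space kills these terms. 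What remains is $B_{D^{\ast}}(\eta{\bm u},\eta{\bm v}) = b_{\mathbf{L}}(\widehat{\partial\eta}{\bm u}, \widehat{\partial\eta}{\bm v})$, which, up to the normalization convention for $\widehat{\partial\eta}$, is precisely the identity \cref{cac:5-3-1}.

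For \cref{cac:5-3-2}, I set ${\bm u}={\bm v}$ in the identity and use the spectral upper bound ${\bm C}{\bm \tau}:{\bm \tau}\leq C_{\max}|{\bm \tau}|^{2}$ together with the elementary matrix bound $|{\bm u}(\nabla\eta)^{\top} + (\nabla\eta){\bm u}^{\top}|^{2}\leq 4|{\bm u}|^{2}|\nabla\eta|^{2}$ to obtain
\begin{equation*}
\Vert \eta{\bm u}\Vert_{B,D^{\ast}}^{2} \;\leq\; 4\,C_{\max}\int_{D^{\ast}} |\nabla\eta|^{2}\,|{\bm u}|^{2}\,d{\bm x}.
\end{equation*}
Choosing $\eta\in C^{1}(\overline{D^{\ast}})$ as a cut-off with $0\leq \eta\leq 1$, $\eta\equiv 1$ on $D$, $\eta=0$ on $\partial D^{\ast}\setminus\partial\Omega$, and $|\nabla\eta|\leq \delta^{-1}$ supported in $D^{\ast}\setminus D$, together with $\Vert {\bm u}\Vert_{B,D}\leq \Vert \eta{\bm u}\Vert_{B,D^{\ast}}$, delivers the claimed bound $\Vert{\bm u}\Vert_{B,D}\leq 2 C_{\max}^{1/2}\delta^{-1}\Vert{\bm u}\Vert_{L^{2}(D^{\ast}\setminus D)}$.

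There is no real obstacle here: the argument is a clean application of the abstract identity, and the only place one has to be careful is the verification that $\eta^{2}{\bm u},\eta^{2}{\bm v}$ are legitimate test functions in ${\bm H}^{1}_{\mathsf{D},0}(D^{\ast})$, which requires combining the Dirichlet trace of ${\bm u}$ on $\Gamma_{\mathsf{D}}$ with the vanishing of $\eta$ on the interior part of $\partial D^{\ast}$. Compared with the convection--diffusion case, the Cauchy tensor ${\bm C}$ replaces the matrix $A$, ${\bm \epsilon}$ replaces $\nabla$, and the absence of a first-order drift term removes the $C^{\rm II}_{\rm cac}$ contribution, yielding the cleaner positive-definite form of the Caccioppoli inequality.
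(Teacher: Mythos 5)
Your proof is correct and follows the paper's approach exactly: specialize the abstract Caccioppoli identity of \cref{prop:5-1} to Example~\ref{exam:5-2}, observe that the cross terms $b_{\mathbf{L}}({\bm \epsilon}({\bm u}), {\bm \epsilon}(\eta^{2}{\bm v}))$ vanish because $\eta^{2}{\bm u},\eta^{2}{\bm v}\in {\bm H}^{1}_{\mathsf{D},0}(D^{\ast})$ are admissible test functions, and then derive the inequality by inserting a cut-off. You correctly flag that the $\widehat{\partial\eta}$ in Example~\ref{exam:5-2} carries a factor $\tfrac12$, so the abstract identity actually yields a prefactor $\tfrac14$ on the right-hand side of \cref{cac:5-3-1} as written — this is an internal bookkeeping slip in the paper's statement, and tracking it through would sharpen the constant in \cref{cac:5-3-2} from $2C_{\max}^{1/2}$ to $C_{\max}^{1/2}$; your final bound matches the paper's as stated because both drop the same factor.
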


\begin{proof}
As before, we use the abstract identity \cref{eq:5-4} to prove \cref{cac:5-3-1}. Consider the setting of Example~\ref{exam:5-2} and define $b_{\bf L}(\cdot,\cdot):L^{2}(D^{\ast},\mathbb{R}^{d\times d})\times L^{2}(D^{\ast},\mathbb{R}^{d\times d})\rightarrow \mathbb{R}$ as $b_{\bf L}({\bm \tau},{\bm \mu}) = \int_{D^{\ast}}({\bm C}{\bm \tau}): {\bm \mu}\,d{\bm x}$. Then, identity \cref{cac:5-3-1} follows from \cref{prop:5-1} and the fact that for any ${\bm u}, \,{\bm v}\in {\bm H}_{B}(D^{\ast})$, $b_{\mathbf{L}}\big({\bm \epsilon}({\bm u}), {\bm \epsilon}(\eta^{2}{\bm v})\big) =b_{\mathbf{L}}\big({\bm \epsilon}({\bm v}), {\bm \epsilon}(\eta^{2}{\bm u})\big) =0$. Inequality \cref{cac:5-3-2} can be derived from \cref{cac:5-3-1} by choosing a cut-off function $\eta$ as before.
\end{proof}

\textbf{Local eigenproblems}. By \cref{lem:cac-5-3} and the compact embeddings ${\bm H}^{1}(\omega_i^{\ast})\subset {\bm L}^{2}(\omega_i^{\ast})$, we see that the operators $\widehat{P}_{i}:{\bm H}_{B}(\omega_{i}^{\ast})\rightarrow  {\bm H}_{\mathsf{D},0}^{1}(\omega_i)$ defined by $\widehat{P}_{i}{\bm u} = \chi_{i} {\bm u}$ are compact. Hence, \cref{ass:2-2-3} holds true. The abstract local eigenproblems \cref{eq:2-24-5} take the form: Find $\lambda_{i}\in \mathbb{R}\cup \{+\infty\}$ and ${\bm \phi}_{i}\in {\bm H}_{B}(\omega_{i}^{\ast})$ such that
\begin{equation}\label{eigen:5-3}
B_{\omega^{\ast}_i}(\chi_{i}{\bm \phi}_{i},\,  \chi_{i}{\bm v}) = \lambda_{i}\,B_{\omega^{\ast}_i}({\bm \phi}_{i},\, {\bm v}) \quad  \forall {\bm v}\in {\bm H}_{B}(\omega_{i}^{\ast}).
\end{equation}
Using the identity \cref{cac:5-3-1}, the eigenproblems above can be rewritten as
\begin{equation}\label{neweigen:5-3}
\int_{\omega^{\ast}_i} {\bm C}\Theta({\bm \phi}_{i}): \Theta({\bm v})\,d{\bm x} = \lambda_{i}\,B_{\omega^{\ast}_i}({\bm \phi}_{i},\, {\bm v}) \quad \text{with}\;\; \Theta({\bm u}) := {\bm u}(\nabla \chi_{i})^{\top} + (\nabla \chi_{i}) {\bm u}^{\top}.
\end{equation}
Next we verify the exponential local convergence. Having proved the Caccioppoli-type inequality, we only need to show the weak approximation property, which is stated in the following lemma. 
\begin{lemma}\label{lem:wa-5-3-3}
Let $D\subset D^{\ast}\subset {D}^{\ast\ast}$ be subdomains of $\Omega$ with $\delta:={\rm dist} ({D}^{\ast},\partial D^{\ast\ast}\setminus \partial \Omega)>0$, and let $\mathsf{V}_{\delta}({D}^{\ast}\setminus D):= \big\{{\bm x}\in {D}^{\ast\ast}: {\rm dist}({\bm x}, {D}^{\ast}\setminus D)\leq \delta \big\}$. Then, for each integer $m\geq C_{1}\big|\mathsf{V}_{\delta}({D}^{\ast}\setminus D)\big|\delta^{-d}$, there exists an $m$-dimensional space $Q_{m}(D^{\ast\ast})\subset {\bm L}^{2}(D^{\ast\ast})$ such that for all ${\bm u}\in {\bm H}^{1}(D^{\ast\ast})$,
    \begin{equation}\label{wa:5-3-1}
        \inf_{{\bm v}\in Q_{m}(D^{\ast\ast})}\Vert{\bm u}-{\bm v}\Vert_{L^{2}(D^{\ast}\setminus D)} \leq C \big|\mathsf{V}_{\delta}({D}^{\ast}\setminus D)\big|^{1/d} m^{-1/d}\Vert{\bm \varepsilon} ({\bm u})\Vert_{L^{2}(D^{\ast\ast})},
    \end{equation}
where $C>0$ depends only on $d$.

%         \item[$(\rm ii)$] For each integer $m\geq C_{1}\big|D^{\ast\ast}\big|\delta^{-d}$, there exists an $m$-dimensional space $Q_{m}(D^{\ast\ast})\subset {\bm L}^{2}(D^{\ast\ast})$ such that for all ${\bm u}\in {\bm H}^{1}(D^{\ast\ast})$,
%     \begin{equation}\label{wa:5-3-2}
%         \inf_{{\bm v}\in Q_{m}(D^{\ast\ast})}\Vert {\bm u}-{\bm v}\Vert_{L^{2}(D^{\ast})} \leq C_{2} \big|D^{\ast\ast}\big|^{1/d} m^{-1/d}\Vert{\bm \varepsilon} ({\bm u})\Vert_{L^{2}(D^{\ast\ast})}.
%     \end{equation}

%     \item[$(\rm iii)$] For each integer $m\geq C_{1}\big|D^{\ast\ast}\big|\delta^{-d}$ and any $a,b>0$, there exists an $m$-dimensional space $Q_{m}(D^{\ast\ast})\subset {\bm L}^{2}(D^{\ast\ast})$ such that for all ${\bm u}\in {\bm H}^{1}(D^{\ast\ast})$,
%     \begin{equation}\label{wa:5-3-3}
%     \begin{array}{ll}
%      {\displaystyle \inf_{{\bm v}\in Q_{m}(D^{\ast\ast})}\big(a\Vert {\bm u}-{\bm v}\Vert_{L^{2}(D^{\ast}\setminus D)} + b\Vert{\bm u}-{\bm v}\Vert_{L^{2}(D^{\ast})}\big) }\\[4mm]
%      {\displaystyle \leq C_{2} \big(a\big|\mathsf{V}_{\delta}({D}^{\ast}\setminus D)\big|^{1/d} + b\big|D^{\ast\ast}\big|^{1/d}\big)m^{-1/d}\Vert{\bm \varepsilon} ({\bm u})\Vert_{L^{2}(D^{\ast\ast})}.}
%     \end{array}
%     \end{equation}
 % \end{itemize}
\end{lemma}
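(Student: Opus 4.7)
The strategy is to mimic the proof of \cref{lem:5-3}(i), but to replace the elementwise Poincar\'e inequality (which yields piecewise-constant approximation with error $Ch\|\nabla u\|_{L^2}$) by an elementwise Korn inequality (which yields piecewise rigid-body-mode approximation with error $Ch\|{\bm \varepsilon}({\bm u})\|_{L^2}$). The natural $m$-dimensional space is thus the space of piecewise rigid body modes on a suitable quasi-uniform triangulation covering $D^{\ast}\setminus D$.

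More concretely, I would first fix, as in the proof of \cref{lem:5-3}, a shape-regular, quasi-uniform family of triangulations $\{\mathcal{T}_H\}_{H>0}$ of $\Omega$ obtained by successive refinement of an initial mesh, and select a triangulation with $0<H\leq \delta$. Let $\widetilde{\mathcal{T}}_H=\{T\in\mathcal{T}_H:T\cap(D^{\ast}\setminus D)\neq\emptyset\}$ and $\widetilde{D}_H=\bigcup\widetilde{\mathcal{T}}_H$, so that $(D^{\ast}\setminus D)\subset\widetilde{D}_H\subset \mathsf{V}_\delta(D^{\ast}\setminus D)$. Denoting by $\mathcal{RM}$ the $d(d+1)/2$-dimensional space of rigid body modes, I would define
\begin{equation*}
Q_m(D^{\ast\ast})=\bigl\{{\bm v}\in {\bm L}^2(D^{\ast\ast}):{\bm v}|_T\in\mathcal{RM}\text{ for every }T\in\widetilde{\mathcal{T}}_H,\ {\bm v}=0\text{ outside }\widetilde{D}_H\bigr\},
\end{equation*}
so that $\dim Q_m(D^{\ast\ast})=\tfrac{d(d+1)}{2}\,\#\widetilde{\mathcal{T}}_H=:m$.

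The core analytic ingredient is the elementwise Korn inequality: for any shape-regular simplex $T$ and ${\bm u}\in{\bm H}^1(T)$,
\begin{equation*}
\inf_{{\bm r}\in\mathcal{RM}}\|{\bm u}-{\bm r}\|_{L^2(T)}\leq C\,h_T\,\|{\bm \varepsilon}({\bm u})\|_{L^2(T)},
\end{equation*}
where $C$ depends only on $d$ and the shape-regularity constant. This follows by pulling back to a reference simplex, invoking the second Korn inequality there (with the kernel projected out, $\mathcal{RM}$ is precisely the null space of ${\bm \varepsilon}$), and scaling. Summing over $T\in\widetilde{\mathcal{T}}_H$ yields
\begin{equation*}
\inf_{{\bm v}\in Q_m(D^{\ast\ast})}\|{\bm u}-{\bm v}\|_{L^2(D^{\ast}\setminus D)}\leq \inf_{{\bm v}\in Q_m(D^{\ast\ast})}\|{\bm u}-{\bm v}\|_{L^2(\widetilde{D}_H)}\leq C\,H\,\|{\bm \varepsilon}({\bm u})\|_{L^2(D^{\ast\ast})}.
\end{equation*}
The quasi-uniformity of the mesh gives $mH^d\simeq |\widetilde{D}_H|\leq |\mathsf{V}_\delta(D^{\ast}\setminus D)|$, hence $H\lesssim m^{-1/d}|\mathsf{V}_\delta(D^{\ast}\setminus D)|^{1/d}$, which yields \cref{wa:5-3-1}. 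The lower bound $m\geq C_1|\mathsf{V}_\delta(D^{\ast}\setminus D)|\delta^{-d}$ simply ensures that one can select a mesh in the fixed family satisfying both $H\leq\delta$ and the matching $mH^d\simeq|\widetilde{D}_H|$.

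I do not expect any genuine obstacle. The only technical point that requires some care is the uniform constant in the elementwise Korn inequality: it must be bounded in terms of the shape-regularity parameter alone, which is standard via the reference-element argument. All other steps are copies of the corresponding arguments in \cref{lem:5-3}(i), with $\nabla$ replaced by ${\bm \varepsilon}$ and piecewise constants replaced by piecewise rigid body modes.
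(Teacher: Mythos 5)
Your strategy matches the paper's exactly: a quasi-uniform mesh covering $D^{\ast}\setminus D$ with $H\lesssim\delta$, piecewise rigid-body-mode approximation on that mesh, an elementwise Korn inequality to replace the elementwise Poincar\'e inequality, and the same counting $mH^d\simeq|\widetilde D_H|$. The paper in fact says precisely this — "the only difference is that we modify the definition of $Q_m$ ... and use a similar approximation result resulting from inequality \cref{wa:5-3-2-1}."

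The one place you are too quick is the justification of the elementwise Korn inequality itself. You write that it "follows by pulling back to a reference simplex, invoking the second Korn inequality there, and scaling," and characterize the uniformity of the constant as "standard via the reference-element argument." Unlike the Poincar\'e case, this does not go through directly: under a general affine map $F(\hat{\bm x})=B\hat{\bm x}+c$ the symmetric gradient is \emph{not} affinely covariant (the antisymmetric part of $\hat\nabla\hat{\bm u}$ feeds into $\bm\varepsilon({\bm u})$), and the space $\mathcal{RM}$ of rigid body modes is not preserved by $F$ unless $B$ is a scalar multiple of an orthogonal matrix. Thus "pull back and scale" does not yield the element Korn inequality with a constant depending only on the shape-regularity parameter. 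The paper resolves precisely this point by isolating it as \cref{lem:wa-5-3-2} and proving it via Friedrichs' inequality, the nonstandard Korn-type inequality from Oleinik et al.\ (\cref{lem:wa-5-3-1}), and Korn's inequality on the inscribed ball — this gives an explicit dependence of the constant on the chunkiness parameter $r/r_1$, which is what shape-regularity controls. Your conclusion is correct (the elementwise Korn inequality with a shape-regularity-dependent constant is a known fact), but the cited mechanism is wrong, and this is exactly the nontrivial step the paper deems worth an independent lemma.
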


% Unlike the scalar elliptic case, this property in this setting does not follows from \cref{lem:5-3} as the norms $\Vert\cdot \Vert_{B,D}$ and $\|\nabla \cdot \|_{L^{2}(D)}$ are in general not equivalent. 

To prove \cref{lem:wa-5-3-3}, we need the following auxiliary approximation result which may be of independent interest. The key point is that the constant depends only on the spatial dimension and the chunkiness parameter of the domain.
\begin{lemma}\label{lem:wa-5-3-2}
Suppose that $D\subset\mathbb{R}^{d}$ is a bounded domain of diameter $r$ and is star-shaped with respect to a ball $Q_{r_1}$ of radius $r_{1}$. Let $\mathcal{K}=\big\{{\bm a}+{\bm b}\times {\bm x}: {\bm a},{\bm b}\in\mathbb{R}^{d}\big\}$. Then for any ${\bm u}\in {\bm H}^{1}(D)$,
\begin{equation}\label{wa:5-3-2-1}
  \inf_{{\bm v}\in \mathcal{K}}\Vert {\bm u} -{\bm v}\Vert_{L^{2}(D)}\leq Cr\Vert {\bm \varepsilon} ({\bm u})\Vert_{L^{2}(D)},
\end{equation}
where $C$ depends only on $d$ and $r/r_{1}$.
\end{lemma}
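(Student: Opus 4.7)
The plan is to reduce to a diameter-one reference situation by scaling and then invoke the second Korn inequality modulo rigid body motions on a star-shaped domain, whose constant is controlled by the chunkiness parameter $\gamma := r/r_{1}$. First, I would set $\hat{D} = D/r$ and $\hat{Q} = Q_{r_{1}}/r$, so that $\hat{D}$ has diameter $1$ and is star-shaped with respect to the ball $\hat{Q}$ of radius $1/\gamma$. For ${\bm u} \in {\bm H}^{1}(D)$, I define $\hat{\bm u}(\hat{\bm x}) = {\bm u}(r\hat{\bm x})$, so that
\begin{equation*}
\|{\bm u}-{\bm v}\|_{L^{2}(D)}^{2} = r^{d}\|\hat{\bm u}-\hat{\bm v}\|_{L^{2}(\hat{D})}^{2}, \qquad \|{\bm \varepsilon}({\bm u})\|_{L^{2}(D)}^{2} = r^{d-2}\|{\bm \varepsilon}(\hat{\bm u})\|_{L^{2}(\hat{D})}^{2},
\end{equation*}
and rigid motions on $D$ correspond to rigid motions on $\hat{D}$ under this scaling. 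Hence the estimate \cref{wa:5-3-2-1} is equivalent to a diameter-one estimate
\begin{equation*}
\inf_{\hat{\bm v}\in\mathcal{K}} \|\hat{\bm u}-\hat{\bm v}\|_{L^{2}(\hat{D})} \leq C(\gamma)\,\|{\bm \varepsilon}(\hat{\bm u})\|_{L^{2}(\hat{D})},
\end{equation*}
with a constant depending only on $d$ and $\gamma$.

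Next, I would produce the required constant by combining the two classical inequalities in the correct order. Let $\Pi\hat{\bm u}$ denote the $L^{2}(\hat{D})$-orthogonal projection of $\hat{\bm u}$ onto $\mathcal{K}$; then $\hat{\bm u} - \Pi\hat{\bm u}$ is orthogonal to all constants (and to the skew-symmetric part of $\nabla\hat{\bm u}$ averaged over $\hat{D}$). By the Poincar\'e inequality on $\hat{D}$, and then by the second Korn inequality on $\hat{D}$ applied to the residual,
\begin{equation*}
\|\hat{\bm u}-\Pi\hat{\bm u}\|_{L^{2}(\hat{D})} \leq C_{P}(\hat{D})\,\|\nabla(\hat{\bm u}-\Pi\hat{\bm u})\|_{L^{2}(\hat{D})} \leq C_{P}(\hat{D})\,C_{K}(\hat{D})\,\|{\bm \varepsilon}(\hat{\bm u})\|_{L^{2}(\hat{D})}.
\end{equation*}
The main obstacle is to show that both $C_{P}(\hat{D})$ and the Korn constant $C_{K}(\hat{D})$ are bounded by a function of $d$ and $\gamma$ alone, uniformly over the class of domains of diameter $1$ that are star-shaped with respect to a ball of radius $1/\gamma$. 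For the Poincar\'e constant this follows from the Payne--Weinberger type bound for star-shaped domains. For the Korn constant, one can use the Bogovski\u{\i} (Ne\v{c}as) construction of a bounded right inverse of the divergence on such domains: the operator norm of this right inverse is controlled explicitly in terms of $\gamma$, and via Ne\v{c}as' ``negative norm'' characterization this yields a Korn constant depending only on $d$ and $\gamma$.

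To close the argument, I would then unscale: the constant $C(\gamma)$ for the diameter-one problem combined with the scaling identities above produces the factor $r$ in \cref{wa:5-3-2-1} and yields the claimed estimate with $C$ depending only on $d$ and $r/r_{1}$. The only step that is not essentially bookkeeping is the $\gamma$-explicit control of the Korn constant for star-shaped domains; all other pieces (scaling, Poincar\'e, orthogonal projection onto $\mathcal{K}$) are routine.
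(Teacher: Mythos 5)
Your proposal is correct in spirit but follows a genuinely different path from the paper, and the one non-trivial step you explicitly flag (the $\gamma$-explicit Korn constant) is exactly the step the paper handles differently. The paper does not scale to diameter one; instead it chooses the rigid motion $\bm{v}$ so that the residual $\bm{u}^{0}=\bm{u}-\bm{v}$ has zero mean on $D$ \emph{and} zero mean skew-gradient on the inner ball $Q_{r_1}$. Then a Friedrichs/Poincar\'e step gives $\Vert\bm{u}^{0}\Vert_{L^2(D)}\lesssim r\Vert\nabla\bm{u}^{0}\Vert_{L^2(D)}$, the nonstandard Korn-type inequality of Oleinik et al.\ (Lemma~\ref{lem:wa-5-3-1}) --- which already carries the explicit $(r/r_1)$-powers and a lower-order term $\Vert\nabla\bm{u}^0\Vert_{L^2(Q_{r_1})}$ --- is applied, and that lower-order term is killed using the skew-mean-zero normalization on $Q_{r_1}$ together with Payne's Korn inequality on a ball. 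This avoids ever having to prove a $\gamma$-uniform second Korn inequality on the full domain; the uniformity in $\gamma$ is inherited directly from the quoted inequality. Your route via Bogovski\u{\i}/Ne\v{c}as on star-shaped domains is a well-known alternative and does yield a chunkiness-controlled Korn constant, but it is harder to pin down the exact $\gamma$-dependence in elementary terms, whereas the Oleinik-type inequality hands it to you.

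One imprecision in your sketch is worth naming: you assert that the $L^2(\hat{D})$-orthogonal projection $\Pi\hat{\bm u}$ onto $\mathcal{K}$ yields a residual ``orthogonal to the skew-symmetric part of $\nabla\hat{\bm u}$ averaged over $\hat{D}$.'' Orthogonality of $\hat{\bm u}-\Pi\hat{\bm u}$ to the rotational fields $\bm{b}\times\hat{\bm x}$ in $L^2$ is \emph{not} the same as $\int_{\hat{D}}\mathrm{skew}\,\nabla(\hat{\bm u}-\Pi\hat{\bm u})=0$, and it is the latter condition that the textbook second Korn inequality (without lower-order term) typically wants. You can patch this by choosing $\bm{v}\in\mathcal{K}$ to satisfy both $\int(\bm{u}-\bm{v})=0$ and $\int\mathrm{skew}\,\nabla(\bm{u}-\bm{v})=0$ (these are decoupled since $\nabla\bm{v}$ is a constant skew matrix), or by invoking a version of Korn adapted to the $L^2$-orthogonality normalization; the equivalence of normalizations holds with constants depending only on $d$ and $\gamma$, but that equivalence is itself another place where $\gamma$-uniformity must be verified. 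The paper sidesteps all of this by imposing the skew-mean-zero condition \emph{on the inner ball} $Q_{r_1}$, where Payne's result applies directly.
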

\begin{proof}
Let ${\bm u}\in {\bm H}^{1}(D)$. It is easy to see that there exists a ${\bm v}\in \mathcal{K}$ such that ${\bm u}^{0} = (u^{0}_{1},\ldots,u^{0}_{d})^{\top}:={\bm u} - {\bm v}$ satisfies $\int_{D} {\bm u}^{0} \,d{\bm x} = {\bm 0}$, and $\int_{Q_{r_1}}\Big(\frac{\partial u^{0}_{i}}{\partial x_j} - \frac{\partial u^{0}_{j}}{\partial x_i}\Big) \,d{\bm x} = 0$, $i,j=1,\cdots,d$. Using Friedrichs' inequality and a nonstandard Korn-type inequality (see \cref{lem:wa-5-3-1}) gives that
\begin{equation}\label{wa:5-3-2-3}
\begin{array}{cc}
\displaystyle \Vert {\bm u}^{0}\Vert^{2}_{L^{2}(D)}\leq C r^{2}\Vert \nabla {\bm u}^{0}\Vert^{2}_{L^{2}(D)} \\[3mm]
\displaystyle \leq Cr^{2}\Big(C_{1}\Big(\frac{r}{r_1} \Big)^{d+1} \Vert {\bm \varepsilon} ({\bm u}^{0})\Vert^{2}_{L^{2}(D)} + C_{2}\Big(\frac{r}{r_1} \Big)^{d} \Vert \nabla {\bm u}^{0} \Vert^{2}_{L^{2}(Q_{r_1})}\Big),
\end{array}
\end{equation}
where $C_{1}$ and $C_{2}$ depend only on $d$. Furthermore, we can use properties of ${\bm u}^{0}$ again and a Korn's inequality for a sphere \cite{payne1961korn} to get
\begin{equation}\label{wa:5-3-2-4}
     \Vert \nabla {\bm u}^{0} \Vert_{L^{2}(Q_{r_1})} \leq C \Vert {\bm \varepsilon}({\bm u}^{0})\Vert_{{\bm L}^{2}(Q_{r_1})} \leq C \Vert {\bm \varepsilon}({\bm u}^{0})\Vert_{{\bm L}^{2}(D)},
\end{equation}
where $C>0$ depends only on $d$. Substituting \cref{wa:5-3-2-4} into \cref{wa:5-3-2-3} and noting that ${\bm \varepsilon}({\bm u}^{0})={\bm \varepsilon}({\bm u})$ complete the proof of inequality \cref{wa:5-3-2-1}.
\end{proof}

With \cref{lem:wa-5-3-2} in hand, the weak approximation property above can be proved similarly as \cref{lem:5-3} (i). The only difference is that we modify the definition of $Q_{m}(D^{\ast\ast})$ in \cref{eq:5-22} as $Q_{m}({D}^{\ast\ast}) = {\rm span}\big\{{\bm v}|_{T}: {\bm v}\in \mathcal{K},\;T\in \widetilde{\mathcal{T}}_{H}\big\}$ with $\mathcal{K}=\big\{{\bm a}+{\bm b}\times {\bm x}: {\bm a},{\bm b}\in\mathbb{R}^{d}\big\}$, and use a similar approximation result as \cref{eq:5-23} resulting from inequality \cref{wa:5-3-2-1}. The details are omitted. Combining the Caccioppoli inequality and the weak approximation property, we obtain the following result.
\begin{theorem}[Exponential local convergence]\label{upperbound:5-3}
For each $i=1,\cdots,M$, \cref{thm:3-1} holds true with $C_{\rm cac}^{\rm I} = 2C_{\rm max}^{1/2}$, $C_{\rm cac}^{\rm II} = 0$, and $\alpha = 1/d$.
\end{theorem}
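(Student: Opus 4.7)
The plan is to directly invoke \cref{thm:3-1}(i), since we are in the elliptic setting with $B^{+}_{D}(\cdot,\cdot) = B_{D}(\cdot,\cdot)$ and the Caccioppoli-type inequality carries no lower-order term (i.e.\ $C^{\rm II}_{\rm cac}=0$). To do so, it suffices to verify the two fundamental assumptions, \cref{ass:3-1-1} (Caccioppoli-type inequality) and case $a$ of \cref{ass:3-1-2} (weak approximation property), with the claimed constants.

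First, I would take $\mathcal{L}(D) := {\bm L}^{2}(D)$, so that $\mathcal{H}_{B}(D) = {\bm H}_{B}(D) \subset {\bm H}^{1}(D) \subset {\bm L}^{2}(D)$ with a bounded restriction operator, and verify that all structural hypotheses of \cref{sec:fundamental-assumptions} are met. The Caccioppoli-type inequality then follows immediately from \cref{lem:cac-5-3}, specifically estimate \cref{cac:5-3-2}, which reads $\Vert {\bm u} \Vert_{B,D} \leq 2C^{1/2}_{\rm max}\delta^{-1}\Vert {\bm u}\Vert_{L^{2}(D^{\ast}\setminus D)}$ for ${\bm u}\in {\bm H}_{B}(D^{\ast})$. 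This yields $C^{\rm I}_{\rm cac} = 2 C_{\rm max}^{1/2}$ and $C^{\rm II}_{\rm cac}=0$, putting us in case $(\rm i)$ of \cref{thm:3-1}.

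Next, case $a$ of the weak approximation property with $\alpha = 1/d$ is supplied by \cref{lem:wa-5-3-3}: for any $u\in {\bm H}^{1}(D^{\ast\ast}) \supset {\bm H}_{B}(D^{\ast\ast})$ there exists an $m$-dimensional space $Q_{m}(D^{\ast\ast})\subset {\bm L}^{2}(D^{\ast\ast})$ approximating $u$ in ${\bm L}^{2}(D^{\ast}\setminus D)$ with the required rate in terms of $\Vert {\bm \varepsilon}({\bm u})\Vert_{L^{2}(D^{\ast\ast})}$. Since for ${\bm u}\in {\bm H}_{B}(D^{\ast\ast})$ we have $\Vert {\bm \varepsilon}({\bm u})\Vert_{L^{2}(D^{\ast\ast})} \lesssim \Vert {\bm u}\Vert_{B,D^{\ast\ast}} = \Vert {\bm u}\Vert_{B^{+},D^{\ast\ast}}$ (with a constant depending on $C_{\rm min}$), this is exactly estimate \cref{eq:3-2} (case $a$) with $\alpha = 1/d$ and a constant $C_{\rm wa}$ depending only on $d$ and $C_{\rm min}$. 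The mild technical point here is that the statement of \cref{lem:wa-5-3-3} gives the approximation bound via $\Vert {\bm \varepsilon}({\bm u})\Vert_{L^{2}(D^{\ast\ast})}$ rather than directly via $\Vert {\bm u} \Vert_{B^{+},D^{\ast\ast}}$, so absorbing the coercivity constant $C_{\rm min}$ into $C_{\rm wa}$ is required.

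With both assumptions in hand and $C^{\rm II}_{\rm cac} = 0$, \cref{thm:3-1}(i) applies to each pair $(\omega_i, \omega_i^{\ast})$, yielding the exponential bound $d_{n}(\omega_i,\omega_i^{\ast})\leq \Vert P_{B}\Vert\, e^{-c n^{1/d}}$ for $n>n_0$, with the exponent $c$ depending on $C^{\rm I}_{\rm cac}$, $C_{\rm wa}$, the oversampling ratio $H^{\ast}/\delta^{\ast}$ and $d$, as stated. I do not foresee any real obstacle beyond this bookkeeping: the core analytic ingredients (the identity \cref{cac:5-3-1} and the Korn-based approximation result \cref{lem:wa-5-3-2}) have already been established, and for general $\omega_i,\omega_i^{\ast}$ the extension uses \cref{rem:general_domains} verbatim. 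The only subtlety worth flagging is that the kernel $\mathcal{K}_i$ in \cref{ass:2-2-3} is now the six-dimensional (in $3$D) space of rigid body motions, so that the injectivity assumption on $P_{i}|_{\mathcal{K}_i}$ needs the standard mild condition on the partition of unity that $\chi_i$ not vanish identically on $\omega_i$ modulo rigid motions; this is automatic for any reasonable construction as in \cref{rem:POU}.
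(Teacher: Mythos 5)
Your proposal takes essentially the same route as the paper: combine the Caccioppoli inequality of \cref{lem:cac-5-3} (giving $C^{\rm I}_{\rm cac}=2C_{\rm max}^{1/2}$, $C^{\rm II}_{\rm cac}=0$) with the Korn-based weak approximation property of \cref{lem:wa-5-3-3} (case $a$, $\alpha=1/d$), then invoke \cref{thm:3-1}(i). Your extra bookkeeping — absorbing $C_{\rm min}^{-1/2}$ into $C_{\rm wa}$ so that $\Vert{\bm\varepsilon}({\bm u})\Vert_{L^2}$ can be replaced by $\Vert{\bm u}\Vert_{B^{+},D^{\ast\ast}}$, and noting that the kernel $\mathcal{K}_i$ is the space of rigid body motions with $P_i|_{\mathcal{K}_i}$ injective — is correct and simply makes explicit what the paper leaves implicit.
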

Since we are in the elliptic case, the quasi-optimal convergence of the method follows directly from \cref{quasi-optimal-coercive}. 

% Similarly to the scalar elliptic case, we can verify \cref{numhomo-convergence} in this setting by using Poinc\'{a}re-Korn inequalities.
% \begin{theorem}\label{Numhomo-5-2}
% Let the partition of unity be the nodal basis of the associated $P_{1}$ finite element space. Then \cref{numhomo-convergence} holds true for $\widetilde{L}(\Omega) = {\bm L}^{2}(\Omega)$.    
% \end{theorem}

\textbf{Discrete setting}. We end this subsection by briefly discussing the application of our theory to discretized linear elasticity problems with conforming FEs. In this setting, the weak approximation property is a direct consequence of \cref{lem:wa-5-3-3} due to the conformity of FE spaces. The discrete Caccioppoli inequality can be proved by following the same lines as in the proof of \cref{lem:cac-5-2} and by using Korn's inequalities. The details are omitted here. Therefore, \cref{thm:3-1} holds true in this setting.

% assuming that the underlying FE mesh is quasi-uniform and applying a similar argument as in based on the superapproximation results in \cref{lem:5-2} (the proof differs slightly from that of \cref{lem:cac-5-2} in using an inverse estimate to avoid bounding $\Vert \nabla {\bm u}_{h}\Vert_{L^{2}(D)}$ by $\Vert {\bm u}_{h}\Vert_{B,D}$). The details are omitted here. The quasi-uniformity assumption may be removed by establishing some superapproximation results specific to linear elasticity. We leave it for future works.

\subsection{\texorpdfstring{$\bm{H}$}{$H$}$({\rm curl})$ elliptic problems}\label{sec:Hcurl-elliptic}
In this subsection, we consider the following curl-curl problem: Find ${\bm u}:\Omega\rightarrow \mathbb{C}^{d}$ such that
\begin{equation}\label{eq:5-4-1}
\left\{
\begin{array}{lll}
 \nabla \times (\nu \nabla\times {\bm u}) + \kappa {\bm u}= {\bm f}\,\quad &{\rm in}\;\, \Omega \\[2mm]
  \qquad \qquad\quad\quad  {\bm n}\times {\bm u} = {\bm 0} \,\quad &{\rm on}\;\, \Gamma,  
\end{array}
\right.
\end{equation}
with real-valued coefficient $\nu>0$ and complex-valued coefficient $\kappa$. Various equations of Maxwell type can be written as or reduced to this curl-curl problem \cref{eq:5-4-1}. For example, it represents the time-harmonic Maxwell's equations in an absorbing medium, by setting $\nu=\mu^{-1}$, $\kappa=-\textit{\textomega}^{2}\varepsilon + {\rm i}\textit{\textomega}\sigma$, and ${\bm f} = -{\rm i}\textit{\textomega}{\bm J}$, with the magnetic permeability $\mu$, the electric permittivity $\varepsilon$, the conductivity $\sigma$, and the current density ${\bm J}$. Another example is semi-discretized (in time) Maxwell's equations with an implicit time stepping scheme. In this case, $\nu,\kappa>0$, and ${\bm f}\in {\bm H}_{0}({\rm curl};\Omega)^{\prime}$. We refer to \cite{chung2019adaptive,gallistl2018numerical,henning2020computational} for multiscale methods for $H({\rm curl})$ elliptic problems.

We make the following assumption on the data of the problem: $\nu,\,\kappa\in L^{\infty}(\Omega)$, ${\bm f}\in{\bm H}_{0}({\rm curl};\Omega)^{\prime}$, and there exists $0<\nu_{\rm min}<\nu_{\rm max}<\infty$ such that $\nu_{\rm min}\leq \nu({\bm x})\leq \nu_{\rm max}$ for $a.e \; {\bm x}\in \Omega$, and exists a $C_{1}>0$ such that for any subdomain $D\subset \Omega$,
\begin{equation}\label{eq:5-4-2}
|B_{D}({\bm v}, {\bm v})|\geq C_{1} \Vert {\bm v}\Vert^{2}_{{\bm H}({\rm curl};D)} \quad \text{for all}\;\;{\bm v}\in {\bm H}({\rm curl};D),
\end{equation}
where the sesquilinear form $B_{D}:{\bm H}({\rm curl};D)\times {\bm H}({\rm curl};D)\rightarrow \mathbb{C}$ is defined as
\begin{equation}\label{eq:5-4-3}
    B_{D}({\bm v}, {\bm w}):=(\nu\nabla\times{\bm v}, \nabla\times{\bm w})_{L^{2}(D)} + (\kappa{\bm v}, {\bm w})_{L^{2}(D)}.
\end{equation}
Extension of the results below to matrix-valued coefficients is straightforward. 

Setting $B(\cdot,\cdot):=B_{\Omega}(\cdot,\cdot)$, the weak formulation of problem \cref{eq:5-4-1} is defined by: Find ${\bm u}\in {\bm H}_{0}({\rm curl};\Omega)$ such that 
\begin{equation}\label{eq:5-4-4}
 B({\bm u}, {\bm v}) = \langle {\bm f}, {\bm v} \rangle_{\Omega}\quad \quad \text{for all}\;\;{\bm v}\in {\bm H}_{0}({\rm curl};\Omega). 
\end{equation}
By assumption, the sesquilinear form $B(\cdot,\cdot)$ is bounded and coercive on ${\bm H}_{0}({\rm curl};\Omega)$. Therefore, \cref{ass:2-1-0} holds true and problem \cref{eq:5-4-4} is uniquely solvable. For any subdomain $D\subset \Omega$, we define 
\begin{equation}
{\bm H}_{\Gamma}({\rm curl};D) = \{{\bm v}\in {\bm H}({\rm curl};D):{\bm v} = {\bm 0}\;\;\text{on}\;\;\partial D\cap \Gamma\}.    
\end{equation}
Then, \cref{ass:2-1-1} holds true with $\mathcal{H}(D) =  {\bm H}_{\Gamma}({\rm curl};D)$, $\mathcal{H}_{0}(D) =  {\bm H}_{0}({\rm curl};D)$, and with the local bilinear form $B_{D}(\cdot,\cdot)$ defined by \cref{eq:5-4-3}.

While there is no essential difficulty in treating the general case, we restrict ourselves to the case $\kappa\geq \kappa_{\rm min}>0$ in the rest of this subsection to simplify the presentation. Moreover, we assume that all the functions involved are real-valued. In this case, the bilinear form $B_{D}(\cdot,\cdot)$ is symmetric and coercive on ${\bm H}({\rm curl};D)$, and we use $\Vert {\bm v}\Vert_{B,D} := \big(B_{D}({\bm v}, {\bm v})\big)^{1/2}$ as the norm for ${\bm H}({\rm curl};D)$.

For any subdomain $D\subset \Omega$, we define the generalized harmonic space
\begin{equation}
 {\bm H}_{B}({\rm curl};D)=\big\{{\bm u}\in {\bm H}_{\Gamma}({\rm curl};D): B_{D}({\bm u}, {\bm v}) = 0 \quad \text{for all} \;\,{\bm v}\in {\bm H}_{0}({\rm curl};D)\big\}.   
\end{equation}
Clearly, $\big({\bm H}_{B}({\rm curl};D),\Vert \cdot \Vert_{B,D}\big)$ is a Hilbert space, and hence \cref{ass:2-2-3} holds true with $B^{+}_{D}(\cdot,\cdot)=B_{D}(\cdot,\cdot)$ and $\mathcal{K} = \emptyset$. An important property of ${\bm H}_{B}({\rm curl};D)$ is that 
\begin{equation}\label{divergence-free-property}
\nabla \cdot(\kappa {\bm v}) = 0 \quad \text{for all}\;\; {\bm v}\in {\bm H}_{B}({\rm curl};D).     
\end{equation}
Next we prove the Caccioppoli inequality in this setting, again based on the abstract identity \cref{eq:5-4}.

\begin{lemma}[Caccioppoli-type inequality]\label{lem:cac-5-4}
Let $D\subset D^{\ast}$, $\delta>0$, and $\eta\in C^{1}(\overline{D^{\ast}})$ be as in \cref{lem:cac-5-1}. Then, for any ${\bm u}, \,{\bm v}\in {\bm H}_{B}({\rm curl};D^{\ast})$,
\begin{equation}\label{cac:5-4-1}
B_{D^{\ast}}(\eta {\bm u},\eta{\bm v}) = \int_{D^{\ast}}\nu\nabla \eta\times {\bm u}\cdot \nabla \eta\times {\bm v} \,d{\bm x}.
\end{equation}
In addition, for any ${\bm u}\in {\bm H}_{B}({\rm curl};D^{\ast})$,
\begin{equation}\label{cac:5-4-2}
\Vert {\bm u} \Vert_{B,D} \leq \nu_{\rm max}\delta^{-1}\Vert {\bm u} \Vert_{L^{2}(D^{\ast}\setminus D)}.
\end{equation}
\end{lemma}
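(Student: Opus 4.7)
The plan is to derive the identity (cac:5-4-1) as a direct consequence of the Abstract Caccioppoli Identity (Proposition~\ref{prop:5-1}) instantiated in the $\bm{H}(\mathrm{curl})$ setting of Example~\ref{exam:5-3}, and then choose a suitable cut-off to get the quantitative bound (cac:5-4-2). Specifically, I would set $b_{\bm L}(\bm\phi,\bm\psi) := \int_{D^{\ast}}\nu\,\bm\phi\cdot\bm\psi\,d\bm x$ on $\bm L(D^{\ast}) = (L^{2}(D^{\ast}))^{d}$. Because $\nu$ is a nonnegative real-valued $L^{\infty}$ scalar, this form is Hermitian and satisfies $b_{\bm L}(\bm u,\eta\bm v)=b_{\bm L}(\eta\bm u,\bm v)$, so (\ref{eq:5-4}) applies and yields
\begin{equation*}
\int_{D^{\ast}}\!\!\nu\,\nabla\times(\eta\bm u)\cdot\nabla\times(\eta\bm v)\,d\bm x
= \int_{D^{\ast}}\!\!\nu(\nabla\eta\times\bm u)\cdot(\nabla\eta\times\bm v)\,d\bm x
+ \tfrac{1}{2}\!\!\sum_{(\bm u,\bm v),(\bm v,\bm u)}\!\! b_{\bm L}\big(\nabla\times\bm u,\nabla\times(\eta^{2}\bm v)\big).
\end{equation*}

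The crucial step is to rewrite each cross term using the generalized harmonicity of $\bm u,\bm v\in\bm H_{B}(\mathrm{curl};D^{\ast})$. Since $\eta$ vanishes on $\partial D^{\ast}\setminus\partial\Omega$ and $\bm v$ has zero tangential trace on $\partial D^{\ast}\cap\Gamma$ (as an element of $\bm H_{\Gamma}(\mathrm{curl};D^{\ast})$), the product $\eta^{2}\bm v$ lies in $\bm H_{0}(\mathrm{curl};D^{\ast})$, and likewise for $\eta^{2}\bm u$. Thus $B_{D^{\ast}}(\bm u,\eta^{2}\bm v)=B_{D^{\ast}}(\bm v,\eta^{2}\bm u)=0$, which gives
\begin{equation*}
\int_{D^{\ast}}\nu\,\nabla\times\bm u\cdot\nabla\times(\eta^{2}\bm v)\,d\bm x
= -\int_{D^{\ast}}\kappa\,\eta^{2}\bm u\cdot\bm v\,d\bm x,
\end{equation*}
and symmetrically for the other cross term. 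Substituting back and noting $B_{D^{\ast}}(\eta\bm u,\eta\bm v) = \int\nu\nabla\times(\eta\bm u)\cdot\nabla\times(\eta\bm v)\,d\bm x + \int\kappa\,\eta^{2}\bm u\cdot\bm v\,d\bm x$, the $\kappa$-terms cancel and (cac:5-4-1) follows.

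For (cac:5-4-2), I would select a cut-off $\eta\in C^{1}(\overline{D^{\ast}})$ with $0\leq\eta\leq 1$, $\eta\equiv 1$ on $D$, $\eta\equiv 0$ on $\partial D^{\ast}\setminus\partial\Omega$, and $\|\nabla\eta\|_{L^{\infty}(D^{\ast})}\leq \delta^{-1}$. On $D$, $\nabla\eta=0$ implies $\nabla\times(\eta\bm u)=\nabla\times\bm u$ and $\eta\bm u=\bm u$, so pointwise nonnegativity of the integrand together with $\kappa\geq\kappa_{\min}>0$ gives $\|\bm u\|_{B,D}^{2}\leq B_{D^{\ast}}(\eta\bm u,\eta\bm u)$. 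Applying (cac:5-4-1) with $\bm v=\bm u$ and the pointwise estimate $|\nabla\eta\times\bm u|\leq \delta^{-1}|\bm u|\,\mathbf{1}_{D^{\ast}\setminus D}$ yields the desired bound (up to a square-root factor on $\nu_{\max}$).

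The only mild subtlety is to verify that the framework of Definition~\ref{def:5-1} accommodates cut-offs $\eta$ that need not vanish on $\partial D^{\ast}\cap\Gamma$ (so $\eta\bm u$ stays in $\bm H_{\Gamma}(\mathrm{curl};D^{\ast})$ rather than $\bm H_{0}(\mathrm{curl};D^{\ast})$). This is not an obstacle because the Leibniz and commutativity identities underlying Proposition~\ref{prop:5-1} are purely algebraic and hold on $\bm H(\mathrm{curl};D^{\ast})$; the role of $\eta$ vanishing on $\partial D^{\ast}\setminus\partial\Omega$ enters only through ensuring $\eta^{2}\bm v\in\bm H_{0}(\mathrm{curl};D^{\ast})$ for the generalized-harmonicity step above, which is exactly what the hypothesis on $\eta$ provides.
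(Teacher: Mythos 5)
Your proof is correct and follows essentially the same route as the paper: instantiating Proposition~\ref{prop:5-1} with the $\bm H(\mathrm{curl})$ operators of Example~\ref{exam:5-3} and $b_{\bm L}(\bm\phi,\bm\psi)=\int_{D^{\ast}}\nu\,\bm\phi\cdot\bm\psi\,d\bm x$, using the harmonicity of $\bm u,\bm v$ together with $\eta^{2}\bm u,\eta^{2}\bm v\in\bm H_{0}(\mathrm{curl};D^{\ast})$ to convert the cross terms into $-\int\kappa\,\eta^{2}\bm u\cdot\bm v$ so that they cancel the $\kappa$-contribution in $B_{D^{\ast}}(\eta\bm u,\eta\bm v)$, and then applying a standard cut-off. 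Your observation that the resulting constant is $\nu_{\max}^{1/2}$ rather than $\nu_{\max}$ is a genuine catch against the lemma statement as written, and it is consistent with $C_{\rm cac}^{\rm I}=\nu_{\max}^{1/2}$ declared in Theorem~\ref{upperbound:5-4}.
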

\begin{proof}
We consider the setting of Example~\ref{exam:5-3} and define $b_{\bf L}(\cdot,\cdot): {\bm L}^{2}(D^{\ast})\times {\bm L}^{2}(D^{\ast})\rightarrow \mathbb{R}$ as $b_{\bf L}({\bm u},{\bm v}) = \int_{D^{\ast}}\nu {\bm u}\cdot{\bm v}\,d{\bm x}$. Then, the identity \cref{eq:5-4-2} is a consequence of \cref{prop:5-1} and the following fact: for all ${\bm u},{\bm v}\in {\bm H}_{B}({\rm curl};D^{\ast})$,
\begin{equation}
b_{\mathbf{L}}\big(\nabla\times{\bm u}, \nabla\times(\eta^{2}{\bm v})\big) = b_{\mathbf{L}}\big(\nabla\times{\bm v}, \nabla\times(\eta^{2}{\bm u})\big) = - (\kappa\eta {\bm u},\eta{\bm v})_{L^{2}(D^{\ast})}.
\end{equation} 
Inequality \cref{cac:5-4-2} follows immediately from \cref{cac:5-4-1} by choosing a cut-off function $\eta$.
\end{proof}

In the following, we verify the compactness of operators $\widehat{P}_{i}:{\bm H}_{B}({\rm curl};\omega_i^{\ast})\rightarrow {\bm H}_{0}({\rm curl};\omega_i)$ defined by $\widehat{P}_{i}{\bm u} = \chi_{i}{\bm u}$. Here it is crucial that ${\rm dist}(\omega_i,\partial \omega_{i}^{\ast}\setminus\partial \Omega)>0$.

\begin{lemma}\label{compactness-5-1}
\Cref{ass:2-2-2} holds true if ${\rm dist}(\omega_i,\partial \omega_{i}^{\ast}\setminus\partial \Omega)>0$ ($1\leq i\leq M$).   
\end{lemma}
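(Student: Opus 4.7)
The plan is to reduce the compactness of $\widehat{P}_i$ to the classical Weber compactness theorem for electromagnetic Hilbert spaces, using the Caccioppoli inequality as a bridge from the weak $L^2$-norm back to the strong $H(\mathrm{curl})$-norm. Let $\{{\bm u}_n\}$ be bounded in ${\bm H}_{B}(\mathrm{curl};\omega_i^{\ast})$. Since $\mathrm{dist}(\omega_i,\partial\omega_i^{\ast}\setminus\partial\Omega)>0$, I can insert two intermediate Lipschitz subdomains $\omega_i\subset\widetilde{\omega}_1\subset\widetilde{\omega}_2\subset\omega_i^{\ast}$, each pair separated by a positive distance from the others relative to $\partial\Omega$. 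By the Caccioppoli-type inequality \cref{cac:5-4-2} applied on $\widetilde{\omega}_2\subset\omega_i^{\ast}$, the sequence $\{{\bm u}_n\}$ is bounded in ${\bm H}(\mathrm{curl};\widetilde{\omega}_2)$.

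Next I use the divergence-free property \cref{divergence-free-property}, namely $\nabla\cdot(\kappa{\bm u}_n)=0$, together with a smooth cutoff $\tilde{\eta}\in C^{\infty}(\overline{\widetilde{\omega}_2})$ satisfying $\tilde{\eta}\equiv 1$ on $\widetilde{\omega}_1$ and $\tilde{\eta}=0$ on $\partial\widetilde{\omega}_2\setminus\partial\Omega$. Since ${\bm u}_n\in {\bm H}_{\Gamma}(\mathrm{curl};\omega_i^{\ast})$, the product $\tilde{\eta}{\bm u}_n$ has vanishing tangential trace on all of $\partial\widetilde{\omega}_2$, and therefore belongs to ${\bm H}_{0}(\mathrm{curl};\widetilde{\omega}_2)$. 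A direct computation gives
\begin{equation*}
\nabla\times(\tilde{\eta}{\bm u}_n)=\nabla\tilde{\eta}\times{\bm u}_n+\tilde{\eta}\nabla\times{\bm u}_n,\qquad \nabla\cdot(\kappa\tilde{\eta}{\bm u}_n)=\kappa\nabla\tilde{\eta}\cdot{\bm u}_n,
\end{equation*}
so that $\{\tilde{\eta}{\bm u}_n\}$ is bounded in the space
\begin{equation*}
{\bm X}_{N}(\widetilde{\omega}_2,\kappa):=\{{\bm v}\in {\bm H}_{0}(\mathrm{curl};\widetilde{\omega}_2):\nabla\cdot(\kappa{\bm v})\in L^{2}(\widetilde{\omega}_2)\}
\end{equation*}
equipped with its graph norm. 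Weber's classical compactness theorem (valid for bounded Lipschitz domains and $L^{\infty}$ coefficients uniformly bounded below by $\kappa_{\min}>0$) asserts that ${\bm X}_{N}(\widetilde{\omega}_2,\kappa)$ embeds compactly into ${\bm L}^{2}(\widetilde{\omega}_2)$. Extracting a subsequence, still denoted $\tilde{\eta}{\bm u}_n$, that converges in ${\bm L}^{2}(\widetilde{\omega}_2)$, and using $\tilde{\eta}\equiv 1$ on $\widetilde{\omega}_1$, I obtain ${\bm L}^{2}(\widetilde{\omega}_1)$-convergence of ${\bm u}_n$.

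To upgrade this $L^2$-convergence to $H(\mathrm{curl})$-convergence on $\omega_i$, I apply the Caccioppoli inequality \cref{cac:5-4-2} once more, this time to the differences ${\bm u}_{n_k}-{\bm u}_{n_l}\in {\bm H}_{B}(\mathrm{curl};\omega_i^{\ast})$ (which lie in ${\bm H}_B(\mathrm{curl};\widetilde{\omega}_1)$ by \cref{prop:2-1}), with the pair $(D,D^{\ast})=(\omega_i,\widetilde{\omega}_1)$ and $\delta=\mathrm{dist}(\omega_i,\partial\widetilde{\omega}_1\setminus\partial\Omega)>0$, yielding
\begin{equation*}
\|{\bm u}_{n_k}-{\bm u}_{n_l}\|_{B,\omega_i}\leq \nu_{\max}\,\delta^{-1}\|{\bm u}_{n_k}-{\bm u}_{n_l}\|_{L^{2}(\widetilde{\omega}_1\setminus\omega_i)}\xrightarrow[k,l\to\infty]{}0.
\end{equation*}
Hence $\{{\bm u}_{n_k}\}$ is Cauchy in ${\bm H}(\mathrm{curl};\omega_i)$, and since multiplication by $\chi_i\in C^{1}(\overline{\omega_i})$ with $\chi_i=0$ on $\partial\omega_i\setminus\partial\Omega$ is continuous from ${\bm H}_{\Gamma}(\mathrm{curl};\omega_i)$ into ${\bm H}_{0}(\mathrm{curl};\omega_i)$, the sequence $\widehat{P}_i{\bm u}_{n_k}=\chi_i{\bm u}_{n_k}$ converges in ${\bm H}_{0}(\mathrm{curl};\omega_i)$, proving compactness. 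The main conceptual obstacle is that $H(\mathrm{curl})$ does \emph{not} embed compactly into $L^2$; the argument resolves this by packaging the divergence-free constraint satisfied by generalized $B$-harmonic fields into a Weber-type space where compactness holds, and using Caccioppoli twice — once to gain interior regularity and once to convert $L^2$-closeness back into $H(\mathrm{curl})$-closeness.
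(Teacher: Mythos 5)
Your proof is correct and follows essentially the same route as the paper: cut off by a smooth function vanishing near the interior part of the boundary, observe (via $\nabla\cdot(\kappa\bm u)=0$) that the cut-off function lands in a Weber-type space $\{\bm v\in \bm H_0(\mathrm{curl}):\nabla\cdot(\kappa\bm v)\in L^2\}$ which embeds compactly into $\bm L^2$, then use the Caccioppoli inequality \cref{cac:5-4-2} to convert $L^2$-Cauchyness on an intermediate set into $\bm H(\mathrm{curl})$-Cauchyness on $\omega_i$. The only inessential difference is that you introduce a second intermediate domain and invoke Caccioppoli an extra time to get boundedness in $\bm H(\mathrm{curl};\widetilde\omega_2)$, which is already immediate from boundedness in $\bm H_B(\mathrm{curl};\omega_i^*)$ (whose norm is equivalent to the $H(\mathrm{curl})$-norm since $\kappa\ge\kappa_{\min}>0$) together with the trivial restriction bound; the paper uses a single intermediate set and one Caccioppoli step.
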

\begin{proof}
Let $\widehat{\omega}_{i}$ be an intermediate set such that $\omega_{i}\subset \widehat{\omega}_{i}\subset \omega_{i}^{\ast}$ and ${\rm dist}(\omega_{i},\partial \widehat{\omega}_{i} \setminus\partial \Omega)={\rm dist}(\widehat{\omega}_{i},\partial \omega_{i}^{\ast}\setminus\partial \Omega)>0$. We first show that ${\bm H}_{B}({\rm curl};\omega_{i}^{\ast})$ is compactly embedded into ${\bm L}^{2}(\widehat{\omega}_{i})$. Let $\eta\in C^{1}(\overline{\omega_i^{\ast}})$ satisfy $\eta =0 $ on $\partial \omega_i^{\ast}\setminus\partial \Omega$ and $\eta = 1$ in $\widehat{\omega}_{i}$, and define $S_{\eta}(\omega_i^{\ast}) = \{\eta {\bm v}:{\bm v}\in  {\bm H}_{B}({\rm curl};\omega_{i}^{\ast})\}$. Using the divergence-free property \cref{divergence-free-property} yields
\begin{equation}\label{eq:compact_emb_1}
S_{\eta}(\omega_i^{\ast})\subset \{{\bm v}\in {\bm H}({\rm curl};\omega_{i}^{\ast}) : \nabla\cdot (\kappa {\bm v})\in L^{2}(\omega_{i}^{\ast}),\;\; {\bm n}\times {\bm v} = 0\;\;{\rm on}\;\, \partial \omega_i^{\ast} \}.  
\end{equation}
On the other hand, it is known that the second set in \cref{eq:compact_emb_1} is compactly embedded into ${\bm L}^{2}(\omega_i^{\ast})$ \cite{bauer2016maxwell}. Hence, recalling that $\eta =1$ in $\widehat{\omega}_{i}$, we have the compact embedding ${\bm H}_{B}({\rm curl};\omega_{i}^{\ast})\subset {\bm L}^{2}(\widehat{\omega}_{i})$. Combining this compactness result and inequality \cref{cac:5-4-2} (on $\widehat{\omega}_{i}$ and $\omega_{i}$), we further deduce that the inclusion ${\bm H}_{B}({\rm curl};\omega_{i}^{\ast})\subset {\bm H}_{B}({\rm curl};\omega_{i})$ is compact. Therefore, \cref{ass:2-2-2} is verified.
\end{proof}

\textbf{Local eigenproblems}. In this setting, the local eigenproblems are defined by: Find $\lambda_{i}\in \mathbb{R}$ and ${\bm \phi}_{i}\in {\bm H}_{B}({\rm curl};\omega_{i}^{\ast})$ such that
\begin{equation}\label{eigen:5-4}
B_{\omega^{\ast}_i}(\chi_{i}{\bm \phi}_{i},\,  \chi_{i}{\bm v}) = \lambda_{i}\,B_{\omega^{\ast}_i}({\bm \phi}_{i},\, {\bm v}) \quad  \forall {\bm v}\in {\bm H}_{B}({\rm curl};\omega_{i}^{\ast}).
\end{equation}
Again, it follows from the identity \cref{cac:5-4-1} that problem \cref{eigen:5-4} is equivalent to
\begin{equation}
(\nu \nabla\chi_{i}\times {\bm \phi}_{i}, \nabla\chi_{i}\times {\bm v})_{L^{2}(\omega^{\ast}_i)}= \lambda_{i}\,B_{\omega^{\ast}_i}({\bm \phi}_{i},\, {\bm v}) \quad  \forall {\bm v}\in {\bm H}_{B}({\rm curl};\omega_{i}^{\ast}).
\end{equation}

Now we pass to the verification of the exponential local convergence. Since the Caccioppoli inequality has been established in \cref{lem:cac-5-4}, we only need to prove the weak approximation property. Moreover, noting that in this setting $C_{\rm cac}^{\rm II}=0$, only case $a$ of \cref{ass:3-1-2} is required for \cref{thm:3-1}. Compared to elliptic problems, verifying the weak approximation property for Maxwell type problems is considerably more complicated. This complication arises from two aspects. First, as previously mentioned, the compact embedding ${\bm H}_{B}({\rm curl};D)\subset {\bm L}^{2}(\widehat{D})$ only holds for $\widehat{D} \Subset D$. As such, functions in ${\bm H}_{B}({\rm curl};D)$ can only be approximated in the interior of $D$ in the ${\bm L}^{2}$-norm by finite-dimensional spaces. Secondly, due to the general $L^{\infty}$-coefficient $\kappa$, the regularity of functions in ${\bm H}_{B}({\rm curl};D)$ is typically much lower than $H^{1}$, making it difficult to use well developed approximation techniques for elliptic problems. 
\begin{figure}
    \centering
    \includegraphics[scale=0.5]{./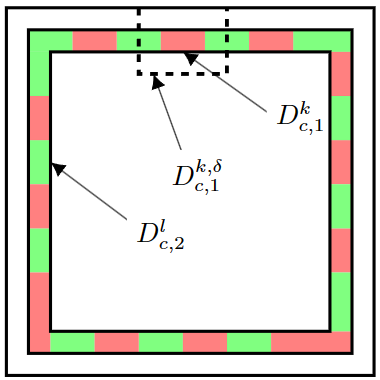}
    \caption{Illustration of the decomposition of $D_{c} = D^{\ast}\setminus D$ (the colored region). Here $D_{c} = D_{c,1}\cup D_{c,2}$, where $D_{c,1}$ and $D_{c,2}$ are the unions of the red and green colored regions, respectively. }
    \label{fig:domain_decomposition}
\end{figure}
\begin{proof}[Proof of the weak approximation property]
To overcome the two difficulties, we will use suitable Helmholtz decompositions, the obtained exponential convergence results for scalar elliptic problems, and a variational characterization of eigenvalues. For the sake of clarity, the proof is split into four steps. 

\textit{Step 1}. Let $D\subset D^{\ast}\subset {D}^{\ast\ast}\subset \Omega$ satisfy $\delta:={\rm dist} ({D}^{\ast},\partial D^{\ast\ast}\setminus \partial \Omega)>0$ and that ${\rm dist} ({\bm x},\partial D^{\ast}\setminus \partial \Omega)\leq \delta$ for all ${\bm x}\in \partial D\setminus \partial \Omega$. To fix ideas, we assume that $D\subset D^{\ast}\subset {D}^{\ast\ast}$ are nested concentric cubes or truncated cubes in the case of boundary subdomains. Denoting by $D_{c} = D^{\ast}\setminus D$, we introduce an operator $R_{c}:{\bm H}_{B}({\rm curl};D^{\ast\ast})\rightarrow {\bm L}^{2}(D^{\ast\ast})$ defined by $R_{c}{\bm u} = {\bm 1}_{D_c}{\bm u}$, where ${\bm 1}_{D_c}$ denotes the characteristic function of $D_{c}$, and consider the $n$-width of operator $R_{c}$:
\begin{equation}\label{wa:5-4-1-1}
d_{n}(R_{c}) = \inf_{Q(n)\subset {\bm L}^{2}(D^{\ast\ast})} \sup_{{\bm u}\in {\bm H}_{B}({\rm curl};D^{\ast\ast})}\inf_{{\bm v}\in Q(n)}\frac{\Vert R_{c}{\bm u} - {\bm v}\Vert_{L^{2}(D^{\ast\ast})} }{\Vert {\bm u}\Vert_{B, D^{\ast\ast}}}.    
\end{equation}
To estimate $d_{n}(R_{c})$, we partition the strip-shaped domain $D_{c}$ into several disjoint sets, denoted by $D_{c,1},\cdots, D_{c,r}$, such that $D_{c,i}\cap D_{c,j}= \emptyset$ for $i\neq j$ and $\cup_{1\leq i\leq r} D_{c,i} = D_{c}$. Moreover, for each $i$, the set $D_{c,i}$ is the union of a finite number of disjoint sets, denoted by $D^{1}_{c,i},\cdots, D^{m_i}_{c,i}$, such that for $1\leq k,l\leq m_i$,
\begin{equation}\label{wa:5-4-1-3}
 2\delta \leq {\rm diam}(D^{k}_{c,i})\leq 4\delta,\quad \text{and}\;\; {\rm dist}(D^{k}_{c,i},D^{l}_{c,i} ) \geq 2\delta  \quad \text{if}\;\; k\neq l.
\end{equation}
See \cref{fig:domain_decomposition} for an illustration of the two-dimensional case with $r=2$. Next, for each $i=1,\cdots, r$, we define the operator $R_{c,i}:{\bm H}_{B}({\rm curl};D^{\ast\ast})\rightarrow {\bm L}^{2}(D^{\ast\ast})$ by $R_{c,i}{\bm u} = {\bm 1}_{D_{c,i}}{\bm u}$. In view of the construction of the sets $D_{c,i}$, it is evident that 
\begin{equation}\label{wa:5-4-1-4}
R_{c} = R_{c,1}+\cdots+R_{c,r}. 
\end{equation}
For each $i=1,\cdots, r$, let $d_{n}(R_{c,i})$ denote the $n$-width of operator $R_{c,i}$. It follows from the additivity property \cref{additivity-property} that 
\begin{equation}\label{wa:5-4-1-7}
d_{nr}(R_{c}) \leq \sum_{i=1}^{r} d_{n}(R_{c,i}).  
\end{equation}
Hence, to verify the weak approximation property, it suffices to prove that
\begin{equation}\label{wa:5-4-1-2}
d_{n}(R_{c,i}) \leq C \big|\mathsf{V}({D}^{\ast}\setminus D,\,\delta)\big|^{1/d} n^{-1/d},   \quad i=1,\cdots,r,
\end{equation}
where $\mathsf{V}({D}^{\ast}\setminus D,\,\delta):= \big\{{\bm x}\in {D}^{\ast\ast}: {\rm dist}({\bm x}, {D}^{\ast}\setminus D)\leq \delta \big\}$. 

%Hence, the verification of the weak approximation property is reduced to estimating the $n$-widths $d_{n}(R_{c,i})$.
% and by $\Psi_{n}(D_{c,i})$ the space spanned by the first $n$ right singular vectors of $R_{c,i}$, it follows from the characterization of $n$-widths of compact operators that for any ${\bm u}\in {\bm H}_{B}({\rm curl};D^{\ast\ast})$,
% \begin{equation}\label{wa:5-4-1-5}
% \inf_{{\bm v}\in \Psi_{n}(D_{c,i})} \Vert R_{c,i} {\bm u} -  R_{c,i} {\bm v}\Vert_{{\bm L}^{2}(D^{\ast\ast})} \leq d_{n}(R_{c,i}) \Vert {\bm u} \Vert_{B, D^{\ast\ast}}.   
% \end{equation}
% Let $\Psi_{nr}(D_{c}) = \{R_{c,i}{\bm v}_{1}+\cdots+R_{c,r}{\bm v}_{r}: {\bm v}_{i}\in \Psi_{n}(D_{c,i})\quad i=1,\cdots,r\}\subset {\bm L}^{2}(D^{\ast\ast})$. Then, using \cref{wa:5-4-1-4,wa:5-4-1-5} yields 
% \begin{equation}\label{wa:5-4-1-6}
% \begin{array}{ll}
% {\displaystyle \inf_{{\bm v}\in \Psi_{nr}(D_{c})} \Vert R_{c} {\bm u} -  {\bm v}\Vert^{2}_{{\bm L}^{2}(D^{\ast\ast})}  = \inf_{{\bm v}_{i}\in \Psi_{n}(D_{c,i}), 1\leq i\leq r} \Big\Vert \sum_{i=1}^{r}R_{c,i} ({\bm u} -  {\bm v}_{i})\Big\Vert^{2}_{{\bm L}^{2}(D^{\ast\ast})} }   \\[4mm]
% {\displaystyle \quad = \sum_{i=1}^{r} \inf_{{\bm v}\in \Psi_{n}(D_{c,i})} \Vert R_{c,i} ({\bm u} - {\bm v})\Vert^{2}_{{\bm L}^{2}(D^{\ast\ast})} \leq \big(d^{2}_{n}(R_{c,1}) +\cdots+d^{2}_{n}(R_{c,r})\big) \Vert {\bm u} \Vert^{2}_{B, D^{\ast\ast}}.}
% \end{array}
% \end{equation}

\textit{Step 2}. Let $1\leq i \leq r$ be fixed. The proof of \cref{compactness-5-1} shows that the operator $R_{c,i}$ is compact. Therefore, it follows from \cref{lem:1-1} that $d_{n}(R_{c,i}) = \lambda_{i,n+1}^{1/2}$, where $\lambda_{i,n+1}$ is the $(n+1)$-th eigenvalue of $R^{\ast}_{c,i}R_{c,i}: {\bm H}_{B}({\rm curl};D^{\ast\ast})\rightarrow {\bm H}_{B}({\rm curl};D^{\ast\ast})$.
% \begin{equation}\label{wa:5-4-2-1}
% (R_{c,i} {\bm u}, R_{c,i}{\bm v})_{L^{2}(D^{\ast\ast})} = \lambda_{i} B_{D^{\ast\ast}}({\bm u}, {\bm v}) \quad \text{for all}\;\;{\bm v}\in {\bm H}_{B}({\rm curl};D^{\ast\ast}).   
% \end{equation}
%eigenvalues of the eigenvalues of \cref{wa:5-4-2-1}, we consider the number of those eigenvalues greater than $\lambda$ ($\lambda>0$). 
To estimate $d_{n}(R_{c,i})$, we consider the distribution function of eigenvalues of $R^{\ast}_{c,i}R_{c,i}$. Let $N_{i}(\lambda, D^{\ast\ast}) = \#\{j:\lambda_{i,j} > \lambda\}$. By Glazman Lemma (\cref{Glazmann Lemma}), we see that
\begin{equation}\label{wa:5-4-2-2}
\begin{array}{ll}
N_{i}(\lambda, D^{\ast\ast}) = {\rm max\, dim}\big\{\mathcal{L}\subset {\bm H}_{B}({\rm curl};D^{\ast\ast}):   \\[2mm]
\qquad \Vert R_{c,i} {\bm u} \Vert^{2}_{L^{2}(D^{\ast\ast})} > \lambda \Vert {\bm u} \Vert^{2}_{B, D^{\ast\ast}}, \; {\bm u} \in \mathcal{L}\setminus {\bm 0} \big\}.  
\end{array}  
\end{equation}
Recall that $D_{c,i} = \cup_{k=1}^{m_i}D^{k}_{c,i}$. For each $k=1,\cdots, m_i$, let 
\begin{equation}\label{wa:5-4-2-3}
D^{k,\delta}_{c,i} = \{{\bm x}\in \Omega: {\rm dist}({\bm x}, D^{k}_{c,i})\leq \delta \},\quad \text{and let }\;\;D^{0,\delta}_{c,i} = D^{\ast\ast}\setminus \cup_{k=1}^{m_i}D^{k,\delta}_{c,i};
\end{equation}
see \cref{fig:domain_decomposition}. It follows that $D^{\ast\ast} = \cup_{k=0}^{m_i} D^{k,\delta}_{c,i}$. Moreover, we see from \cref{wa:5-4-1-3} that $D^{k,\delta}_{c,i} \cap D^{l,\delta}_{c,i} = \emptyset$ for $k\neq l$. To proceed, we consider the following space:
\begin{equation}\label{wa:5-4-2-4}
\widetilde{{\bm H}}_{B}({\rm curl};D^{\ast\ast}) := \big\{{\bm u}\in {\bm L}^{2}(D^{\ast\ast}): {\bm u}|_{D^{k,\delta}_{c,i}}\in {\bm H}_{B}({\rm curl};D^{k,\delta}_{c,i}),\;\; k=0,\cdots,m_i\big\}.     
\end{equation}
Clearly, ${\bm H}_{B}({\rm curl};D^{\ast\ast})\subset \widetilde{{\bm H}}_{B}({\rm curl};D^{\ast\ast})$. Combining this relation with \cref{wa:5-4-2-2} gives
\begin{equation}\label{wa:5-4-2-5}
\begin{array}{ll}
 N_{i}(\lambda, D^{\ast\ast}) \leq \widetilde{N}_{i}(\lambda, D^{\ast\ast}) :=  {\rm max\, dim}\big\{\mathcal{L}\subset \widetilde{\bm H}_{B}({\rm curl};D^{\ast\ast}):   \\[2mm]
\qquad \Vert R_{c,i} {\bm u} \Vert^{2}_{L^{2}(D^{\ast\ast})} > \lambda \Vert {\bm u} \Vert^{2}_{B, D^{\ast\ast}}, \; {\bm u} \in \mathcal{L}\setminus {\bm 0} \big\}.  
\end{array}  
\end{equation}
On the other hand, since $D^{k,\delta}_{c,i}$ are disjoint sets and $D^{\ast\ast} = \cup_{k=0}^{m_i} D^{k,\delta}_{c,i}$, we deduce that
\begin{equation}\label{wa:5-4-2-6}
 \widetilde{N}_{i}(\lambda, D^{\ast\ast}) =  \sum_{k=0}^{m_i} N_{i}(\lambda, D^{k,\delta}_{c,i})
\end{equation}
where 
\begin{equation}
\begin{array}{ll}
N_{i}(\lambda, D^{k,\delta}_{c,i}) = {\rm max\, dim}\big\{\mathcal{L}\subset {\bm H}_{B}({\rm curl};D^{k,\delta}_{c,i}):   \\[2mm]
\qquad \Vert R_{c,i} {\bm u} \Vert^{2}_{L^{2}(D^{k,\delta}_{c,i})} > \lambda \Vert {\bm u} \Vert^{2}_{B, D^{k,\delta}_{c,i}}, \; {\bm u} \in \mathcal{L}\setminus {\bm 0} \big\}.  
\end{array}  
\end{equation}
By Glazman Lemma, $N_{i}(\lambda, D^{k,\delta}_{c,i})$ are the distribution functions of eigenvalues of the problems:
\begin{equation}\label{wa:5-4-2-7}
({\bm u}, {\bm v})_{L^{2}(D^{k,\delta}_{c,i}\cap D_{c,i})} = \lambda_{i}^{k} B_{D^{k,\delta}_{c,i}}({\bm u}, {\bm v}) \quad \text{for all}\;\;{\bm v}\in {\bm H}_{B}({\rm curl};D^{k,\delta}_{c,i}).       
\end{equation}
Since $D^{0,\delta}_{c,i}\cap D_{c,i} = \emptyset$, we have $N_{i}(\lambda, D^{0,\delta}_{c,i})=0$. Combining \cref{wa:5-4-2-5,wa:5-4-2-6} gives
\begin{equation}\label{wa:5-4-2-8}
N_{i}(\lambda, D^{\ast\ast}) \leq \sum_{k=1}^{m_i} N_{i}(\lambda, D^{k,\delta}_{c,i}).    
\end{equation}
Recalling that for $k=1,\cdots,m_i$, $D^{k,\delta}_{c,i}\cap D_{c,i} = D_{c,i}^{k}$, and using \cref{lem:1-1} again, we see that the $(n+1)$-th eigenvalue of problem \cref{wa:5-4-2-7} has the following characterization:
\begin{equation}\label{wa:5-4-2-9}
 \sqrt{\lambda_{i}^{k,n+1}} = d_{n}(D^{k,\delta}_{c,i}, D^{k}_{c,i}):=   \inf_{Q(n)\subset {\bm L}^{2}(D^{k}_{c,i})} \sup_{{\bm u}\in {\bm H}_{B}({\rm curl};D^{k,\delta}_{c,i})}\inf_{{\bm v}\in Q(n)}\frac{\Vert {\bm u} - {\bm v}\Vert_{{\bm L}^{2}(D^{k}_{c,i})} }{\Vert {\bm u}\Vert_{B, D^{k,\delta}_{c,i}}}.
\end{equation}
To prove \cref{wa:5-4-1-2}, it suffices to prove that
\begin{equation}\label{wa:5-4-2-10}
d_{n}(D^{k,\delta}_{c,i}, D^{k}_{c,i}) \leq C|D^{k,\delta}_{c,i}|^{1/d}n^{-1/d},\quad k=1,\cdots, m_i.   
\end{equation}
Indeed, using \cref{wa:5-4-2-9,wa:5-4-2-10} yields that $N_{i}(\lambda, D^{k,\delta}_{c,i})\leq C|D^{k,\delta}_{c,i}|\lambda^{-d/2}$ ($1\leq k\leq m_i$). Inserting these estimates into \cref{wa:5-4-2-8} gives  
\begin{equation}\label{wa:5-4-2-11}
N_{i}(\lambda, D^{\ast\ast})\leq C\big|\mathsf{V}({D}^{\ast}\setminus D,\,\delta) \big|\lambda^{-d/2}.
\end{equation}
Recalling that $d_{n}(R_{c,i}) = \lambda_{i,n+1}^{1/2}$ and $N_{i}(\lambda, D^{\ast\ast}) = \#\{j:\lambda_{i,j} > \lambda\}$, we see that the two estimates \cref{wa:5-4-2-11,wa:5-4-1-2} are equivalent. Hence, the verification of the weak approximation property is reduced to proving the estimates \cref{wa:5-4-2-10}.

\textit{Step 3}. To avoid cumbersome notations in the proof of \cref{wa:5-4-2-10}, we write $D$ and $D^{\delta}$ in place of $D^{k}_{c,i}$ and $D^{k,\delta}_{c,i}$. Let $D\subset D^{\delta}$ be (Lipschitz) subdomains of $\Omega$ with ${\rm diam}(D^{\delta})\leq 6\delta $ and ${\rm dist}(D,D^{\delta})=\delta$. Denoting by 
\begin{equation}\label{wa:5-4-3-1}
  X(\kappa;D^{\delta}) = \big\{{\bm u}\in  {\bm H}_{0}({\rm curl}; D^{\delta}): \nabla\cdot (\kappa {\bm u}) = 0\quad \text{in}\;\,D^{\delta}\big\},  
\end{equation}
we see that any ${\bm u}\in {\bm H}_{B}({\rm curl};D^{\delta})$ admits the Helmholtz decomposition: 
\begin{equation}\label{eq:H-decomposition}
{\bm u} = {\bm u}_{0} + \nabla \phi, 
\end{equation}
where ${\bm u}_{0}\in X(\kappa;D^{\delta})$, and $\phi \in H^{1}(D^{\delta})$ satisfies (noting that $\nabla \cdot(\kappa {\bm u})=0$)
\begin{equation}\label{wa:5-4-3-2}
 \nabla \cdot (\kappa \nabla \phi)= 0\,\quad {\rm in}\;\, D^{\delta}; \quad 
  {\bm n}\times \nabla \phi = {\bm n}\times {\bm u} \,\quad {\rm on}\;\, \partial D^{\delta}.
\end{equation}
We first consider the approximation of ${\bm u}_{0}$. It is known that $X(\kappa;D^{\delta})$ is compactly embedded into ${\bm L}^{2}(D^{\delta})$ and that $\Vert \nabla\times{\bm u}\Vert_{L^{2}(D^{\delta})} $ is a norm on $X(\kappa;D^{\delta})$. Therefore, using a usual argument shows that there exists $S_{n}(D^{\delta})\subset {\bm L}^{2}(D^{\delta})$ such that for any ${\bm u}\in X(\kappa;D^{\delta})$,
\begin{equation}\label{wa:5-4-3-3}
    \inf_{{\bm v}\in S_{n}(D^{\delta})}\Vert {\bm u} - {\bm v}\Vert_{L^{2}(D^{\delta})} \leq \widetilde{\lambda}^{1/2}_{n+1} \Vert \nabla \times {\bm u}\Vert_{L^{2}(D^{\delta})},
\end{equation}
where $\widetilde{\lambda}_{n+1}$ is the $(n+1)$-th eigenvalue of the problem
\begin{equation}\label{wa:5-4-3-4}
    ({\bm u}, {\bm v})_{L^{2}(D^{\delta})} = \widetilde{\lambda}  (\nabla\times{\bm u}, \nabla\times{\bm v})_{L^{2}(D^{\delta})}\quad \text{for all}\;\;{\bm v}\in  X(\kappa;D^{\delta}).
\end{equation}
Weyl asymptotics for Maxwell operators with heterogeneous coefficients give the following asymptotic estimate \cite{birman2007weyl}:
\begin{equation}\label{wa:5-4-3-5}
\widetilde{\lambda}^{1/2}_{n+1} = C|D^{\delta} |^{1/d} n^{-1/d}(1+ o(1)). 
\end{equation}
When $D^{\delta}$ is a convex domain, we can derive a non-asymptotic approximation error estimate as follows. Let $\Pi_{1}:  X(\kappa;D^{\delta})\rightarrow  X(1;D^{\delta})$ be defined as $\Pi_{1}{\bm u} = {\bm u} - \nabla \psi$, where $\psi\in H_{0}^{1}(D^{\delta})$ satisfies $(\nabla \psi-{\bm u},\nabla \xi)_{L^{2}(D^{\delta})} =0 $ for all $\xi\in H_{0}^{1}(D^{\delta})$, and let $\Pi_{\kappa}: X(1;D^{\delta})\rightarrow  X(\kappa;D^{\delta})$ be defined as $\Pi_{\kappa}{\bm u} = {\bm u} - \nabla \varphi$, where $\varphi\in H_{0}^{1}(D^{\delta})$ satisfies $(\kappa\nabla \varphi-\kappa{\bm u},\nabla \xi)_{L^{2}(D^{\delta})} =0 $ for all $\xi\in H_{0}^{1}(D^{\delta})$. It is easy to verify the following properties of operators $\Pi_{1}$ and $\Pi_{\kappa}$:
\begin{equation}\label{wa:5-4-3-6}
\begin{array}{cc}
\nabla \times \Pi_{1}{\bm u} = \nabla \times {\bm u},\quad  \Vert \Pi_{1}{\bm u} \Vert_{L^{2}(D^{\delta})}\leq \Vert {\bm u}\Vert_{L^{2}(D^{\delta})}; \\[2mm]
\nabla \times \Pi_{\kappa}{\bm u} = \nabla \times {\bm u},\quad  \Vert \kappa^{1/2}\Pi_{\kappa}{\bm u} \Vert_{L^{2}(D^{\delta})}\leq \Vert \kappa^{1/2}{\bm u}\Vert_{L^{2}(D^{\delta})}; \\[2mm]
\Pi_{1}\Pi_{\kappa} =\Pi_{\kappa}\Pi_{1}= {\rm Id}.
\end{array}
\end{equation}
Since the inclusion $X(1;D^{\delta})\subset {\bm H}^{1}(D^{\delta})$ holds for a convex domain $D^{\delta}$, there exists $\widetilde{\bm S}_{n}(D^{\delta})\subset {\bm L}^{2}(D^{\delta})$ such that for any ${\bm u}\in X(\kappa;D^{\delta})$,
\begin{equation}\label{wa:5-4-3-7}
     \inf_{{\bm v}\in \widetilde{\bm S}_{n}(D^{\delta})}\Vert \Pi_{1}{\bm u} - {\bm v}\Vert_{L^{2}(D^{\delta})} \leq C |D^{\delta} |^{1/d} n^{-1/d} \Vert \nabla (\Pi_{1}{\bm u})\Vert_{L^{2}(D^{\delta})}.
\end{equation}
Moreover, using a Friedrichs-type inequality \cite{saranen1982inequality} on convex domains and the fact that functions in $X(1;D^{\delta})$ are divergence-free gives $\Vert \nabla (\Pi_{1}{\bm u})\Vert_{L^{2}(D^{\delta})} \leq \Vert \nabla \times (\Pi_{1}{\bm u})\Vert_{L^{2}(D^{\delta})}$. Combining this estimate with \cref{wa:5-4-3-6,wa:5-4-3-7} yields that for any ${\bm u}\in X(\kappa;D^{\delta})$,
\begin{equation}\label{wa:5-4-3-8}
    \inf_{{\bm v}\in \widetilde{\bm S}_{n}(D^{\delta})}\Vert \Pi_{1}{\bm u} - {\bm v}\Vert_{L^{2}(D^{\delta})} \leq C |D^{\delta} |^{1/d} n^{-1/d} \Vert \nabla \times {\bm u}\Vert_{L^{2}(D^{\delta})}.   
\end{equation}
Next, we extend $\Pi_{\kappa}$ to all of ${\bm L}^{2}(D^{\delta})$ such that $\Vert \kappa^{1/2}\Pi_{\kappa}{\bm u} \Vert_{L^{2}(D^{\delta})}\leq \Vert \kappa^{1/2}{\bm u}\Vert_{L^{2}(D^{\delta})}$ holds, and let ${\bm S}_{n}(D^{\delta})= \Pi_{\kappa}\big(\widetilde{\bm S}_{n}(D^{\delta})\big)$. It follows from \cref{wa:5-4-3-6,wa:5-4-3-8} that for any ${\bm u}\in X(\kappa;D^{\delta})$,
\begin{equation}\label{wa:5-4-3-9}
\begin{array}{ll}
{\displaystyle \inf_{{\bm v}\in {\bm S}_{n}(D^{\delta})}\Vert \kappa^{1/2}({\bm u} - {\bm v})\Vert_{L^{2}(D^{\delta})} = \inf_{{\bm v}\in \widetilde{\bm S}_{n}(D^{\delta})}\Vert \kappa^{1/2}(\Pi_{\kappa}\Pi_{1}{\bm u} - \Pi_{\kappa}{\bm v})\Vert_{L^{2}(D^{\delta})} }  \\[3mm]
{\displaystyle  \leq \inf_{{\bm v}\in \widetilde{\bm S}_{n}(D^{\delta})}\Vert \kappa^{1/2}(\Pi_{1}{\bm u} - {\bm v})\Vert_{L^{2}(D^{\delta})} \leq C\kappa_{\rm max}^{1/2}  |D^{\delta} |^{1/d} n^{-1/d} \Vert \nabla \times {\bm u}\Vert_{L^{2}(D^{\delta})}. }
\end{array}
\end{equation}
Hence, we see that for any ${\bm u}\in X(\kappa;D^{\delta})$,
\begin{equation}\label{wa:5-4-3-10}
\inf_{{\bm v}\in {\bm S}_{n}(D^{\delta})}\Vert{\bm u} - {\bm v}\Vert_{L^{2}(D^{\delta})}   \leq  C(\kappa_{\rm max}/\kappa_{\rm min})^{1/2}  |D^{\delta} |^{1/d} n^{-1/d} \Vert \nabla \times {\bm u}\Vert_{L^{2}(D^{\delta})}. 
\end{equation}

\textit{Step 4}. Finally we approximate the second part in the decomposition \cref{eq:H-decomposition}, and we adopt the notation used in the previous step. First, let us show that $\phi$ defined by \cref{wa:5-4-3-2} is uniquely determined up to an additive constant. Using the relation $\nabla_{\partial D^{\delta}} \phi = ({\bm n}\times \nabla \phi)\times {\bm n}$ on $\partial D^{\delta}$, where $\nabla_{\partial D^{\delta}}$ denotes the surface gradient, we see that the boundary condition prescribed for $\phi$ is equivalent to $\nabla_{\partial D^{\delta}} \phi = ({\bm n} \times {\bm u})\times {\bm n} = \gamma_{T}({\bm u}) \in {\bm H}^{-1/2}({\rm curl};\partial D^{\delta})$. Hence, we conclude that $\phi\in H^{1/2}(\partial D^{\delta})$. Classical results for elliptic problems show that $\phi\in H^{1}(D^{\delta})$ with the following estimate:
\begin{equation}
\begin{array}{cc}
\Vert \kappa^{1/2}\nabla \phi\Vert_{L^{2}(D^{\delta})} \leq C\Vert \phi\Vert_{H^{1/2}(\partial D^{\delta}) }\\[3mm] 
\leq C\Vert \gamma_{T}({\bm u}) \Vert_{{\bm H}^{-1/2}({\rm curl};\partial D^{\delta})} \leq C\Vert {\bm u}\Vert_{{\bm H}({\rm curl};D^{\delta})},   
\end{array} 
\end{equation}
where the last inequality follows from the boundedness of the tangential trace operator $\gamma_{T}$ \cite[Theorem 3.31]{monk2003finite}. Using a standard scaling argument gives a refined estimate for $\phi$ as follows:
\begin{equation}\label{eq:refined-estimate}
\Vert \kappa^{1/2}\nabla \phi\Vert_{L^{2}(D^{\delta})} \leq C({\rm diam}(D^{\delta})^{-1} \Vert{\bm u} \Vert_{L^{2}(D^{\delta})}+ \Vert \nabla\times {\bm u}\Vert_{L^{2}(D^{\delta})} \big),
\end{equation}
where $C$ depends only on the shape of $D^{\delta}$, but not on its size. Finally, since ${\bm n}\times {\bm u} = {\bm 0}$ on $\partial D^{\delta}\cap \partial \Omega$, $\phi$ is a constant on $\partial D^{\delta}\cap \partial \Omega$. Therefore, noting that $\phi$ is uniquely up to a constant, we can choose a particular $\phi$ such that $\phi = 0$ on $\partial D^{\delta}\cap \partial \Omega$.

We now proceed to construct an approximation space for $\nabla \phi$. Let $H^{1}_{\Gamma}(D^{\delta}) = \{u\in H^{1}(D^{\delta}): u=0 \;\; \text{on}\;\; \partial D^{\delta}\cap \partial \Omega\}$, and define
\begin{equation}
H_{B}(D^{\delta}) = \big\{u\in H^{1}_{\Gamma}(D^{\delta}): (\kappa \nabla u,\nabla v)_{L^{2}(D^{\delta})} = 0 \;\;\text{for all}\;\; v\in H_{0}^{1}(D^{\delta}) \big\}.    
\end{equation}
We see that $\phi\in H_{B}(D^{\delta})$. Assume that $\partial D^{\delta}\cap \partial \Omega=\emptyset$ and consider the $n$-width
\begin{equation}
 d_{n}(D, D^{\delta}):= \inf_{Q(n)\subset H_{B}(D)/\mathbb{R}}\sup_{u\in H_{B}(D^{\delta})/\mathbb{R}}\inf_{v\in Q(n)}\frac{\Vert \kappa^{1/2}(\nabla u-\nabla v)\Vert_{L^{2}(D)}}{\Vert \kappa^{1/2}\nabla u\Vert_{L^{2}(D^{\delta})}}.   
\end{equation}
A similar $n$-width can be defined without involving the quotient spaces when $\partial D^{\delta}\cap \partial \Omega\neq \emptyset$. Recalling that ${\rm diam}(D^{\delta})\leq 6\delta$ and ${\rm dist}(D, \partial D^{\delta}\setminus\partial \Omega) = \delta$, we can apply \cref{thm:3-1} in the setting of scalar elliptic problems (see \cref{sec:convection-diffusion}) to deduce that there exist $n_{0} = (2e\Theta)^{d}$ and $b_{0} = (2e\Theta+1)^{-1}$, such that for any $n>n_{0}$,
\begin{equation}
 d_{n}(D, D^{\delta}) \leq e^{-b_{0}n^{1/d}},   
\end{equation}
where $\Theta $ is independent of $\delta$, $D$, and $D^{\delta}$. Denote by $Q^{\rm opt}(n)$ the optimal approximation space of $d_{n}(D, D^{\delta})$, and define
\begin{equation}
S_{n}(D^{\delta}) = \big\{{\bm 1}_{D}\nabla \varphi: \varphi\in Q^{\rm opt}(n) \} \subset {\bm L}^{2}(D^{\delta}).
\end{equation}
It follows that for any $n>n_{0}$ and any $u\in H_{B}(D^{\delta})$,
\begin{equation}\label{eq:appro-error}
    \inf_{{\bm v}\in S_{n}(D^{\delta})}\Vert \kappa^{1/2}(\nabla u - {\bm v})\Vert_{L^{2}(D)}\leq e^{-b_{0}n^{1/d}} \Vert \kappa^{1/2}\nabla u\Vert_{L^{2}(D^{\delta})}. 
\end{equation}
Since $\phi\in H_{B}(D^{\delta})$, we can combine \cref{eq:refined-estimate,eq:appro-error} to conclude 
\begin{equation}\label{eq:appro-error-phi}
 \inf_{{\bm v}\in S_{n}(D^{\delta})}\Vert \kappa^{1/2}(\nabla \phi - {\bm v})\Vert_{L^{2}(D)}\leq Ce^{-b_{0}n^{1/d}} \big({\rm diam}(D^{\delta})^{-1} \Vert{\bm u} \Vert_{L^{2}(D^{\delta})}+ \Vert \nabla\times {\bm u}\Vert_{L^{2}(D^{\delta})} \big).   
\end{equation}
Let $n\geq \big(b_{0}^{-2}\ln(\delta^{-1})\big)^{d}$. It follows that $e^{-b_{0}n^{1/d}}\leq n^{-1/d} \delta^{2} $. Inserting this estimate into \cref{eq:appro-error-phi} and using the fact that ${\rm diam}(D^{\delta}) \geq \delta$ and $|D^{\delta}|^{1/d}\geq \delta$ lead us to
\begin{equation}\label{eq:appro-error-new}
 \inf_{{\bm v}\in S_{n}(D^{\delta})}\Vert \kappa^{1/2}(\nabla \phi - {\bm v})\Vert_{L^{2}(D)}\leq C|D^{\delta} |^{1/d} n^{-1/d} \Vert {\bm u}\Vert_{{\bm H}({\rm curl};D^{\delta})},   
\end{equation}
where we have assumed that $\delta \leq 1$ without loss of generality. Define $\Psi_{2n}(D^{\delta}) = {\bm S}_{n}(D^{\delta}) + S_{n}(D^{\delta})$. It follows from \cref{wa:5-4-3-10}, \cref{eq:appro-error-new}, and the decomposition \cref{eq:H-decomposition} that for any ${\bm u} \in {\bm H}_{B}({\rm curl};D^{\delta})$,
\begin{equation}\label{eq:final-error}
   \inf_{{\bm v}\in \Psi_{2n}(D^{\delta})}\Vert{\bm u} - {\bm v}\Vert_{L^{2}(D)}   \leq C|D^{\delta} |^{1/d} n^{-1/d} \Vert  {\bm u}\Vert_{B, D^{\delta}},  
\end{equation}
where $C$ may depend on $\kappa$, $\nu$, and the shape of $D^{\delta}$. In view of the construction of the subdomains $D^{\delta}$ (i.e. $D^{k,\delta}_{c,i}$), we make a moderate assumption that the constant $C$ in \cref{eq:final-error} can be bounded above for all subdomains, and thus conclude the proof of \cref{wa:5-4-2-10}. Hence, the weak approximation property in this setting is verified.
\end{proof}

Combining the Caccioppoli inequality and the weak approximation property leads us to the exponential local convergence in this setting.
\begin{theorem}[Exponential local convergence]\label{upperbound:5-4}
For each $i=1,\cdots,M$, \cref{thm:3-1} holds true with $C_{\rm cac}^{\rm I} = \nu^{1/2}_{\rm max}$, $C_{\rm cac}^{\rm II} = 0$, and $\alpha = 1/d$.
\end{theorem}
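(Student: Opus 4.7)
The plan is to observe that this theorem is a direct corollary of the abstract case (i) of \cref{thm:3-1}, once the two fundamental hypotheses have been verified with the claimed constants. Since we are in the positive definite regime (the assumption $\kappa\geq\kappa_{\rm min}>0$ makes $B_D(\cdot,\cdot)$ symmetric coercive, and $B^+_D=B_D$), the Caccioppoli parameter $C^{\rm II}_{\rm cac}$ should vanish, so only case~$a$ of \cref{ass:3-1-2} is required, which is the easiest branch of the abstract theorem.

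First I would extract the Caccioppoli-type inequality from \cref{lem:cac-5-4}. Setting $u=v$ in the identity \cref{cac:5-4-1} and choosing a cutoff $\eta\in C^1(\overline{D^\ast})$ with $\eta\equiv 1$ on $D$, $\eta=0$ on $\partial D^\ast\setminus\partial\Omega$, and $|\nabla\eta|\leq\delta^{-1}$, one obtains
\[
\|{\bm u}\|_{B,D}^2 \;\leq\; \|\eta{\bm u}\|_{B,D^\ast}^2 \;=\; \int_{D^\ast}\nu\,|\nabla\eta\times{\bm u}|^2\,d{\bm x} \;\leq\; \nu_{\rm max}\delta^{-2}\|{\bm u}\|_{L^2(D^\ast\setminus D)}^2
\]
for all ${\bm u}\in\bm{H}_B(\mathrm{curl};D^\ast)$. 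Since $B^+_D=B_D$ and $\mathcal{L}(D)=\bm{L}^2(D)$ in this setting, this is precisely \cref{ass:3-1-1} with $C^{\rm I}_{\rm cac}=\nu^{1/2}_{\rm max}$ and $C^{\rm II}_{\rm cac}=0$.

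Next I would invoke the weak approximation property that was just established for the $\bm{H}(\mathrm{curl})$ setting in the preceding four-step proof. That argument (via the domain splitting $D_c = \cup_i D_{c,i}$, the Helmholtz decomposition ${\bm u}={\bm u}_0+\nabla\phi$, the Weyl-type bound for the compact inclusion $X(\kappa;D^\delta)\hookrightarrow \bm{L}^2(D^\delta)$, and the already-proved exponential local convergence for the scalar elliptic problem applied to $\phi$) delivers exactly case $a$ of \cref{ass:3-1-2} with $\alpha=1/d$ and some constant $C_{\rm wa}$ depending on $\nu_{\rm max}$, $\nu_{\rm min}$, $\kappa_{\rm max}$, $\kappa_{\rm min}$, and the shape of the subdomains.

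With both fundamental assumptions in place and $C^{\rm II}_{\rm cac}=0$, \cref{thm:3-1}~(i) applies directly to each pair $(\omega_i,\omega_i^\ast)$, yielding the exponential decay $d_n(\omega_i,\omega_i^\ast)\leq \|P_B\|\,e^{-cn^{1/d}}$ with the stated constants. In this sense the theorem requires no new analytical work: the two genuinely difficult ingredients — the Caccioppoli identity in \cref{prop:5-1} and especially the Maxwell weak approximation property — have already been proved, and the main obstacle (the weak approximation property, which is notably harder here than in the elliptic case due to the low regularity of $\bm{H}_B(\mathrm{curl})$ functions and the lack of a global ${\bm L}^2$-compact embedding) was overcome in the preceding passage. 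The proposed proof is therefore a short ``plug-in'' statement collecting the verified hypotheses and citing the abstract result.
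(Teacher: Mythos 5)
Your proposal is correct and follows the same route as the paper: verify \cref{ass:3-1-1} via \cref{lem:cac-5-4}, verify case~$a$ of \cref{ass:3-1-2} via the preceding four-step argument (domain splitting, Helmholtz decomposition, Weyl bound, and the already-established scalar elliptic result), and then cite \cref{thm:3-1}~(i). One small remark worth noting: your re-derivation from the Caccioppoli identity \cref{cac:5-4-1} correctly gives $C^{\rm I}_{\rm cac}=\nu_{\rm max}^{1/2}$ (which is what the theorem asserts), whereas the displayed inequality \cref{cac:5-4-2} as written in \cref{lem:cac-5-4} has $\nu_{\rm max}$ rather than $\nu_{\rm max}^{1/2}$ in front of $\delta^{-1}$ — this appears to be a typographical slip in the lemma, and your derivation resolves it in favor of the sharper constant used in the theorem.
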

The quasi-optimal convergence of the method follows from \cref{quasi-optimal-coercive}.

We conclude this subsection by briefly discussing the application of our theory to discretized ${\bm H}({\rm curl})$ elliptic problems with N\'{e}d\'{e}lec elements. It is known that the discrete Caccioppoli inequality holds true in this setting \cite{faustmann2022h}. The weak approximation property, however, does not follows from the continuous analysis as contrasted with previous applications. Indeed, it requires a more sophisticated analysis based on previous techniques and properties of N\'{e}d\'{e}lec elements. We will address this problem in a forthcoming work.

\subsection{High-frequency time-harmonic wave problems}
In this subsection, we apply our abstract theory to time-harmonic wave problems with high wavenumber and heterogeneous coefficients, including Helmholtz equation, elastic wave equations, and Maxwell's equations. The focus of the analysis is on wavenumber explicit convergence rates. For simplicity, we impose the impedance boundary conditions on the whole boundary. Mixed boundary conditions can also be considered in a straightforward way and make no difference to our results.

\subsubsection{Helmholtz equation}
Given $k>0$, we consider the following heterogeneous Helmholtz equation: Find $u:\Omega\rightarrow\mathbb{C}$ such that
\begin{equation}\label{eq:5.5.1-1}
\left\{
\begin{array}{lll}
{\displaystyle -{\rm div}(A\nabla u) - k^{2}V^{2}u= f\,\quad {\rm in}\;\, \Omega }\\[2mm]
{\displaystyle \qquad \,\;A\nabla u\cdot {\bm n} - {\rm i}k\beta u=g  \quad \,{\rm on}\;\, \Gamma.}
\end{array}
\right.
\end{equation}
We make the following assumptions on the data of the problem. $A \in L^{\infty}(\Omega,\mathbb{R}_{\rm sym}^{d\times d})$ satisfies the uniform ellipticity condition as in \cref{sec:convection-diffusion}. $V \in L^{\infty}(\Omega)$, $f\in H^{1}(\Omega)^{\prime}$, $g\in H^{-1/2}(\Gamma)$, and $\beta\in L^{\infty}(\Gamma)$. Moreover, $\beta>0$ on $\Gamma$. For multiscale methods for Helmholtz problems, see, e.g., \cite{cao2002multiscale,chaumont2020multiscale,chen2023exponentially,freese2021super,fu2021wavelet,hauck2022multi,ohlberger2018new,peterseim2017eliminating}.

For any subdomain $D\subset\Omega$, we define the local sesquilinear forms $B_{D}(\cdot,\cdot)$, $B^{+}_{D}(\cdot,\cdot)$, $B^{+}_{D,k}(\cdot,\cdot):H^{1}(D)\times H^{1}(D)\rightarrow \mathbb{C}$ as 
\begin{equation}\label{Helmholtz:local-forms}
\begin{array}{cc}
{\displaystyle B_{D}(u,v)=\int_{D} \big(A\nabla u\cdot {\nabla \overline{v}}-k^{2}V^{2}u\overline{v}\big) \,d{\bm x} - {\rm i}k\int_{\partial D\cap \Gamma} \beta u\overline{v}\,d{\bm s},}\\[3mm]
{\displaystyle  B^{+}_{D}(u, v) = \int_{D}A\nabla u\cdot\nabla \overline{v} \,d{\bm x}, \;\; B^{+}_{D,k}(u, v) = \int_{D}\big(A\nabla u\cdot\nabla \overline{v} + k^{2}V^{2}u\overline{v}\big)\,d{\bm x},}
\end{array}   
\end{equation}
and set
\begin{equation}
\Vert u\Vert_{{B}^{+},D}:= \big(B^{+}_{D}(u, u)\big)^{1/2}, \quad  \Vert u\Vert_{{B}^{+},D,k}:= \big(B^{+}_{D,k}(u, u)\big)^{1/2}.  
\end{equation}
As before, we drop the subscript $D$ in the symbols above when $D=\Omega$. We see that \cref{ass:2-1-0} holds true with $\mathcal{H}(\Omega) = H^{1}(\Omega)$, $\Vert\cdot\Vert_{\mathcal{H}(\Omega)} = \Vert\cdot\Vert_{B^{+},k}$, and $\mathcal{L}(\Omega) = L^{2}(\Omega)$. The weak formulation of problem \cref{eq:5.5.1-1} is to find $u\in H^{1}(\Omega)$ such that 
\begin{equation}\label{eq:5.5.1-2}
B(u,v) = F(v):=\langle f,v\rangle_{\Omega} + \langle g,v\rangle_{\Gamma} \quad \text{for all} \;\;v\in H^{1}(\Omega).
\end{equation}
% where $B(\cdot,\cdot):H^{1}(\Omega)\times H^{1}(\Omega)\rightarrow \mathbb{C}$ is defined by
% \begin{equation}
%     B(u,v)=\int_{\Omega} \big(A\nabla u\cdot {\nabla \overline{v}}-k^{2}V^{2}u\overline{v}\big) \,d{\bm x} - {\rm i}k\int_{\Gamma} \beta u\overline{v}\,d{\bm s}.
% \end{equation}
Define $H^{1}_{0,I}(D)=\{v\in H^{1}(D): v=0\;\;\text{on}\;\partial D\cap \Omega\}$. Then, \cref{ass:2-1-1} holds true with $\mathcal{H}(\Omega)=H^{1}(D)$, $\mathcal{H}_{0}(D)=H_{0,I}^{1}(D)$, and with $B_{D}(\cdot,\cdot)$ defined by \cref{Helmholtz:local-forms}.
% \begin{equation}
% B_{D}(u,v)=\int_{D} \big(A\nabla u\cdot {\nabla \overline{v}}-k^{2}V^{2}u\overline{v}\big) \,d{\bm x} - {\rm i}k\int_{\partial D\cap \Gamma} \beta u\overline{v}\,d{\bm s}.    
% \end{equation}

Now we define the generalized harmonic space on $D\subset \Omega$ in this setting: 
\begin{equation}
H_{B}(D)=\big\{u\in H^{1}(D): B_{D}(u,v) = 0 \;\;\text{for all}\;\,v\in H_{0,I}^{1}(D)\}.
\end{equation}
% Furthermore, setting
% \begin{equation}
% {B}^{+}_{D}(u, v) := \int_{D}A\nabla u\cdot\nabla v \,d{\bm x}, \quad |u|_{{B}^{+},D}:= \big(B^{+}_{D}(u, u)\big)^{1/2},    
% \end{equation}
Using a general result on equivalent norms for Sobolev spaces, it can be shown that $\big(H_{B}(D), {B}^{+}_{D}(\cdot,\cdot)\big)$ is a Hilbert space for subdomain $D$ that satisfies the cone condition. Therefore, \cref{ass:2-2-3} holds true with $\mathcal{K} = \emptyset$. Next we give the Caccioppoli inequality in this setting. Proceeding along the same lines as in the proof of \cref{lem:cac-5-1}, we can prove
\begin{lemma}[Caccioppoli-type inequality]\label{lem:cac-5-5-1}
Let $D\subset D^{\ast}$, $\delta>0$, and $\eta\in C^{1}(\overline{D^{\ast}})$ be as in \cref{lem:cac-5-1}. Then, for any $u, \,v\in H_{B}(D^{\ast})$,
\begin{equation}
{B}^{+}_{D^{\ast}}(\eta u, \eta v) = \int_{D^{\ast}}\big((A\nabla \eta \cdot \nabla \eta)u\overline{v} + k^{2}V^{2}\eta^{2} u\overline{v}\big) \,d{\bm x}.
\end{equation}
In addition, for any $u\in H_{B}(D^{\ast})$,
\begin{equation}
\Vert u\Vert_{{B}^{+},D} \leq a^{1/2}_{\rm max} \delta^{-1} \Vert u\Vert_{L^{2}(D^{\ast}\setminus D)} + k\Vert V\Vert_{L^{\infty}(D^{\ast})}\Vert u \Vert_{L^{2}(D^{\ast})},
\end{equation}
where $a_{\rm max}$ is the spectral upper of $A$.
\end{lemma}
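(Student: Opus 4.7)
The plan is to follow the same template as the proof of \cref{lem:cac-5-1}, adapting only the terms that distinguish the Helmholtz setting (mass term and impedance boundary term) from the convection-diffusion one. First I will apply the abstract Caccioppoli identity \cref{prop:5-1} with the $\eta$-derivation triple of Example~\ref{exam:5-1} and the Hermitian sesquilinear form $b_{\bf L}({\bm u},{\bm v}) := \int_{D^{\ast}} A{\bm u}\cdot \overline{\bm v}\,d{\bm x}$. With this choice, the left-hand side of \cref{eq:5-4} is exactly $B^{+}_{D^{\ast}}(\eta u,\eta v)$, and $b_{\bf L}(\widehat{\partial\eta}u,\widehat{\partial\eta}v) = \int_{D^{\ast}}(A\nabla \eta\cdot \nabla \eta)u\overline{v}\,d{\bm x}$, matching the two terms of the target identity.

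Next I will compute the cross-terms $b_{\bf L}(\nabla u,\nabla(\eta^{2}v))$ and $b_{\bf L}(\nabla v,\nabla(\eta^{2}u))$ by exploiting the hypothesis $u,v\in H_{B}(D^{\ast})$. Because $\eta$ vanishes on $\partial D^{\ast}\cap\Omega$, both $\eta^{2}u$ and $\eta^{2}v$ lie in $H^{1}_{0,I}(D^{\ast})$ and are thus admissible test functions in the definition of $H_{B}(D^{\ast})$. This yields
\begin{equation*}
b_{\bf L}(\nabla u,\nabla(\eta^{2}v)) = \int_{D^{\ast}} k^{2}V^{2}\eta^{2}u\overline{v}\,d{\bm x} + {\rm i}k\int_{\partial D^{\ast}\cap\Gamma}\beta\eta^{2}u\overline{v}\,d{\bm s},
\end{equation*}
and analogously with $u,v$ swapped. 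The key observation, replacing the role of integration by parts on the convective term in \cref{lem:cac-5-1}, is that since $k$, $V$, $\beta$ are real-valued and the impedance contribution carries the imaginary factor ${\rm i}k$, the boundary pieces of $\frac{1}{2}b_{\bf L}(\nabla u,\nabla(\eta^{2}v))$ and $\frac{1}{2}\overline{b_{\bf L}(\nabla v,\nabla(\eta^{2}u))}$ are complex conjugates of opposite sign and cancel, while the mass pieces combine into $\int_{D^{\ast}}k^{2}V^{2}\eta^{2}u\overline{v}\,d{\bm x}$. Inserting this into identity \cref{eq:5-4} gives exactly the first claim of the lemma.

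For the inequality I will specialize to $u=v$ in the identity, giving $\Vert \eta u\Vert_{B^{+},D^{\ast}}^{2} = \int_{D^{\ast}}(A\nabla\eta\cdot\nabla\eta)|u|^{2}\,d{\bm x} + k^{2}\int_{D^{\ast}}V^{2}\eta^{2}|u|^{2}\,d{\bm x}$. Using the spectral bound $A\nabla\eta\cdot\nabla\eta\leq a_{\rm max}|\nabla\eta|^{2}$, selecting a cut-off $\eta\in C^{1}(\overline{D^{\ast}})$ with $0\leq \eta\leq 1$, $\eta\equiv 1$ on $D$, $\eta=0$ on $\partial D^{\ast}\cap\Omega$, and $|\nabla\eta|\leq 1/\delta$, and applying the triangle inequality $\sqrt{a+b}\leq \sqrt{a}+\sqrt{b}$ then yields the stated estimate with constants $a_{\rm max}^{1/2}$ in front of the $\delta^{-1}$ term and $k\Vert V\Vert_{L^{\infty}(D^{\ast})}$ in front of the $L^{2}$ term.

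The only subtlety relative to \cref{lem:cac-5-1} is bookkeeping the sign cancellation of the impedance boundary contributions; everything else is a direct transcription. No new technical machinery is needed, which justifies the excerpt's assertion that the result follows along the same lines.
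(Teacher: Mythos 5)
Your proof is correct and is exactly the argument the paper intends when it states the lemma ``follows along the same lines as \cref{lem:cac-5-1}'': apply the abstract identity \cref{eq:5-4} with the Hermitian form $b_{\bf L}({\bm u},{\bm v})=\int_{D^\ast}A{\bm u}\cdot\overline{\bm v}\,d{\bm x}$, use $\eta^2 u,\eta^2 v\in H^1_{0,I}(D^\ast)$ to evaluate the cross-terms via the generalized-harmonic relation, observe that the impedance boundary contributions cancel while the mass terms survive, and then specialize to $u=v$ with a cut-off function. One small wording quibble: the two boundary pieces $\tfrac12\,{\rm i}k\int\beta\eta^2 u\bar v$ and $-\tfrac12\,{\rm i}k\int\beta\eta^2 u\bar v$ are exact negatives of one another (after conjugating and using that $k,V,\beta,\eta$ are real), not ``complex conjugates of opposite sign''; the cancellation you rely on is nonetheless correct.
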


\textbf{Local eigenproblems}. As in \cref{sec:convection-diffusion}, \cref{ass:2-2-2} holds true due to \cref{lem:cac-5-5-1} and the compact embedding $H^{1}(\omega_i^{\ast})\subset L^{2}(\omega_i^{\ast})$. With the setting above, the local eigenproblems are defined by: Find $\lambda_{i}\in \mathbb{R}$ and $\phi_{i}\in H_{B}(\omega_{i}^{\ast})$ such that
\begin{equation}\label{eigen:5-5-1}
{B}^{+}_{\omega^{\ast}_i,k}(\chi_{i}\phi_{i}, \chi_i v) = \lambda_{i} {B}^{+}_{\omega^{\ast}_i}(\phi_{i}, v)\quad \forall v\in H_{B}(\omega_{i}^{\ast}).
\end{equation} 
As before, there is an equivalent formulation of the above eigenproblems:
\begin{equation}\label{eigen:5-5-1-new}
\big(Q_{k}\phi_{i},\,  v\big)_{L^{2}(\omega_i^{\ast})}= \lambda_{i} {B}^{+}_{\omega^{\ast}_i}(\phi_{i}, v)\quad \forall v\in H_{B}(\omega_{i}^{\ast}),
\end{equation} 
where $Q_{k}:= A\nabla \chi_{i} \cdot \nabla \chi_{i} + 2k^{2}V^{2}\chi_{i}^{2}$.

Combining the Caccioppoli-type inequality (\cref{lem:cac-5-5-1}) and the weak approximation property (\cref{lem:5-3}) leads us to the exponential local convergence. 
\begin{theorem}[Exponential local convergence]\label{upperbound:5-5-1}
For each $i=1,\cdots,M$, \cref{thm:3-1} holds true with $C_{\rm cac}^{\rm I} = a^{1/2}_{\rm max}$, $C_{\rm cac}^{\rm II} = k\Vert V\Vert_{L^{\infty}(\omega_i^{\ast})}$, and $\alpha = 1/d$.
\end{theorem}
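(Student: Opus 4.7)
The plan is to directly invoke \cref{thm:3-1} once the two fundamental assumptions have been checked in this Helmholtz setting; everything else has essentially been set up in the paragraphs preceding the statement. First, I would observe that \cref{lem:cac-5-5-1} already verifies \cref{ass:3-1-1} with exactly the constants $C_{\rm cac}^{\rm I}=a_{\max}^{1/2}$ and $C_{\rm cac}^{\rm II}=k\Vert V\Vert_{L^{\infty}(\omega_i^{\ast})}$ claimed in the theorem; no further work is needed for the Caccioppoli step.

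Second, I would verify \cref{ass:3-1-2} in its \emph{case ab} form, since $C_{\rm cac}^{\rm II}\neq 0$ in general and we want to be able to use part (iii) of \cref{thm:3-1}. The key point is that, with $\mathcal{H}(D)=H^{1}(D)$, $\mathcal{L}(D)=L^{2}(D)$ and $B_{D}^{+}$ the weighted Dirichlet form, one has the trivial bound
\begin{equation*}
\Vert \nabla u\Vert_{L^{2}(D^{\ast\ast})}\leq a_{\min}^{-1/2}\Vert u\Vert_{B^{+},D^{\ast\ast}}\qquad\forall u\in H_{B}(D^{\ast\ast})\subset H^{1}(D^{\ast\ast}).
\end{equation*}
Plugging this into \cref{lem:5-3}(iii) produces exactly an inequality of the form \cref{eq:3-2} with $\alpha=1/d$ and $C_{\rm wa}$ depending only on $d$ and $a_{\min}$. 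Since the inequality \cref{lem:5-3}(iii) holds uniformly for the nested (truncated) concentric cubes that appear in the proof of \cref{thm:3-1}, this is enough.

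With both assumptions verified, I would then apply \cref{thm:3-1}(iii) to obtain the exponential decay $d_{n}(\omega_{i},\omega_{i}^{\ast})\leq \Vert P_{B}\Vert\,e^{-cn^{1/d}}$ for $n$ larger than the threshold $n_{0}$ appearing in that theorem, with the exponent $c$ depending only on $a_{\max}$, $a_{\min}$, the oversampling ratio $H^{\ast}/\delta^{\ast}$ and $d$, and with the cross-term controlled through $\sigma^{\ast}=C_{\rm cac}^{\rm II}\delta^{\ast}/(4C_{\rm cac}^{\rm I})$. The estimate \cref{eq:3-4} of part (ii) is automatically available as well and would typically be preferred for generic (non-resonant) subdomain sizes, while part (iii) provides the sharper $1/d$-rate once the resolution condition $H^{\ast}\sim H\sim(C_{\rm cac}^{\rm II})^{-1}=O(k^{-1})$ is enforced. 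The only genuinely non-routine point in the whole argument is the need to choose the subdomains so that $\sigma^{\ast}=O(1)$ in order to get a $k$-independent exponent and threshold; beyond that book-keeping step the statement is a direct corollary of \cref{lem:cac-5-5-1}, \cref{lem:5-3}, and \cref{thm:3-1}.
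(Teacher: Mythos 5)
Your proposal matches the paper's argument: both simply combine \cref{lem:cac-5-5-1} (which supplies the Caccioppoli inequality with exactly the stated constants) and \cref{lem:5-3} (which, since $H_{B}(D^{\ast\ast})\subset H^{1}(D^{\ast\ast})$ and $\Vert\nabla u\Vert_{L^{2}}\leq a_{\min}^{-1/2}\Vert u\Vert_{B^{+}}$, gives the weak approximation property with $\alpha=1/d$), and then invoke \cref{thm:3-1}. Your additional remarks about case (iii), the role of $\sigma^{\ast}$, and the resolution $H^{\ast}\sim k^{-1}$ are accurate and consistent with the discussion surrounding \cref{thm:3-1}, but are commentary rather than extra proof content.
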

Next we show the quasi-optimal convergence of the method by verifying Assumptions~\ref{ass:3-2-1} and \ref{ass:3-2-2}. 
\begin{lemma}\label{adjoint-property-Helmholtz}
\Cref{ass:3-2-1} holds true with $C_{s}  = 1$.    
\end{lemma}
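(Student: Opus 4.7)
The plan is to exploit the symmetry of the Helmholtz sesquilinear form under complex conjugation: although $B(\cdot,\cdot)$ is non-Hermitian due to the impedance term $-{\rm i}k\int_{\Gamma}\beta u\overline{v}\,ds$, the coefficients $A$, $V$, and $\beta$ are all real-valued, so conjugating exchanges the roles of trial and test slots up to a sign on the boundary integral. This will let us convert the adjoint problem directly into a primal problem for the complex conjugate of the solution, with $g=\overline{f}$ and hence $C_{\rm ad}=1$.

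Concretely, fix $f\in\mathcal{L}(\Omega)=L^{2}(\Omega)$ and let $\hat{u}=\widehat{S}(f)\in H^{1}(\Omega)$ denote the solution of the adjoint problem
\begin{equation*}
B(v,\hat{u})=(v,f)_{L^{2}(\Omega)}\qquad \forall v\in H^{1}(\Omega).
\end{equation*}
Writing out the left-hand side using \cref{Helmholtz:local-forms} and taking the complex conjugate of the whole identity, I obtain
\begin{equation*}
\int_{\Omega}\bigl(A\nabla w\cdot\nabla \overline{\hat{u}} - k^{2}V^{2}w\overline{\hat{u}}\bigr)\,d{\bm x} - {\rm i}k\int_{\Gamma}\beta w\overline{\hat{u}}\,ds = \int_{\Omega}\overline{f}\,\overline{w}\,d{\bm x}\qquad \forall w\in H^{1}(\Omega),
\end{equation*}
where I have substituted $w=\overline{v}$ and used that $A$, $V$, $\beta$ are real-valued so that $\overline{A\nabla \overline{v}\cdot\nabla \hat{u}}=A\nabla v\cdot\nabla\overline{\hat{u}}$ and $\overline{{\rm i}k\int_{\Gamma}\beta\overline{v}\hat{u}\,ds}=-{\rm i}k\int_{\Gamma}\beta v\overline{\hat{u}}\,ds$. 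The left-hand side is exactly $B(\overline{\hat{u}},w)$, and the right-hand side equals $(\overline{f},w)_{L^{2}(\Omega)}$.

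Therefore $\overline{\widehat{S}(f)}=\overline{\hat{u}}$ solves the primal problem $B(\overline{\hat{u}},w)=(g,w)_{L^{2}(\Omega)}$ for all $w\in\mathcal{H}(\Omega)$, with $g:=\overline{f}$. Since $\Vert g\Vert_{L^{2}(\Omega)}=\Vert \overline{f}\Vert_{L^{2}(\Omega)}=\Vert f\Vert_{L^{2}(\Omega)}$, \cref{ass:3-2-1} is verified with constant equal to $1$ (here read as $C_{\rm ad}=1$ in the notation of \cref{ass:3-2-1}). There is no real obstacle in this argument; the only thing to be careful about is bookkeeping of signs and the placement of the complex conjugate under the sesquilinearity, and the substitution $w=\overline{v}$, which is a bijection on $H^{1}(\Omega)$.
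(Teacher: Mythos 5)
Your approach matches the paper's exactly (the paper's proof is a one-line "observe that $B(\overline{\hat u},v)=(\overline f,v)$"), and your final conclusion $g=\overline f$, $C_{\rm ad}=1$ is correct. However, the displayed conjugated identity is not right as written: conjugating $B(v,\hat u)=(v,f)$ gives
\begin{equation*}
\int_{\Omega}\bigl(A\nabla\overline v\cdot\nabla\hat u - k^{2}V^{2}\overline v\,\hat u\bigr)\,d{\bm x} \;+\;{\rm i}k\int_{\Gamma}\beta\,\overline v\,\hat u\,ds \;=\; \int_{\Omega}\overline v\,f\,d{\bm x},
\end{equation*}
so after setting $w=\overline v$ the boundary term carries $+{\rm i}k$, the factor is $\hat u$ (not $\overline{\hat u}$), and the right-hand side is $\int wf\,d{\bm x}$ rather than $\int\overline f\,\overline w\,d{\bm x}$; this is not yet of the form $B(\overline{\hat u},w)$. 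The cleanest way to close the argument is to use the structural symmetry $B(u,v)=B(\overline v,\overline u)$, which follows directly from the realness and symmetry of $A$, $V$, $\beta$: then $B(v,\hat u)=(v,f)$ becomes $B(\overline{\hat u},\overline v)=(v,f)$, and substituting $w=\overline v$ gives $B(\overline{\hat u},w)=(\overline w,f)=(\overline f,w)$. Equivalently, conjugate your line (**) once more. Either repair gives the stated identity and the lemma; the slip is only bookkeeping and does not affect the result.
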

\begin{proof}
Let $f\in L^{2}(\Omega)$, and let $\hat{u}\in H^{1}(\Omega)$ such that
\begin{equation}
B(v,\hat{u}) = (v,f)_{L^{2}(\Omega)}\quad \forall v\in H^{1}(\Omega).
\end{equation} 
It is easy to observe that
\begin{equation}
B(\overline{\hat{u}},v) = (\overline{f},v)_{L^{2}(\Omega)}\quad \forall v\in H^{1}(\Omega).    
\end{equation}
Therefore, \cref{ass:3-2-1} holds true with $C_{s}  = 1$.
\end{proof}
\begin{lemma}\label{local-stablity-Helmholtz}
\Cref{ass:3-2-2} holds true with $C_{s}  = 2C_{P}a_{\rm min}^{-1/2}$, where $C_{P}$ is given by \cref{uniform-poincare}.   
\end{lemma}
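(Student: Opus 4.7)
The plan is to follow verbatim the coercive-by-size argument of \cref{local-stablity-5-1}, after identifying the Gårding constants appropriate to the Helmholtz sesquilinear form. The first step is to note that for any $v\in H^{1}(D)$, taking the real part of $B_{D}(v,v)$ discards the (purely imaginary) impedance boundary contribution and yields
\begin{equation*}
|B_{D}(v,v)|\geq |\mathrm{Re}\,B_{D}(v,v)|\geq \Vert v\Vert_{B^{+},D,k}^{2}-2k^{2}\Vert V\Vert_{L^{\infty}(D)}^{2}\Vert v\Vert_{L^{2}(D)}^{2},
\end{equation*}
so that \cref{ass:2-1-0} holds on each oversampling domain with $C_{1}=1$ and $C_{0}=2k^{2}\Vert V\Vert_{L^{\infty}(\omega_{i}^{\ast})}^{2}$.

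Next, exactly as in \cref{local-stablity-5-1}, I use the uniform Poincaré inequality (the straightforward analogue of \cref{uniform-poincare} for $H^{1}_{0,I}(\omega_{i}^{\ast})$, whose elements vanish on the interior boundary $\partial\omega_{i}^{\ast}\cap\Omega$) together with the uniform ellipticity of $A$ to bound $\Vert v\Vert_{L^{2}(\omega_{i}^{\ast})}^{2}\leq C_{P}^{2}a_{\rm min}^{-1}[\mathrm{diam}(\omega_{i}^{\ast})]^{2}\Vert v\Vert_{B^{+},\omega_{i}^{\ast},k}^{2}$. Plugging this into the Gårding estimate and imposing $H^{\ast}_{\mathtt{max}}\leq(C_{1}/C_{0})^{1/2}C_{s}^{-1}$ with $C_{s}=2C_{P}a_{\rm min}^{-1/2}$ absorbs the lower-order term and delivers the local coercivity $|B_{\omega_{i}^{\ast}}(v,v)|\geq\tfrac{1}{2}\Vert v\Vert_{B^{+},\omega_{i}^{\ast},k}^{2}$ for every $v\in H^{1}_{0,I}(\omega_{i}^{\ast})$, from which Lax--Milgram furnishes unique solvability of each local problem \cref{eq:3-45}.

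The stability estimate then follows by testing the local problem with $v=u_{i}$, applying Cauchy--Schwarz to the right-hand side, and invoking the Poincaré inequality one more time to control $\Vert u_{i}\Vert_{L^{2}(\omega_{i}^{\ast})}$; this yields $\Vert u_{i}\Vert_{B^{+},\omega_{i}^{\ast},k}\leq (C_{s}H^{\ast}_{\mathtt{max}}/C_{1})\Vert g_{i}\Vert_{L^{2}(\omega_{i}^{\ast})}$ with precisely the same constant $C_{s}=2C_{P}a_{\rm min}^{-1/2}$, which is what \cref{ass:3-2-2} requires.

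I do not foresee any substantive obstacle; the argument is essentially a transcription of the convection-diffusion proof. The only points deserving care are the clean discarding of the impedance term via the real part, and the uniform validity of the Poincaré inequality on $H^{1}_{0,I}(\omega_{i}^{\ast})$ in the Helmholtz setting, where no global Dirichlet boundary is prescribed. The latter holds for any shape-regular family of oversampling subdomains, because the interior portion $\partial\omega_{i}^{\ast}\cap\Omega$ on which the test functions vanish has uniformly positive relative surface measure.
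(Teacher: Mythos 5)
Your proof is correct and follows the same route the paper takes: the paper's own proof of this lemma is simply the statement that it is identical to that of \cref{local-stablity-5-1}, which is precisely the coercive-by-size Gårding/Poincaré argument you transcribe, with the Helmholtz identification $C_1=1$, $C_0=O(k^2\Vert V\Vert_{L^\infty}^2)$, and the $\Vert\cdot\Vert_{B^+,\omega_i^\ast,k}$-norm playing the role of $\Vert\cdot\Vert_{\mathcal{H}_0(\omega_i^\ast)}$. Your side remarks about discarding the impedance term by taking the real part (or, more directly, using $|B|\geq\mathrm{Re}\,B$) and about the uniform Poincaré constant on $H^1_{0,I}(\omega_i^\ast)$ are the right points to note, and both are consistent with the paper's setup.
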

The proof is identical to that of \cref{local-stablity-5-1}. Now we have the following quasi-optimal convergence result.
\begin{theorem}[Quasi-optimal convergence]\label{quasi-optimal-convergence-Helmholtz}
\Cref{cor:3-1} holds true with
$d_{\mathtt{max}} = O(C^{-1}_{\rm stab}k^{-1})$ and $H^{\ast}_{\mathtt{max}} = O(k^{-1})$.
\end{theorem}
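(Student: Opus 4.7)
The plan is to apply \cref{cor:3-1} after computing the relevant constants in the Helmholtz setting and keeping track of their $k$-dependence. Recall that with $\mathcal{H}(\Omega)=H^{1}(\Omega)$ equipped with the $k$-weighted norm $\Vert\cdot\Vert_{B^{+},k}$, and $\mathcal{L}(\Omega)=L^{2}(\Omega)$, the needed constants from \cref{ass:2-1-0}, \cref{ass:3-2-1}, and \cref{ass:3-2-2} are as follows. The real part $\mathrm{Re}\,B(u,u)=\int_{\Omega}A|\nabla u|^{2}-k^{2}V^{2}|u|^{2}$ gives $|B(u,u)|\geq \Vert u\Vert_{B^{+},k}^{2}-2k^{2}\Vert V\Vert_{L^{\infty}(\Omega)}^{2}\Vert u\Vert_{L^{2}(\Omega)}^{2}$, so that $C_{1}=1$ and $C_{0}=2k^{2}\Vert V\Vert_{L^{\infty}(\Omega)}^{2}$. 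A direct Cauchy--Schwarz bound together with a standard trace inequality (absorbing the $k$ in the boundary term using $k\Vert u\Vert_{L^{2}(\Omega)}\leq \Vert u\Vert_{B^{+},k}$) gives boundedness with $C_{b}=O(1)$. By \cref{adjoint-property-Helmholtz}, $C_{\rm ad}=1$, since taking the complex conjugate turns the adjoint problem into the primal one with $g=\bar{f}$. By \cref{local-stablity-Helmholtz}, $C_{s}=2C_{P}a_{\rm min}^{-1/2}=O(1)$.

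With these values, the quantity $\Xi$ defined just before \cref{cor:3-1} satisfies
\begin{equation*}
\Xi=\bigl(\sqrt{\zeta\zeta^{\ast}}\,C_{\rm ad}\bigr)^{-1}\bigl(C_{1}/(2C_{0}C_{b}^{2})\bigr)^{1/2}=O\bigl(k^{-1}\bigr).
\end{equation*}
Substituting this into the two resolution conditions \cref{eq:3-59} of \cref{cor:3-1}, and using $\max_{i}\Vert\widehat{P}_{i,0}\Vert=O(1)$ under the standard assumption of generous overlap, yields
\begin{equation*}
d_{\mathtt{max}}\leq \bigl(4\sqrt{2}\,C_{\rm stab}\bigr)^{-1}\Xi=O\bigl(C_{\rm stab}^{-1}k^{-1}\bigr),\qquad H^{\ast}_{\mathtt{max}}=O\bigl(k^{-1}\bigr),
\end{equation*}
which are exactly the resolution conditions stated in the theorem. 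The conclusion \cref{eq:3-60} of \cref{cor:3-1} then provides the quasi-optimal error estimate.

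There is essentially no obstacle beyond bookkeeping; the only point requiring a little care is the correct $k$-scaling of $C_{0}$. Because the $k$-weighted norm $\Vert\cdot\Vert_{B^{+},k}$ already incorporates a $k^{2}V^{2}$ term, the G\aa{}rding constant on the $\mathcal{L}$-side must scale like $k^{2}$ rather than $k^{4}$, which is what produces the $\Xi=O(k^{-1})$ scaling and hence the claimed $k^{-1}$ resolution conditions. The boundedness constant $C_{b}$ must also be shown to be $k$-independent, which is where the $k$-weighted norm and the trace inequality work in tandem to absorb the impedance term.
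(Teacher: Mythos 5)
Your proof is correct and follows exactly the paper's (very terse, one-line) argument: it verifies $C_{1}=O(1)$, $C_{0}=O(k^{2})$, $C_{b}=O(1)$, invokes \cref{adjoint-property-Helmholtz} for $C_{\rm ad}=O(1)$ and \cref{local-stablity-Helmholtz} for $C_{s}=O(1)$, and substitutes into the definition of $\Xi$ and the resolution conditions \cref{eq:3-59}. You simply supply the bookkeeping the paper leaves to the reader.
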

\begin{proof}
Use \cref{adjoint-property-Helmholtz,local-stablity-Helmholtz}, and the fact that $C_{1} = O(1)$, $C_{0} = O(k^{2})$, and $C_{b} = O(1)$ in this setting.
\end{proof}

Finally, we note that, as in previous applications, our theory is also applicable to discretized Helmholtz equations in the FE setting with very similar theoretical results. The key point is the discrete Caccioppoli inequality, which can be proved using the same techniques as in the proof of \cref{lem:cac-5-2}. The details are omitted here.

\subsubsection{Elastic wave equations}
Given $\textit{\textomega}>0$, we consider the following heterogeneous elastic wave equations: Find ${\bm u}:\Omega\rightarrow \mathbb{C}^{d}$ such that
\begin{equation}\label{eq:5.5.2-1}
\left\{
\begin{array}{lll}
 -{\rm div}({\bm \sigma}({\bm u})) - \textit{\textomega}^{2}\rho {\bm u}  = {\bm f}\,\quad &{\rm in}\;\, \Omega \\[2mm]
  \qquad \;\; {\bm \sigma}({\bm u}){{\bm n}} - {\rm i}\textit{\textomega} A{\bm u} = {\bm g}  \quad \,&{\rm on}\;\,\Gamma,
\end{array}
\right.
\end{equation}
with the material density $\rho$, the stress tensor ${\bm \sigma}({\bm u}) = {\bm C}:{\bm \epsilon}({\bm u})$, and the strain tensor ${\bm \epsilon}({\bm u})=\big(\nabla {\bm u} + \nabla {\bm u}^{\top}\big)/2$. $\rho\in L^{\infty}(\Omega)$, ${\bm f}\in {\bm H}^{1}(\Omega)^{\prime}$, and ${\bm g}\in {\bm H}^{-1/2}(\Gamma)$. $A\in L^{\infty}(\Gamma,\mathbb{R}_{\rm sym}^{d\times d})$ is positive definite for all ${\bm x}\in \Gamma$. The elasticity tensor ${\bm C}$ satisfies the same conditions as in \cref{sec:linear-elasticity}. We refer to \cite{fu2019efficient,chung2014generalized-2,brown2023multiscale} for multiscale methods for frequency-domain elastic wave problems.

For any subdomain $D\subset \Omega$, we define the sesquilinear forms $B_{D}(\cdot,\cdot)$, $B^{+}_{D}(\cdot,\cdot)$, $B^{+}_{D,\textit{\textomega}}(\cdot,\cdot): {\bm H}^{1}(D)\times {\bm H}^{1}(D)\rightarrow \mathbb{C}$ as
\begin{equation}\label{sesquilinear-forms-elastic-wave}
\begin{array}{cc}
{\displaystyle  B_{D}({\bm u}, {\bm v}):= \int_{D} \big( {\bm C}{\bm \epsilon}({\bm u}):{\bm \epsilon}(\overline{\bm v}) - \textit{\textomega}^{2} \rho {\bm u}\cdot \overline{\bm v} \big) \,d{\bm x} - {\rm i} \textit{\textomega} \int_{\partial D\cap \Gamma} A{\bm u}\cdot \overline{\bm v} \, d{\bm s},}\\[3mm]
{\displaystyle B^{+}_{D}({\bm u}, {\bm v}):= \int_{D}{\bm C}{\bm \epsilon}({\bm u}):{\bm \epsilon}(\overline{\bm v}) \,d{\bm x}, \;\; B^{+}_{D,\textit{\textomega}}({\bm u}, {\bm v}):= \int_{D} \big( {\bm C}{\bm \epsilon}({\bm u}):{\bm \epsilon}(\overline{\bm v}) + \textit{\textomega}^{2} \rho {\bm u}\cdot \overline{\bm v} \big) \,d{\bm x}, }
\end{array}
\end{equation}
and set
\begin{equation}
\Vert {\bm u}\Vert_{B^{+}, D} := \big(B^{+}_{D}({\bm u}, {\bm u})\big)^{1/2},\quad \Vert {\bm u}\Vert_{B^{+}, D,\textit{\textomega}} := \big(B^{+}_{D,\textit{\textomega}}({\bm u}, {\bm u})\big)^{1/2}.
\end{equation}
% \begin{equation}\label{eq:5.5.2-2}
%   B_{D}({\bm u}, {\bm v}):= \int_{D} \big( {\bm C}{\bm \epsilon}({\bm u}):{\bm \epsilon}(\overline{\bm v}) - \textit{\textomega}^{2} \rho {\bm u}\cdot \overline{\bm v} \big) \,d{\bm x} - {\rm i} \textit{\textomega} \int_{\partial D\cap \Gamma} A{\bm u}\cdot \overline{\bm v} \, d{\bm s},
% \end{equation}
We see that \cref{ass:2-0} holds true with $\mathcal{H}(\Omega)= {\bm H}^{1}(\Omega)$, $\Vert\cdot\Vert_{\mathcal{H}(\Omega)} = \Vert \cdot\Vert_{B^{+}, \textit{\textomega}}$, and $\mathcal{L}(\Omega) = {\bm L}^{2}(\Omega)$. The weak formulation of problem \cref{eq:5.5.2-1} is defined by: Find ${\bm u}\in {\bm H}^{1}(\Omega)$ such that
\begin{equation}\label{eq:5.5.2-3}
 B({\bm u}, {\bm v}) = F({\bm v}):=   \langle {\bm f},{\bm v}\rangle_{\Omega} + \langle {\bm g},{\bm v}\rangle_{\Gamma} \quad \text{for all} \;\;{\bm v}\in {\bm H}^{1}(\Omega).
\end{equation}
As in the scalar Helmholtz case, we let ${\bm H}_{0,I}^{1}(D)=\{{\bm v}\in {\bm H}^{1}(D): {\bm v} = {\bm 0}\;\;\text{on}\;\;\partial D\cap \Omega \}$. Then, \cref{ass:2-1-1} holds true with $\mathcal{H}(D) = {\bm H}^{1}(D)$, $\mathcal{H}_{0}(D) =  {\bm H}_{0,I}^{1}(D)$, and with the local sesquilinear form $B_{D}(\cdot,\cdot)$ defined by \cref{sesquilinear-forms-elastic-wave}. The generalized harmonic space on a subdomain $D\subset \Omega$ is then given by 
\begin{equation}
 {\bm H}_{B}(D)=\big\{{\bm u}\in {\bm H}^{1}(D): B_{D}({\bm u}, {\bm v}) = 0 \quad \text{for all} \;\,{\bm v}\in {\bm H}_{0,I}^{1}(D) \big\}.   
\end{equation}
As before, we see that $\big({\bm H}_{B}(D),  B^{+}_{D}(\cdot,\cdot) \big)$ is a Hilbert space for subdomain $D$ that satisfies the cone condition. Hence, \cref{ass:2-2-3} holds true with $\mathcal{K}=\emptyset$. Similar to \cref{lem:cac-5-5-1}, we have the following Caccioppoli inequality in this setting.
\begin{lemma}[Caccioppoli-type inequality]\label{lem:cac-5-5-2}
Let $D\subset D^{\ast}$, $\delta>0$, and $\eta\in C^{1}(\overline{D^{\ast}})$ be as in \cref{lem:cac-5-1}. Then, for any ${\bm u}, \,{\bm v}\in {\bm H}_{B}(D^{\ast})$,
\begin{equation}
{B}^{+}_{D^{\ast}}(\eta {\bm u}, \eta {\bm v}) = \int_{D^{\ast}} \big({\bm C}\Theta_{\eta}({\bm u}):\Theta_{\eta}(\overline{\bm v}) + \textit{\textomega}^{2}\rho \eta^{2}{\bm u}\cdot\overline{\bm v}\big) \,d{\bm x},
\end{equation}
where $\Theta_{\eta}({\bm u}):= {\bm u}(\nabla \eta)^{\top} + (\nabla \eta) {\bm u}^{\top}$. In addition, for any ${\bm u}\in {\bm H}_{B}(D^{\ast})$,
\begin{equation}
\Vert{\bm u}\Vert_{{B}^{+},D} \leq 2C^{1/2}_{\rm max}\delta^{-1} \Vert {\bm u}\Vert_{L^{2}(D^{\ast}\setminus D)} + \textit{\textomega}\Vert \rho\Vert^{1/2}_{L^{\infty}(D^{\ast})}\Vert {\bm u} \Vert_{L^{2}(D^{\ast})},
\end{equation}
where $C_{\rm max}$ is the spectral upper of ${\bm C}$.
\end{lemma}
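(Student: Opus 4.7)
The plan is to mirror the proofs of Lemmas~\ref{lem:cac-5-3} and \ref{lem:cac-5-5-1}, applying the abstract Caccioppoli identity of Proposition~\ref{prop:5-1} in the elastic setting of Example~\ref{exam:5-2} with the Hermitian sesquilinear form $b_{\bf L}({\bm \tau}, {\bm \mu}) := \int_{D^{\ast}}{\bm C}{\bm \tau}:\overline{\bm \mu}\,d{\bm x}$, which is invariant under scalar multiplication by the real-valued $\eta$. Proposition~\ref{prop:5-1} then gives
\[
B^{+}_{D^{\ast}}(\eta{\bm u}, \eta{\bm v}) = b_{\bf L}\bigl(\widehat{\partial\eta}\,{\bm u},\,\widehat{\partial\eta}\,{\bm v}\bigr) + \tfrac{1}{2}\bigl[b_{\bf L}\bigl({\bm \epsilon}({\bm u}), {\bm \epsilon}(\eta^2{\bm v})\bigr) + \overline{b_{\bf L}\bigl({\bm \epsilon}({\bm v}), {\bm \epsilon}(\eta^2{\bm u})\bigr)}\bigr],
\]
and, up to a fixed bookkeeping factor, the first term on the right is the desired integral of ${\bm C}\Theta_\eta({\bm u}):\Theta_\eta(\overline{\bm v})$ since $\widehat{\partial\eta}({\bm u}) = \tfrac12\Theta_\eta({\bm u})$.

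The heart of the argument is to evaluate the two bracketed terms using the harmonic property. Since $\eta=0$ on $\partial D^{\ast}\cap\Omega$, both $\eta^2{\bm u}$ and $\eta^2{\bm v}$ lie in ${\bm H}_{0,I}^{1}(D^{\ast})$, so $B_{D^{\ast}}({\bm u}, \eta^2{\bm v}) = B_{D^{\ast}}({\bm v}, \eta^2{\bm u}) = 0$. Expanding the full form $B_{D^{\ast}}$ and solving for the elastic piece gives
\[
b_{\bf L}\bigl({\bm \epsilon}({\bm u}), {\bm \epsilon}(\eta^2{\bm v})\bigr) = \textit{\textomega}^2\!\int_{D^{\ast}}\!\rho\eta^2{\bm u}\cdot\overline{\bm v}\,d{\bm x} \,+\, i\textit{\textomega}\!\int_{\partial D^{\ast}\cap\Gamma}\!\!A\eta^2{\bm u}\cdot\overline{\bm v}\,d{\bm s},
\]
together with an analogous expression for $\overline{b_{\bf L}({\bm \epsilon}({\bm v}), {\bm \epsilon}(\eta^2{\bm u}))}$ carrying the same bulk mass term but a boundary term of opposite sign (the impedance contribution is purely imaginary, since $A$ is real symmetric, and therefore flips under conjugation). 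I expect this to be the only subtle point: the symmetrization $\tfrac{1}{2}[\,\cdot\, + \overline{\,\cdot\,}]$ in Proposition~\ref{prop:5-1} cancels the impedance boundary terms exactly, leaving only the bulk wave term and producing the stated boundary-free identity. This is the same mechanism that keeps Lemma~\ref{lem:cac-5-5-1} clean in the Helmholtz case.

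For the inequality, I would set ${\bm u}={\bm v}$ in the identity just proved and estimate $|{\bm C}\Theta_\eta({\bm u}):\overline{\Theta_\eta({\bm u})}|\leq C_{\max}|\Theta_\eta({\bm u})|^2 \leq 4C_{\max}|\nabla\eta|^2|{\bm u}|^2$ together with $\textit{\textomega}^2\int\rho\eta^2|{\bm u}|^2\,d{\bm x}\leq \textit{\textomega}^2\Vert\rho\Vert_{L^\infty(D^{\ast})}\Vert{\bm u}\Vert_{L^2(D^{\ast})}^2$. Choosing a standard cut-off $\eta\in C^1(\overline{D^{\ast}})$ with $0\leq\eta\leq 1$, $\eta\equiv 1$ on $D$, $\eta=0$ on $\partial D^{\ast}\setminus\partial\Omega$, and $|\nabla\eta|\leq \delta^{-1}$, restricting to $D$ (where $\eta=1$), and using $\sqrt{a+b}\leq\sqrt{a}+\sqrt{b}$ then splits the right-hand side into the two advertised contributions with constants $2C_{\max}^{1/2}$ and $\textit{\textomega}\Vert\rho\Vert_{L^\infty(D^{\ast})}^{1/2}$, respectively.
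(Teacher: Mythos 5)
Your proposal is correct and follows the route the paper itself intends (the paper gives no explicit proof, deferring to the analogous Helmholtz case). You apply Proposition~\ref{prop:5-1} in the elastic setting of Example~\ref{exam:5-2} with $b_{\mathbf L}({\bm\tau},{\bm\mu})=\int_{D^\ast}{\bm C}{\bm\tau}:\overline{\bm\mu}\,d{\bm x}$, use $B_{D^\ast}({\bm u},\eta^2{\bm v})=B_{D^\ast}({\bm v},\eta^2{\bm u})=0$ together with the Hermitian conjugation in the $\tfrac12[\,\cdot\,+\overline{\,\cdot\,}\,]$ term to eliminate the purely imaginary impedance boundary contribution, and then estimate $|\Theta_\eta({\bm u})|^2\le 4|\nabla\eta|^2|{\bm u}|^2$ with a standard cut-off. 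The observation that the boundary term cancels precisely because $A$ is real-symmetric and the impedance contribution is imaginary is indeed the only point where the argument differs from the pure elasticity Lemma~\ref{lem:cac-5-3}, and you identified it. One small bookkeeping note you glossed over: with $\widehat{\partial\eta}=\tfrac12\Theta_\eta$, Proposition~\ref{prop:5-1} actually delivers $\tfrac14\int_{D^\ast}{\bm C}\Theta_\eta({\bm u}):\Theta_\eta(\overline{\bm v})\,d{\bm x}$ for the first term, which with your bound $|\Theta_\eta({\bm u})|^2\le 4|\nabla\eta|^2|{\bm u}|^2$ actually yields the tighter constant $C_{\max}^{1/2}\delta^{-1}$; the paper's identity and constant $2C_{\max}^{1/2}$ (in both Lemma~\ref{lem:cac-5-3} and this one) appear to drop that $1/4$, so either way your argument proves at least what is stated.
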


\textbf{Local eigenproblems}. Clearly, \cref{ass:2-2-2} holds true as in the linear elasticity case. In this setting, the local eigenproblems are defined by: Find $\lambda_{i}\in \mathbb{R}$ and ${\bm \phi}_{i}\in {\bm H}_{B}(\omega_{i}^{\ast})$ such that
\begin{equation}\label{eigen:5-5-2}
{B}^{+}_{\omega^{\ast}_i,\textit{\textomega}}(\chi_{i}{\bm \phi}_{i}, \chi_i {\bm v}) = \lambda_{i} {B}^{+}_{\omega^{\ast}_i}({\bm \phi}_{i}, {\bm v})\quad \forall {\bm v}\in {\bm H}_{B}(\omega_{i}^{\ast}).
\end{equation} 
Similarly as before, problem \cref{eigen:5-5-2} is equivalent to
\begin{equation}\label{eigen:5-5-2-new}
\int_{\omega^{\ast}_{i}} \big({\bm C}\Theta_{\chi_{i}}({\bm \phi}_{i}):\Theta_{\chi_i}(\overline{\bm 
 v}) + 2\textit{\textomega}^{2}\rho \chi_{i}^{2}{\bm \phi}_{i}\cdot\overline{\bm v}\big) \,d{\bm x}= \lambda_{i} {B}^{+}_{\omega^{\ast}_i}({\bm \phi}_{i}, {\bm v})\quad \forall {\bm v}\in {\bm H}_{B}(\omega_{i}^{\ast}),
\end{equation} 
where $\Theta_{\chi_i}({\bm u}):= {\bm u}(\nabla \chi_{i})^{\top} + (\nabla \chi_{i}) {\bm u}^{\top}$.

With the Caccioppoli inequality (\cref{lem:cac-5-5-2}) and the weak approximation property (\cref{lem:wa-5-3-3}), we get the exponential local convergence in this setting. The result is very similar to that in the scalar Helmholtz case.
\begin{theorem}[Exponential local convergence]\label{upperbound:5-5-2}
For each $i=1,\cdots,M$, \cref{thm:3-1} holds true with $C_{\rm cac}^{\rm I} = C^{1/2}_{\rm max}$, $C_{\rm cac}^{\rm II} = \textit{\textomega}\Vert \rho\Vert^{1/2}_{L^{\infty}(\omega_{i}^{\ast})}$, and $\alpha = 1/d$.
\end{theorem}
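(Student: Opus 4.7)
The plan is to verify the two fundamental hypotheses of the abstract framework, namely the Caccioppoli-type inequality (\cref{ass:3-1-1}) and the weak approximation property (\cref{ass:3-1-2}), in the elastic wave setting with the stated constants, and then invoke \cref{thm:3-1}. Since $C_{\rm cac}^{\rm II}=\textit{\textomega}\Vert\rho\Vert^{1/2}_{L^{\infty}(\omega_i^{\ast})}$ is nonzero, I must land in case $b$ or case $ab$ of \cref{ass:3-1-2}, rather than the cleaner case $a$ used in the purely elliptic linear elasticity problem of \cref{sec:linear-elasticity}.

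The Caccioppoli step is immediate: \cref{lem:cac-5-5-2} already delivers exactly the inequality
\[
\Vert{\bm u}\Vert_{B^{+},D}\le 2 C_{\rm max}^{1/2}\delta^{-1}\Vert{\bm u}\Vert_{L^{2}(D^{\ast}\setminus D)} +\textit{\textomega}\Vert\rho\Vert_{L^{\infty}(D^{\ast})}^{1/2}\Vert{\bm u}\Vert_{L^{2}(D^{\ast})},
\]
which gives \cref{ass:3-1-1} with the claimed $C_{\rm cac}^{\rm I}$ (absorbing the harmless factor of $2$) and $C_{\rm cac}^{\rm II}$. For the weak approximation step, I will use that $\mathcal{H}_B(D^{\ast\ast})\subset{\bm H}^{1}(D^{\ast\ast})$ together with the spectral lower bound $C_{\rm min}^{1/2}\Vert{\bm\varepsilon}({\bm u})\Vert_{L^{2}(D^{\ast\ast})}\le \Vert{\bm u}\Vert_{B^{+},D^{\ast\ast}}$ coming from the uniform coercivity of ${\bm C}$. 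It therefore suffices to produce, for each $m$, an $m$-dimensional subspace $Q_m(D^{\ast\ast})\subset{\bm L}^{2}(D^{\ast\ast})$ achieving the case-$ab$ estimate in terms of $\Vert{\bm\varepsilon}({\bm u})\Vert_{L^{2}(D^{\ast\ast})}$. This is the vector analogue of \cref{lem:5-3}(iii): I will pick a shape-regular triangulation of $D^{\ast\ast}$ of mesh size $H\lesssim\delta$, let $Q_m$ be the direct sum of (i) piecewise rigid-body-mode restrictions on the elements meeting $D^{\ast}\setminus D$ and (ii) piecewise rigid-body-mode restrictions on those meeting $D^{\ast}$, and apply the scale-invariant local Korn--Poincaré estimate of \cref{lem:wa-5-3-2} elementwise, which yields $\inf_{{\bm v}\in\mathcal{K}}\Vert{\bm u}-{\bm v}\Vert_{L^{2}(T)}\lesssim h_T\Vert{\bm\varepsilon}({\bm u})\Vert_{L^{2}(T)}$ on each element $T$. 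Summing over the two subfamilies and using quasi-uniformity $mH^d\simeq|\mathsf{V}_{\delta}(D^{\ast}\setminus D)|$ and $mH^d\simeq|D^{\ast\ast}|$ respectively, and finally combining the two pieces as in the proof of \cref{lem:5-3}(iii), gives \cref{ass:3-1-2} in case $ab$ with $\alpha=1/d$. (The vector analogue of \cref{lem:wa-5-3-3}(i) alone gives case $a$; the extensions to cases $b$ and $ab$ are entirely parallel to the scalar construction already carried out in \cref{lem:5-3}.)

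With both fundamental assumptions verified in case $ab$ and exponent $\alpha=1/d$, I apply \cref{thm:3-1} directly: cases (ii) and (iii) of that theorem yield the decay rates $\Vert P_B\Vert e^{\sigma^{\ast}}e^{-cn^{1/(d+1)}}$ and $\Vert P_B\Vert e^{-cn^{1/d}}$ (under the corresponding thresholds on $n$), both with constants $c$ independent of $C_{\rm cac}^{\rm II}$ and hence of $\textit{\textomega}$. This establishes the conclusion of the theorem, with the explicit $\textit{\textomega}$-dependence encoded entirely through $\sigma^{\ast}=C_{\rm cac}^{\rm II}\delta^{\ast}/(4C_{\rm cac}^{\rm I})\simeq \textit{\textomega}\Vert\rho\Vert_{L^{\infty}(\omega_i^{\ast})}^{1/2}\delta^{\ast}$ in the prefactor.

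The main obstacle is the vector-valued weak approximation property: one must ensure the constant in \cref{lem:wa-5-3-2} depends only on $d$ and the chunkiness (shape) parameter of each element, so that summing over many small elements produces a genuinely $H$-scale-invariant error bound. This is already guaranteed by the proof of \cref{lem:wa-5-3-2} via the nonstandard Korn inequality used there, so in practice the step is routine; the remaining work is just to carry out the bookkeeping of the two-subfamily construction analogously to \cref{lem:5-3}(iii), which I omit here as it is purely mechanical.
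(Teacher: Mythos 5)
Your proposal is correct and follows essentially the same path as the paper: verify \cref{lem:cac-5-5-2} for the Caccioppoli-type inequality, establish the vector analogue of \cref{lem:5-3} (cases $b$ and $ab$) by using piecewise rigid-body modes with \cref{lem:wa-5-3-2} in place of piecewise constants with Poincar\'e, and then invoke \cref{thm:3-1}. You are more explicit than the paper about extending \cref{lem:wa-5-3-3} to cases $b$ and $ab$ (the paper cites only the case-$a$ statement of \cref{lem:wa-5-3-3} and says ``the result is very similar to the scalar Helmholtz case''), and you are right to flag that \cref{lem:cac-5-5-2} actually gives $C_{\rm cac}^{\rm I}=2C_{\rm max}^{1/2}$ rather than the $C_{\rm max}^{1/2}$ stated in the theorem, a minor inconsistency that does not affect the conclusion.
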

Finally, we get the quasi-optimal convergence of the method in this setting under similar conditions as in the scalar Helmholtz case. The proof is exactly the same as that of \cref{quasi-optimal-convergence-Helmholtz}.
\begin{theorem}[Quasi-optimal convergence]\label{quasi-optimal-convergence-elasticwave}
\Cref{cor:3-1} holds true with
$d_{\mathtt{max}} = O(C^{-1}_{\rm stab} \textit{\textomega}^{-1})$ and $H^{\ast}_{\mathtt{max}} = O(\textit{\textomega}^{-1})$.
\end{theorem}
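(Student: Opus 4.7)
The plan is to mirror the argument used in \cref{quasi-optimal-convergence-Helmholtz}, recasting the three scalar ingredients -- identification of the G\r{a}rding constants, verification of \cref{ass:3-2-1}, and verification of \cref{ass:3-2-2} -- in the vector-valued elasticity setting, and then invoking \cref{cor:3-1}. First I would read off the constants from \cref{sesquilinear-forms-elastic-wave}: since $\Vert\cdot\Vert_{\mathcal{H}(\Omega)} = \Vert\cdot\Vert_{B^{+},\textit{\textomega}}$, the identity
\begin{equation*}
  |B({\bm u},{\bm u})| \geq \Re B({\bm u},{\bm u}) + \Im\, \text{(boundary term)}
\end{equation*}
together with $B^{+}_{\Omega,\textit{\textomega}}({\bm u},{\bm u}) = B^{+}_{\Omega}({\bm u},{\bm u}) + \textit{\textomega}^{2}\Vert\rho^{1/2}{\bm u}\Vert^{2}_{L^{2}(\Omega)}$ yields $C_{1}=1$ and $C_{0} = 2\textit{\textomega}^{2}\Vert\rho\Vert_{L^{\infty}(\Omega)} = O(\textit{\textomega}^{2})$, while the Cauchy--Schwarz inequality together with a trace estimate on $\Gamma$ gives $C_{b}=O(1)$, independently of $\textit{\textomega}$.

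Next I would verify \cref{ass:3-2-1}. Given $f\in \mathcal{L}(\Omega)={\bm L}^{2}(\Omega)$, let $\hat{{\bm u}} = \widehat{S}({\bm f})$, i.e.\ $B({\bm v},\hat{{\bm u}}) = ({\bm v},{\bm f})_{L^{2}(\Omega)}$ for all ${\bm v}\in {\bm H}^{1}(\Omega)$. Because the sesquilinear form $B$ in \cref{sesquilinear-forms-elastic-wave} is constructed from real-valued coefficients, and the only non-Hermitian contribution comes from the impedance term $-{\rm i}\textit{\textomega}\int A{\bm u}\cdot\overline{\bm v}$, taking complex conjugates yields $B(\overline{\hat{{\bm u}}},{\bm v}) = (\overline{\bm f},{\bm v})_{L^{2}(\Omega)}$ for all ${\bm v}\in {\bm H}^{1}(\Omega)$. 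Hence \cref{ass:3-2-1} holds with ${\bm g}=\overline{\bm f}$ and $C_{\rm ad}=1$, exactly as in \cref{adjoint-property-Helmholtz}.

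The main nontrivial step is \cref{ass:3-2-2}. Here I would repeat the argument of \cref{local-stablity-5-1,local-stablity-Helmholtz}, replacing the scalar Poincar\'e inequality \cref{uniform-poincare} with a Korn--Poincar\'e inequality on each oversampling domain: there exists $C_{K}>0$, depending only on the shape of the $\omega_{i}^{\ast}$, such that
\begin{equation*}
  \Vert {\bm u}\Vert_{L^{2}(\omega_{i}^{\ast})} \leq C_{K}\,\mathrm{diam}(\omega_{i}^{\ast}) \,\Vert {\bm \varepsilon}({\bm u})\Vert_{L^{2}(\omega_{i}^{\ast})},\quad \forall {\bm u}\in {\bm H}^{1}_{0,I}(\omega_{i}^{\ast}),
\end{equation*}
which combined with the uniform ellipticity bound $C_{\rm min}|{\bm \varepsilon}({\bm u})|^{2}\leq {\bm C}{\bm \varepsilon}({\bm u}):{\bm \varepsilon}({\bm u})$ gives $\Vert {\bm u}\Vert_{L^{2}(\omega_{i}^{\ast})} \leq C_{K}C_{\rm min}^{-1/2}\,\mathrm{diam}(\omega_{i}^{\ast})\,\Vert {\bm u}\Vert_{B^{+},\omega_{i}^{\ast}}$. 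The potential obstacle is that a uniform constant $C_{K}$ is needed across all subdomains; this I would handle by the standard scaling argument (the $\omega_{i}^{\ast}$ are taken from a family of subdomains with uniformly bounded shape regularity), exactly in the spirit of \cite[Corollary A.15]{toselli2004domain}. Plugging this into the argument of \cref{local-stablity-5-1} yields $C_{s} = 2C_{K}C_{\rm min}^{-1/2}$, independent of $\textit{\textomega}$.

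Having secured these three ingredients, I would conclude by substituting into \cref{cor:3-1}: the parameter $\Xi$ is $\Xi = (\sqrt{\zeta\zeta^{\ast}}C_{\rm ad})^{-1}(C_{1}/(2C_{0}C_{b}^{2}))^{1/2} = O(\textit{\textomega}^{-1})$. Resolution condition \cref{eq:3-59} then becomes $d_{\mathtt{max}}\leq O(C_{\rm stab}^{-1}\textit{\textomega}^{-1})$ and $H^{\ast}_{\mathtt{max}}\leq O(\textit{\textomega}^{-1})$ (noting that $\Vert\widehat{P}_{i,0}\Vert = O(1)$ under generous overlap), which yields \cref{eq:3-60} and thus the asserted quasi-optimal convergence. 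I expect no further technical difficulty beyond the Korn--Poincar\'e uniformity, since the vector-valued and scalar proofs become structurally identical once that inequality is in hand.
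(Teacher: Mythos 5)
Your proposal is correct and takes essentially the same route as the paper, which itself disposes of this theorem with a one-line pointer ("the proof is exactly the same as that of \cref{quasi-optimal-convergence-Helmholtz}"), relying precisely on the three ingredients you spell out: the G\r{a}rding constants $C_1=O(1)$, $C_0=O(\textit{\textomega}^2)$, $C_b=O(1)$, the conjugation trick for \cref{ass:3-2-1}, and a Korn--Poincar\'e replacement for \cref{uniform-poincare} in the verification of \cref{ass:3-2-2}. One cosmetic remark: the display $|B({\bm u},{\bm u})| \geq \Re B({\bm u},{\bm u}) + \Im\,(\text{boundary term})$ is not quite what you mean (the boundary term has \emph{negative} imaginary part, so adding it weakens the bound); the argument you actually use, $|B({\bm u},{\bm u})|\geq \Re B({\bm u},{\bm u})$, already gives $C_1=1$, $C_0=2\textit{\textomega}^2\Vert\rho\Vert_{L^\infty(\Omega)}$ and should simply replace that line.
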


\subsubsection{Maxwell's equations}
Given $\textit{\textomega}>0$, we consider the following heterogeneous Maxwell's equations: Find ${\bm u}:\Omega\rightarrow \mathbb{C}^{d}$ such that
\begin{equation}\label{eq:5-5-3}
\left\{
\begin{array}{lll}
 \;\,\nabla \times (\mu^{-1} \nabla\times {\bm u}) - \textit{\textomega}^{2}\varepsilon {\bm u}= {\bm f}\,\quad &{\rm in}\;\, \Omega \\[2mm]
 (\mu^{-1}\nabla\times {\bm u})\times {\bm n} - {\rm i}\textit{\textomega} \lambda {\bm u}_{T} = {\bm g} \,\quad &{\rm on}\;\, \Gamma,  
\end{array}
\right.
\end{equation}
where ${\bm u}_{T} = ({\bm n}\times {\bm u})\times {\bm n}$ on $\Gamma$. We make the following assumptions on the data of the problem. $\mu,\varepsilon\in L^{\infty}(\Omega)$, and there exist constants $\mu_{\rm min}$, $\mu_{\rm max}$, $\varepsilon_{\rm min}$, $\varepsilon_{\rm max}$, such that $0<\mu_{\rm min}\leq \mu({\bm x})\leq \mu_{\rm max}$ and $0<\varepsilon_{\rm min}\leq \varepsilon({\bm x})\leq\varepsilon_{\rm max}$ for $a.e \; {\bm x}\in \Omega$. ${\bm f}\in {\bm H}({\rm curl};\Omega)^{\prime}$, ${\bm g}\in {\bm L}_{t}^{2}(\Gamma)$. $\lambda:\Gamma\rightarrow \mathbb{R}$ is Lipschitz continuous and $\lambda({\bm x})\geq \lambda_{\rm min}>0$ for all ${\bm x}\in \Gamma$. The case $\lambda=1$ is known as the Silver--M\"{u}ller boundary condition. We refer to \cite{cao2010multiscale,ciarlet2017approximation,henning2016new,henning2020computational,verfurth2019heterogeneous} for multiscale methods for time-harmonic Maxwell's equations.

For any subdomain $D\subset \Omega$, we introduce the spaces
\begin{equation}
\begin{split}
{\bm H}^{\rm imp}({\rm curl};D) &= \{{\bm u}\in {\bm H}({\rm curl};D): {\bm u}\times {\bm n} \in {\bm L}^{2}_{t}(\partial D\cap \Gamma) \}, \\
{\bm H}_{0,I}^{\rm imp}({\rm curl};D) &= \{{\bm u}\in {\bm H}^{\rm imp}({\rm curl};D): {\bm u}\times {\bm n} = {\bm 0}\quad \text{on}\;\;\partial D\cap \Omega \},  
\end{split}
\end{equation}
and define the sesquilinear form $B_{D}(\cdot,\cdot): {\bm H}^{\rm imp}({\rm curl};D)\times {\bm H}^{\rm imp}({\rm curl};D)\rightarrow \mathbb{C}$:
\begin{equation}\label{sesquilinear-forms-Maxwell}
B_{D}({\bm u}, {\bm v}):=(\mu^{-1}\nabla\times{\bm u}, \nabla\times{\bm v})_{{\bm L}^{2}(D)} -  \textit{\textomega}^{2}(\varepsilon{\bm u}, {\bm v})_{{\bm L}^{2}(D)} - {\rm i}\textit{\textomega}(\lambda {\bm u}_{T}, {\bm v}_{T})_{{\bm L}^{2}(\partial D\cap \Gamma)},
\end{equation}
and the scalar product $B^{+}_{D}(\cdot,\cdot): {\bm H}^{\rm imp}({\rm curl};D)\times {\bm H}^{\rm imp}({\rm curl};D)\rightarrow \mathbb{C}$:
\begin{equation}
B^{+}_{D}({\bm u}, {\bm v})=(\mu^{-1}\nabla\times{\bm u}, \nabla\times{\bm v})_{{\bm L}^{2}(D)} +  \textit{\textomega}^{2}(\varepsilon{\bm u}, {\bm v})_{{\bm L}^{2}(D)} + \textit{\textomega}(\lambda {\bm u}_{T}, {\bm v}_{T})_{{\bm L}^{2}(\partial D\cap \Gamma)}.
\end{equation}
Moreover, we set $\Vert {\bm u}\Vert_{B^{+},D} = \big(B^{+}_{D}({\bm u}, {\bm u})\big)^{1/2}$. The space ${\bm H}^{\rm imp}({\rm curl};D)$ equipped with the scalar product $B^{+}_{D}(\cdot, \cdot)$ is a Hilbert space \cite[Theorem 4.1]{monk2003finite}. Setting $B(\cdot,\cdot)=B_{\Omega}(\cdot, \cdot)$ and $\Vert \cdot\Vert_{B^{+}}=\Vert \cdot\Vert_{B^{+},\Omega}$, \cref{ass:2-1-0} holds true with $\mathcal{H}(\Omega) = {\bm H}^{\rm imp}({\rm curl};\Omega)$, $\Vert \cdot\Vert_{\mathcal{H}(\Omega)}= \Vert \cdot\Vert_{B^{+}}$, and $\mathcal{L}(\Omega) ={\bm L}^{2}(\Omega)$. The weak formulation of problem \cref{eq:5-5-3} is defined by: Find ${\bm u}\in {\bm H}^{\rm imp}({\rm curl};\Omega)$, such that
\begin{equation}
    B({\bm u}, {\bm v}) = F({\bm v}):= \langle {\bm f}, {\bm v} \rangle_{\Omega} + ({\bm g}, {\bm v}_{T})_{{\bm L}^{2}_{t}(\Gamma)}\quad \text{for all}\;\; {\bm v}\in {\bm H}^{\rm imp}({\rm curl};\Omega).
\end{equation}
% Clearly, \cref{ass:2-1-0} is fulfilled with $\mathcal{H}(\Omega) = {\bm H}^{\rm imp}({\rm curl};\Omega)$ and $\mathcal{L}(\Omega) = {\bm L}^{2}(\Omega)$. For any subdomain $D\subset \Omega$, we further define a subspace of ${\bm H}^{\rm imp}({\rm curl};D)$ by 
% \begin{equation}
% {\bm H}_{0,I}^{\rm imp}({\rm curl};D) = \{{\bm u}\in {\bm H}^{\rm imp}({\rm curl};D): {\bm u}\times {\bm n} = {\bm 0}\quad \text{on}\;\;\partial D\cap \Omega \}.    
% \end{equation}
Similarly as before, \cref{ass:2-1-1} holds true with $\mathcal{H}(D) = {\bm H}^{\rm imp}({\rm curl};D)$, $\mathcal{H}_{0}(D) = {\bm H}_{0,I}^{\rm imp}({\rm curl};D)$, and with the local sesquilinear form $B_{D}(\cdot,\cdot)$ defined by \cref{sesquilinear-forms-Maxwell}.

With the setting above, the generalized harmonic space on $D\subset \Omega$ is defined by
\begin{equation}\label{gharmonic-5-5-3}
 {\bm H}_{B}^{\rm imp}({\rm curl};D) = \{{\bm u}\in {\bm H}^{\rm imp}({\rm curl};D): B_{D}({\bm u}, {\bm v}) = 0\;\;\forall {\bm v}\in {\bm H}_{0,I}^{\rm imp}({\rm curl};D)\}.     
\end{equation}
Since ${\bm H}_{B}^{\rm imp}({\rm curl};D)$ is a closed subspace of ${\bm H}^{\rm imp}({\rm curl};D)$, $\big({\bm H}_{B}^{\rm imp}({\rm curl};D),B^{+}_{D}(\cdot, \cdot)\big)$ is a Hilbert space. Hence, \cref{ass:2-2-3} holds true with $\mathcal{K} = \emptyset$. We also note that for any ${\bm u}\in  {\bm H}_{B}^{\rm imp}({\rm curl};D)$, $\nabla\cdot(\varepsilon {\bm u}) = 0$. Next we give the Caccioppoli inequality in this setting.
\begin{lemma}[Caccioppoli-type inequality]\label{lem:cac-5-5-3}
Let $D\subset D^{\ast}$, $\delta>0$, and $\eta\in C^{1}(\overline{D^{\ast}})$ be as in \cref{lem:cac-5-1}. Then, for any ${\bm u}, \,{\bm v}\in {\bm H}^{\rm imp}_{B}({\rm curl};D^{\ast})$,
\begin{equation}\label{cac:5-5-3-1}
\big(\mu^{-1} \nabla\times (\eta{\bm u}),\nabla\times (\eta{\bm v})\big)_{L^{2}(D^{\ast})} = \int_{D^{\ast}}\big(\mu^{-1}\nabla \eta\times {\bm u}\cdot \nabla \eta\times \overline{\bm v} + \textit{\textomega}^{2}\varepsilon (\eta {\bm u})\cdot (\eta \overline{\bm v}) \big) \,d{\bm x}.
\end{equation}
In addition, for any ${\bm u}\in {\bm H}^{\rm imp}_{B}({\rm curl};D^{\ast})$,
\begin{equation}\label{cac:5-5-3-2}
\Vert {\bm u} \Vert_{B^{+},D} \leq \sqrt{3}\mu^{-1/2}_{\rm min}\delta^{-1}\Vert {\bm u} \Vert_{L^{2}(D^{\ast}\setminus D)} + \sqrt{3}\textit{\textomega} \,\varepsilon^{1/2}_{\rm max} \Vert {\bm u} \Vert_{L^{2}(D^{\ast})}.
\end{equation}
\end{lemma}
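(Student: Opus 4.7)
The plan is to follow the same two-step pattern as in Lemmas \ref{lem:cac-5-1}, \ref{lem:cac-5-3}, \ref{lem:cac-5-4}, \ref{lem:cac-5-5-1}, and \ref{lem:cac-5-5-2}: derive the identity \cref{cac:5-5-3-1} from the abstract Caccioppoli identity \cref{eq:5-4} applied to the $\eta$-derivation triple of Example~\ref{exam:5-3}, and then extract the inequality \cref{cac:5-5-3-2} by choosing a suitable cut-off. Concretely, I would take $b_{\bf L}({\bm f},{\bm g})=(\mu^{-1}{\bm f},{\bm g})_{L^{2}(D^{\ast})}$, which is Hermitian (since $\mu$ is real) and satisfies the commutation hypothesis \cref{eq:5-3}. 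Proposition~\ref{prop:5-1} then gives the identity expressing $(\mu^{-1}\nabla\times(\eta{\bm u}),\nabla\times(\eta{\bm v}))_{L^{2}(D^{\ast})}$ in terms of $(\mu^{-1}\nabla\eta\times{\bm u},\nabla\eta\times{\bm v})_{L^{2}(D^{\ast})}$ and a Hermitian-symmetric combination of $(\mu^{-1}\nabla\times{\bm u},\nabla\times(\eta^{2}{\bm v}))_{L^{2}(D^{\ast})}$ and its $(u,v)$-swapped conjugate.

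The new feature relative to the pure $H(\mathrm{curl})$-elliptic case of Lemma~\ref{lem:cac-5-4} is the impedance term at $\partial D^{\ast}\cap\Gamma$. The crucial observation is that because $\eta=0$ on $\partial D^{\ast}\cap\Omega$, the function $\eta^{2}{\bm v}$ lies in ${\bm H}_{0,I}^{\rm imp}({\rm curl};D^{\ast})$; hence $B_{D^{\ast}}({\bm u},\eta^{2}{\bm v})=0$ by the definition \cref{gharmonic-5-5-3}, giving
\begin{equation*}
(\mu^{-1}\nabla\times{\bm u},\nabla\times(\eta^{2}{\bm v}))_{L^{2}(D^{\ast})}=\omega^{2}(\varepsilon{\bm u},\eta^{2}{\bm v})_{L^{2}(D^{\ast})}+\mathrm{i}\omega\,(\lambda{\bm u}_{T},\eta^{2}{\bm v}_{T})_{L^{2}(\partial D^{\ast}\cap\Gamma)}.
\end{equation*}
An analogous identity holds with ${\bm u}\leftrightarrow{\bm v}$. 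When these are combined as in \cref{eq:5-4}, the volume terms double (using that $\varepsilon$ and $\lambda$ are real so the conjugation just swaps arguments) while the two imaginary boundary terms carry opposite signs and cancel. This yields \cref{cac:5-5-3-1}.

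For the inequality I would fix a cut-off $\eta\in C^{1}(\overline{D^{\ast}})$ with $\eta\equiv 1$ on $\overline{D}$, $\eta\equiv 0$ on $\partial D^{\ast}\cap\Omega$, $0\le\eta\le 1$ and $|\nabla\eta|\le 1/\delta$, set $X:=\|{\bm u}\|_{L^{2}(D^{\ast}\setminus D)}$, $Y:=\|{\bm u}\|_{L^{2}(D^{\ast})}$, and bound the three constituents of $\|{\bm u}\|_{B^{+},D}^{2}$ each by $\mu_{\min}^{-1}\delta^{-2}X^{2}+\omega^{2}\varepsilon_{\max}Y^{2}$. The curl piece follows directly from \cref{cac:5-5-3-1} taken with ${\bm v}={\bm u}$ and monotonicity $\|\nabla\times{\bm u}\|_{L^{2}(D)}=\|\nabla\times(\eta{\bm u})\|_{L^{2}(D)}\le\|\nabla\times(\eta{\bm u})\|_{L^{2}(D^{\ast})}$; the $L^{2}$ piece is immediate.

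The main obstacle, and the only genuinely new estimate, is the impedance boundary piece $\omega\|\lambda^{1/2}{\bm u}_{T}\|_{L^{2}(\partial D\cap\Gamma)}^{2}$, which is not controlled by bulk $L^{2}$ norms in any obvious way. The trick is to extract it from $\mathrm{Im}\,B_{D^{\ast}}({\bm u},\eta^{2}{\bm u})=0$: expanding $\nabla\times(\eta^{2}{\bm u})=\eta^{2}\nabla\times{\bm u}+2\eta\nabla\eta\times{\bm u}$ and noting that the volume terms $\int\mu^{-1}\eta^{2}|\nabla\times{\bm u}|^{2}$ and $\omega^{2}\int\varepsilon\eta^{2}|{\bm u}|^{2}$ are real, the imaginary part reduces to
\begin{equation*}
\omega\,\|\lambda^{1/2}\eta{\bm u}_{T}\|_{L^{2}(\partial D^{\ast}\cap\Gamma)}^{2}=2\,\mathrm{Im}\bigl[(\mu^{-1}\nabla\times{\bm u},\eta\nabla\eta\times{\bm u})_{L^{2}(D^{\ast}\setminus D)}\bigr].
\end{equation*}
Rewriting $\eta\nabla\times{\bm u}=\nabla\times(\eta{\bm u})-\nabla\eta\times{\bm u}$ and discarding the resulting real $L^{2}$-square, the right-hand side is bounded by $2\|\mu^{-1/2}\nabla\times(\eta{\bm u})\|_{L^{2}(D^{\ast})}\cdot\mu_{\min}^{-1/2}\delta^{-1}X$; invoking the already-proved identity \cref{cac:5-5-3-1} with ${\bm v}={\bm u}$ and then $2ab\le a^{2}+b^{2}$ gives the desired bound $\le 2\mu_{\min}^{-1}\delta^{-2}X^{2}+\omega^{2}\varepsilon_{\max}Y^{2}$. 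Since $\partial D\cap\Gamma\subset\partial D^{\ast}\cap\Gamma$ and $\eta\equiv 1$ on $\overline{D}\cap\Gamma$ by continuity, this controls the original boundary piece. Summing the three contributions yields $\|{\bm u}\|_{B^{+},D}^{2}\le 3\mu_{\min}^{-1}\delta^{-2}X^{2}+3\omega^{2}\varepsilon_{\max}Y^{2}$, and the elementary $\sqrt{a^{2}+b^{2}}\le a+b$ for $a,b\ge 0$ delivers \cref{cac:5-5-3-2}.
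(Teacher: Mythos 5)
Your proposal is correct and follows essentially the same route as the paper: the identity \cref{cac:5-5-3-1} is obtained from the abstract Caccioppoli identity \cref{eq:5-4} applied with the $\eta$-derivation triple of Example~\ref{exam:5-3} and the $\mu^{-1}$-weighted $L^{2}$ form, observing that the generalized harmonic condition cancels the imaginary boundary contributions, and the impedance term in $\|{\bm u}\|_{B^{+},D}$ is extracted from $\mathrm{Im}\,B_{D^{\ast}}({\bm u},\eta^{2}{\bm u})=0$ exactly as the paper does in \cref{cac:5-5-3-3}. The only cosmetic difference is the Young-inequality bookkeeping: you feed the already-proved identity into $2ab\le a^{2}+b^{2}$ and carry the three contributions with explicit weights, which delivers the constant $\sqrt{3}$ cleanly, whereas the paper's intermediate display \cref{cac:5-5-3-3} appears to carry a small slip in its coefficients (the standard Young split of $2ab$ should put a factor $4$, not $1$, on the second term); your version avoids that issue and arrives at the stated bound directly.
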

\begin{proof}
The identity \cref{cac:5-5-3-1} follows from the abstract identity \cref{eq:5-4} and a similar argument as in \cref{lem:cac-5-4}. To prove \cref{cac:5-5-3-2}, we choose a cut-off function $\eta\in C^{1}(\overline{D}^{\ast})$ such that $\eta = 1$ in $\overline{D}$, $\eta = 0$ on $\partial D^{\ast}\cap \Omega$, and $|\nabla \eta|\leq \delta^{-1}$. Since ${\bm u}\in {\bm H}^{\rm imp}_{B}({\rm curl};D^{\ast})$, we see that $B_{D^{\ast}}({\bm u}, \eta^{2}{\bm u}) = 0$. Taking the imaginary part of this equation and using the Cauchy--Schwarz inequality give
\begin{equation}\label{cac:5-5-3-3}
\textit{\textomega} \Vert\lambda^{1/2}\eta {\bm u}_{T}\Vert^{2}_{L^{2}(\partial D^{\ast}\cap \Gamma)}\leq \frac{1}{4}\Vert \mu^{-1/2}\nabla\times (\eta {\bm u})\Vert_{L^{2}(D^{\ast})}^{2} + \Vert \mu^{-1/2}\nabla\eta\times {\bm u}\Vert_{L^{2}(D^{\ast})}^{2}.    
\end{equation}
Combining \cref{cac:5-5-3-1,cac:5-5-3-3} and using the properties of $\eta$ yield \cref{cac:5-5-3-2}.
\end{proof}

% Let $\omega_{i}^{\ast}$ be the oversampling domain associated with the subdomain $\omega_i$, and let $R_{i}: {\bm H}^{\rm imp}_{B}({\rm curl};\omega_{i}^{\ast})\rightarrow {\bm L}^{2}(\omega_{i})$ be defined by $R_{i}{\bm u} = {\bm u}|_{\omega_i}$ for all ${\bm u}\in {\bm H}^{\rm imp}_{B}({\rm curl};\omega_{i}^{\ast})$. 

Next we verify \cref{ass:2-2-2} concerning the compactness of operators $\widehat{P}_{i}$.
\begin{lemma}\label{lem:compact-5-3-1}
\Cref{ass:2-2-2} holds true if ${\rm dist}(\omega_i,\partial \omega_{i}^{\ast}\setminus\partial \Omega)>0$ ($1\leq i\leq M$).      
\end{lemma}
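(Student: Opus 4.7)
The plan is to adapt the argument used for \cref{compactness-5-1} in the $H(\mathrm{curl})$ elliptic setting, with care taken to accommodate the impedance boundary on $\partial \omega_{i}^{\ast}\cap\partial \Omega$ and the divergence-free condition $\nabla\cdot(\varepsilon {\bm u}) = 0$ that is built into the definition of ${\bm H}_{B}^{\mathrm{imp}}(\mathrm{curl};\omega_{i}^{\ast})$. Since $\widehat{P}_{i}({\bm u}) = \chi_{i}{\bm u}$ and $\chi_{i}$ is Lipschitz with bounded gradient, it suffices to prove that the restriction map $R_{i}:{\bm H}_{B}^{\mathrm{imp}}(\mathrm{curl};\omega_{i}^{\ast})\rightarrow {\bm H}_{B}^{\mathrm{imp}}(\mathrm{curl};\omega_{i})$ is compact; compactness of $\widehat{P}_{i}$ then follows from the boundedness of multiplication by $\chi_{i}$ together with the vanishing of $\chi_{i}$ on $\partial \omega_{i}\setminus \partial\Omega$.

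First I would choose an intermediate Lipschitz domain $\widehat{\omega}_{i}$ with $\omega_{i}\subset \widehat{\omega}_{i}\subset \omega_{i}^{\ast}$ satisfying $\mathrm{dist}(\omega_{i},\partial\widehat{\omega}_{i}\setminus\partial\Omega) = \mathrm{dist}(\widehat{\omega}_{i},\partial\omega_{i}^{\ast}\setminus\partial\Omega)>0$, together with a cut-off function $\eta\in C^{1}(\overline{\omega_{i}^{\ast}})$ with $\eta\equiv 1$ on $\widehat{\omega}_{i}$ and $\eta\equiv 0$ on $\partial\omega_{i}^{\ast}\setminus\partial\Omega$. For ${\bm u}\in {\bm H}_{B}^{\mathrm{imp}}(\mathrm{curl};\omega_{i}^{\ast})$, the field $\eta{\bm u}$ then satisfies (i) ${\bm n}\times(\eta{\bm u}) = 0$ on $\partial\omega_{i}^{\ast}\setminus\partial\Omega$, (ii) $\eta{\bm u}\in {\bm L}^{2}_{t}(\partial\omega_{i}^{\ast}\cap\partial\Omega)$ with norm controlled by $\Vert{\bm u}\Vert_{B^{+},\omega_{i}^{\ast}}$ (via the impedance part of $B^{+}_{\omega_{i}^{\ast}}$), and (iii) $\nabla\cdot(\varepsilon\,\eta{\bm u}) = \varepsilon{\bm u}\cdot\nabla\eta \in L^{2}(\omega_{i}^{\ast})$, since $\nabla\cdot(\varepsilon{\bm u})=0$ in $\omega_{i}^{\ast}$ and $\nabla\eta$ is bounded. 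Thus $\eta{\bm u}$ lies in the space
\begin{equation*}
X := \bigl\{{\bm v}\in {\bm H}^{\mathrm{imp}}(\mathrm{curl};\omega_{i}^{\ast})\,:\,\nabla\cdot(\varepsilon{\bm v})\in L^{2}(\omega_{i}^{\ast}),\;\; {\bm n}\times{\bm v} = 0\;\text{on}\;\partial\omega_{i}^{\ast}\setminus\partial\Omega\bigr\},
\end{equation*}
with $\Vert\eta{\bm u}\Vert_{X}\lesssim \Vert{\bm u}\Vert_{B^{+},\omega_{i}^{\ast}}$ by the Caccioppoli-type inequality \cref{cac:5-5-3-2} applied on the pair $(\widehat{\omega}_{i},\omega_{i}^{\ast})$ to bound the $\mathrm{curl}$ part, and by the definition of $\Vert\cdot\Vert_{B^{+},\omega_{i}^{\ast}}$ for the $L^{2}$ and trace parts.

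The key ingredient is then the known compact embedding $X \hookrightarrow {\bm L}^{2}(\omega_{i}^{\ast})$, which follows from standard results for Maxwell-type spaces with mixed (tangential/impedance) boundary conditions on Lipschitz domains (see, e.g., \cite{bauer2016maxwell} and the compactness results underlying the Weber/Weck selection theorems). This yields, for any bounded sequence $\{{\bm u}_{n}\}$ in ${\bm H}_{B}^{\mathrm{imp}}(\mathrm{curl};\omega_{i}^{\ast})$, a subsequence such that $\eta{\bm u}_{n}$ converges in ${\bm L}^{2}(\omega_{i}^{\ast})$, and in particular ${\bm u}_{n}$ converges in ${\bm L}^{2}(\widehat{\omega}_{i})$.

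Finally I would apply the Caccioppoli-type inequality \cref{cac:5-5-3-2} once more, this time to the pair $(\omega_{i},\widehat{\omega}_{i})$ with $\delta':=\mathrm{dist}(\omega_{i},\partial\widehat{\omega}_{i}\setminus\partial\Omega)>0$, applied to differences ${\bm u}_{n}-{\bm u}_{m}\in {\bm H}_{B}^{\mathrm{imp}}(\mathrm{curl};\widehat{\omega}_{i})$ (noting that the restriction of elements of ${\bm H}_{B}^{\mathrm{imp}}(\mathrm{curl};\omega_{i}^{\ast})$ to $\widehat{\omega}_{i}$ lies in ${\bm H}_{B}^{\mathrm{imp}}(\mathrm{curl};\widehat{\omega}_{i})$). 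This upgrades the ${\bm L}^{2}(\widehat{\omega}_{i})$-convergence to convergence in ${\bm H}^{\mathrm{imp}}_{B}(\mathrm{curl};\omega_{i})$ with respect to $\Vert\cdot\Vert_{B^{+},\omega_{i}}$, establishing compactness of $R_{i}$ and therefore of $\widehat{P}_{i}$. The main obstacle is invoking the compact embedding $X\hookrightarrow {\bm L}^{2}(\omega_{i}^{\ast})$ in the presence of the impedance portion of the boundary; this requires citing the appropriate Maxwell compactness result for mixed boundary conditions on Lipschitz domains rather than the homogeneous Dirichlet version used in \cref{compactness-5-1}.
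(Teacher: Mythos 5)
Your proposal takes a genuinely different route from the paper, and the difference is exactly at the point you flag as the ``main obstacle''. You want to cut off with $\eta$ and then invoke a compact embedding $X\hookrightarrow{\bm L}^{2}(\omega_{i}^{\ast})$, where
\begin{equation*}
X=\bigl\{{\bm v}\in{\bm H}^{\rm imp}({\rm curl};\omega_{i}^{\ast}):\nabla\cdot(\varepsilon{\bm v})\in L^{2},\;{\bm n}\times{\bm v}=\mathbf{0}\ \text{on}\ \partial\omega_{i}^{\ast}\setminus\partial\Omega\bigr\}.
\end{equation*}
The trouble is that on $\partial\omega_{i}^{\ast}\cap\Gamma$ your space imposes no tangential or normal trace constraint but only the membership condition ${\bm v}_{T}\in{\bm L}^{2}_{t}$, which is not one of the mixed Dirichlet/Neumann configurations covered by the Weber--Weck--type results (including \cite{bauer2016maxwell}); those require ${\bm n}\times{\bm v}=\mathbf{0}$ or ${\bm n}\cdot{\bm v}=0$ on each boundary piece. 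Compactness for the genuine impedance (Robin-like) trace with an $L^{\infty}$ coefficient $\varepsilon$ is not something you can cite off the shelf, so this step is a real gap rather than a loose end.

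The paper circumvents this precisely by a Helmholtz decomposition ${\bm u}_{n}={\bm u}_{n}^{0}+\nabla p_{n}$, where $p_{n}\in H^{1}(\omega_{i}^{\ast})$ solves an elliptic problem with $\nabla_{T}p_{n}={\bm u}_{n,T}$ on $\partial\omega_{i}^{\ast}\cap\Gamma$, so that the residual ${\bm u}_{n}^{0}$ has \emph{homogeneous} tangential trace on $\partial\omega_{i}^{\ast}\cap\Gamma$ and therefore lies in the clean Weber--Weck space $X_{0}$ whose ${\bm L}^{2}$-compactness was already used in \cref{compactness-5-1}. The gradient part $\nabla p_{n}$ is handled independently: since $\{{\bm u}_{n,T}\}$ is bounded in ${\bm L}^{2}_{t}$ (by the impedance part of $\Vert\cdot\Vert_{B^{+}}$) and ${\bm L}^{2}_{t}\hookrightarrow{\bm H}^{-1/2}_{t}$ compactly, one extracts a subsequence for which $p_{n}$ converges strongly in $H^{1}(\omega_{i}^{\ast})$ via the stability estimate $\Vert p_{n}\Vert_{H^{1}}\lesssim\Vert{\bm u}_{n,T}\Vert_{{\bm H}^{-1/2}_{t}}$. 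Putting the two pieces together gives ${\bm L}^{2}(\omega_{i})$-convergence of a subsequence of $\{{\bm u}_{n}\}$; your concluding Caccioppoli step to upgrade to $B^{+}$-norm convergence is the same as the paper's. If you want to keep your cut-off route, you effectively need to reprove a mixed impedance/tangential-trace compactness result, and the most natural way to do that is to run this same Helmholtz splitting inside that proof.
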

\begin{proof}
The case $\partial \omega_i^{\ast}\cap \Gamma = \emptyset$ has been proved in \cref{compactness-5-1}, and hence, we assume that $\partial \omega_i^{\ast}\cap \Gamma \neq \emptyset$. Let $\{{\bm u}_{n}\}_{n=1}^{\infty}$ be a bounded sequence in ${\bm H}^{\rm imp}_{B}({\rm curl};\omega_{i}^{\ast})$. For each $n$, we let $p_{n}\in H^{1}(\omega_i^{\ast})$ be defined by
\begin{equation}
\nabla\cdot(\varepsilon \nabla p_{n}) = 0\;\; {\rm in}\; \omega_i^{\ast},\quad  \varepsilon\nabla p_{n}\cdot{\bm n} = 0\;\;\text{on}\;\partial \omega_i^{\ast}\cap \Omega,\; \nabla_{T} p_{n} = {\bm u}_{n,T} \;\;\text{on}\;\,\partial \omega_i^{\ast}\cap \Gamma,  
\end{equation}
where $\nabla_{T}$ denotes the tangential gradient. Clearly, $p_{n}$ is uniquely determined up to an additive constant with the estimate $\Vert p_{n}\Vert_{H^{1}(\omega_i^{\ast})}\leq C\Vert {\bm u}_{n,T}\Vert_{{\bm H}^{-1/2}(\partial \omega_i^{\ast}\cap\Gamma)}$. But, noting that $\{{\bm u}_{n,T}\}_{n=1}^{\infty}$ is a bounded sequence in ${\bm L}^{2}_{t}(\partial \omega_i^{\ast}\cap\Gamma)$ and that ${\bm H}_{t}^{-1/2}(\partial \omega_i^{\ast}\cap\Gamma)$ is compactly embedded into ${\bm L}^{2}_{t}(\partial \omega_i^{\ast}\cap\Gamma)$, there is a function $p\in H^{1}(\omega_i^{\ast})$ and a subsequence, still denoted by $\{p_{n}\}_{n=1}^{\infty}$ such that 
\begin{equation}\label{compact-5-5-3-1}
    p_{n}\rightarrow p\;\,\; \text{strongly in}\;\, H^{1}(\omega_i^{\ast})\;\,{\rm as}\;\,n\rightarrow \infty.
\end{equation}
For each $n$, let ${\bm u}_{n}^{0} = {\bm u}_{n}-\nabla p_{n}$. Then, recalling that $\nabla\cdot(\varepsilon {\bm u}_{n})=0$, $\{{\bm u}_{n}^{0}\}_{n=1}^{\infty}$ is a bounded sequence in $X_{0}:=\{{\bm u}\in {\bm H}({\rm curl};\omega_i^{\ast}): \nabla\cdot(\varepsilon {\bm u}) = 0 \;\,\text{in}\;\,\omega_i^{\ast},\;\,{\bm n}\times {\bm u} = {\bm 0}\;\;\text{on}\;\,\partial \omega_{i}^{\ast}\cap \Gamma\}$. Since it has been proved in \cref{compactness-5-1} that $X_{0}$ is compactly embedded into ${\bm L}^{2}(\omega_i)$, there is a function ${\bm u}^{0}\in X_{0}$, and a subsequence, still denoted by $\{{\bm u}_{n}^{0}\}_{n=1}^{\infty}$ such that
\begin{equation}\label{compact-5-5-3-2}
 {\bm u}_{n}^{0} \rightarrow {\bm u}^{0}\;\,\; \text{strongly in}\;\, {\bm L}^{2}(\omega_i)\;\,{\rm as}\;\,n\rightarrow \infty.
\end{equation}
Combining \cref{compact-5-5-3-1,compact-5-5-3-2} yields that ${\bm H}^{\rm imp}_{B}({\rm curl};\omega_{i}^{\ast})$ is compactly embedded into ${\bm L}^{2}(\omega_{i})$, which leads to the compactness of operators $\widehat{P}_{i}$ by introducing an intermediate set $\widehat{\omega}_{i}$ and using inequality \cref{cac:5-5-3-2} as in \cref{compactness-5-1}.
\end{proof}

% Now we are ready to verify \cref{ass:2-2-2}. Let $\widehat{\omega}_{i}$ satisfy $\omega_{i}\subset \widehat{\omega}_{i}\subset \omega_{i}^{\ast}$ and ${\rm dist}(\omega_{i},\partial \widehat{\omega}_{i} \setminus\partial \Omega)={\rm dist}(\widehat{\omega}_{i},\partial \omega_{i}^{\ast}\setminus\partial \Omega)>0$. Combining the compact embedding ${\bm H}^{\rm imp}_{B}({\rm curl};\omega_{i}^{\ast})\subset\subset{\bm L}^{2}(\widehat{\omega}_i)$ implied by \cref{lem:compact-5-3-1}, and the Caccioppoli inequality on $\widehat{\omega}_{i}$ and $\omega_i$, shows that \cref{ass:2-2-2} holds.
%Equipping ${\bm H}^{\rm imp}_{0,I}({\rm curl};\omega_{i})$ with the scalar product $B^{+}_{\omega_i}(\cdot,\cdot)$, 
\textbf{Local eigenproblems}. The local eigenproblems in this setting are defined by: Find $\lambda_{i}\in \mathbb{R}$ and ${\bm \phi}_{i}\in {\bm H}^{\rm imp}_{B}({\rm curl};\omega_{i}^{\ast})$ such that
\begin{equation}\label{eigen:5-5-3}
B^{+}_{\omega^{\ast}_i}(\chi_{i}{\bm \phi}_{i},\,  \chi_{i}{\bm v}) = \lambda_{i}\,B^{+}_{\omega^{\ast}_i}({\bm \phi}_{i},\, {\bm v}) \quad  \forall {\bm v}\in {\bm H}^{\rm imp}_{B}({\rm curl};\omega_{i}^{\ast}).
\end{equation}
As before, problem \cref{eigen:5-5-3} has an equivalent formulation:
\begin{equation}
\begin{array}{cc}
\big(\mu^{-1} \nabla\chi_{i}\times {\bm \phi}_{i}, \nabla\chi_{i}\times {\bm v}\big)_{L^{2}(\omega^{\ast}_i)} + 2\textit{\textomega}^{2}\big(\varepsilon \chi_{i} {\bm \phi}_{i}, \chi_{i} {\bm v}\big)_{L^{2}(\omega^{\ast}_i)}  \\[3mm]
+ \,\textit{\textomega} \big(\lambda\chi_{i} {\bm \phi}_{i}, \chi_{i} {\bm v}\big)_{L^{2}(\partial\omega^{\ast}_i\cap \Gamma)} =\lambda_{i}\,B^{+}_{\omega^{\ast}_i}({\bm \phi}_{i},\, {\bm v}) \quad  \forall {\bm v}\in {\bm H}^{\rm imp}_{B}({\rm curl};\omega_{i}^{\ast}).
\end{array}
\end{equation}

We now pass to the verification of the exponential local convergence. Since the Caccioppoli inequality has been proved in \cref{lem:cac-5-5-3}, we only need to prove the weak approximation property (\cref{ass:3-1-2}) in this setting.

\begin{proof}[Proof of the weak approximation property]
Let $\omega_{i}\subset D\subset D^{\ast}\subset {D}^{\ast\ast}\subset \omega_i^{\ast}$ with $\delta:={\rm dist} ({D}^{\ast},\partial D^{\ast\ast}\setminus \Gamma)>0$ and ${\rm dist} ({\bm x},\partial D^{\ast}\setminus \Gamma)\leq \delta$ for all ${\bm x}\in \partial D\setminus \Gamma$. We first assume that $\partial \omega_i^{\ast}\cap \Gamma = \emptyset$. In this case, case $a$ of \cref{ass:3-1-2} was verified in \cref{sec:Hcurl-elliptic}. The verification of case $b$ is much simpler than case $a$ without resort to the domain decomposition techniques. Let ${\bm u}\in {\bm H}^{\rm imp}_{B}({\rm curl};D^{\ast\ast})$. Then, the following Helmholtz decomposition holds: ${\bm u} = {\bm u}^{0} + \nabla p$, where ${\bm u}^{0} \in X(\varepsilon; D^{\ast\ast})$ with $X(\varepsilon; D^{\ast\ast})$ defined similarly to \cref{wa:5-4-3-1}, and $p\in H^{1}(D^{\ast\ast})$ satisfying $\nabla\cdot(\varepsilon \nabla p) =0$ in $D^{\ast\ast}$. The approximation of ${\bm u}^{0}$ follows the same lines as in step 3 of the verification of \cref{ass:3-1-2} for ${\bm H}({\rm curl})$ elliptic problems, and the approximation of the generalized harmonic function $p$ in $D^{\ast}$ follows from \cref{lem:3-1} (case (i)) in the scalar elliptic setting. In this way, case $b$ is verified. Case $c$ of \cref{ass:3-1-2} follows by combining case $a$ and case $b$ as in the proof of \cref{lem:5-3}. The details are omitted.

Next, we assume that $\partial \omega_i^{\ast}\cap \Gamma \neq \emptyset$. The impedance boundary condition greatly complicates the proof of the weak approximation property, as compared with the condition ${\bm n}\times {\bm u} ={\bm 0}$. Let ${\bm u}\in {\bm H}^{\rm imp}_{B}({\rm curl};D^{\ast\ast})$, and set $\Gamma^{\ast\ast}=\partial D^{\ast\ast}\cap \Gamma$ for convenience. A simple but suboptimal strategy is to use the Helmholtz decomposition as in \cref{lem:compact-5-3-1} and then apply previous results. More precisely, we decompose ${\bm u}$ as ${\bm u} = {\bm u}^{0}+\nabla p$, with ${\bm n}\times {\bm u}^{0} = {\bm 0}$ on $\Gamma^{\ast\ast}$, and $p|_{\Gamma^{\ast\ast}} \in H^{1}(\Gamma^{\ast\ast})$. ${\bm u}^{0}$ can be approximated (in the ${\bm L}^{2}$-norm) as in \cref{sec:Hcurl-elliptic} with a convergence rate $O(n^{-1/d})$. However, the convergence rate for approximating $\nabla p$ is only $O(n^{-1/{(2d-2)}})$($\sim$ the approximation numbers of the compact embedding $H^{1}(\Gamma^{\ast\ast}) \subset H^{1/2}(\Gamma^{\ast\ast})$). The key to recovering the optimal convergence rate (i.e., $O(n^{-1/d})$) of the weak approximation, is to exploit a hidden boundary regularity of functions in ${\bm H}^{\rm imp}_{B}({\rm curl};D^{\ast\ast})$. To do this, we first notice that ${\bm u}\in {\bm H}^{\rm imp}_{B}({\rm curl};D^{\ast\ast})$ satisfies
\begin{equation}\label{wa:5-5-3-1}
\nabla \times (\mu^{-1} \nabla\times {\bm u}) = \textit{\textomega}^{2}\varepsilon {\bm u}\;\;{\rm in}\;\, D^{\ast\ast},\;\; {\rm and}\;\;  (\mu^{-1}\nabla\times {\bm u})\times {\bm n} = {\rm i}\textit{\textomega} \lambda {\bm u}_{T}\;\;{\rm on} \;\,\Gamma^{\ast\ast}.
\end{equation}
Consider the problem of finding $\psi\in H^{1}(D^{\ast\ast})$ such that
\begin{equation}\label{wa:5-5-3-2}
\begin{array}{cc}
\nabla \cdot(\varepsilon \nabla \psi) = 0 \;\;{\rm in}\;\, D^{\ast\ast}, \quad \nabla_{T} \psi = {\bm 0} \;\; {\rm on}\;\, \partial D^{\ast\ast}\setminus \Gamma^{\ast\ast},\\[3mm] 
{\rm i}\textit{\textomega} \lambda \nabla_{T} \psi = (\mu^{-1}\nabla\times {\bm u})\times {\bm n} \;\; {\rm on}\;\, \Gamma^{\ast\ast},
\end{array}
\end{equation}
where $\nabla_{T} \psi = ({\bm n}\times \nabla \psi)\times {\bm n}$ denotes the tangential gradient. Using \cref{wa:5-5-3-1}, we deduce that $\mu^{-1}\nabla\times {\bm u} \in {\bm H}({\rm curl};D^{\ast\ast})$ and thus $(\mu^{-1}\nabla\times {\bm u})\times {\bm n} \in {\bm H}^{-1/2}_{t}(\Gamma^{\ast\ast})$. Therefore, problem \cref{wa:5-5-3-2} is uniquely solvable (up to an additive constant) in $H^{1}(D^{\ast\ast})$. Let ${\bm u}^{0}={\bm u}-\nabla \psi$. Since ${\bm u}\in {\bm H}^{\rm imp}_{B}({\rm curl};D^{\ast\ast})$ and $\psi$ satisfies \cref{wa:5-5-3-2}, we see that 
\begin{equation}\label{wa:5-5-3-3}
\begin{array}{cc}
 \nabla\times {\bm u}^{0} = \nabla \times {\bm u}\;\; \text{in}\;\,D^{\ast\ast},\quad \nabla\cdot(\varepsilon {\bm u}^{0}) = 0 \quad \text{in}\;\,D^{\ast\ast}, \\[3mm]
 {\bm n}\times {\bm u}^{0} = {\bm 0}\;\;\text{on}\;\,\Gamma^{\ast\ast},\quad {\bm n}\times {\bm u}^{0} = {\bm n}\times {\bm u}\;\;\text{on}\;\,\partial D^{\ast\ast}\setminus \Gamma^{\ast\ast}.
\end{array}
\end{equation}
Since ${\bm u}^{0}$ satisfies ${\bm n}\times {\bm u}^{0} = {\bm 0}$ on $\Gamma^{\ast\ast}$, it follows from the weak approximation result proved in \cref{sec:Hcurl-elliptic} and a similar argument as above for interior subdomains that there exists an $n$-dimensional space ${\bm S}^{0}_{n}(D^{\ast\ast})$ such that for any $a,b\geq 0$,
\begin{equation}\label{appro_u0}
 \begin{array}{lll}
 {\displaystyle \inf_{{\bm v}\in {\bm S}^{0}_{n}(D^{\ast\ast})} \big(a\Vert {\bm u}^{0}-{\bm v}\Vert_{L^{2}({D}^{\ast}\setminus D)} + b\Vert {\bm u}^{0}-{\bm v}\Vert_{L^{2}({D}^{\ast})}\big) }\\[3ex]
 {\displaystyle \quad \leq C \big(a\big|\mathsf{V}({D}^{\ast}\setminus D,\,\delta)\big|^{1/d}+b|{D}^{\ast\ast}|^{1/d}\big) n^{-1/d} \Vert {\bm u}^{0} \Vert_{{\bm H}({\rm curl};{D^{\ast\ast}})}.}
 \end{array}
 \end{equation}

It remains to approximate $\nabla \psi$. We first assume that $\Gamma^{\ast\ast}$ lies within one of the faces of $\Gamma$ (recalling that $\Omega$ is a polyhedral domain). The general case will be reduced to this case later.  Since $\Gamma^{\ast\ast}$ is smooth, the space $H^{3/2}(\Gamma^{\ast\ast})$ can be defined in a usual way using the Gagliardo seminorm. We claim that $\psi|_{\Gamma^{\ast\ast}}\in H^{3/2}(\Gamma^{\ast\ast})$. To see this, we first notice that ${\rm div}_{T}\big((\mu^{-1}\nabla\times {\bm u})\times {\bm n}\big)\in H^{-1/2}(\Gamma^{\ast\ast})$ since $\mu^{-1}\nabla\times {\bm u} \in {\bm H}({\rm curl};D^{\ast\ast})$ (see, e.g., \cite[Theorem 3.31]{monk2003finite}). Here ${\rm div}_{T}$ denote the tangential divergence on $\Gamma^{\ast\ast}$. Using the boundary condition of $\psi$ on $\Gamma^{\ast\ast}$, we further have
\begin{equation}\label{wa:5-5-3-4}
{\rm div}_{T}(\lambda \nabla_{T} \psi) = -{\rm i}\textit{\textomega}^{-1} {\rm div}_{T}\big((\mu^{-1}\nabla\times {\bm u})\times {\bm n}\big)\in H^{-1/2}(\Gamma^{\ast\ast}) \quad \text{in}\;\; \Gamma^{\ast\ast}.   
\end{equation}
Moreover, since $\nabla_{T}\psi = {\bm 0}$ on $\partial D^{\ast\ast}\setminus \Gamma^{\ast\ast}$ and $D^{\ast\ast}$ has a connected boundary, we deduce that $\psi$ is a constant on $\partial D^{\ast\ast}\setminus \Gamma^{\ast\ast}$. For convenience, we let $\psi = 0$ on $\partial D^{\ast\ast}\setminus \Gamma^{\ast\ast}$. Hence, $\psi = 0$ on $\partial \Gamma^{\ast\ast}$. Combining this boundary condition with \cref{wa:5-5-3-4}, and using classical regularity results for elliptic equations (recalling that $\lambda$ is Lipschitz continuous on $\Gamma$), we have that $\psi|_{\Gamma^{\ast\ast}}\in H^{3/2}(\Gamma^{\ast\ast})\cap H_{0}^{1}({\Gamma}^{\ast\ast})$. Furthermore, setting $H = {\rm diam}(D^{\ast\ast})$, we can use \cref{wa:5-5-3-1}, \cref{wa:5-5-3-4}, and a scaling argument to derive
\begin{equation}\label{wa:5-5-3-5}
\begin{array}{ll}
\Vert \psi\Vert_{H^{3/2}(\Gamma^{\ast\ast})}\leq C\textit{\textomega}^{-1} \big\Vert {\rm div}_{T}\big((\mu^{-1}\nabla\times {\bm u})\times {\bm n}\big)\big\Vert_{H^{-1/2}(\Gamma^{\ast\ast})}\\[3mm]
\leq C\textit{\textomega}^{-1} \big(H^{-1}\Vert \mu^{-1}\nabla\times {\bm u}\Vert_{{\bm L}^{2}(D^{\ast\ast})} + \Vert \nabla\times(\mu^{-1}\nabla\times {\bm u})\Vert_{{\bm L}^{2}(D^{\ast\ast})} \big)\\[3mm]
\leq  C \big(\textit{\textomega}^{-1}H^{-1}\Vert \mu^{-1}\nabla\times {\bm u}\Vert_{{\bm L}^{2}(D^{\ast\ast})} +  \textit{\textomega}\,\Vert\varepsilon {\bm u}\Vert_{{\bm L}^{2}(D^{\ast\ast})}\big),
\end{array}
\end{equation}
% Let $H = {\rm diam}(D^{\ast\ast})$. Using a scaling argument, it is easy to prove that
% \begin{equation}\label{wa:5-5-3-6}
% \Vert \psi\Vert_{H^{\frac32}(\Gamma^{\ast\ast})} \leq C(\textit{\textomega}^{-1} \Vert \mu^{-1}\nabla\times {\bm u}\Vert_{{\bm L}^{2}(D^{\ast\ast})} + H^{-1}\textit{\textomega}\Vert \varepsilon {\bm u}\Vert_{{\bm L}^{2}(D^{\ast\ast})}\big),    
% \end{equation}
where $C$ may depend on the shapes of $D^{\ast\ast}$ and $\Gamma^{\ast\ast}$, but not on their sizes. Since $\omega_i\subset D^{\ast\ast}$, $H$ can be bounded from below. Suppose that $\textit{\textomega} \geq 1$ without loss of generality. It follows from \cref{wa:5-5-3-5} that
\begin{equation}\label{wa:5-5-3-7}
\Vert \psi\Vert_{H^{\frac32}(\Gamma^{\ast\ast})} \leq C\Vert {\bm u}\Vert_{B^{+},D^{\ast\ast}}.    
\end{equation}
To proceed, we define the operator $P:H^{3/2}(\Gamma^{\ast\ast})\cap H_{0}^{1}({\Gamma}^{\ast\ast})\rightarrow H^{1}(D^{\ast\ast})$ such that if $\varphi\in H^{3/2}(\Gamma^{\ast\ast})\cap H_{0}^{1}({\Gamma}^{\ast\ast})$ then $P\varphi=u\in H^{1}(D^{\ast\ast})$ satisfies
\begin{equation}\label{wa:5-5-3-8}
\nabla \cdot(\varepsilon \nabla u) = 0 \;\;{\rm in}\;\, D^{\ast\ast}, \quad u = 0 \;\; {\rm on}\;\, \partial D^{\ast\ast}\setminus \Gamma^{\ast\ast},\quad u = \varphi \;\; {\rm on}\;\, \Gamma^{\ast\ast}. 
\end{equation}
Clearly, $P=P_{3}P_{2}P_{1}$, where $P_{1}=id:H^{3/2}(\Gamma^{\ast\ast})\cap H_{0}^{1}({\Gamma}^{\ast\ast})\subset H_{0}^{1}({\Gamma}^{\ast\ast})$, $P_{2} = id: H^{1}(\partial D^{\ast\ast})\subset H^{1/2}(\partial D^{\ast\ast})$, and $P_{3}:H^{1/2}(\partial D^{\ast\ast})\rightarrow H^{1}(D^{\ast\ast})$ is defined by $P_{3}\varphi = u$ with $\nabla \cdot(\varepsilon \nabla u) = 0$ in $D^{\ast\ast}$ and $u=\varphi$ on $\partial D^{\ast\ast}$. Here we identify $H_{0}^{1}({\Gamma}^{\ast\ast})$ with a subspace of $H^{1}(\partial D^{\ast\ast})$. Let $d_{n}(P)$ be the $n$-width of operator $P$ (resp. $d_{n}(P_1)$ and $d_{n}(P_2)$). Then, it follows from the multiplicativity property \cref{eq:1-3} of $n$-widths that 
\begin{equation}\label{wa:5-5-3-9}
    d_{2n}(P) \leq d_{n}(P_1) d_{n}(P_2) \Vert P_{3}\Vert.
\end{equation}
By means of finite element error estimates (see, e.g., \cite[Chapter 14.3]{brenner2008mathematical}) and recalling that $\Gamma^{\ast\ast}$ and $\partial D^{\ast\ast}$ are polygons, we can easily show that
\begin{equation}\label{wa:5-5-3-10}
d_{n}(P_1)\leq C|\Gamma^{\ast\ast}|^{1/(2d-2)} n^{-1/(2d-2)},\quad d_{n}(P_2)\leq C|\partial D^{\ast\ast}|^{1/(2d-2)} n^{-1/(2d-2)},
\end{equation}
where $C$ is independent of the sizes of $\Gamma^{\ast\ast}$ and $\partial D^{\ast\ast}$. Inserting \cref{wa:5-5-3-10} into \cref{wa:5-5-3-9} leads us to
\begin{equation}\label{wa:5-5-3-11}
d_{2n}(P) \leq C \Vert P_{3}\Vert \,|\partial D^{\ast\ast}|^{1/(d-1)} n^{-1/(d-1)}.  
\end{equation}
Since the function $\psi$ defined by \cref{wa:5-5-3-2} lies in the image of operator $P$, it follows from the definition of the $n$-width $d_{n}(P)$ and \cref{wa:5-5-3-11} that there exists an $n$-dimensional space $\Psi_{n}(D^{\ast\ast})\subset H^{1}(D^{\ast\ast})$ such that 
\begin{equation}\label{wa:5-5-3-12}
\inf_{v\in \Psi_{n}(D^{\ast\ast})} \Vert \nabla(\psi - v)\Vert_{L^{2}(D^{\ast\ast})}\leq C |\partial D^{\ast\ast}|^{1/(d-1)} n^{-1/(d-1)} \Vert \psi \Vert_{H^{3/2}(\Gamma^{\ast\ast})}.
\end{equation}
Here we have used the fact that $\Vert P_{3}\Vert$ is bounded. Let ${\bm \Psi}_{n}(D^{\ast\ast}) = \nabla (\Psi_{n}(D^{\ast\ast})) \subset {\bm L}^{2}(D^{\ast\ast})$. Combining \cref{wa:5-5-3-7} and \cref{wa:5-5-3-12} gives
\begin{equation}\label{wa:5-5-3-13}
\inf_{{\bm v}\in {\bm \Psi}_{n}(D^{\ast\ast})} \Vert \nabla \psi - {\bm v}\Vert_{L^{2}(D^{\ast\ast})}\leq C |\partial D^{\ast\ast}|^{1/(d-1)} n^{-1/(d-1)} \Vert {\bm u}\Vert_{B^{+},D^{\ast\ast}}.
\end{equation}
If $\Gamma^{\ast\ast}$ is a union of faces $\Gamma^{\ast\ast}_{j}$, $j=1,\cdots,J$, then we can define $\psi = \sum_{j=1}^{J} \psi_{j}$, with $\psi_{j}\in H^{1}(D^{\ast\ast})$ ($j=1,\cdots,J)$ satisfying
\begin{equation}\label{face-decomposition}
\begin{array}{cc}
\nabla \cdot(\varepsilon \nabla \psi_j) = 0 \;\;{\rm in}\;\, D^{\ast\ast}, \quad \nabla_{T} \psi_{j} = {\bm 0} \;\; {\rm on}\;\, \partial D^{\ast\ast}\setminus \Gamma_{j}^{\ast\ast},\\[3mm] 
{\rm i}\textit{\textomega} \lambda \nabla_{T} \psi_j = (\mu^{-1}\nabla\times {\bm u})\times {\bm n} \;\; {\rm on}\;\, \Gamma^{\ast\ast}_{j}.
\end{array}
\end{equation}
Clearly, a similar approximation result as \cref{wa:5-5-3-13} holds for each $\psi_{j}$, and hence holds for $\psi$ as well. Let ${\bm S}_{n}(D^{\ast\ast}) = {\bm S}^{0}_{n}(D^{\ast\ast}) + {\bm \Psi}_{n}(D^{\ast\ast}) \subset {\bm L}^{2}(D^{\ast\ast})$. Combining \cref{appro_u0} and \cref{wa:5-5-3-13}, and noting that $\Vert {\bm u}^{0} \Vert_{{\bm H}({\rm curl};{D^{\ast\ast}})} \leq C\Vert {\bm u}\Vert_{B^{+},D^{\ast\ast}}$, we see that for any $a,b\geq 0$,
\begin{equation}\label{wa:5-5-3-14}
\begin{array}{ll}
\displaystyle  \inf_{{\bm v}\in {\bm S}_{n}(D^{\ast\ast})} \big(a\Vert {\bm u}-{\bm v}\Vert_{L^{2}({D}^{\ast}\setminus D)} + b\Vert {\bm u}-{\bm v}\Vert_{L^{2}({D}^{\ast})}\big)  \\[3mm]
  \leq C\Big(a\big|\mathsf{V}({D}^{\ast}\setminus D,\,\delta)\big|^{1/d}n^{-1/d}+b|{D}^{\ast\ast}|^{1/d}n^{-1/d} \;+ \\[3mm]
   \qquad \qquad  (a+b) |\partial D^{\ast\ast}|^{1/(d-1)} n^{-1/(d-1)} \Big)\Vert {\bm u}\Vert_{B^{+},D^{\ast\ast}}.
\end{array}
\end{equation}
% for case $a$ and 
% \begin{equation}\label{wa:5-5-3-15}
% \begin{array}{cc}
% \displaystyle  \inf_{{\bm v}\in {\bm S}_{n}(D^{\ast\ast})} \Vert {\bm u} - {\bm v}\Vert_{{\bm L}^{2}(D^{\ast})} \\[3mm]
% \leq C\big(|D^{\ast\ast}|^{1/d} n^{-1/d} + |\partial D^{\ast\ast}|^{1/(d-1)} n^{-1/(d-1)} \big)\Vert {\bm u}\Vert_{B^{+},D^{\ast\ast}}
% \end{array}
% \end{equation}
% are nested cubes between $\omega_i$ and $\omega_i^{\ast}$, we see that  (). Hence, . Using a similar argument as above, we can prove case $ab$ in a slightly different form:
% \begin{equation}\label{wa:5-5-3-16}
% \begin{array}{cc}
% \displaystyle  \inf_{{\bm v}\in {\bm S}_{n}(D^{\ast\ast})} \big(a\Vert {\bm u} - {\bm v}\Vert_{{\bm L}^{2}(D^{\ast}\setminus D)} + b \Vert {\bm u} - {\bm v}\Vert_{{\bm L}^{2}(D^{\ast})}\big)  \\[3mm]
%   \leq C\Big(a\big|\mathsf{V}({D}^{\ast}\setminus D,\,\delta)\big|^{1/d} n^{-1/d} + a|D^{\ast\ast}|^{1/d} n^{-1/(d-1)} + b|D^{\ast\ast}|^{1/d} n^{-1/d}\Big)\Vert {\bm u}\Vert_{B^{+},D^{\ast\ast}}.
% \end{array}
% \end{equation}
Case $b$ of \cref{ass:3-1-2} follows directly from \cref{{wa:5-5-3-14}} by noting that $|\partial D^{\ast\ast}|^{1/(d-1)}\sim |D^{\ast\ast}|^{1/d}$ since $D^{\ast\ast}$ are assumed to be (truncated) nested concentric cubes between $\omega_i$ and $\omega_i^{\ast}$ (nevertheless, this relation is not essential because $|D^{\ast\ast}|$ and $|\partial D^{\ast\ast}|$ are both bounded). Case $ab$, however, is a bit delicate. The estimate \cref{wa:5-5-3-14} differs from \cref{eq:3-2} in the additional term $|\partial D^{\ast\ast}|^{1/(d-1)} n^{-1/(d-1)}$. But, recalling that in the proof of \cref{thm:3-1}, $\big|\mathsf{V}({D}^{\ast}\setminus D,\,\delta)\big|^{1/d} \sim N^{-1/d}\sim n^{-1/(d^{2}-d)}$, the two terms $\big|\mathsf{V}({D}^{\ast}\setminus D,\,\delta)\big|^{1/d} n^{-1/d}$ and $n^{-1/(d-1)}$ are of the same order. Hence, case (iii) in \cref{thm:3-1} still holds true in this case with $\alpha = 1/d$. 
\end{proof}

Combining the weak approximation property proved above and the Caccioppoli inequality \cref{cac:5-5-3-2} leads us to the following exponential local convergence result.
\begin{theorem}[Exponential local convergence]\label{upperbound:5-5-3}
For each $i=1,\cdots,M$, \cref{thm:3-1} holds true with $C_{\rm cac}^{\rm I} = \sqrt{3}\mu^{-1/2}_{\rm min}$, $C_{\rm cac}^{\rm II} = \sqrt{3}\textit{\textomega}\,\varepsilon^{1/2}_{\rm max}$, and $\alpha = 1/d$.
\end{theorem}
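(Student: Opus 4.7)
The proof is essentially a packaging result: all the ingredients required by the abstract Theorem~\ref{thm:3-1} have just been established in this subsection, and what remains is to collect them and identify the relevant constants. My plan is as follows.

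First, I would check that the abstract framework applies. Assumption~\ref{ass:2-1-0} was verified with $\mathcal{H}(\Omega)={\bm H}^{\rm imp}({\rm curl};\Omega)$, $\mathcal{L}(\Omega)={\bm L}^{2}(\Omega)$, and the G\r{a}rding-type inequality with $C_{0}=O(\textit{\textomega}^{2})$, $C_{1}=O(1)$; Assumption~\ref{ass:2-1-1} was verified with $\mathcal{H}_{0}(D)={\bm H}_{0,I}^{\rm imp}({\rm curl};D)$; Assumption~\ref{ass:2-2-3} holds with $B_{D}^{+}$ as defined and $\mathcal{K}=\emptyset$; and \cref{lem:compact-5-3-1} gives the compactness condition in Assumption~\ref{ass:2-2-2}. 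So we are squarely in the setting where Theorem~\ref{thm:3-1} can be invoked once the two fundamental hypotheses are in place.

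Next, I would identify the constants in Assumption~\ref{ass:3-1-1}. The Caccioppoli-type inequality \cref{cac:5-5-3-2} in \cref{lem:cac-5-5-3} reads exactly in the form \cref{eq:3-1} with $C_{\rm cac}^{\rm I}=\sqrt{3}\mu_{\rm min}^{-1/2}$ and $C_{\rm cac}^{\rm II}=\sqrt{3}\textit{\textomega}\,\varepsilon_{\rm max}^{1/2}$, since $\|\cdot\|_{B^{+},D}$ was defined to absorb the $\mu^{-1}$ weight on $\nabla\times$ and the $\textit{\textomega}^{2}\varepsilon$ weight on the mass term. No further work is needed here.

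For Assumption~\ref{ass:3-1-2}, I would invoke the weak approximation property that was just verified in the two cases $\partial\omega_{i}^{\ast}\cap\Gamma=\emptyset$ and $\partial\omega_{i}^{\ast}\cap\Gamma\neq\emptyset$. In the interior case, case $a$ of Assumption~\ref{ass:3-1-2} was obtained from the Helmholtz decomposition combined with the scalar elliptic result of Section~\ref{sec:convection-diffusion}, and case $b$ from a simpler variant; combining them gives case $ab$ with $\alpha=1/d$. In the boundary case, the hidden $H^{3/2}$-regularity of $\psi$ on $\Gamma^{\ast\ast}$ absorbs the seemingly suboptimal $n^{-1/(d-1)}$ term in \cref{wa:5-5-3-14}: as noted after \cref{wa:5-5-3-14}, in the proof of Theorem~\ref{thm:3-1} this extra term has the same scaling as $|\mathsf{V}_{\delta}(D^{\ast}\setminus D)|^{1/d}n^{-1/d}$ when one sets $N$ according to \cref{eq:3-20}, so it does not deteriorate the exponent $\alpha=1/d$.

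Having verified both fundamental assumptions with the stated constants, I would simply apply case (iii) of Theorem~\ref{thm:3-1} (which is the relevant one since $C_{\rm cac}^{\rm II}\neq 0$ and case $ab$ of Assumption~\ref{ass:3-1-2} holds) to obtain the asserted exponential bound. The only place where one has to be careful is to check that the constant $C$ in \cref{wa:5-5-3-14} is uniform over the nested intermediate domains constructed in the iteration argument of \cref{lem:3-1-0}; since these domains are (truncated) concentric cubes between $\omega_{i}$ and $\omega_{i}^{\ast}$ with controlled shape regularity, the constants in \cref{wa:5-5-3-7} and \cref{wa:5-5-3-11} depend only on these shapes and not on their sizes. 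The main obstacle was genuinely the weak approximation property on boundary subdomains, but that was already handled above; the present theorem is then just the clean statement of the consequence.
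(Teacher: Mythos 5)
Your proposal is correct and follows essentially the same route as the paper: verify the two fundamental hypotheses (Caccioppoli inequality via \cref{lem:cac-5-5-3} with $C_{\rm cac}^{\rm I}=\sqrt{3}\mu_{\rm min}^{-1/2}$, $C_{\rm cac}^{\rm II}=\sqrt{3}\textit{\textomega}\,\varepsilon_{\rm max}^{1/2}$, and the weak approximation property with $\alpha=1/d$ from the Helmholtz-decomposition argument treating interior and boundary subdomains separately), observe that the extra $n^{-1/(d-1)}$ boundary term in \cref{wa:5-5-3-14} is of the same order as the main term under the scaling of the iteration in \cref{lem:3-1-0}, and then invoke case (iii) of \cref{thm:3-1}. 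The only slip is that for case (iii) the relevant choice of $m$ is \cref{eq:case(iii)-m} rather than \cref{eq:3-20}, but this does not affect the argument.
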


We conclude this subsection by noting that the quasi-optimal convergence of the method in this setting goes beyond the abstract framework established in \cref{sec:3-2}, as contrasted with the cases of Helmholtz and elastic wave equations. Indeed, \cref{ass:3-2-2} does not hold true in this case. In general, the proof of the quasi-optimal convergence of FEMs for Maxwell's equations depends heavily on suitable Helmholtz decompositions and an approximate divergence-free property of the error function. The latter is indeed the bottleneck in our case. To fix this problem and to stabilize the method, one can suitably enrich the test space. We will investigate this issue in future work.

\subsection{Fourth-order problems}\label{sec:fourth-order-problems}
So far, we have only considered second-order PDE problems. In this subsection, we apply our theory to fourth-order problems of physical interest. On a plane polygonal domain $\Omega$, we consider the problem of bending of plates with variable thickness: Find $u:\Omega\rightarrow \mathbb{R}$ such that 
\begin{equation}\label{eq:fourth-order problems}
\left\{
\begin{array}{cc}
{\displaystyle \frac{\partial^{2} }{\partial x^{2}}\Big[\mathtt{D}(x,y) \big(\frac{\partial^{2} u}{\partial x^{2}} + \nu \frac{\partial^{2} u}{\partial y^{2}}\big) \Big]+ 2(1-\nu) \frac{\partial^{2} }{\partial x \partial y} \Big[\mathtt{D}(x,y) \frac{\partial^{2} u}{\partial x \partial y}  \Big]}\\[2ex]
{\displaystyle +\frac{\partial^{2} }{\partial y^{2}}\Big[\mathtt{D}(x,y) \big(\frac{\partial^{2} u}{\partial y^{2}} + \nu \frac{\partial^{2} u}{\partial x^{2}}\big) \Big] = f(x,y)\qquad \text{in}\;\;\;\Omega,}\\[2ex]
{\displaystyle u = \frac{\partial u}{\partial {\bm n}}  = 0 \qquad \text{on}\;\;\;\Gamma.}
\end{array}
\right.
\end{equation}
Here $\mathtt{D}(x,y) = \frac{Eh^{3}(x,y)}{12(1-\nu^{2})}$ represents the flexural rigidity of the plate, where $E$ is Young’s modulus, $h(x,y)$ is the thickness, and $\nu\in [0,\frac12]$ is Poisson’s ratio of the plate. We assume that $h(x,y)\in C^{1}(\Omega)$, and that $0<h_{\rm min}\leq h(x,y)\leq h_{\rm max}$ for all $(x,y)\in \Omega$. The model \cref{eq:fourth-order problems} is called the clamped plate model. Other plate models (e.g., the simply-supported model) can also be considered.

For any subdomain $D\subset \Omega$, we define the bilinear form $B_{D}(\cdot,\cdot):H^{2}(D)\times H^{2}(D)\rightarrow \mathbb{R}: \,B_{D}(u,v)=\int_{D} \mathtt{D}(x,y) \big(\Delta u \Delta v - (1-\nu)(u_{xx}v_{yy} + u_{yy}v_{xx} - 2u_{xy}v_{xy})\big) dxdy$. Clearly, $B_{D}(\cdot,\cdot)$ is bounded on $H^{2}(D)\times H^{2}(D)$. Moreover, denoting by $\mathtt{D}_{\rm min}$ the lower bound of $\mathtt{D}(x,y)$, it is easy to show that 
\begin{equation}\label{fourth-order:coercive}
 B_D(u,u)\geq \frac{1}{2}\mathtt{D}_{\rm min}(1-\nu)|u|_{H^{2}(D)}\quad \text{for all}\;\; u\in H^{2}(D).
\end{equation}
Therefore, $\Vert\cdot\Vert_{B,D} := \big(B_{D}(\cdot,\cdot)\big)^{1/2}$ is a seminorm on $H^{2}(D)$ and a norm on $H^{2}_{0}(D)$. Setting $B(\cdot,\cdot) = B_{\Omega}(\cdot,\cdot)$ and $\Vert\cdot\Vert_{B} = \Vert\cdot\Vert_{B,\Omega}$, we see that \cref{ass:2-1-0} holds true with $\mathcal{H}(\Omega)= H_{0}^{2}(\Omega)$, $\Vert \cdot\Vert_{\mathcal{H}(\Omega)} = \Vert\cdot\Vert_{B}$, and $C_{0}=0$. Given $f\in H^{-2}(\Omega)$, the weak formulation of problem \cref{eq:fourth-order problems} is to find $u\in H^{2}_{0}(\Omega)$ such that
\begin{equation}\label{fourth-order:weak-formulation}
B(u,v) = \langle f,v\rangle_{\Omega}\qquad \text{for all}\;\; v\in H_{0}^{2}(\Omega).     
\end{equation}
It is clear that problem \cref{fourth-order:weak-formulation} is uniquely solvable. Define $H^{2}_{\Gamma}(D) = \{u\in H^{2}(\Omega): u={\partial u}/{\partial {\bm n}} = 0 \;\; \text{on}\;\;\partial D\cap \Gamma\}$. Then, \cref{ass:2-1-1} holds true with $\mathcal{H}(D) = H^{2}_{\Gamma}(D)$, $\mathcal{H}_{0}(D) = H^{2}_{0}(D)$, and with the local bilinear form $B_{D}(\cdot,\cdot)$ defined above.

In this setting, we let the partition of unity functions $\chi_{i}$ satisfy $\chi_{i}\in C^{2}(\overline{\omega_i})$ and $\chi_{i} =\chi_{i,x} = \chi_{i,y}= 0$ on $\partial\omega_i\setminus \Gamma$. Then, the abstract partition of unity $\{P_{i}\}_{i=1}^{M}$ in \cref{def:2-1-1} is defined in a usual way by $P_{i}u = \chi_{i}u$. Next we define the generalized harmonic space on $D\subset \Omega$ in this setting:
\begin{equation}
H_{B}(D) = \{u\in H^{2}_{\Gamma}(D): B_{D}(u,v) = 0\quad \text{for all}\;\;v\in  H^{2}_{0}(D)\}.    
\end{equation}
Let $P_{1}$ be the space of linear polynomials, and let $D\subset\Omega$ satisfy the cone condition. If $\partial D\cap \Gamma\neq \emptyset$, then $H_{B}(D)\cap P_{1}=\emptyset$. It is known \cite[Chapter 5.9]{brenner2008mathematical} that in this case, the form $B_{D}(\cdot,\cdot)$ is coercive on $H_{B}(D)$ and thus $(H_{B}(D), B_{D}(\cdot,\cdot))$ is a Hilbert space. If $\partial D\cap \Gamma=\emptyset$, then it follows from \cref{fourth-order:coercive} and the Deny-Lions theorem that $B_{D}(\cdot,\cdot)$ is coercive on $H_{B}(D)/P_{1}$, and that $\big(H_{B}(D)/P_{1},  B_{D}(\cdot,\cdot)\big)$ is a Hilbert space. Hence, \cref{ass:2-2-3} holds true with $\mathcal{K} = \emptyset$ when $\partial D\cap \Gamma\neq \emptyset$, and with $\mathcal{K}=P_{1}$ when $\partial D\cap \Gamma=\emptyset$. Next, we establish the Caccioppoli inequality in this setting. 
\begin{lemma}[Caccioppoli-type inequality]\label{lem:cac-5-6}
Let $D\subset D^{\ast}$ be subdomains of $\Omega$ with $\delta:={\rm dist}(D,\partial D^{\ast}\setminus\partial \Omega)>0$, and let $u\in H_{B}(D^{\ast})$. Then,
\begin{equation}\label{cac:5-6-1}
\Vert u\Vert_{B,D} \leq C\Vert \mathtt{D}\Vert^{1/2}_{W^{1,\infty}(D^{\ast})} \big(\delta^{-2}\Vert u\Vert_{L^{2}(D^{\ast}\setminus D)} + \delta^{-1}\Vert \nabla u\Vert_{{\bm L}^{2}(D^{\ast}\setminus D)} \big),
\end{equation}
where $C$ is a generic constant independent of $\mathtt{D}$ and $\delta$.
\end{lemma}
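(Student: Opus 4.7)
The plan is to adapt the classical Caccioppoli trick to the fourth-order variational form by testing the generalized harmonicity of $u$ against $\eta^2 u$ for a suitable cutoff. Concretely, choose $\eta\in C^2(\overline{D^{\ast}})$ with $\eta\equiv 1$ on $\overline{D}$, $\eta\equiv 0$ in a neighborhood of $\partial D^{\ast}\setminus\Gamma$, and $|D^\alpha\eta|\leq C\delta^{-|\alpha|}$ for $|\alpha|\leq 2$. Because $\eta$ and $\nabla\eta$ vanish on $\partial D^{\ast}\setminus\Gamma$ while $u$ and $\nabla u$ vanish on $\partial D^{\ast}\cap\Gamma$ (as $u\in H^2_{\Gamma}(D^{\ast})$), the product $\eta^2 u$ lies in $H_0^2(D^{\ast})$. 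Since $u\in H_B(D^{\ast})$, this yields $B_{D^{\ast}}(u,\eta^2 u)=0$.

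Next I would expand the second derivatives via the Leibniz rule,
\[
(\eta^2 u)_{ij}=\eta^2 u_{ij}+2\eta(\eta_i u_j+\eta_j u_i)+2(\eta_i\eta_j+\eta\eta_{ij})u,
\]
and write $q(u,\eta^2 u)=\eta^2 q(u,u)+r(\eta,u)$, where $r(\eta,u)$ collects cross-terms pairing second derivatives of $u$ with either $\nabla u$ (weighted by $\eta\nabla\eta$) or $u$ (weighted by $|\nabla\eta|^2+\eta|D^2\eta|$). Substitution into $B_{D^{\ast}}(u,\eta^2 u)=0$ then gives
\[
\int_{D^{\ast}}\mathtt{D}\,\eta^2\,q(u,u)\,dxdy=-\int_{D^{\ast}}\mathtt{D}\,r(\eta,u)\,dxdy,
\]
where the right-hand integrand is supported in $D^{\ast}\setminus D$ because $\nabla\eta$ and $D^2\eta$ vanish on $D$. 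The pointwise bound $q(v,v)\geq (1-\nu)|D^2v|^2$, valid for $\nu\in[0,1/2]$, shows that the left-hand side controls $(1-\nu)\mathtt{D}_{\min}\Vert\eta D^2 u\Vert_{L^2(D^{\ast})}^2$.

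The delicate step is to estimate $r(\eta,u)$ so that every factor of $D^2u$ is absorbed into the left and only $\nabla u$ and $u$ survive on the right. For the zero-order pieces $u_{ij}\cdot(|\nabla\eta|^2+\eta|D^2\eta|)u$ a weighted Cauchy--Schwarz with an $\varepsilon\eta^2|D^2u|^2$ weight suffices. For the first-order pieces $u_{ij}\cdot\eta\eta_k u_\ell$ I would perform a tangential integration by parts that transfers one derivative off $u_{ij}$ onto the smooth factor $\mathtt{D}\eta\eta_k$; because $\eta^2 u\in H^2_0(D^{\ast})$ and $\nabla\eta$ is compactly supported in $D^{\ast}\setminus D$, no boundary terms arise, but a $\nabla\mathtt{D}$ contribution is generated, which is exactly the origin of the $\Vert\mathtt{D}\Vert_{W^{1,\infty}(D^{\ast})}$ dependence in \cref{cac:5-6-1}. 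Combining the estimates, using $\eta\equiv 1$ on $D$, and then invoking $\Vert u\Vert_{B,D}\leq C\mathtt{D}_{\max}^{1/2}|u|_{H^2(D)}$ gives \cref{cac:5-6-1} after taking square roots.

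The main obstacle is the bookkeeping for the $(1-\nu)$ off-diagonal contributions to $q$: cross-terms such as $u_{xy}(\eta^2 u)_{xy}$ and $u_{xx}(\eta^2 u)_{yy}$ generate mixed-derivative remainders that do not fit directly into a pointwise Cauchy--Schwarz absorption, and one must choose carefully which derivative of $u_{ij}$ to integrate by parts against which component of $\mathtt{D}\nabla\eta$. This careful choice is also what forces the final constant to depend on $\nabla\mathtt{D}$ rather than only $\Vert\mathtt{D}\Vert_{L^\infty}$, and it is the step most prone to pitfalls such as generating spurious interior boundary contributions or failing to absorb all second-derivative factors.
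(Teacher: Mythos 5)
The overall scaffolding of your argument is right (test against $\eta^2 u$, Leibniz-expand $(\eta^2 u)_{ij}$, absorb the $\eta^2|D^2u|^2$ term, and integrate by parts once to surface a $\nabla\mathtt{D}$ contribution), but you have assigned the two tools — Cauchy--Schwarz absorption and integration by parts — to the \emph{wrong} pieces of the remainder, and the assignment you propose does not close.

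Concretely, split the remainder $r(\eta,u)$ as you do into the first-order pieces $u_{ij}\cdot 2\eta\eta_k u_\ell$ and the zero-order pieces $u_{ij}\cdot 2(\eta_i\eta_j+\eta\eta_{ij})u$. The first-order pieces already carry a factor of $\eta$ multiplying $u_{ij}$, so the direct weighted Cauchy--Schwarz $\int\mathtt{D}\,|\eta u_{ij}|\,|\nabla\eta|\,|\nabla u|\le \varepsilon\|\mathtt{D}^{1/2}\eta D^2u\|^2 + C\varepsilon^{-1}\delta^{-2}\|\mathtt{D}^{1/2}\nabla u\|^2_{L^2(D^\ast\setminus D)}$ works with no integration by parts; your proposed IBP there only shuffles one second derivative of $u$ into another and is a detour. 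The genuine obstacle is the zero-order subpiece $u_{ij}\cdot\eta_i\eta_j u$: it carries \emph{no} factor of $\eta$ on $u_{ij}$, so the weighted Cauchy--Schwarz you propose, with weight $\varepsilon\eta^2|D^2u|^2$, forces a compensating term $\varepsilon^{-1}\int\mathtt{D}\,\frac{|\nabla\eta|^4}{\eta^2}u^2$, and with the standard cutoff satisfying only $|D^\alpha\eta|\le C\delta^{-|\alpha|}$ the factor $|\nabla\eta|^4/\eta^2$ is unbounded near $\partial\{\eta>0\}$. There is no $\|\eta D^2u\|$ to absorb into here, and one cannot control $\|D^2u\|$ on the strip $D^\ast\setminus D$ a priori.

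It is precisely this $u\,D^2u\,|\nabla\eta|^2$ term that the paper handles by the integration by parts $\int\mathtt{D}\,u\Delta u|\nabla\eta|^2 = -\int\nabla(\mathtt{D}u|\nabla\eta|^2)\cdot\nabla u$, which eliminates the second derivative of $u$ entirely and is the \emph{actual} source of the $\Vert\mathtt{D}\Vert_{W^{1,\infty}}$ dependence in \cref{cac:5-6-1}. Your argument could be repaired by swapping the two steps — integrate by parts on the $|\nabla\eta|^2 u$ piece and Cauchy--Schwarz the $\eta\nabla\eta\,\nabla u$ and $\eta\,D^2\eta\,u$ pieces — or alternatively, if one insists on avoiding integration by parts, by choosing a Moser-type cutoff $\eta=\zeta^2$ so that $|\nabla\eta|\lesssim\eta^{1/2}/\delta$; but as written the proposal has a gap.
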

\begin{proof}
Let $u\in H_{B}(D^{\ast})$, and let $\eta\in C^{\infty}(\overline{D^{\ast}})$ satisfy
\begin{equation}\label{cac-5-6:cut-off}
\eta=1 \;\;\text{in}\;\, D, \;\;\, {\rm dist}({\rm supp}(\eta), \partial D^{\ast}\setminus\partial \Omega)>0, \;\;\,|\eta|_{W^{j,\infty}(D^{\ast})}\leq \delta^{-j}\, (j=0,1,2). 
\end{equation}
Clearly, we have $\eta^{2}u\in H_{0}^{2}(D^{\ast})$, and thus $B_{D^{\ast}}(u,\eta^{2}u)=0$. It follows that 
\begin{equation}\label{cac-fourth-order:decomposition}
\begin{array}{cc}
\displaystyle I_{0}:=\int_{D^{\ast}} \eta^{2}\mathtt{D}(x,y) \big(\Delta u \Delta u - (1-\nu)(u_{xx}u_{yy} + u_{yy}u_{xx} - 2u_{xy}u_{xy})\big) dxdy \\[2ex]
= -[I_{\Delta} -(1-\nu) (I_{xx} + I_{yy} -2I_{xy})],
\end{array} 
\end{equation}
where
\begin{equation}
\begin{array}{cc}
\displaystyle I_{\Delta} = \int_{D^{\ast}} \mathtt{D}(x,y) \big( 2 \eta u\Delta u\Delta \eta + 4\eta \Delta u\nabla u\cdot \nabla \eta + 2u\Delta u |\nabla \eta|^{2} \big)dxdy,\\[2ex]
\displaystyle I_{xx} = \int_{D^{\ast}} \mathtt{D}(x,y) \big( 2 \eta u u_{xx}\eta_{yy} + 4\eta u_{xx} u_{y}\eta_{y} + 2u u_{xx} |\eta_{y}|^{2}\big)dxdy.
\end{array}    
\end{equation}
$I_{yy}$ and $I_{xy}$ are defined similarly as $I_{xx}$. Next we estimate $I_{\Delta}$. Using \cref{cac-5-6:cut-off} yields
\begin{equation}\label{I-delta-1}
\begin{array}{cc}
\displaystyle  \int_{D^{\ast}} \mathtt{D}(x,y) \big( 2 \eta u\Delta u\Delta \eta + 4\eta \Delta u\nabla u\cdot \nabla \eta \big)dxdy \leq \frac{1+\nu}{8} \Vert \mathtt{D}^{1/2}\eta \Delta u\Vert^{2}_{L^{2}(D^{\ast})} \\[2ex]
+\,C\Vert \mathtt{D}\Vert_{L^{\infty}(D^{\ast})} \big(\delta^{-4}\Vert u\Vert^{2}_{L^{2}(D^{\ast}\setminus D)} + \delta^{-2}\Vert \nabla u\Vert^{2}_{{\bm L}^{2}(D^{\ast}\setminus D)}\big).
 \end{array}
\end{equation}
To proceed, using integration by parts and the fact that $u|\nabla \eta|^{2} = 0$ on $\partial D^{\ast}$ gives
\begin{equation}\label{I-delta-2}
\begin{array}{cc}
\displaystyle \int_{D^{\ast}}\mathtt{D}(x,y)u\Delta u |\nabla \eta|^{2} dxdy = - \int_{D^{\ast}} \nabla \mathtt{D} \cdot \nabla u|\nabla \eta|^{2}u \,dxdy \\[2ex] 
\displaystyle - \int_{D^{\ast}}\mathtt{D}(x,y) \big(2u\Delta \eta \nabla \eta\cdot\nabla u + |\nabla u|^{2}|\nabla\eta|^{2}\big) dxdy \\[2ex]
\displaystyle \leq C\Vert \mathtt{D}\Vert_{W^{1,\infty}(D^{\ast})} \big(\delta^{-4}\Vert u\Vert^{2}_{L^{2}(D^{\ast}\setminus D)} + \delta^{-2}\Vert \nabla u\Vert^{2}_{{\bm L}^{2}(D^{\ast}\setminus D)} \big).
\end{array}
\end{equation}
Combining \cref{I-delta-1,I-delta-2} and noting that $(1+\nu)\Vert \mathtt{D}^{1/2}\eta \Delta u\Vert^{2}_{L^{2}(D^{\ast})}\leq 2I_{0}$, we get
\begin{equation}
|I_{\Delta}|\leq \frac{1}{4}I_{0}   + C\Vert \mathtt{D}\Vert_{W^{1,\infty}(D^{\ast})} \big(\delta^{-4}\Vert u\Vert^{2}_{L^{2}(D^{\ast}\setminus D)} + \delta^{-2}\Vert \nabla u\Vert^{2}_{{\bm L}^{2}(D^{\ast}\setminus D)} \big).
\end{equation}
The other terms in \cref{cac-fourth-order:decomposition} can be estimated similarly. Hence, the lemma is proved.
\end{proof}

\textbf{Local eigenproblems}. Using the Caccioppoli inequality \cref{cac:5-6-1} and the Rellich–Kondrachov theorem, we can verify \cref{ass:2-2-2} similarly as before. The local eigenproblems are given by: Find $\lambda_{i}\in \mathbb{R} \cup \{+\infty \}$ and $\phi_{i}\in H_{B}(\omega_{i}^{\ast})$ such that
\begin{equation}\label{eigen:5-6}
{B}_{\omega^{\ast}_i}(\chi_{i}\phi_{i}, \chi_i v) = \lambda_{i} {B}_{\omega^{\ast}_i}(\phi_{i}, v)\quad \forall v\in H_{B}(\omega_{i}^{\ast}).
\end{equation} 

To obtain the exponential local convergence, we need to prove the weak approximation property. Indeed, we have
\begin{lemma}\label{lem:hp-weak-approx}
Let $D\subset D^{\ast}\subset {D}^{\ast\ast}$ be subdomains of $\Omega$ with $\delta:={\rm dist} ({D}^{\ast},\partial D^{\ast\ast}\setminus \partial \Omega)>0$, and let $\mathsf{V}_{\delta}({D}^{\ast}\setminus D):= \big\{{\bm x}\in {D}^{\ast\ast}: {\rm dist}({\bm x}, {D}^{\ast}\setminus D)\leq \delta \big\}$. 
Then, for any integer $m\geq C_{1}\big|\mathsf{V}_{\delta}({D}^{\ast}\setminus D)\big|\delta^{-d}$, there exists an $m$-dimensional space $Q_{m}(D^{\ast\ast})\subset H^{1}(D^{\ast\ast})$ such that for any $u\in H^{2}(D^{\ast\ast})$ and any $a_{1},a_{2}>0$,
    \begin{equation}\label{wa:5-6}
    \begin{array}{cc}
   {\displaystyle \inf_{v\in Q_{m}(D^{\ast\ast})}\Big(a_{1} \big\Vert \nabla (u-v)\big\Vert_{L^{2}(D^{\ast}\setminus D)} + a_{2}\Vert u-v\Vert_{L^{2}(D^{\ast}\setminus D)}\Big)  }\\[2ex]
{\displaystyle \leq C_{2} \Big( a_{1}\big|\mathsf{V}_{\delta}({D}^{\ast}\setminus D)\big|^{1/d} m^{-1/d} + a_{2} \big|\mathsf{V}_{\delta}({D}^{\ast}\setminus D)\big|^{2/d} m^{-2/d}\Big)\,|u|_{H^{2}(D^{\ast\ast})}, }
        \end{array}    
    \end{equation}
where the constants $C_{1}$ and $C_{2}$ depend only on $d$.    
\end{lemma}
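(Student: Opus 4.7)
The plan is to mimic the argument used in the proof of \cref{lem:5-3}(i), replacing the piecewise constant approximation space by a space of continuous piecewise linear functions so as to benefit from the extra order of approximation available for $H^2$ functions. First, I would fix a shape-regular, quasi-uniform family of triangulations $\{\mathcal{T}_H\}_{H>0}$ of $\Omega$ obtained by successive refinement of a fixed initial mesh. Given $m\in\mathbb{N}$, I would choose $H\leq \delta$ such that $m H^d \sim |\widetilde{D}_H|$, where $\widetilde{\mathcal{T}}_H$ denotes the collection of elements of $\mathcal{T}_H$ that intersect $D^{\ast}\setminus D$ and $\widetilde{D}_H$ denotes their union. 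The condition $H\leq \delta$ guarantees that $D^{\ast}\setminus D\subset \widetilde{D}_H\subset \mathsf{V}_{\delta}(D^{\ast}\setminus D)$, and the hypothesis $m\geq C_1 |\mathsf{V}_{\delta}(D^{\ast}\setminus D)|\delta^{-d}$ ensures that such an $H$ can be found.

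Second, I would define $Q_m(D^{\ast\ast})\subset H^1(D^{\ast\ast})$ as the $P_1$ finite element space associated with a one-layer enlargement $\widehat{\mathcal{T}}_H$ of $\widetilde{\mathcal{T}}_H$, whose basis functions are taken to vanish on $\partial \widehat{D}_H \cap D^{\ast\ast}$ and are extended by zero to all of $D^{\ast\ast}$; by quasi-uniformity this space has dimension $\sim m$ and is $H^1$-conforming. Third, for any $u\in H^2(D^{\ast\ast})$, I would invoke a Scott--Zhang quasi-interpolant $I_H u\in Q_m(D^{\ast\ast})$ and apply the standard local estimates
\begin{equation*}
 \|u-I_H u\|_{L^2(K)} \leq CH_K^{2}\,|u|_{H^2(\widetilde{K})}, \qquad \|\nabla(u-I_H u)\|_{L^2(K)} \leq CH_K\,|u|_{H^2(\widetilde{K})},
\end{equation*}
where $\widetilde{K}$ is the patch of elements touching $K$. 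Summing over the elements $K\in \widetilde{\mathcal{T}}_H$ (whose patches are contained in $\mathsf{V}_{\delta}(D^{\ast}\setminus D)$ since $H\leq \delta$) yields $\|u-I_H u\|_{L^2(D^{\ast}\setminus D)}\leq CH^2\,|u|_{H^2(D^{\ast\ast})}$ and $\|\nabla(u-I_H u)\|_{L^2(D^{\ast}\setminus D)}\leq CH\,|u|_{H^2(D^{\ast\ast})}$. Substituting the relation $H\sim (|\mathsf{V}_{\delta}(D^{\ast}\setminus D)|/m)^{1/d}$ and combining with the weights $a_1,a_2>0$ produces the desired bound \cref{wa:5-6}.

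The only subtle point is to simultaneously achieve the $H^1$-conformity of $Q_m(D^{\ast\ast})$, the correct dimension count $\dim Q_m\sim |\mathsf{V}_{\delta}(D^{\ast}\setminus D)|\delta^{-d}$, and the local approximation properties of $I_H u$. This is handled by the one-layer enlargement $\widehat{\mathcal{T}}_H$ above: the truncation only affects elements within one mesh layer of $\partial \widetilde{D}_H$, which lie inside $\mathsf{V}_{\delta}(D^{\ast}\setminus D)$ and hence do not enlarge the dimension count beyond a constant factor depending on $d$ and the shape regularity of the mesh, while still leaving the interior patches $\widetilde{K}$ of all elements touching $D^{\ast}\setminus D$ unaffected so that the Scott--Zhang estimates apply verbatim.
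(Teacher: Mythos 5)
Your proposal is correct and matches the paper's approach: the paper likewise replaces the piecewise‑constant space of \cref{lem:5-3}(i) by the $P_1$ hat‑function space on $\widetilde{\mathcal{T}}_H$, uses $\|u-I_Hu\|_{L^2}\lesssim H^2|u|_{H^2}$ and $\|\nabla(u-I_Hu)\|_{L^2}\lesssim H|u|_{H^2}$, and closes with the same scaling $H\sim(|\mathsf{V}_\delta|/m)^{1/d}$. The only cosmetic differences are that you use a Scott--Zhang quasi‑interpolant and a one‑layer enlargement with zero boundary values to secure $H^1$‑conformity, whereas the paper simply invokes the Lagrange nodal interpolant (legitimate here since $H^2(D^{**})\hookrightarrow C^0$ for $d\le 3$) and "naturally extends" the hat functions; both choices are valid and yield the stated bound.
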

\cm{The proof of \cref{lem:hp-weak-approx} is very similar to that of \cref{lem:5-3} (i) -- the only difference is the definition of $Q_{m}({D}^{\ast\ast})$ in \cref{eq:5-22} and the associated error estimate \cref{eq:5-23}. Specifically, noting that $u\in H^{2}(D^{\ast\ast})$, here we define $Q_{m}({D}^{\ast\ast})$ as the span of the classical "hat functions" associated with the mesh $\widetilde{T}_{H}$ (these functions are naturally extended to $D^{\ast\ast}$ as $H^{1}$ functions), and use standard error estimates for finite element nodal interpolants (see \cite[Theorem 4.4.20]{brenner2008mathematical}) in place of \cref{eq:5-23}.}

In what follows, we show how the inequality \cref{cac:5-6-1} and the approximation result \cref{wa:5-6} fit into the abstract framework in \cref{sec:framework-higher-order} for higher-order problems. Let $M=2$, and for any subdomain $D\subset\Omega$, let $\mathcal{L}(D):=H^{1}(D)$, $\mathcal{L}_{1}(D):={\bm L}^{2}(D)$, and $\mathcal{L}_{2}(D):=L^{2}(D)$. Moreover, we define the operators $A_{D,1}: H^{1}(D)\rightarrow {\bm L}^{2}(D)$ by $A_{D,1}(u) = \nabla u$, and $A_{D,2}: H^{1}(D)\rightarrow L^{2}(D)$ by $A_{D,2} = id$. Clearly, inequality \cref{hp:norm-equiv} holds with $c_{0}=1$. With this setup, we see from \cref{lem:cac-5-6} that the generalized Caccioppoli-type inequality \cref{hp:cac-inequality} holds true with $C_{\rm cac}^{\rm I} = C\Vert \mathtt{D}\Vert^{1/2}_{W^{1,\infty}(\Omega)}$, and from \cref{lem:hp-weak-approx} and \cref{fourth-order:coercive} we see that the generalized weak approximation property \cref{hp:weak-approx} holds with $C_{\rm wa}=C\big(\frac{1}{2}\mathtt{D}_{\rm min}(1-\nu)\big)^{-1}$ and $\alpha = 1/d$. Hence, we have
\begin{theorem}[Exponential local convergence]\label{upperbound:5-6}
For each $i=1,\cdots,M$, \cref{thm:3-2} holds true with $\alpha = 1/d$.
\end{theorem}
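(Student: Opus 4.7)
The plan is to mirror the two-step strategy used in the proof of \cref{thm:3-1}, adapting each step to the multi-term structure of the generalized Caccioppoli-type inequality \cref{hp:cac-inequality} and the generalized weak approximation property \cref{hp:weak-approx}. First I would establish a one-step analogue of \cref{lem:3-1}$(\rm i)$: given nested subdomains $D\subset D^{\ast}$ with $\delta={\rm dist}(D,\partial D^{\ast}\setminus\partial \Omega)>0$, construct, for an intermediate set $\widetilde{D}^{\ast}$ with $D_c:=\widetilde{D}^{\ast}\setminus D$, an $m$-dimensional space $S_{m}(D^{\ast})\subset \mathcal{H}_{B}(D^{\ast})$ such that
\begin{equation*}
\inf_{v\in S_{m}(D^{\ast})} \Vert u-v\Vert_{B^{+},D} \leq
\sqrt{L}\,C_{\rm wa}C^{\rm I}_{\rm cac} \sum_{i=1}^{L} (\delta/2)^{-i}\big|\mathsf{V}_{\delta/2}(D_c)\big|^{i\alpha} m^{-i\alpha}\, \Vert u\Vert_{B^{+},D^{\ast}}.
\end{equation*}
The technical device for this step, already hinted at in the proof sketch of \cref{thm:3-2}, is to endow $\mathcal{L}(D_c)$ with the weighted norm $\vertiii{u}_{\mathcal{L}(D_c)}^{2}:=\sum_{i=1}^{L}(\delta/2)^{-2i}\Vert A_{D_c,i}(u)\Vert^{2}_{\mathcal{L}_{i}(D_c)}$, which is a Hilbert norm equivalent to $\Vert\cdot\Vert_{\mathcal{L}(D_c)}$ by \cref{hp:norm-equiv}. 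In this Hilbert setting, the restriction operator from $\mathcal{H}_{B}(D^{\ast})/\mathcal{K}$ into $\mathcal{L}(D_c)/\mathcal{K}_c$ is compact by virtue of \cref{hp:weak-approx} with $a_i = (\delta/2)^{-i}$, so that its singular vectors form the desired optimal subspace via the Kolmogorov $n$-width characterization \cref{lem:1-1}. Combining the resulting approximation estimate with the generalized Caccioppoli-type inequality \cref{hp:cac-inequality} on $D\subset \widetilde{D}^{\ast}$ gives the displayed bound.

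Next I would iterate exactly as in \cref{lem:3-1-0}, on nested concentric cubes $\omega=\omega^{N+1}\subset\cdots\subset\omega^{1}=\omega^{\ast}$ with uniform gap ${\rm dist}(\omega^{k},\omega^{k+1})=\delta^{\ast}/(2N)$ and $|\mathsf{V}_{\delta/2}(\omega^{k}\setminus\omega^{k+1})|\lesssim \delta(H^{\ast})^{d-1}$. The multiplicativity \cref{eq:1-3} of Kolmogorov widths, together with the one-step bound above, yields
\begin{equation*}
d_{mN}(\omega,\omega^{\ast}) \leq \Vert P_{B}\Vert\, [\xi(N,m)]^{N},\qquad \xi(N,m)\lesssim \sum_{i=1}^{L} C_{i}(H^{\ast})\,(N/\delta^{\ast})^{i(1-\alpha)} m^{-i\alpha},
\end{equation*}
with constants $C_{i}(H^{\ast})$ depending on $H^{\ast}$ but not on $N$ or $m$. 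Choosing $m$ so that $m^{\alpha}\sim (eL\widehat{\Theta})^{1/\alpha}N^{1-\alpha}$ for a sufficiently large $\widehat{\Theta}$ that dominates all the $C_{i}(H^{\ast})$ (so that each of the $L$ summands in $\xi(N,m)$ is at most $(eL)^{-1}$) gives $\xi(N,m)\leq e^{-1}$, and hence $d_{mN}(\omega,\omega^{\ast})\leq \Vert P_{B}\Vert e^{-N}$. Setting $n=mN$ and inverting as in \cref{eq:3-23} produces $N\geq c\,n^{\alpha}$ for an explicit $c>0$, which yields the desired bound $d_{n}(\omega,\omega^{\ast})\leq \Vert P_{B}\Vert e^{-cn^{\alpha}}$ for all $n>n_{0}:=(eL\widehat{\Theta})^{1/\alpha}$.

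The main technical obstacle is the balancing of the $L$ different terms in $\xi(N,m)$: whereas in \cref{thm:3-1} there is a single scale $m^{-\alpha}$ to match against $N^{1-\alpha}$, here we have to dominate a mixed expression $\sum_{i}(N/\delta^{\ast})^{i(1-\alpha)}m^{-i\alpha}$ by a single power law in $m$. The point is that if we pick the exponent $m^{\alpha}\sim N^{1-\alpha}$ driven by the $i=1$ term, then the higher-order terms $i\geq 2$ satisfy $(N/\delta^{\ast})^{i(1-\alpha)}m^{-i\alpha}=[(N/\delta^{\ast})^{1-\alpha}m^{-\alpha}]^{i}$, which decay faster than the $i=1$ term and can be absorbed into the same geometric bound by inflating $\widehat{\Theta}$ by a factor depending on $L$ and $\delta^{\ast}$. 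Finally, the extension to general subdomains $\omega,\omega^{\ast}$ follows exactly as in \cref{rem:general-domains} by constructing the nested intermediate domains through $\delta^{\ast}/N$-dilations and replacing $H^{\ast}$ with $|\omega^{\ast}|^{1/d}$.
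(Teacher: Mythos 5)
Your proposal re-proves the abstract \cref{thm:3-2} rather than proving \cref{upperbound:5-6}. The content of \cref{upperbound:5-6} is not the iteration argument (which is already established, in sketch form, in the proof of \cref{thm:3-2}); it is the \emph{verification} that the fourth-order plate setting is an instance of the abstract higher-order framework. The paper's proof does exactly this: set $L=2$, $\mathcal{L}(D)=H^1(D)$, $\mathcal{L}_1(D)={\bm L}^2(D)$, $\mathcal{L}_2(D)=L^2(D)$, $A_{D,1}=\nabla$, $A_{D,2}=\mathrm{id}$; check \cref{hp:norm-equiv} (trivially, with $c_0=1$); observe that \cref{lem:cac-5-6} is precisely the generalized Caccioppoli-type inequality \cref{hp:cac-inequality} with $C^{\rm I}_{\rm cac}=C\Vert\mathtt{D}\Vert^{1/2}_{W^{1,\infty}(\Omega)}$; and observe that \cref{lem:hp-weak-approx} \emph{combined with the coercivity \cref{fourth-order:coercive}} (which converts the $|u|_{H^2(D^{\ast\ast})}$ on the right of \cref{wa:5-6} into $\Vert u\Vert_{B^+,D^{\ast\ast}}$) gives the generalized weak approximation property \cref{hp:weak-approx} with $\alpha=1/d$. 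Then the conclusion is an immediate application of \cref{thm:3-2}.

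Your write-up never performs any of these identifications — you work entirely with the abstract objects $A_{D,i}$, $C^{\rm I}_{\rm cac}$, $C_{\rm wa}$, and you never cite the plate-specific \cref{cac:5-6-1}, \cref{wa:5-6}, or \cref{fourth-order:coercive}. In particular the step where \cref{fourth-order:coercive} supplies the bound $|u|_{H^2}\lesssim \Vert u\Vert_{B^+}$, which is essential because \cref{lem:hp-weak-approx} is stated with the $H^2$-seminorm and not the $B^+$-norm, is entirely missing. The balancing argument you describe (dominating $\sum_{i}(N/\delta^\ast)^{i(1-\alpha)}m^{-i\alpha}$ by noting the $i\geq 2$ terms are powers of the $i=1$ term) and the use of the weighted norm $\vertiii{\cdot}_{\mathcal{L}(D_c)}$ are both correct and essentially reproduce the paper's proof sketch of \cref{thm:3-2}, so they are not wrong, just misdirected: they prove a theorem the paper has already proved, and leave unproved the verification that the plate problem satisfies its hypotheses.
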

The quasi-optimal convergence of the method follows from \cref{quasi-optimal-coercive}.

% We conclude this section by remarking that while we only presented applications with (part of) homogeneous Dirichlet boundary conditions, it is straightforward to extend the method to the case of inhomogeneous Dirichlet boundary conditions or pure Neumann boundary conditions; see \cite{ma2021novel,chupeng2023wavenumber}. Moreover, numerical results of MS-GFEM for some applications can be found in \cite{ma2021novel,ma2022error,chupeng2023wavenumber,benezech2022scalable}. We will implement the method for other important applications (in particular, convection diffusion problems and Maxwell type problems) in future work.  

\section{Conclusion and outlook}\label{sec:conclusion}
In this paper, we have formulated an abstract framework for the design, implementation, and analysis of MS-GFEM, and applied it to various multiscale PDEs. A key component of the framework is the abstract local approximation theory. Based on this theory, we established a unified framework for characterizing the low-rank property of multiscale PDEs. Our analysis reveals that MS-GFEM has the same theoretical underpinnings as a variety of efficient numerical techniques that exploit the low-rank property of PDEs. Most importantly, we proved sharpened convergence rates for the method  surpassing previous results. This work provides a comprehensive foundation unifying local basis construction, approximation properties, and connections to related methods for MS-GFEM.

%We showed that a wide class of multiscale PDEs fit into our framework, including high-frequency wave problems with rough coefficients. 
%Moreover, we presented applications of the spectral local spaces within MS-GFEM to Trefftz methods and numerical homogenization. 

There are several important directions for future research. First, we have proved the upper bound $O(e^{-cn^{1/d}})$ for the local approximation errors of MS-GFEM. A natural question is: What is the optimal local convergence rate of MS-GFEM for practical applications? This question is equivalent to asking the asymptotic decay rate of the Kolmogrov $n$-widths $d_{n}(\omega_i,\omega_i^{\ast})$, or the optimal estimate of the approximate separability of the Green's functions. We conjecture that the optimal local convergence rate is $O(e^{-cn^{1/(d-1)}})$. This is indeed the case for two-dimensional Poisson-type equations with constant coefficients, as shown in \cite{babuska2011optimal} by a direct calculation; see also \cite{borm2013low,borm2016approximation} for such upper bound estimates by using a Green’s representation formula. Also, numerical results in different contexts \cite{ma2022error,chupeng2023wavenumber,faustmann2015h,grasedyck2009domain} support this conjecture. 

The dependence of the obtained error estimates for MS-GFEM on the coefficient contrast of the problems was not investigated in this paper. Indeed, this dependence is hidden in the constant $\Theta$ in \cref{thm:3-1}. While theoretical results may exhibit a pessimistic dependence on the contrast, numerical results showed that the convergence rates are nearly contrast-independent in various settings \cite{ma2022error,bebendorf2003existence}. A rigorous proof of these numerical observations requires a careful analysis of the proof of \cref{thm:3-1}. We will address this issue in a forthcoming work.

We have mainly dealt with the issue of approximation for MS-GFEM in this paper. The stability of the method, however, was analysed within the framework of standard compact perturbation method. This leads to pessimistic stability conditions for convection diffusion problems. The stability issue for highly indefinite problems will be addressed in a separate paper using some special techniques. For example, one can suitably enrich the test and trial space or use a Petrov-Galerkin formulation to guarantee the stability. An alternative is to give up the partition of unity technique and work in the framework of Trefftz methods or discontinuous Galerkin methods.

Finally, we note that an important class of PDEs, saddle point problems, do not fit into our framework formally. Indeed, they can also be solved by MS-GFEM, but there exist two aspects that need a different treatment compared with the current method. First, one needs to appropriately transform the local approximation problems at hand before using the abstract approximation theory in this paper. Second, the local spaces need to be carefully designed such that the inf-sup condition is fulfilled. We refer to \cite{alber2024mixed} for the application of MS-GFEM to the Darcy's equation in mixed formulation.

\appendix
\section{Kolmogorov $n$-widths and their properties \cite{pinkus1985n}}\label{Kolmogrov-nwidth}
Let $\mathbb{H}_{1}$ and $\mathbb{H}_{2}$ be Hilbert spaces, and let $T:\mathbb{H}_{1}\rightarrow\mathbb{H}_{2}$ be a bounded linear operator. For each $n\in\mathbb{N}$, we can define the Kolmogrov $n$-width of the operator $T$ as
\begin{equation}\label{eq:1-1}
d_{n}(T) = \inf_{S(n)\subset \mathbb{H}_{2}} \sup_{u\in \mathbb{H}_{1}}\inf_{v\in S(n)}\frac{\Vert Tu - v\Vert_{\mathbb{H}^{2}}}{\Vert u\Vert_{\mathbb{H}^{1}}}, 
\end{equation}
where the leftmost infimum is taken over all $n$-dimensional subspaces $S(n)$ of $\mathbb{H}_{2}$. It is a classical result in operator theory that $\lim_{n\rightarrow\infty}d_{n}(T) = 0$ if and only if $T$ is compact. Indeed, the sequence $\big( d_{n}(T)\big)_{n\in\mathbb{N}}$ provides a quantification of the degree of compactness of $T$. 

Let $T^{\ast}:\mathbb{H}_{2}\rightarrow \mathbb{H}_{1}$ be the adjoint of $T$, and let $\{\lambda_{j}\}$ and $\{u_{j}\}$ be the eigenvalues and eigenvectors (in nonascending order) of the problem
\begin{equation}\label{eq:1-2}
T^{\ast}Tu = \lambda u.
\end{equation}
$\{\lambda_{j}^{1/2}\}$, $\{u_{j}\}$, and $\{Tu_{j}\}$ are usually referred to as the singular values, the right and left singular vectors of $T$, respectively. If the operator $T$ is compact, we can characterize the $n$-width $d_{n}(T)$ via the singular values and singular vectors of $T$ as follows.
\begin{lemma}\label{lem:1-1}
Let $T:\mathbb{H}_{1}\rightarrow\mathbb{H}_{2}$ be a compact operator, and let $\{\lambda_{j}\}$ and $\{u_{j}\}$ be the eigenvalues and eigenvectors of problem \cref{eq:1-2}. Then, $d_{n}(T) = \lambda^{1/2}_{n+1}$, and the optimal approximation space is given by $S^{\rm opt}(n) = {\rm span}\big\{Tu_{1}, \ldots, Tu_{n}\big\}$.
\end{lemma}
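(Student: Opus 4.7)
The plan is to prove the two inequalities $d_n(T)\le \lambda_{n+1}^{1/2}$ and $d_n(T)\ge \lambda_{n+1}^{1/2}$ separately, both based on the spectral decomposition of the self-adjoint positive compact operator $T^{\ast}T$. Since $T$ is compact, so is $T^{\ast}T$; the spectral theorem then yields an orthonormal system $\{u_j\}\subset\mathbb{H}_1$ of eigenvectors for the positive eigenvalues $\lambda_1\ge\lambda_2\ge\cdots\ge 0$, which can be extended to an orthonormal basis of $\mathbb{H}_1$ by adjoining an orthonormal basis of $\ker(T^{\ast}T)=\ker(T)$ (with associated $\lambda_j=0$). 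The crucial identity I will use throughout is $\|Tu_j\|_{\mathbb{H}_2}^2 = \langle T^{\ast}Tu_j,u_j\rangle_{\mathbb{H}_1}=\lambda_j$, and, more generally, $\langle Tu_j,Tu_k\rangle_{\mathbb{H}_2}=\langle T^{\ast}Tu_j,u_k\rangle_{\mathbb{H}_1}=\lambda_j\delta_{jk}$, so that $\{Tu_j\}_{\lambda_j>0}$ is an orthogonal family in $\mathbb{H}_2$.

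For the upper bound, I take the candidate space $S^{\mathrm{opt}}(n)=\mathrm{span}\{Tu_1,\dots,Tu_n\}$. Given any $u\in\mathbb{H}_1$, expand $u=\sum_j c_j u_j$ so that $Tu=\sum_j c_j Tu_j$ (the sum converges because $T$ is bounded), and choose $v=\sum_{j=1}^n c_j Tu_j\in S^{\mathrm{opt}}(n)$. By the orthogonality above,
\begin{equation*}
\|Tu-v\|_{\mathbb{H}_2}^2 = \sum_{j>n}|c_j|^2\,\|Tu_j\|_{\mathbb{H}_2}^2 = \sum_{j>n}|c_j|^2\lambda_j \le \lambda_{n+1}\sum_{j>n}|c_j|^2 \le \lambda_{n+1}\|u\|_{\mathbb{H}_1}^2,
\end{equation*}
which gives $d_n(T)\le \lambda_{n+1}^{1/2}$.

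For the lower bound, I use a standard dimension-counting (Courant--Fischer-type) argument. Fix an arbitrary $n$-dimensional subspace $S(n)\subset\mathbb{H}_2$, let $P:\mathbb{H}_2\to S(n)$ denote the orthogonal projection, and consider the $(n+1)$-dimensional subspace $V=\mathrm{span}\{u_1,\dots,u_{n+1}\}\subset\mathbb{H}_1$. The composition $P\circ T\colon V\to S(n)$ maps from an $(n+1)$-dimensional space into an $n$-dimensional space, hence has a nonzero kernel; pick $u\in V$ with $\|u\|_{\mathbb{H}_1}=1$ and $P(Tu)=0$. Then $Tu\perp S(n)$, so $\inf_{v\in S(n)}\|Tu-v\|_{\mathbb{H}_2}=\|Tu\|_{\mathbb{H}_2}$. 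Writing $u=\sum_{j=1}^{n+1}c_j u_j$ with $\sum|c_j|^2=1$, the orthogonality of $\{Tu_j\}$ gives $\|Tu\|_{\mathbb{H}_2}^2=\sum_{j=1}^{n+1}\lambda_j|c_j|^2\ge \lambda_{n+1}$, since $\lambda_j\ge\lambda_{n+1}$ for $j\le n+1$. Taking the supremum over unit $u$ and then the infimum over $S(n)$ yields $d_n(T)\ge \lambda_{n+1}^{1/2}$, and combining this with the upper bound shows that $S^{\mathrm{opt}}(n)$ is an optimal approximation space.

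Because this is a classical result, I do not anticipate any serious obstacle; the only points requiring a little care are (i) handling $\ker(T)$ when extending $\{u_j\}$ to an orthonormal basis of $\mathbb{H}_1$, so that the expansion $u=\sum_j c_j u_j$ is valid for every $u\in\mathbb{H}_1$, and (ii) ensuring that the kernel of $P\circ T$ on $V$ is nontrivial even when some of $\lambda_1,\dots,\lambda_{n+1}$ vanish (in which case the lower bound $\lambda_{n+1}^{1/2}=0$ is trivial). Both points are resolved by routine linear-algebraic bookkeeping.
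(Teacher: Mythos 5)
The paper states this lemma without proof, citing Pinkus's monograph \cite{pinkus1985n} as the source. Your argument is the standard textbook proof found there: spectral decomposition of the compact self-adjoint operator $T^{\ast}T$, the upper bound via the truncated singular-vector space, and the lower bound via a Courant--Fischer dimension-counting argument applied to $P\circ T$ restricted to $\mathrm{span}\{u_1,\dots,u_{n+1}\}$. The proof is correct, and the two caveats you flag (extending $\{u_j\}$ to an orthonormal basis of $\mathbb{H}_1$ by including a basis of $\ker T$, and noting that the lower bound is vacuous when $\lambda_{n+1}=0$) are exactly the right points to address and are handled correctly.
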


The $n$-width satisfies the following additivity and multiplicativity properties. Let $\mathbb{H}_{1}$, $\mathbb{H}_{2}$, and $\mathbb{H}_{3}$ be Hilbert spaces, and let $T,\,Q: \mathbb{H}_{1}\rightarrow\mathbb{H}_{2}$ and $R: \mathbb{H}_{2}\rightarrow\mathbb{H}_{3}$ be bounded linear operators. Then, 
\begin{align}
d_{m+n}(T+Q)\leq &\, d_{m}(T)+d_{n}(Q),\label{additivity-property}\\[1mm]
 d_{m+n}(RT)\leq d_{m}(R)d_{n}(T),&\quad d_{n}(RT)\leq \Vert R\Vert \,d_{n}(T),\label{eq:1-3}
\end{align}
where $\Vert R\Vert$ denotes the operator norm of $R$. 

\section{Proof of \cref{prop:3-1-1}}\label{sec:appendix-1-2}
\begin{proof}
Let $\widetilde{D}$ with $D\subset \widetilde{D}\subset D^{\ast}$ be an intermediate domain between $D$ and $D^{\ast}$ such that ${\rm dist} ({D},\partial \widetilde{D}\setminus \partial \Omega) = {\rm dist} (\widetilde{D},\partial D^{\ast}\setminus \partial \Omega)>0$. In view of \cref{ass:2-2-3}, we may assume that $\Vert\cdot\Vert_{B^{+}, D}$ is a norm on $\mathcal{H}_{B}(D)$ for any subdomains $D\subset\Omega$; otherwise we can consider the corresponding quotient spaces. Let $\widetilde{R}:\mathcal{H}_{B}(D^{\ast})\rightarrow \mathcal{L}(\widetilde{D})$ be defined by $\widetilde{R}(u) = u|_{\widetilde{D}}$. Then, \cref{ass:3-1-2} implies that $\widetilde{R}$ is compact. Let $\{u_{n}\}_{n=1}^{\infty}$ be a bounded sequence in $\mathcal{H}_{B}(D^{\ast})$. Due to the compactness of operator $\widetilde{R}$, there is a subsequence, still denoted by $\{u_{n}\}$, such that $\{u_{n}|_{\widetilde{D}}\}\subset \mathcal{H}_{B}(\widetilde{D})$ is a Cauchy sequence in $\mathcal{L}(D^{\ast})$. It follows from \cref{ass:3-1-1} that $\{u_{n}|_{D}\}$ is a Cauchy sequence in $\mathcal{H}_{B}(D)$. Since $\mathcal{H}_{B}(D)$ is a closed subspace of $\mathcal{H}(D)$ (\cref{prop:2-1}), the sequence $\{u_{n}|_{D}\}$ converges to an element of $\mathcal{H}_{B}(D)$. Hence, operator $R$ is compact.  
\end{proof}

\section{some useful lemmas}
\begin{lemma}{\cite[Theorem 2.10]{oleinik1992mathematical}}\label{lem:wa-5-3-1}
Suppose that $D\subset\mathbb{R}^{d}$ is a bounded domain of diameter $r$ and is star-shaped with respect to a ball $Q_{r_1}$ of radius $r_{1}$. Then for any ${\bm u}\in {\bm H}^{1}(D)$,
\begin{equation}
  \Vert \nabla {\bm u}\Vert_{L^{2}(D)}^{2}\leq C_{1}\Big(\frac{r}{r_1} \Big)^{d+1} \Vert {\bm \varepsilon} ({\bm u})\Vert^{2}_{L^{2}(D)} +  C_{2}\Big(\frac{r}{r_1} \Big)^{d} \Vert \nabla {\bm u} \Vert^{2}_{L^{2}(Q_{r_1})},
\end{equation}
where $C_{1}$ and $C_{2}$ are constants depending only on $d$.
\end{lemma}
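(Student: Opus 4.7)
The plan is to recover the classical Oleinik--Kondrat'ev proof of this strengthened Korn inequality, whose two main ingredients are the pointwise Korn identity for second derivatives and a Sobolev-type integral representation exploiting the star-shapedness. I would begin by splitting $\nabla {\bm u} = {\bm \varepsilon}({\bm u}) + {\bm \omega}({\bm u})$, where ${\bm \omega}({\bm u}) = (\nabla {\bm u} - (\nabla {\bm u})^{\top})/2$ is the skew-symmetric part. Since $\|{\bm \varepsilon}({\bm u})\|_{L^2(D)}^2$ already appears on the right-hand side, the entire problem reduces to estimating $\|{\bm \omega}({\bm u})\|_{L^2(D)}^2$ by the two quantities that are allowed. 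A density argument lets me assume ${\bm u}\in C^{\infty}(\overline D)$.

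The first key step is the pointwise Korn identity
\begin{equation*}
\partial_{k}\omega_{ij}({\bm u}) = \partial_{i}\varepsilon_{jk}({\bm u}) - \partial_{j}\varepsilon_{ik}({\bm u}),
\end{equation*}
which lets me write every first derivative of ${\bm \omega}({\bm u})$ as a first derivative of ${\bm \varepsilon}({\bm u})$. The problem is that these are derivatives of ${\bm \varepsilon}$, whereas the right-hand side only controls ${\bm \varepsilon}$ itself in $L^{2}$. This is where the star-shapedness enters: for every ${\bm x}\in D$ and every ${\bm y}\in Q_{r_1}$, the segment $[{\bm y},{\bm x}]$ lies inside $D$. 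The plan is to write, for any ${\bm x}\in D$ and ${\bm y}\in Q_{r_1}$,
\begin{equation*}
\omega_{ij}({\bm u})({\bm x}) = \omega_{ij}({\bm u})({\bm y}) + \int_{0}^{1}({\bm x}-{\bm y})\cdot \nabla \omega_{ij}({\bm u})({\bm y}+t({\bm x}-{\bm y}))\,dt,
\end{equation*}
substitute the Korn identity to replace $\nabla \omega$ by components of $\nabla {\bm \varepsilon}$, and then integrate over ${\bm y}\in Q_{r_1}$ with weight $1/|Q_{r_1}|$. This produces a representation of $\omega_{ij}({\bm u})({\bm x})$ as (i) an average of $\omega_{ij}({\bm u})$ over $Q_{r_1}$ (bounded by $\|\nabla {\bm u}\|_{L^{2}(Q_{r_1})}$ after Cauchy--Schwarz) plus (ii) a volume integral over $D$ of $\nabla {\bm \varepsilon}({\bm u})$ against an explicit singular kernel.

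The standard trick to remove the derivative on ${\bm \varepsilon}$ is integration by parts in the volume integral: this throws the derivative off ${\bm \varepsilon}$ and onto the kernel, yielding a convolution-type operator whose kernel behaves like $|{\bm x}-{\bm z}|^{1-d}$ on $D$ (up to boundary terms on $\partial Q_{r_1}$, which are absorbed into the $\|\nabla {\bm u}\|_{L^{2}(Q_{r_1})}$ piece). I can then square, integrate in ${\bm x}\in D$, and apply the Young/Hardy--Littlewood--Sobolev inequality for convolutions with weakly singular kernels, using $|D|\lesssim r^{d}$ to bound the $L^{1}$ norm of the kernel. The main obstacle---and the place where the geometric constants $(r/r_{1})^{d+1}$ and $(r/r_{1})^{d}$ emerge---is the careful bookkeeping of this step: the factor $1/|Q_{r_1}|\sim r_{1}^{-d}$ from averaging, the diameter $|{\bm x}-{\bm y}|\leq r$ controlling the integration along segments, and the $L^{1}(D)$ norm of the convolution kernel $\sim r$ must combine in exactly the right way. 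Once this is done, collecting all terms and adding back $\|{\bm \varepsilon}({\bm u})\|_{L^{2}(D)}^{2}$ gives the claimed inequality with constants depending only on $d$.
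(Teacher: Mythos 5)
You are reconstructing a proof for a lemma that the paper does not prove: it is cited verbatim from Oleinik--Shamaev--Yosifian, Theorem 2.10. Your outline is indeed the classical route taken there, and most of the skeleton is right: split $\nabla{\bm u}={\bm\varepsilon}+{\bm\omega}$, use the pointwise Korn identity $\partial_k\omega_{ij}=\partial_i\varepsilon_{jk}-\partial_j\varepsilon_{ik}$, exploit star-shapedness to write $\omega_{ij}({\bm x})$ as an average over $Q_{r_1}$ plus a segment integral, and then integrate by parts to shift the extra derivative off ${\bm\varepsilon}$. The averaging over $Q_{r_1}$ and a Cauchy--Schwarz step correctly produce the $(r/r_1)^d\|\nabla{\bm u}\|^2_{L^2(Q_{r_1})}$ contribution, and the change of variable $z=y+t(x-y)$ does produce, after integrating in $t$, a kernel of size $(r/r_1)^d|x-z|^{1-d}$ multiplying $\nabla{\bm\varepsilon}$.

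The concrete gap is in the step that follows. After you integrate by parts to move the derivative off ${\bm\varepsilon}$, the kernel gains one power of singularity: the operator acting on ${\bm\varepsilon}$ has kernel of order $|x-z|^{-d}$, not $|x-z|^{1-d}$ as you wrote. (A dimensional check makes this unavoidable: ${\bm\omega}$ and ${\bm\varepsilon}$ scale the same way, so $\int_D K(x,z){\bm\varepsilon}(z)\,dz$ can only reproduce the units of ${\bm\omega}$ if $K$ scales like $|x-z|^{-d}$.) A kernel of order $|x-z|^{-d}$ is not locally integrable, so the $L^1$-kernel Young estimate, or the Hardy--Littlewood--Sobolev inequality, cannot be applied to it: this is precisely the Calder\'on--Zygmund threshold. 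The $L^2$-boundedness of the resulting operator is true, but it is a genuine additional input: one must verify the cancellation/regularity conditions for singular integrals (Mikhlin's theorem, or a Fourier-multiplier argument) and track that the resulting operator norm is dimensional only, not domain-dependent. This is exactly what the cited reference does, via Mikhlin's theory of multidimensional singular integrals. In addition, the integration by parts produces boundary contributions not only on $\partial Q_{r_1}$ but, a priori, on the lateral boundary of the cone of segments; one must check that the kernel vanishes continuously there so that no uncontrolled trace of ${\bm\varepsilon}$ on $\partial D$ appears. Neither of these points is mere bookkeeping, so the argument as written would not close without them.
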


\begin{lemma}[Glazman Lemma \cite{glazman1965direct}]\label{Glazmann Lemma}
Let $\mathcal{H}$ be a Hilbert space, and let $T:\mathcal{H}\rightarrow \mathcal{H}$ be a compact self-adjoint operator. Denote by $N(\lambda, T)$ the number of eigenvalues of $T$ above $\lambda$. Then
\begin{equation}
N(\lambda, T) = {\rm max}\, {\rm dim} \big\{\mathcal{L}\subset \mathcal{H}: (Tu,u)_{\mathcal{H}}>\lambda (u, u)_{\mathcal{H}},\;\,\forall u\in \mathcal{L}\setminus \{ 0\}  \big\}.
\end{equation}
\end{lemma}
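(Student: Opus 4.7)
The plan is to derive Glazman's characterization directly from the spectral theorem for compact self-adjoint operators, splitting the argument into the two standard inequalities: a lower bound obtained by exhibiting an explicit subspace, and an upper bound obtained by a dimension-counting / projection argument.

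First, I would invoke the spectral theorem to obtain an orthonormal system of eigenvectors $\{e_k\}_{k\geq 1}$ with real eigenvalues $\lambda_1\geq \lambda_2\geq \cdots$ converging to $0$, so that $Tu=\sum_k \lambda_k(u,e_k)_{\mathcal H}e_k$ for every $u\in \mathcal H$ (any element of $\ker T$ can be absorbed into the tail). By definition $n:=N(\lambda,T)=\#\{k:\lambda_k>\lambda\}$, which is finite since $\lambda_k\to 0$.

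For the lower bound, I would take $\mathcal L_0:=\mathrm{span}\{e_1,\ldots,e_n\}$, which has dimension $n$. For any nonzero $u=\sum_{k=1}^n c_k e_k\in \mathcal L_0$, orthonormality gives $(Tu,u)_{\mathcal H}=\sum_{k=1}^{n}\lambda_k|c_k|^2>\lambda\sum_{k=1}^{n}|c_k|^2=\lambda(u,u)_{\mathcal H}$, using $\lambda_k>\lambda$ for $k\leq n$. Hence the supremum on the right-hand side is at least $n$.

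For the upper bound, suppose toward contradiction that there exists a subspace $\mathcal L\subset\mathcal H$ of dimension $m>n$ on which $(Tu,u)_{\mathcal H}>\lambda(u,u)_{\mathcal H}$ for every nonzero $u$. Let $P$ denote the orthogonal projection onto $\mathrm{span}\{e_1,\ldots,e_n\}$. Since $\dim\mathcal L>n=\dim P(\mathcal H)$, the restriction $P|_{\mathcal L}$ has nontrivial kernel, producing a nonzero $u\in\mathcal L$ with $(u,e_k)_{\mathcal H}=0$ for $k=1,\ldots,n$. Expanding with respect to the eigenbasis (completed by an orthonormal basis of $\ker T$), $(Tu,u)_{\mathcal H}=\sum_{k>n}\lambda_k|(u,e_k)_{\mathcal H}|^2\leq \lambda\,(u,u)_{\mathcal H}$, since $\lambda_k\leq \lambda$ for $k>n$. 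This contradicts the strict inequality on $\mathcal L$, so $\dim\mathcal L\leq n$, and combined with the lower bound yields equality.

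The main (very mild) obstacle is the bookkeeping for $\ker T$ when $T$ is not injective: one must ensure the expansion $(Tu,u)_{\mathcal H}=\sum \lambda_k|(u,e_k)_{\mathcal H}|^2$ is still valid after completing the eigenbasis with a basis of $\ker T$, and that the codimension bound used in the projection step is unaffected. This is handled by noting $\ker T\subset\mathrm{span}\{e_1,\ldots,e_n\}^\perp$ whenever $\lambda\geq 0$; the case $\lambda<0$ requires only trivially modifying the argument because then every vector satisfies $(Tu,u)_{\mathcal H}>\lambda(u,u)_{\mathcal H}$ in the image of $T$ only up to finitely many directions, and the projection argument still applies with $n$ replaced accordingly.
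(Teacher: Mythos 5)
The paper does not prove Glazman's lemma; it is stated with a citation to \cite{glazman1965direct}, so there is no in-paper argument to compare against. Your proof is the standard spectral-theorem argument and is essentially correct in the regime $\lambda\ge 0$, which is the only regime the paper actually uses (the lemma is applied to the nonnegative operators $R^{\ast}_{c,i}R_{c,i}$ to bound eigenvalue counting functions).

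Two small points worth tightening. First, the clean enumeration $\lambda_1\ge\lambda_2\ge\cdots$ with $\lambda_k\to 0$ is not possible when the operator has infinitely many positive \emph{and} infinitely many negative eigenvalues, since both halves accumulate at $0$ from opposite sides. The fix is to observe that only the eigenvalues strictly above $\lambda$ matter for the argument: for $\lambda>0$ these are finitely many (with multiplicity), so you may set $n=N(\lambda,T)$, pick an orthonormal set $e_1,\dots,e_n$ spanning the corresponding eigenspaces, and let the ``tail'' absorb all remaining spectral directions (negative eigenvalues, eigenvalues in $(0,\lambda]$, and $\ker T$), for each of which the eigenvalue is $\le\lambda$; the rest of your projection argument then goes through unchanged. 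Second, the inequality $\lambda\sum_{k>n}|(u,e_k)|^2\le \lambda(u,u)$ in the upper bound step does require $\lambda\ge 0$, so the hand-wavy remark at the end about $\lambda<0$ is not a proof; however, that case is vacuous in infinite dimensions because $\ker T$ is infinite-dimensional, hence both $N(\lambda,T)$ and the maximal dimension are $+\infty$ whenever $\lambda<0$, and in the paper's applications $\lambda>0$ anyway. With those clarifications, your proof is complete and correct for the lemma as used.
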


\bibliographystyle{siamplain}
\bibliography{references}
\end{document}